\documentclass[11pt,reqno]{amsart}
\usepackage[utf8]{inputenc}
\usepackage{enumitem}
\usepackage{amssymb}
\usepackage{mathtools}
\usepackage{pdfsync}
\usepackage[english]{babel}
\usepackage[pdftex,pagebackref,colorlinks=true,urlcolor=blue,linkcolor=blue,citecolor=red]{hyperref}
\usepackage[capitalize]{cleveref}
\usepackage{fullpage}
\usepackage{color}
\usepackage{amsmath}
\usepackage{amsfonts}
\usepackage{mathrsfs}
\usepackage{t1enc, graphicx}
\usepackage{verbatim}
\usepackage{bbm}
\usepackage[colorinlistoftodos]{todonotes}
\usepackage{enumitem}
\usepackage{bm}
\usepackage{upgreek}
\usepackage[mathcal]{eucal}
\usepackage{tikz-cd}
\usepackage{adjustbox}
\usepackage{marginnote}
\usepackage[nocompress,noadjust]{cite}
\usepackage{dsfont}

\newcommand{\R}{\mathbb{R}}

\newcommand{\Z}{\mathbb{Z}}
\newcommand{\N}{\mathbb{N}}

\renewcommand{\P}{\bold{P}}
\newcommand{\nnorm}[1]{\lvert\!|\!| #1|\!|\!\rvert}

\newcommand{\RP}{\bold{RP}}
\newcommand{\NRP}{\bold{NRP}}

\newcommand{\AP}{\bold{AP}}

\newcommand{\ecube[1]}{[\hspace{-1.5pt} [ {#1} ]\hspace{-1.5pt}]} 

\newcommand{\bQ}{\bm{Q}}

\newcommand{\bx}{{\boldsymbol x}}
\newcommand{\by}{{\boldsymbol y}}
\newcommand{\bg}{{\boldsymbol g}}

\newcommand{\bz}{{\boldsymbol z}}
\newcommand{\bt}{{\boldsymbol t}}
\newcommand{\bv}{{\boldsymbol v}}

\newcommand{\cF}{{\mathcal F}}

\newcommand{\cN}{{\mathcal N}}

\newcommand{\cZ}{{\mathcal Z}}
\newcommand{\cX}{{\mathcal X}}
\newcommand{\cY}{{\mathcal Y}}

\def\scr{\mathscr }

\def\sF{{\mathscr F}}

\def\GG{{\mathfrak G}}

\newcommand{\cltau}[1]{\operatorname{cl}_{\uptau}(#1)}
\newcommand{\cltauF}[2]{\operatorname{cl}_{\uptau^{[#1]}}(#2)}

\newcommand{\taulim}{\xrightarrow{\uptau}}
\newcommand{\taulimF}[1]{\xrightarrow{\uptau^{[#1]}}}

\newcommand{\E}{\mathop{\mathbb{E}}}
\newtheorem{theorem}{Theorem}[section]
\newtheorem{proposition}[theorem]{Proposition}
\newtheorem{conjecture}[theorem]{Conjecture}
\newtheorem*{conjecture*}{Conjecture}

\newtheorem{lemma}[theorem]{Lemma}

\newtheorem{corollary}[theorem]{Corollary}
\newtheorem*{corollary*}{Corollary}
\newtheorem*{theorem*}{Theorem}
\theoremstyle{definition}
\newtheorem{definition}[theorem]{Definition}
\newtheorem*{definition*}{Definition}

\theoremstyle{remark}

\newtheorem{remark}[theorem]{Remark}

\subjclass[2020]{Primary: 37B05; Secondary: 37B02, 37B20}

\keywords{Topological dynamics, proximal relation, Nilsystems, Ellis group, Enveloping semigroup}

\title{On higher order regionally proximal relations and topological characteristic factors for group actions}

\date{}

\author{Axel \'Alvarez}
\address[Axel \'Alvarez]{Departamento de Ingenier\'{\i}a Matem\'atica, Universidad de Chile, Beauchef 851, Santiago, Chile} \email{aalvarez@dim.uchile.cl}

\begin{document}

\subjclass[2020]{Primary: 37B05; Secondary: 37B02, 37B20}

\keywords{Enveloping semigroup, topological dynamics, nilsystems, recurrence, independence.}

\maketitle
\begin{abstract} 

We study several aspects of higher-order regionally proximal relations for group actions. First, we develop an algebraic approach to study higher-order regionally proximal relations. To this end, we introduce a new topology on a subgroup of the universal minimal system, which can be seen as a higher-order analogue of the classical $\uptau$-topology. Using this topology, we obtain an algebraic characterization of the relation $\RP^{[d]}$ for abelian actions. Then, we study higher-order regionally proximal relations via recurrence sets, extending results of Huang, Shao, and Ye for $\mathbb{Z}$-actions to more general group actions under suitable assumptions. We then study topological characteristic factors and prove, modulo almost one-to-one factors, that the maximal factor of order $d-1$ is the topological characteristic factor of order $d$ for cubic configurations for arbitrary group actions, and for arithmetic progressions for finitely generated abelian group actions. As a consequence, we show that $\RP^{[d]}$ and $\AP^{[d]}$ coincide on minimal points for finitely generated abelian group actions, and we apply this to obtain results on independence along arithmetic progressions.

\end{abstract}

\tableofcontents

\section{Introduction}

A topological dynamical system is a pair $(X, T)$, where $X$ is a compact Hausdorff space and $T$ is a discrete group acting on $X$. In recent years, the class of nilsystems and their inverse limits has been a central object of study in topological dynamics. These systems relate to many deep dynamical properties, such as being characteristic for multiple ergodic averages, and have important applications in number theory and additive combinatorics (see, for example, \cite[Chapter 1]{Host_Kra_nilpotent_structures_ergodic_theory:2018} and the references therein). In the topological category, a particularly useful and important aspect of nilsystems is that they can be characterized by closed invariant relations, namely, the regionally proximal relations. 
In the context of $\Z$-actions, the regionally proximal relations were introduced by Host, Kra, and Maass in \cite{Host_Kra_Maass_nilstructure:2010}. In this context, they proved that these relations (more precisely, the property of these relations being trivial) are exactly what allow us to precisely characterize nilsystems and their inverse limits, and they may be viewed as higher-order analogues of the classical regionally proximal relation $\RP$. They showed, among other things, that in a distal minimal system, $\RP^{[d]}$ is an equivalence relation. Soon after, Shao and Ye in \cite{Shao_Ye_regionally_prox_orderd:2012} extended these results by proving that $\RP^{[d]}$ is an equivalence relation in any minimal system for abelian actions and that the quotient by it corresponds to the maximal pronilsystem factor. In \cite{Glasner_Gutman_Ye_higher_regionallyproximal_general_groups:2018}, Glasner, Gutman, and Ye gave a natural generalization of the definition of $\RP^{[d]}$ for general group actions and introduced the nilpotent regionally proximal relation, denoted by $\NRP^{[d]}$.

A central problem in this theory is to give characterizations of the regionally proximal relations. Having a precise description of these relations is very useful, as their description can be used to obtain applications to saturation, structure and recurrence theorems. In the case where $d=1$, this has long been a classical topic in topological dynamics (see for instance \cite{Veech_equicontinuous_structure_abelian_groups:1968,Auslander_Greschonig_Nagar_reflections_equicontinuity:2014,McMahon_relativized_weak_disjointness:1978,Veech_almost_automorphic_groups:1965,Auslander_McMahon_VanDerWoude_Sun_weak_disjoint_equi_structure_relation:1984}). More recently, extensive work has been done in the context of the higher-order versions $\RP^{[d]}$ (see for instance \cite{Alvarez_Donoso_cube_struct_univ_nil_applications:2025,Shao_Ye_regionally_prox_orderd:2012,Qiu_Ye_veech_higher_order:2025,Host_Kra_Maass_nilstructure:2010,Host_Maass_nil_ordre_deux:2007,Qiu_Zhao_topnilpotent_enveloping_nil:2022,Radic_sumsets_uniform_sets:2026}). In this paper, we focus on two types of characterizations: an algebraic one and one based on recurrence properties. We then derive several consequences.

\subsection{Characterization through algebraic properties} 
In \cite{Alvarez_Donoso_cube_struct_univ_nil_applications:2025}, the authors obtained a characterization of $\RP^{[d]}$ in algebraic terms for $\Z$-actions. However, that proof relies essentially on the characterization of systems of order $d$ via the enveloping semigroup proved in \cite{Donoso_enveloping_systems_orderd:2014,Qiu_Zhao_topnilpotent_enveloping_nil:2022}, namely, that a minimal system is of order $d$ if and only if its enveloping semigroup is a $d$-step nilpotent group. As up to now, this theorem is only known for $\Z$ actions, the argument does not extend to more general settings. This leads naturally to the question of whether one can obtain an algebraic characterization of $\RP^{[d]}$ without using the enveloping-semigroup characterization of systems of order $d$. In this paper, we develop a framework for the study of $\RP^{[d]}$ for abelian actions using the so-called algebraic theory, or Galois theory, of minimal systems proposed by Ellis and developed by several authors \cite{Ellis_Glasner_Shapiro_PI-flows:1975,Ellis_lectures_topological_dynamics:1969,Glasner_quasifactor_minimal_systems:2000, Ellis_the_Veech_struct_thm:1974, Glasner_topological_ergodic_descomposition:1994, Furstenberg_structure_distal_flows:1963,Veech_point-distal_flows:1970}. A central tool in this theory is the $\uptau$-topology, introduced by Furstenberg in \cite{Furstenberg_structure_distal_flows:1963}, which has proven useful in the study of regionally proximal relations (see, for instance, \cite{Alvarez_Donoso_cube_struct_univ_nil_applications:2025,Shao_Ye_regionally_prox_orderd:2012,Auslander_Guerin_regio_prox_and_prolongation:1997, Auslander_Ellis_Ellis_Regionally_proximal:1995,Ellis_Keynes_characterizarion_equi_relation:1971,Veech_topological_dynamics:1977,Ellis_Glasner_Shapiro_PI-flows:1975}). However, when dealing with higher-order regionally proximal relations, the $\uptau$-topology seems to be insufficient to describe the relations $\RP^{[d]}$, as it fails to capture the behavior of so-called face transformations. To overcome this, we introduce a new topology, denoted $\uptau^{[d]}$, inspired by the $\uptau$-topology. This new topology is tailored to capture the behavior of face transformations and shares many important properties with the $\uptau$-topology. As a consequence, it provides a natural algebraic framework for the study of higher-order proximal relations and allows us to extend the algebraic machinery of Ellis and Furstenberg to the setting of $\RP^{[d]}$. Using the framework developed in this paper, we give an algebraic characterization of $\RP^{[d]}$ for abelian actions, providing an alternative algebraic characterization to the one given in \cite[Theorem 5.10]{Alvarez_Donoso_cube_struct_univ_nil_applications:2025} for $\Z$-actions. We refer to \cref{sec: uptau_d_topology} and \cref{sec:univ_system} for the precise definitions of the algebraic objects.

\begin{theorem*}
    Let $(X,T)$ be a minimal system with $T$ being an abelian group. Then\begin{align*}
        \RP^{[d]}(X) =\{(x,vhx):x\in X,h\in H_{\uptau^{[d]}}(G), v\in J(M)\}.
    \end{align*}
\end{theorem*}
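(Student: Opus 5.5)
We prove the two inclusions separately, working in the universal minimal system $M$ with its Ellis group $G=uM$ for a fixed minimal idempotent $u\in J(M)$. Since $X$ is minimal, write $X=M/\!\sim$, or fix $x_0\in X$ with $ux_0=x_0$ and use the evaluation map $p\mapsto px_0$. The basic tool is the description of $\RP^{[d]}$ through the cube structure: by Shao–Ye (for abelian $T$), $(x,y)\in\RP^{[d]}$ if and only if $(x,y,\dots,y)\in \overline{\mathcal F^{[d]}}(x^{[d]})$, where $\mathcal F^{[d]}$ is the face group. The topology $\uptau^{[d]}$ is built so that its closure operator encodes limits along face transformations. So $H_{\uptau^{[d]}}(G)$, the $\uptau^{[d]}$-analogue of the derived group $H(G)=\bigcap\cltau{V}$, should consist exactly of those $h\in G$ for which the configuration $(u,h,\dots,h)$ lies in the face-orbit closure of $u^{[d]}$ in $M^{[2^d]}$. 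The first step is to make this precise, using the earlier properties of $\uptau^{[d]}$: closures of the form $\cltauF{d}{A}$ are realized by nets of face transformations, and $H_{\uptau^{[d]}}(G)$ is a $\uptau^{[d]}$-closed normal subgroup.

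\textbf{Inclusion $\supseteq$.} Take $x\in X$, $h\in H_{\uptau^{[d]}}(G)$ and $v\in J(M)$. By minimality write $x=px_0$, and transfer everything to $M$. The characterization of $h$ gives $(u,h,\dots,h)\in\overline{\mathcal F^{[d]}(u^{[d]})}$ in $M^{[2^d]}$. Applying the diagonal action of $p$, which commutes with face transformations since $T$ is abelian and $M$ is a left $\beta T$-flow, and projecting to $X$, we get $(x,hx,\dots,hx)\in\overline{\mathcal F^{[d]}}(x^{[d]})$, so $(x,hx)\in\RP^{[d]}$. The factor $v$ is then absorbed as follows. Since $(hx,vhx)$ is proximal, it lies in $\RP^{[d]}$ (proximal pairs are in $\RP^{[d]}$ in minimal systems). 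Because $\RP^{[d]}$ is an equivalence relation (Shao–Ye), $(x,vhx)\in\RP^{[d]}$.

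\textbf{Inclusion $\subseteq$.} Let $(x,y)\in\RP^{[d]}$. Choose a minimal idempotent $v$ with $vy=y$ and write $x=px_0$, $y=qx_0$. Then $(x,y,\dots,y)\in\overline{\mathcal F^{[d]}}(x^{[d]})$ is approximated by nets $F_i\in\mathcal F^{[d]}$. The plan is to lift these nets to $M^{[2^d]}$ and pass to Ellis limits. This yields $r\in G$ with $y=v r x$, after adjusting $x$ by an idempotent in its fibre, and with $(u,r,\dots,r)$ in the $\mathcal F^{[d]}$-orbit closure of $u^{[d]}$, modulo the stabilizer of $x_0$. The last step is to show that one can choose such an $r$ lying in $H_{\uptau^{[d]}}(G)$, not merely in the $\uptau^{[d]}$-closure associated with the stabilizer group $\mathfrak G(X,x_0)$.

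\textbf{Main obstacle.} I expect this last step to be the hardest. It is the $\uptau^{[d]}$-analogue of the classical fact that $\RP(X)$ is described by $H(G)$ independently of the stabilizer. For that I would mimic Ellis–Glasner–Shapiro. Consider the pointed system $(X/\RP^{[d]},[x_0])$ with group $A=\mathfrak G(X/\RP^{[d]})$. Since the quotient is of order $d-1$, hence "$\uptau^{[d]}$-nilpotent", one has $H_{\uptau^{[d]}}(G)\subseteq A$. One then shows $A\cdot\mathfrak G(X)$-cosets are controlled, i.e.\ $\cltauF{d}{\mathfrak G(X)}\subseteq H_{\uptau^{[d]}}(G)\mathfrak G(X)$, using that $H_{\uptau^{[d]}}$ is the smallest $\uptau^{[d]}$-closed normal subgroup with $\uptau^{[d]}$-Hausdorff quotient. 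Combining these gives $r=hg$ with $h\in H_{\uptau^{[d]}}(G)$ and $gx_0=x_0$, and hence $y=vhx$.
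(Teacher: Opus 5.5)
Your outline has the right two-inclusion shape, but the decisive step is never supplied. The core of the theorem is this: if $g\in G$ and $(g,u)\in\RP^{[d]}(M)$, then $g\in H_{\uptau^{[d]}}(G)$. You only assert that $H_{\uptau^{[d]}}(G)$ ``should'' consist of such elements because $\uptau^{[d]}$-closures are realized by face nets. That does not suffice. Membership in $\cltauF{d}{A}$ is witnessed by nets $f_i\to u^{[d]}_*$ in $\beta\cF^{[d]}_*$. The cube data, however, only give $f_i\in\cF^{[d]}$ and $g_i\in G$ with $(f_ig_i^{[d]},f_iu^{[d]})\to(g,u^{[d]}_*,u,u^{[d]}_*)$, i.e.\ $f_ig^{[d]}_{i*}\to u^{[d]}_*$ and $f_iu^{[d]}_*\to u^{[d]}_*$, with no control on $f_i$ itself. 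The paper bridges exactly this gap by identifying $\uptau^{[d]}$ with the Furstenberg-type $\mathscr F^{[d]}$-topology defined by the functions $F_{\sigma,d}$. One gets $g_i\taulimF{d}g$ because $g_i\to g$ in $M$. In addition, $\cF^{[d]}_*$-invariance and upper semicontinuity of $F^{[d]}_\sigma$ give $\limsup F_{\sigma,d}(g_i,u)\le F^{[d]}_\sigma(u^{[d]}_*,u^{[d]}_*)=0$, so also $g_i\taulimF{d}u$, whence $g\in H_{\uptau^{[d]}}(G)$. Nothing in your proposal plays this role. Your indices are also off by one. We have $(x,y)\in\RP^{[d]}$ iff $(x,y^{[d+1]}_*)\in\overline{\cF^{[d+1]}x^{[d+1]}}$, so your condition $(u,h,\dots,h)\in\overline{\cF^{[d]}u^{[d]}}$ describes $\RP^{[d-1]}$, and $X/\RP^{[d]}(X)$ is of order $d$, not $d-1$. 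The link with $\uptau^{[d]}$ goes through a $(d+1)$-configuration of two $d$-cubes moved by the same face transformation (Lemma~\ref{lemma: char_Hd}).

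The obstacle you single out, controlling the stabilizer $\GG(X,x_0)$, disappears if you first treat $X=M$, where $\GG(M,u)=\{u\}$. You then use the lifting property $\pi\times\pi(\RP^{[d]}(M))=\RP^{[d]}(X)$, valid for abelian $T$ since $\RP^{[d]}=\NRP^{[d]}$. The only extra ingredient is normality of $H_{\uptau^{[d]}}(G)=\{g\in G:(u,g)\in\RP^{[d]}(M)\}$, which follows from the $M$-case because right multiplication by elements of $G$ is an automorphism of $(M,T)$. Your proposed resolution does not work. $\GG(X)$ is $\uptau$-closed, hence $\uptau^{[d]}$-closed as $\uptau\subseteq\uptau^{[d]}$, so $\cltauF{d}{\GG(X)}\subseteq H_{\uptau^{[d]}}(G)\GG(X)$ is trivial and carries no information. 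What you would actually need is $\GG(X/\RP^{[d]}(X))\subseteq H_{\uptau^{[d]}}(G)\GG(X)$, which is essentially the theorem itself (Corollary~\ref{cor: Ellis_group_factor_order_d}). Moreover, neither a priori normality of $H_{\uptau^{[d]}}(G)$ nor its minimality among $\uptau^{[d]}$-closed normal subgroups with Hausdorff quotient is available. Beforehand one only knows that $H_{\uptau^{[d]}}(F)$ is a $\uptau^{[d]}$-closed subgroup. Since right multiplication and inversion are not $\uptau^{[d]}$-continuous in general, there is no analogue of the compact Hausdorff group $G/H(G)$. Finally, in the $\supseteq$ direction, applying $p$ to $(u,h)$ and projecting yields $(x,phx_0)$, not $(x,hx)=(x,hpx_0)$, since $\beta T$ is not commutative even for abelian $T$. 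Instead, apply $r\mapsto rx$, with $wx=x$, to $w(u,h)=(w,wh)$. Then absorb $v$ through the proximal pair $(whx,vhx)$ and the fact that $\RP^{[d]}$ is an equivalence relation.
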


This theorem shows that the relation $\RP^{[d]}$ can be described in terms of the $\uptau^{[d]}$-topology. Consequently, the structure of systems of order $d$ may also be studied through this topology.

We expect that this algebraic characterization holds for systems satisfying the Bronstein condition, that is, the set of minimal points is dense in the product space. However, we are currently unable to prove this result due to technical difficulties. More specific comments on these issues are provided at the end of \cref{subsec: algebraic_RPd}.

We also conjecture the following characterization.

\begin{conjecture*}
    Let $(X,T)$ be a minimal system and let $d\geq 1$ be an integer. Then,\begin{align*}
        \mathcal{A}(\RP^{[d]}(X)) = \{(x,vhgx):x\in X, h\in H_{\uptau^{[d]}}(G), g\in D,v\in J(M)\},
    \end{align*}

    where $\mathcal{A}(\RP^{[d]}(X))$ denotes the smallest closed invariant equivalence relation containing $\RP^{[d]}(X)$.
\end{conjecture*}

It is known that this conjecture is true for $d=1$. This follows from the results in \cite{Auslander_Glasner_distal_order:2002,Veech_topological_dynamics:1977}. We show that this conjecture holds whenever it holds for distal systems. We also obtain some consequences, assuming that the conjecture is true. In particular, we give a necessary and sufficient condition for $\RP^{[d]}$ to be an equivalence relation.

We believe this new topology might be useful in studying other problems related to the $\RP^{[d]}$ relation and systems of order $d$. 

\subsection{Characterization through recurrence properties}
In \cite{Huang_Shao_Ye_nilbohr_automorphy:2016}, the authors show several characterizations of $\RP^{[d]}$ for $\Z$-actions using recurrence sets. We extend their arguments to general group actions under certain conditions. In particular, we obtain a characterization of $\NRP^{[d]}$ for finitely generated group actions, as well as necessary conditions for a pair to belong to $\RP^{[d]}$ in the case of finitely generated abelian group actions (\cref{thm: RPd_recurrence_sets}). To obtain these results, we also study multiple correlation sequences for actions of finitely generated abelian groups. The study of multiple correlations has been a central topic in ergodic theory since Furstenberg’s proof of Szemerédi’s theorem. In \cite{Bergelson_Host_Kra05}, linear correlation sequences were shown to admit a decomposition into a nilsequence and a null sequence (see \cref{subsec: correlation} for precise definitions). Since then, decompositions of multiple correlation sequences have been extensively studied in recent years (see for instance \cite{Leibman15, Leibman10,Leng_multi_correlation:2025,Donoso_Ferre_Koutsogiannis_Sun_multicorr_joint_erg:2024,Frantzikinakis15b,Le_nilsequences_multiple_correlations:2020}). In particular, we derive the following decomposition for a certain class of multiple correlation sequences.

\begin{theorem*}
    Let $(X,\cX,\mu,T)$ be an ergodic measure preserving system, with $T$ being a finitely generated abelian group, let $f\in L^{\infty}(\mu)$ and let $d\geq 1$ be an integer. The sequence $c_{f}(t)$ is the sum of a null-sequence and a $d$-step nilsequence, where

    \begin{align*}
    c_{f}(t) = \int f(x) \cdot f(tx)\cdot \ldots \cdot f(t^{d}x) \, d\mu(x).
    \end{align*}
\end{theorem*}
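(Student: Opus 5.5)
I will follow the strategy of Bergelson--Host--Kra, adapted to a finitely generated abelian group $T\cong \Z^k\times F$. Here $F$ is finite and sequences are indexed by $t\in T$. I will use the following notions: a \emph{null sequence} is a bounded $a\colon T\to\mathbb C$ with $\lim_N \E_{t\in\Phi_N}|a(t)|^2=0$ for every Følner sequence $(\Phi_N)$; a \emph{$d$-step nilsequence} is a uniform limit of basic $d$-step nilsequences $t\mapsto F(\rho(t)\Gamma)$, where $\rho$ is a homomorphism of $T$ into a $d$-step nilpotent Lie group. The plan has three steps.

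\textbf{Step 1: reduce to the Host--Kra factor.} I will use the Host--Kra factor $Z_{d}$ (of order $d$) for the ergodic $T$-action. Write $f=\E(f\mid Z_{d})+(f-\E(f\mid Z_{d}))$ and expand $c_f$ multilinearly. Every term containing some function $g$ with $\E(g\mid Z_{d})=0$ should be a null sequence. To see this, bound $\E_{t\in\Phi_N}|c(t)|^2$ by the van der Corput lemma for amenable groups, iterated $d$ times, as in the proof that $Z_{d}$ (indeed $Z_{d-1}$ for the averages themselves) is characteristic. After squaring the correlation and using ergodicity, the bound becomes $\nnorm{g}_{d+1}$, the Gowers--Host--Kra seminorm of order $d+1$ for the $T$-action. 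This seminorm vanishes exactly when $\E(g\mid Z_{d})=0$. This step only needs the standard seminorm theory for countable abelian groups and the structure $t,t^2,\dots,t^d$, which is linear in $t$. So it is routine.

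\textbf{Step 2: structure of $Z_{d}$.} For finitely generated abelian groups, $Z_{d}$ is an inverse limit of $d$-step nilsystems, possibly after passing to extensions by finite-index subgroups to handle torsion and connectedness. This is the Host--Kra structure theorem in its $\Z^k$ version (Griesmer; Gutman--Lian), together with a standard argument for the torsion part $F$. Taking $\tilde f=\E(f\mid Z_{d})$ and approximating it in $L^2$ by functions measurable with respect to a finite nilsystem factor changes $c_{\tilde f}$ by a uniformly small amount. Here I use the estimate $|c_{f_1}(t)-c_{f_2}(t)|\le d\,\|f\|_\infty^{d}\|f_1-f_2\|_{L^1}$, valid when $\|f_1\|_\infty,\|f_2\|_\infty\le\|f\|_\infty$.

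\textbf{Step 3: nilsystem case, the main obstacle.} Let $X=G/\Gamma$ be a nilmanifold with Haar measure $m$, and let $T$ act through a homomorphism $t\mapsto \tau_t\in G$. For continuous $\phi$, the integral
\[
\int \phi(x)\phi(\tau_t x)\cdots\phi(\tau_t^{d}x)\,dm(x)
\]
should equal a $d$-step nilsequence evaluated at $t$. I will argue as Bergelson--Host--Kra do via the Leibman orbit closure. Fix $x$. The sequence $t\mapsto(\tau_t x,\tau_t^2x,\dots,\tau_t^{d}x)$ is the orbit of a homomorphism into $G^{d}$; it is linear in $t$ because $T$ is abelian and $j\mapsto \tau_t^j$ is a homomorphism. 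Integrating over $x$, I will realize the whole correlation as the orbit of a single point in a nilmanifold built from the Host--Kra cube group $G^{[d]}$ (the diagonal of $G$ together with the face groups). Continuity of the resulting function on that nilmanifold is the delicate point: for a merely continuous $\phi$ it follows from uniform distribution of the fibers, which is Leibman's theorem for polynomial actions of $\Z^k$ in nilmanifolds. Finally, a bounded measurable $\phi$ is approximated in $L^2$ by continuous functions, and uniform limits of nilsequences are nilsequences. The obstacle I expect is exactly this step: verifying that the integrated correlation is a genuine nilsequence with the group $T$ (including its torsion) acting. That requires Leibman's equidistribution results in the multiparameter setting. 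I would first handle $\Z^k$, then deal with $F$ by restricting to a finite-index subgroup $\Z^k$ and its cosets.
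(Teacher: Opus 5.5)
Your Steps 1 and 2 are the same Bergelson--Host--Kra reduction that the paper uses. The paper removes $f-\E(f\mid\cZ_{d})$ via \cref{cor: correlation_zero_unif_hostkra_zero}, which squares the correlation and uses the ergodic decomposition of $\mu\times\mu$. It then invokes \cref{thm: str_host_kra_factors}, which already covers torsion, so no separate argument for $F$ is needed. (Your Lipschitz constant should be $d+1$, not $d$.)

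The genuine gap is Step 3. You claim that for continuous $\phi$ on an ergodic $d$-step nilsystem the correlation is \emph{exactly} a $d$-step nilsequence. This is false, so no auxiliary nilmanifold can make your fiber-integral function continuous. Here is a counterexample:
\begin{itemize}
\item Take $T=\Z$, $d=2$, and $X=\mathbb{T}^{2}$ with $(y,z)\mapsto(y+\beta,z+y)$, $\beta$ irrational. This is an ergodic $2$-step nilsystem. Let $\phi(y,z)=1+2\cos(2\pi z)$.
\item Write $e(s)=e^{2\pi i s}$ and expand in characters. Then $c(n)$ is the sum of $e\bigl(m_{1}\binom{n}{2}\beta+m_{2}\binom{2n}{2}\beta\bigr)$ over all $m_{0},m_{1},m_{2}\in\{-1,0,1\}$ with $m_{0}+m_{1}+m_{2}=0$ and $n(m_{1}+2m_{2})=0$.
\item Hence $c(n)=1$ for every $n\neq 0$, while $c(0)=\int\phi^{3}=7$.
\item If $c$ were a nilsequence, so would be $c-1$, which vanishes off $\{0\}$.
\item But a null nilsequence is identically zero. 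For a nonzero nilsequence $\psi$ and small $\eta>0$, the set $\{t:|\psi(t)|>\eta\}$ contains, after approximating $\psi$ by a basic nilsequence, the return times of a point to an open subset of its minimal orbit closure. That set is syndetic, so $\psi$ is not null.
\end{itemize}

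The underlying reason is that $c(t)=\int_{g(t)\Delta}\phi^{\otimes(d+1)}$, where $\Delta$ is the diagonal of $X^{d+1}$ and $g(t)=(1_{G},\tau_{t},\dots,\tau_{t}^{d})$. These translates of $\Delta$ are exceptional along a zero-density set of $t$ (here $t=e$), and this forces a null term. That is exactly why the paper applies Leibman's theorem (\cref{thm: nil_connnected_descomp_nil_null}) to $X'^{d+1}$ and the diagonal subnilmanifold. That theorem only yields a basic nilsequence plus a null sequence.

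This also breaks your final step. After approximating (by a nilsystem factor, then by continuous functions) you only obtain $c_{f}=a_{r}+b_{r}+c_{r}$, with $\|a_{r}\|_{\infty}\le 1/r$, $b_{r}$ null and $c_{r}$ a nilsequence. So ``uniform limits of nilsequences are nilsequences'' cannot be applied to $c_{f}$.

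The missing ingredient is the rigidity fact the paper takes from \cite[Lemma 1.11]{Bergelson_Host_Kra05}: a nilsequence lying uniformly within $\varepsilon$ of a null sequence has sup norm at most $\varepsilon$. The proof is the same syndeticity argument. Applied to $c_{r}-c_{s}=(a_{s}-a_{r})+(b_{s}-b_{r})$, it shows that $(c_{r})_{r}$ is uniformly Cauchy. Its limit $c$ is a nilsequence, and $c_{f}-c$ is null.

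If you replace Step 3 by Leibman's decomposition and add this lemma, your argument becomes the paper's proof.
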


\subsection{Topological characteristic factors and applications}
In the study of multiple ergodic averages, characteristic factors play a crucial role. The notion of characteristic factors was first introduced in a paper by Furstenberg and Weiss \cite{Furstenberg_Weiss_ergodic_thm_double:1996}, and its relevance was solidified in the groundbreaking work of Host and Kra \cite{Host_Kra_nonconventional_averages_nilmanifolds:2005}. A counterpart of the notion of characteristic factors in a topological dynamical system was first studied by Glasner \cite{Glasner_topological_ergodic_descomposition:1994}. Later, Cai and Shao defined the notion of characteristic factors along cubes \cite{Cai_Shao_Topological_characteristic_cubes:2019}.

In \cref{subsec: charactersitic_cubic}, we show that the system of order $d-1$ is the topological characteristic factor along cubes of order $d$, modulo almost one-to-one factors, for any group action, thereby extending and providing a new proof of the results in \cite{Alvarez_Donoso_cube_struct_univ_nil_applications:2025,Cai_Shao_Topological_characteristic_cubes:2019}. Specifically, we prove the following theorem.

\begin{theorem*}
    Let $(X, T)$ be a minimal metric system and  $d\geq 2$ be an integer. Let $\pi:(X, T)\to (X_{d-1}, T)$ be the factor to the maximal factor of order $d-1$. Then, there is a commutative diagram of factors of minimal systems

    \[\begin{tikzcd}
	X && {X'} \\
	\\
	{X_{d-1}} && {X_{d-1}'}
	\arrow["\pi"', from=1-1, to=3-1]
	\arrow["{\theta'}"', from=1-3, to=1-1]
	\arrow["{\pi'}", from=1-3, to=3-3]
	\arrow["\theta", from=3-3, to=3-1]
    \end{tikzcd}\]

    such that $(X_{d-1}',T)$ is the topological characteristic factor along cubes of order $d$ of $(X',T)$, where $\theta,\theta'$ are almost one-to-one factors.
\end{theorem*}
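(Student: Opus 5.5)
The plan follows the Glasner / Cai–Shao strategy: reduce to the case where $\pi$ is open, then show that the cube structure of $X$ is governed by that of $X_{d-1}$.

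\textbf{Notation and target property.} Write $\mathbf{Q}^{[d]}(X)$ for the dynamical cube space of $X$. Write $\mathcal{F}^{[d]}$ for the face group and $\mathcal{G}^{[d]}$ for the group generated by the diagonal and face transformations. Being a topological characteristic factor along cubes of order $d$ means the following. There is a dense $G_\delta$ set $\Omega\subseteq X'$ such that for every $x\in\Omega$, the orbit closure $\overline{\mathcal{F}^{[d]}(x^{[d]})}$ contains
\[
\{x\}\times \prod_{\epsilon\neq \emptyset}\pi'^{-1}(y_\epsilon),
\]
whenever $(\pi'(x),(y_\epsilon)_{\epsilon\neq\emptyset})$ lies in the face orbit closure of $\pi'(x)^{[d]}$ in $X'_{d-1}$.

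\textbf{Step 1: O-diagram.} Apply the Veech–Glasner O-diagram construction to $\pi:X\to X_{d-1}$. This produces almost one-to-one extensions $\theta':X'\to X$ and $\theta:X_{d-1}'\to X_{d-1}$, together with an open factor map $\pi':X'\to X'_{d-1}$ making the diagram commute. One could instead use the universal system $M$ and quasifactors, as the paper's algebraic framework suggests. Two facts must be checked:
\begin{itemize}
\item $X'_{d-1}$ is still the maximal factor of order $d-1$ of $X'$, or at least $\RP^{[d-1]}(X')\subseteq R_{\pi'}$. Almost one-to-one extensions of order-$(d-1)$ systems need not be of order $d-1$, so the statement should only need $R_{\pi'}\supseteq \RP^{[d-1]}(X')$ together with the lifting property below.
\item $\RP^{[d-1]}$ is preserved under the maps involved. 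This uses the lifting of $\RP^{[d-1]}$ through factor maps, which holds for minimal systems under the equivalence relation results cited earlier.
\end{itemize}

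\textbf{Step 2: relating fibers to $\RP^{[d-1]}$.} Openness of $\pi'$ gives, via a Kuratowski–Ulam / semicontinuity argument, a dense $G_\delta$ set $\Omega$ of continuity points of the map
\[
x\mapsto \overline{\mathcal{F}^{[d]}(x^{[d]})}
\]
into the hyperspace $2^{X'^{2^d}}$. The key input is that if $(x,x')\in \RP^{[d-1]}(X')$, then
\[
(x^{[d-1]},x'^{[d-1]})\in \mathbf{Q}^{[d]}(X').
\]
This is the standard cube characterization of $\RP^{[d-1]}$, stated for general groups by Glasner–Gutman–Ye. Combining it with minimality of $\overline{\mathcal{F}^{[d]}(x^{[d]})}$ for $x\in\Omega$, and iterating over the $2^d-1$ non-empty coordinates, one moves each coordinate independently within its $\pi'$-fiber. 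Each fiber of $\pi'$ is a single $\RP^{[d-1]}$-class (up to the lifting above). The face orbit closure in $X'$ then projects onto that of $X'_{d-1}$ and is saturated coordinatewise over it, which is exactly the desired product inclusion.

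\textbf{Main obstacle.} The hard part is the independent saturation: varying one coordinate must not disturb the others. The plan is to use continuity at points of $\Omega$ and an inductive argument over the faces of the cube. Adjusting along an upper face of codimension $k$, using the $\RP^{[d-1]}$ cube property on that face, keeps the complementary lower face fixed. This is where openness of $\pi'$, hence the passage to $X'$, is genuinely needed, since it lets fibers vary continuously.
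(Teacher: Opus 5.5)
Your Step 1 is the paper's Step 1 (the AG diagram, \cref{thm: AG_diagram}), but two things in it are wrong. First, the inclusion you need is $R_{\pi'}\subseteq \NRP^{[d-1]}(X')$, not $\RP^{[d-1]}(X')\subseteq R_{\pi'}$. The latter is satisfied by the trivial factor, for which saturation fails. The correct inclusion holds automatically: $R_{\pi'}\subseteq R_{\theta\pi'}=R_{\pi\theta'}$; the lifting property gives $\theta'\times\theta'(\NRP^{[d-1]}(X'))=\NRP^{[d-1]}(X)=R_{\pi}$; and $R_{\theta'}\subseteq\NRP^{[d-1]}(X')$ because $\theta'$ is proximal. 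Fibers of $\pi'$ are in general strictly smaller than $\NRP^{[d-1]}$-classes, so ``each fiber is a single class'' is false, but only containment is needed. Second, for arbitrary $T$ you must work with $\NRP^{[d-1]}$ throughout. The maximal factor of order $d-1$ is $X/\NRP^{[d-1]}(X)$, and $\NRP^{[d-1]}(X)$ can strictly contain $\RP^{[d-1]}(X)$. Moreover, neither transitivity nor the lifting property of $\RP^{[d-1]}$ is known beyond abelian actions.

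The genuine gap is Step 2. Your ``key input'' $(x^{[d-1]},x'^{[d-1]})\in\bQ^{[d]}(X')$ holds for every pair $x,x'$ in a minimal system: apply $t^{(\alpha)}\in\cF^{[d]}$, with $\alpha=\{\epsilon:\epsilon_d=1\}$ and $tx\to x'$. So it carries no information, and the ``independent saturation'' you flag is left without a mechanism. The missing idea has two parts.

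(i) The right input is that $(x,y)\in\NRP^{[d-1]}(X')$ gives $(x,y^{[d]}_{*})\in\overline{\cF^{[d]}x^{[d]}}$. By minimality this yields $\overline{\cF^{[d]}_{*}x^{[d]}_{*}}=\overline{\cF^{[d]}_{*}y^{[d]}_{*}}$. So the base vertex can be swapped within its class while the other $2^{d}-1$ coordinates stay fixed.

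(ii) Prove saturation of the closed set $\bQ^{[d]}(X')$, not of individual orbit closures at generic points.
\begin{itemize}
\item If $\bx\in\bQ^{[d]}(X')$ is $\cF^{[d]}$-minimal, then $\bx\in\overline{\cF^{[d]}\bx_{\emptyset}^{[d]}}$, since this is the unique $\cF^{[d]}$-minimal set of cubes based at $\bx_{\emptyset}$. Then (i) gives $(y,\bx_{*})\in\overline{\cF^{[d]}y^{[d]}}\subseteq\bQ^{[d]}(X')$ for every $y\in\pi'^{-1}(\pi'(\bx_{\emptyset}))$.
\item Density of $\cF^{[d]}$-minimal points, together with openness of $\pi'$ (fibers vary continuously, so $y$ is a limit of points in nearby fibers), extends this to every $\bx\in\bQ^{[d]}(X')$.
\item Euclidean permutation invariance of $\bQ^{[d]}$ moves any vertex to $\emptyset$.
\end{itemize}
Since one-vertex replacement now holds at every point of $\bQ^{[d]}(X')$, iterating over vertices is immediate, and nothing gets ``disturbed''.

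Finally, $\overline{\cF^{[d]}x^{[d]}}=\bQ^{[d]}_{x}(X')$ on a dense $G_{\delta}$ (\cref{thm: cube_is_minimal}(5)). This converts $\pi'^{[d]}$-saturation of $\bQ^{[d]}(X')$ into the required saturation of $\overline{\cF^{[d]}_{*}x^{[d]}_{*}}$. That residual set comes from semicontinuity, not from openness of $\pi'$; openness is used only in the approximation step above.
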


In \cite{Glasner_Huang_Shao_Weiss_Ye_Topological_characteristic_factors:2020}, the authors significantly advanced Glasner’s earlier work on topological characteristic factors by showing that, for $\Z$-actions, for any $d\geq 2$, the maximal factor of order $\infty$ of a minimal system is a topological characteristic factor along arithmetic progressions of order $d$, modulo almost one-to-one factors. This result was later refined in \cite{Ye_Yu_polynomial_saturation:2025}, where it was shown that, for any $d\geq 2$, the factor of order $d-1$ is a topological characteristic factor of order $d$, for any $d\geq 2$, again modulo an almost one-to-one factor. The breakthrough result of \cite{Glasner_Huang_Shao_Weiss_Ye_Topological_characteristic_factors:2020} has inspired extensive research on topological characteristic factors and their applications in recent years (see for instance \cite{Alvarez_Donoso_cube_struct_univ_nil_applications:2025,Ye_Yu_polynomial_saturation:2025,Huang_Shao_Ye_top_induced_poly_comb:2023,Glasscock_sim_approx_nil_thick:2024,Glasscock_Koutsogiannis_Le_Moreira_Richter_Robertson_structure_polynomial_return:2025,Qiu_poly_orbits_tot_minimal:2023,Qiu_Yu_saturated_cubes_measure:2023,Shao_Xu_saturation_R_flows:2025,Wu_Xu_Ye_Structure_saturated:2023,Qiu_Xu_Ye_Yu_saturation_product:2025})

Using the connection between $\RP^{[d]}$ and recurrence sets, we extend the main results of \cite{Glasner_Huang_Shao_Weiss_Ye_Topological_characteristic_factors:2020,Ye_Yu_polynomial_saturation:2025} from $\Z$-actions to finitely generated abelian group actions. Our approach is based on a key idea from \cite{Glasner_Huang_Shao_Weiss_Ye_Topological_characteristic_factors:2020}, but unlike that work, our proof does not use the saturation theorem (\cite[Theorem 3.2]{Glasner_Huang_Shao_Weiss_Ye_Topological_characteristic_factors:2020}). Specifically, we prove the following theorem.

\begin{theorem*}
    Let $(X, T)$ be a minimal metric system, with $T$ being a finitely generated abelian group, and $d\geq 2$ be an integer. Let $\pi:(X, T)\to (X_{d-1}, T)$ be the factor to the maximal factor of order $d-1$. Then, there is a commutative diagram of factors of minimal systems

    \[\begin{tikzcd}
	X && {X'} \\
	\\
	{X_{d-1}} && {X_{d-1}'}
	\arrow["\pi"', from=1-1, to=3-1]
	\arrow["{\theta'}"', from=1-3, to=1-1]
	\arrow["{\pi'}", from=1-3, to=3-3]
	\arrow["\theta", from=3-3, to=3-1]
    \end{tikzcd}\]

    such that $(X_{d-1}',T)$ is the topological characteristic factor of order $d$ of $(X',T)$, where $\theta,\theta'$ are almost one-to-one factors.
\end{theorem*}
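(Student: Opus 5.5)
We follow the strategy of Glasner, Huang, Shao, Weiss and Ye, but we replace the saturation theorem by the recurrence-set description of $\RP^{[d-1]}$ for finitely generated abelian actions (\cref{thm: RPd_recurrence_sets}) together with the nilsequence decomposition of the correlations $c_f$.

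Recall what has to be shown for $(X',T)$. Fix a generating set $t_1,\dots,t_k$ of $T$; in fact we work with a single element $t\in T$ and arithmetic progressions $x, tx,\dots,t^dx$. Topological characteristic factor of order $d$ means the following. Define $\tau_d=t\times t^2\times\cdots\times t^d$, and let $L$ be the orbit closure of $(x,\dots,x)$ under $\langle \tau_d\rangle\times\Delta(T)$. We need a dense $G_\delta$ set $\Omega\subseteq X'$ such that for every $x\in\Omega$ this orbit closure is $\pi'$-saturated, i.e. it equals $(\pi'^{\times d})^{-1}$ of its image.

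\textbf{Step 1: the O–D construction.} Build $X'$ and $X'_{d-1}$ by the standard O–D diagram. Let $X'_{d-1}$ be the almost one-to-one extension of $X_{d-1}$ obtained by realizing $X_{d-1}$ as a quotient of $2^{X}$ via $y\mapsto \pi^{-1}(y)$: take the orbit closure of the points of continuity of $\pi^{-1}$. Let $X'$ be the induced minimal subsystem of $X\times X'_{d-1}$. This makes $\pi'$ open, and $\theta,\theta'$ are almost one-to-one. This part is formal and holds for any group action. Since $\pi'$ is open, it suffices to show the following. For $x$ in a residual set and nonempty open sets $U_1,\dots,U_d\subseteq X'$ whose images meet the fiber structure appropriately, and for any points $x_i\in U_i$ with $\pi'(x_i)=t^{n_i}$-compatible, we can find $n$ and $s\in T$ with $s t^{in}x\in U_i$ for all $i$.

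\textbf{Step 2: reduction to a recurrence statement.} Fibers of $\pi'$ are contained in $\RP^{[d-1]}$-classes, via \cref{thm: RPd_recurrence_sets} and the fact that $X'_{d-1}$ is order $d-1$ modulo the almost one-to-one extension. So it suffices to show the following. If $(y_1,y_2)\in\RP^{[d-1]}(X')$ and $V$ is a neighborhood of $y_2$, then the return times $\{n: t^{n}y_1 \in V\}$ intersect every set of the form $\{n: \text{the nil-Bohr conditions hold}\}$ of order $d-1$. This is exactly the content of \cref{thm: RPd_recurrence_sets}: for such pairs, $N(y_1,V)$ is a set of nil-Bohr$_{d-1}$-recurrence in the appropriate sense.

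We then argue by induction on $d$. For $d=2$, the claim is the classical Glasner statement in which the maximal equicontinuous factor is characteristic; this holds for abelian actions. For the inductive step, assume the $(d-1)$ case. Given the open sets $U_i$, first use the induction hypothesis to place the first $d-1$ coordinates. Then use the nilsequence decomposition of $c_f(t)$, with $f$ a bump function near the target and $\mu$ an ergodic measure on the minimal system, to show that the set of valid $n$ has positive upper density and is, up to a null sequence, a nil-Bohr$_{d-1}$ set. Intersecting it with the return-time set from \cref{thm: RPd_recurrence_sets} gives a valid $n$ for the last coordinate.

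\textbf{Main obstacle.} The main obstacle is the inductive step: the good set of $n$ is controlled by a nilsequence only up to a null part, and it has to be intersected with return-time sets that are only known to be recurrence sets for nil-Bohr sets. Our first attempt is to replace positive density by a ``Nil$_{d-1}$ Bohr$_0$-set up to a null sequence''. We would then check that the characterization in \cref{thm: RPd_recurrence_sets} is robust under removing a set of zero Banach density. This requires using Følner sequences in the $\langle t\rangle$-direction only, combined with the $\Delta(T)$-averaging.
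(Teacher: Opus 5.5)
Your plan has a genuine gap at its core. You try to reach $X_{d-1}$ directly from the recurrence description of $\RP^{[d-1]}$, and that information is of too low order. For a pair in $\RP^{[d-1]}$, \cref{thm: RPd_recurrence_sets} only tells you that $N(x,V)\subseteq T$ meets every Nil$_{d-1}$ Bohr$_0$ set and is a set of $(d-1)$-recurrence. The configurations you must control are $x,tx,\dots,t^dx$ with $x$ pinned; this is what $\overline{\tau_dx^{(d)}}$ encodes, and it is why the paper works with $N_{d+1}$. That is $d+1$ terms, whose correlations are $d$-step nilsequences by \cref{thm: correlation_nilpotent_actions}. So even granting your ``good set $=$ nil-Bohr set up to a null set'', it would be a Nil$_d$ set, and the intersection you need is not supplied.

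In the paper, the one place a return-time set is intersected with dynamics is \cref{prop: Nd_saturated_infty_nil} (after Glasner--Huang--Shao--Weiss--Ye). There one moves a single coordinate $x_j$ of an $S_j$-minimal point $\bx\in N_d(X)$ to a point $y$ of its fiber while freezing the others. This combines multiple recurrence of $S_j=\{(t^{j-1},\dots,t,e,t^{-1},\dots,t^{-(d-j)}):t\in T\}$ on $\overline{S_j\bx}$ with the fact that $N(x_j,V)$, for the finite-index subgroup $T^{j(d-1)!}$, is a set of $d'$-topological recurrence with $d'=2d!(d-1)!$. That needs $(x_j,y)\in\RP^{[d']}$, which is why this step only gives saturation over $X_\infty$. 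The descent from $X_\infty$ to $X_{d-1}$ is a separate argument for distal systems (\cref{prop: Nd_saturated_d_nil}, following Ye--Yu, applied to $X_\infty$). \Cref{thm: topological_dstep_char_factor} then assembles everything via residual sets, the AG diagram and openness of $\pi'$, and the Wu--Xu--Ye lemma converts saturation of $N_{d+1}(X')$ into the characteristic-factor property. Your proposal has no substitute for this descent.

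Several other steps fail as written.
\begin{itemize}
\item Your reformulation of the goal is wrong. $\tau_d=\{(t,t^2,\dots,t^d):t\in T\}$ ranges over all of $T$, whereas you fix one $t$ and use $\{n:t^ny_1\in V\}$, which \cref{thm: RPd_recurrence_sets} (a statement about subsets of $T$) does not control. For fixed $t$ the claim is false: let $\Z^2$ act on a nontrivial weakly mixing minimal $\Z$-system through the first coordinate and take $t=(0,1)$. Then $X_{d-1}$ and $X'_{d-1}$ are points and $t$ acts trivially on $X'$, so your orbit closure is $\Delta^{(d)}(X')\neq X'^d$.
\item Adding $\Delta(T)$ replaces $\overline{\tau_dx^{(d)}}$ by $N_d(X)$, which is independent of $x$ and involves only $d$-term progressions.
\item The induction does not control the joint configuration. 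The $(d-1)$ hypothesis lives on a different almost one-to-one modification (the one for $\pi_{d-2}$) and only governs the first $d-1$ coordinates; placing all coordinates simultaneously is the whole difficulty.
\item The correlation argument does not transfer to distinct targets. If the bump functions have disjoint supports, $\int f_0\cdot f_1\circ t\cdots f_d\circ t^d\,d\mu$ vanishes at $t=e$, so the good set is not a Nil Bohr$_0$ set modulo a null set. \Cref{thm: almost_nil_bohr} uses a single set $A$ precisely to get $\phi(x)>0$ from IP recurrence. Moreover, positivity of a correlation only yields the configuration at some point, not along the orbit of a prescribed generic $x$.
\end{itemize}
By contrast, the obstacle you single out, discarding a set of zero density, is harmless: $N(x,U)$ meets nil-Bohr sets syndetically, exactly as used in the proof of \cref{thm: RPd_recurrence_sets}.
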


In \cite{Glasner_Huang_Shao_Ye_regionally_arithmetic_prog_nil:2020}, the relation $\AP^{[d]}$ is introduced for $\Z$-actions. We define the natural generalization of this relation for arbitrary group actions. As a consequence of the result of topological characteristic factors, we obtain a characterization of the minimal points of $\RP^{[d]}$ and $\AP^{[d]}$, which is new even for $\Z$-actions.

 \begin{theorem*}
     Let $(X,T)$ be a minimal system, with $T$ being a finitely generated abelian group, and let $d\geq 1$ be an integer. If $(x,y)\in X\times X$ is a minimal point, then the following statements are equivalent:\begin{enumerate}[label=(\arabic*)]
         \item $(x,y)\in \RP^{[d]}(X)$.
         \item $\{x,y\}^{d+2} \subseteq N_{d+2}(X)$.
         \item $(x,y)\in \AP^{[d]}(X)$.
     \end{enumerate}
 \end{theorem*}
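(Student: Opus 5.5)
We will prove the cycle of implications $(1)\Rightarrow(2)\Rightarrow(3)\Rightarrow(1)$. Here $N_{d+2}(X)$ denotes the closure of the orbit of the diagonal under the arithmetic progression action $\langle t\times t^{2}\times\cdots\times t^{d+2},\ t^{[d+2]}: t\in T\rangle$, i.e.\ the closure of $\{(tx,t^2x,\dots,t^{d+2}x)\}$ translated by the diagonal. The condition $\{x,y\}^{d+2}\subseteq N_{d+2}(X)$ means that every word in $x,y$ of length $d+2$ is a limit of arithmetic progressions.

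The most substantial step is $(1)\Rightarrow(2)$, and it will rest on the topological characteristic factor theorem for arithmetic progressions stated above.
\begin{enumerate}[label=(\roman*)]
\item Take the diagram $X\leftarrow X'\to X'_{d-1}\to X_{d-1}$ with $\theta,\theta'$ almost one-to-one. Since $(x,y)$ is a minimal point in $\RP^{[d]}(X)$, we lift it to a minimal point $(x',y')\in X'\times X'$.
\item Because $\theta$ is almost one-to-one and the relation is preserved when lifting minimal points, we arrange that $\pi'(x')$ and $\pi'(y')$ are $\RP^{[d]}$-related. Using density of points with singleton fibres, we aim for $\pi'(x')=\pi'(y')$ up to approximation. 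Concretely, $(x,y)\in\RP^{[d]}$ gives $\pi(x)$ and $\pi(y)$ with the same projection to $X_{d-1}$ only after passing to $\RP^{[d-1]}$, so we must handle this carefully (see the last paragraph).
\item The characteristic factor property says the following. For points $x'_0$ in a dense $G_\delta$ set, and for open sets $U_i$ meeting $\pi'^{-1}(\pi'(x'_0))$, there exists $t$ with $t^ix'_0\in U_i$ for all $i$. Apply this with $U_i$ chosen as neighbourhoods of $x'$ or $y'$ according to the word.
\item Pass to the closure and push forward by $\theta'$ to get every word in $N_{d+2}(X)$.
\end{enumerate}
The minimality of $(x,y)$ is exactly what allows us to replace the generic point $x'_0$ by a point near $x'$ via a syndetic return argument.

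The implication $(2)\Rightarrow(3)$ should be a direct unwinding of the definition of $\AP^{[d]}$ generalized in the paper. Membership in $\AP^{[d]}$ asks for points $x_k\to x$, $y_k\to y$ and $t_k$ with $t_k^{i}x_k\to x$ for $1\le i\le d$ and $t_k^{d+1}x_k\to y$ (up to the paper's indexing conventions). This is the single word $(x,\dots,x,y)$ together with the leading coordinate, and it lies in $N_{d+2}(X)$ by (2). Finally, $(3)\Rightarrow(1)$ would use the recurrence characterization of $\RP^{[d]}$ via return-time sets, \cref{thm: RPd_recurrence_sets}, for finitely generated abelian actions. The arithmetic progression return times $\{t: t^ix\in U,\ t^{d+1}x\in V\}$ should be shown to be Nil$_d$-Bohr$_0$-type recurrence sets, using the correlation decomposition theorem ($c_f$ equals a $d$-step nilsequence plus a null sequence). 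Alternatively, one may argue directly that $\AP^{[d]}$ projects trivially to every system of order $d$, since in a pronilsystem arithmetic progressions of length $d+2$ determine the last term from the first $d+1$ terms (the polynomial-sequence rigidity on nilmanifolds). Then the Shao--Ye theorem that $X/\RP^{[d]}$ is the maximal factor of order $d$ gives $\AP^{[d]}\subseteq\RP^{[d]}$.

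The main obstacle is step (ii) of $(1)\Rightarrow(2)$. The characteristic factor there is of order $d-1$ for progressions of length $d$, while our words have length $d+2$ and the pair is only $\RP^{[d]}$-related. The plan is therefore to apply the theorem with $d+1$ in place of $d$, which is allowed since $d+1\ge 2$. This makes $X'_{d}$ the characteristic factor for progressions of length $d+1$. A point $\RP^{[d]}$-related to $x$ has the same image in $X_d$, so $y'$ can be chosen in the fibre $\pi'^{-1}(\pi'(x'))$. The extra $(d+2)$-nd coordinate would be absorbed by the diagonal translation built into $N_{d+2}$. Lifting minimal pairs while preserving the fibre relation through almost one-to-one extensions will need the standard fact that almost one-to-one extensions lift minimal points of $R_\pi$.
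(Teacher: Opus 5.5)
Your handling of the two implications the paper regards as immediate is where the proposal actually goes wrong.

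For $(2)\Rightarrow(3)$ you have misread the definition of $\AP^{[d]}$. It asks for a common step $t_k$ and points $x_k\to x$, $y_k\to y$ such that, for each $1\le j\le d$, $t_k^jx_k$ and $t_k^jy_k$ converge to the \emph{same} point $a_j$. The word $(x,\dots,x,y)$ produces nothing of this kind. Use the word $(x,y,\dots,y)$ instead: if $(z_k,t_kz_k,\dots,t_k^{d+1}z_k)\to(x,y,\dots,y)$, put $x_k=z_k$ and $y_k=t_kz_k$. Then $t_k^jx_k\to y$ and $t_k^jy_k\to y$ for $1\le j\le d$, so $a_*=(y,\dots,y)$ works.

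For $(3)\Rightarrow(1)$ neither of your routes is justified as written. \cref{thm: RPd_recurrence_sets} only returns (1) from (4), and deriving $N(x,U)\in\sF_d^{*}$ from an $\AP^{[d]}$ witness is exactly the nilsystem rigidity you would still have to prove. Your rigidity claim also concerns a single progression of length $d+2$, whereas $\AP^{[d]}$ involves two progressions of length $d+1$ with a common step, and you do not explain how one gives the other. No such input is needed. For $t\in T$, the element $f=\prod_{j=1}^{d}t^{(\alpha_j)}\in\cF^{[d]}$, where $\alpha_j=\{\epsilon:\epsilon_j=1\}$, has $f_\epsilon=t^{|\epsilon|}$, with $|\epsilon|$ the number of $1$'s in $\epsilon$. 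So an $\AP^{[d]}$ witness is literally an $\RP^{[d]}$ witness with cube coordinates $a_{|\epsilon|}$, $\epsilon\neq\emptyset$.

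For $(1)\Rightarrow(2)$ your route differs from the paper's and can be completed, but the decisive points are not the ones you flag.

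\begin{itemize}
\item \textbf{The paper's route.} It never passes to $X'$. It applies \cref{lemma: dense_preimage_in_Nd} with $d+1$ in place of $d$, obtaining a dense $G_\delta$ set $X_0\subseteq X$ whose points have their entire $(d+2)$-fold fibre under $\pi\colon X\to X_d$ inside $N_{d+2}(X)$. It uses minimality of $(x,y)$ only to move, within the minimal orbit closure of $(x,y)$ (contained in $\RP^{[d]}(X)\subseteq R_\pi$), to a pair with first coordinate in $X_0$. It then moves back using that $N_{d+2}(X)$ is closed and diagonally invariant.
\item \textbf{Where minimality enters in your version.} It is what makes the lifting work. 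For a minimal lift $(x',y')$, the pair $(\pi'x',\pi'y')$ is a minimal point of $R_\theta$, and $\theta$ is proximal. A pair that is both proximal and a minimal point of the product lies on the diagonal, so $\pi'(x')=\pi'(y')$ holds exactly, not ``up to approximation''. Here $\pi(x)=\pi(y)$ because $\RP^{[d]}\subseteq\NRP^{[d]}$.
\item \textbf{Finishing without generic points.} Use the strong form given by \cref{thm: topological_dstep_char_factor} with $d+1$ in place of $d$: $N_{d+2}(X')$ is ${\pi'}^{(d+2)}$-saturated and contains ${x'}^{(d+2)}$. Hence $\{x',y'\}^{d+2}\subseteq N_{d+2}(X')$ immediately, and applying $\theta'$ finishes.
\item \textbf{If you keep the generic-point formulation in step (iii).} It only yields words whose first letter is the base point, so words beginning with $y$ need the same argument for the minimal pair $(y,x)$. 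The passage from a generic $x_0'$ to $x'$ must go through the minimal orbit closure, or the openness of $\pi'$, not a syndetic-return argument.
\item \textbf{Metric reduction.} You must first reduce to metric systems via inverse limits, as the paper does. The statement is for arbitrary minimal systems, and the AG diagram requires metrizability.
\end{itemize}

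The trade-off is this. Your argument uses only the final diagram theorem as a black box, at the cost of the lifting and proximality step. The paper avoids modifying $X$ by using the intermediate lemma established in the proof of that theorem.
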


 One consequence of this characterization of the minimal points of $\RP^{[d]}$ is that $\RP^{[d]}$ is the smallest closed invariant equivalence relation containing $\AP^{[d]}$.

 Another application of this characterization of the minimal points of $\RP^{[d]}$ and topological characteristic factors is to study independence pairs along arithmetic progressions. The notion of {\em independence} was first introduced in \cite{Kerr_Li_independence_top_C*_dynamics:2007}. It corresponds to a modification of the notion of interpolator studied in \cite{Glasner_Weiss_quasifactors_zeroentropy:1995,Huang_Ye_local_variational:2006}. For recent results on independence, see \cite{Dong_Donoso_Maass_Shao_Ye_infinite_step_nil:2013,Qiu_independence_automorphy_higher_order:2023,Cai_Shao_top_charact_independence:2022,Kerr_Li_independence_top_C*_dynamics:2007,Huang_Li_Ye_family_indep_top_meas:2012,Qiu_Yu_saturated_cubes_measure:2023}. In \cite{Glasner_Huang_Shao_Ye_regionally_arithmetic_prog_nil:2020}, the authors define independence pairs along arithmetic progressions, which are also studied in \cite{Cai_Shao_top_charact_independence:2022}. As a consequence of our results, we obtain that every minimal system without nontrivial independence pairs along arithmetic progressions is an almost one-to-one extension of its maximal factor of order $\infty$ for finitely generated abelian group actions (\cref{thm: non_Indap}).

\subsection*{Acknowledgments}

The author is grateful to Sebastián Donoso for his guidance during the preparation of this article and for his helpful comments. The author was supported by ANID-Subdirección de Capital Humano/Doctorado Nacional/2025-21251865 and Centro de Modelamiento Matemático (CMM) FB210005, BASAL funds for centers of excellence from ANID-Chile.

\section{Background}

\subsection{Dynamical systems}

\subsubsection{Topological dynamical systems}

A topological dynamical system (or just a system) is a pair $(X, T)$, where $X$ is a compact Hausdorff space and $T$ is a discrete group acting as a group of homeomorphisms of the space $X$. When $X$ is a metric space, we say that $(X,T)$ is a {\em metric system}. If $(X_{i},T)_{i\in I}$ is a family of systems, the action of $T$ on the product space $\prod_{i\in I} X_{i}$ is defined coordinatewise: $t(x_{i})_{i\in I} = (tx_{i})_{i\in I}$ for each $t\in T$ and $(x_{i})_{i\in I} \in\prod_{i\in I} X_{i}$.

The system is {\em transitive} if there is a point $x\in X$ such that its orbit $Tx:=\{tx: t\in T\}$ is dense in $X$. The system is {\em minimal} if the orbit of any point is dense in $X$. A pair of points $(x,y)$ in $X\times X$ is {\em proximal} if there is a net $(t_{\lambda})_{\lambda\in\Lambda}$ in $T$ and a point $z\in X$ such that $\lim t_{\lambda}x = \lim t_{\lambda}y=z$ and $(x,y)$ is a {\em distal} pair if it is not proximal. A point $x\in X$ is a {\em distal point} if it is proximal only to itself. The set of proximal pairs is denoted by $\P(X)$, and called the {\em proximal relation}.

An important class of topological dynamical systems is the class of equicontinuous systems. A system $(X, T)$ is {\em equicontinuous} if the collection of maps defined by the group $T$ is an equicontinuous family. These systems can be characterized by the {\em regionally proximal relation}. A pair $(x,y)\in X\times X$ is said to be regionally proximal if there are nets $(x_{i})_{i\in I},(y_{i})_{i\in I}$ in $X$ and $(t_{i})_{i\in I}$ in $T$ with $x_{i}\to x$, $y_{i}\to y$ and $(t_{i}x_{i},t_{i}y_{i})\to (z,z)$, for some $z\in X$. The set of regionally proximal pairs is denoted by $\RP(X)$ and is the {\em regionally proximal relation}.

A continuous onto map $\pi\colon (X, T) \to (Y, T)$ between systems is a {\em factor} (or an {\em extension}) if $t\pi(x)=\pi(tx)$ for every $x\in X$ and $t\in T$. We say that the factor is proximal (distal) if, for every  $y \in Y$, every pair of points in $\pi^{-1}(y)$ is proximal (distal). We say that a factor $\pi$ between metric systems is {\em almost one-to-one} if there exists a dense $G_{\delta}$ set $X_{0} \subseteq X$ such that $\pi^{-1}(\pi(x))=\{x\}$ for any $x\in X_{0}$. If a factor $\pi\colon (X, T) \to (Y, T)$ between minimal systems is an almost one-to-one factor, then it is also a proximal factor ({\cite[Lemma 2.1]{Donoso_Durand_Maass_Petite_automorphism_low_complexity:2016}}).

One can relativize the notion of equicontinuity. A factor map $\pi\colon (X, T)\to (Y, T)$ is called equicontinuous if for every $\alpha\in \mathcal{U}_{X}$, there exists $\beta \in \mathcal{U}_{X}$ such that if $(x,y)\in R_{\pi}$ and $x,y\in \beta$ then $(tx,ty)\in \alpha$ for all $t\in T$. Here, $R_{\pi} = \{(x,y)\in X\times X:\pi(x)=\pi(y)\}$ and $\mathcal{U}_{X}$ denotes the uniform structure on $X$ (see {\cite[Appendix II]{Auslander_minimal_flows_and_extensions:1988}} for basic properties of uniform spaces). This class of factors can be characterized by a relativized regionally proximal relation. Let $R$ be a closed invariant equivalence relation in $X$ and $\pi: X\to X/R$ the quotient map, we define\begin{align*}
    Q(R)=\bigcap_{\alpha\in \mathcal{U}_{X}} \overline{T(R\cap \alpha)}.
\end{align*}

When $R=R_{\pi}$, we call $Q(R_{\pi})$ the {\em $\pi$-regionally proximal relation}. $Q(R_{\pi})$ is trivial if and only if $\pi$ is equicontinuous.

A particular example of an equicontinuous factor is a group factor. We say that $\pi\colon (X, T)\to (Y, T)$ is a {\em group factor with group $K$} whenever the following conditions are fulfilled:\begin{enumerate}[label=(\roman*)]
            \item $K$ is a compact Hausdorff topological group
            
            \item There is a continuous mapping $(x,k)\mapsto xk: X\times K\to X$ such that\begin{enumerate}
                \item $\forall x\in X,\forall k_{1},k_{2}\in K$: $x(k_{1}k_{2})=(xk_{1})k_{2}$, $xe_{k}=x$.
                \item $\forall t\in T,\forall k\in K, \forall x\in X$: $t(xk)=(tx)k$.
            \end{enumerate}

            \item $\forall x\in X$: $\pi^{-1}(\pi(x))= xK$.
        \end{enumerate} 

\subsubsection{Measure preserving systems} 

A {\em measure preserving system} is a quadruple $(X,\cX,\mu,T)$, where $T$ is a topological group and $(X,\cX,\mu)$ is a Lebesgue probability space such that $\mu$ is $T$-invariant, that is, $\mu(tA)=\mu(A)$ for every $A\in \cX$ and every $t\in T$. We write $\mathcal{I}(T)$ for the $\sigma$-algebra $\{A\in \cX: tA=A\text{ for all }t\in T\}$ of invariant sets. The system is called {\em ergodic} if every $T$-invariant set has measure either $0$ or $1$.

For a measure preserving system $(X,\cX,\mu,T)$, a {\em measurable factor} is used with two meanings: it is a $T$-invariant sub-$\sigma$-algebra $\cY$ of $\cX$ or a system $(Y,\cY,\nu,T)$ and a measurable map $\pi:X\to Y$ such that $\pi\mu=\nu$ and $t\circ \pi=\pi\circ t$ for every $t\in T$. These two definitions coincide under the identification of the $\sigma$-algebra $\cY$ of $Y$ with the invariant sub-$\sigma$-algebra $\pi^{-1}(\cY)$ of $\cX$. In this case, we say that $Y$ is a measurable factor of $X$. If $f$ is an integrable function on $X$, we denote by $\E(f\mid\cY)$ the conditional expectation of $f$ on the measurable factor $\cY$. We write $\E(f\mid Y)$ for the function on $Y$ defined by $\E(f\mid\cY)=\E(f\mid Y)\circ \pi$. This conditional expectation is characterized by\begin{align*}
    \int_{X} f\cdot g \circ \pi \,d\mu = \int_{Y} \E(f\mid Y)\cdot g\, d\nu
\end{align*}

for all $g\in L^{\infty}(\nu)$.

\subsection{Host-Kra cube groups associated with a group}

Let $d \geq 1$ be an integer, and write $[d] = \{1, 2, \dots, d\}$. We view an element of $\{0,1\}^{d}$, the Euclidean cube, either as a sequence $\epsilon = (\epsilon_{1}, \dots, \epsilon_{d})$ of 0's and 1's; or as a subset of $[d]$. A subset $\epsilon$ corresponds to the sequence $(\epsilon_{1}, \dots, \epsilon_{d}) \in \{0,1\}^{d}$ such that $i \in \epsilon$ if and only if $\epsilon_{i} = 1$ for $i \in [d]$. For example, $\overrightarrow{0} = (0, \dots, 0) \in \{0,1\}^{d}$ is the same as $\emptyset \subset [d]$ and $\overrightarrow{1} = (1, \dots, 1)$ is the same as $[d]$.

If $X$ is a set, we denote $X^{2^{d}}$ by $X^{[d]}$ and we write a point $\bx \in X^{[d]}$ as $\bx = (x_{\epsilon} : \epsilon \subseteq [d])$. For example, for $d=2$ we have $\bx=(x_{\emptyset},x_{\{1\}},x_{\{2\}},x_{\{1,2\}})$. Sometimes, it is convenient to view an element of $X^{[d]}$ as a map from $\{0,1\}^{d}$ to $X$. We can isolate the first coordinate, writing $X^{[d]}_{*} = X^{2^{d-1}}$ and writing a point $\bx\in X^{[d]}$ as $\bx = (x_{\emptyset},\bx_{*})$, where $\bx_{*} = (x_{\epsilon}: \epsilon\neq \emptyset)\in X^{[d]}_{*}$. For a point $x\in X$ we let $x^{[d]}\in X^{[d]}$ and $x^{[d]}_{*}\in X^{[d]}_{*}$ be the diagonal points all of whose coordinates are $x$.

Let $0\leq \ell\leq d$ be an integer. An {\em $\ell$-dimensional face} of $\{0,1\}^{d}$, or equivalently a {\em face of codimension $d-\ell$} of $\{0,1\}^{d}$, is a subset of $\{0,1\}^{d}$ obtained by fixing the values of $d-\ell$ coordinates. We write dim($\alpha$) and codim($\alpha$) for the dimension and codimension of a face $\alpha$. A {\em facet} of $\{0,1\}^{d}$ is defined to be a face of codimension $1$. A face $\alpha$ is an {\em upper face} if $\overrightarrow{1}\in\alpha$.

Let $L$ be a group and $d\geq 1$ be an integer. If $\alpha$ is a face of $\{0,1\}^{d}$, for $g\in L$ we define $g^{(\alpha)}\in L^{[d]}$ by\begin{align*}
    g^{(\alpha)}(\epsilon) = \left\lbrace \begin{matrix}
        g & \text{if } \epsilon\in\alpha\\
        e & \text{otherwise}.
    \end{matrix}\right.
\end{align*}

where $e$ is the identity element of $L$.

We call the subgroup of $L^{[d]}$ generated by all $g^{(\alpha)}$, where $g\in L$ and $\alpha$ is a facet of $\{0,1\}^{d}$, the {\em Host-Kra cube group} (of order $d$) associated with $L$ and denote it by $\mathcal{HK}^{[d]}(L)$. We call the subgroup of $L^{[d]}$ generated by all $g^{(\alpha)}$, where $g\in L$ and $\alpha$ is an upper facet of $\{0,1\}^{d}$, the {\em face cube group} and denote it by $\mathcal{F}^{[d]}(L)$. When there is no ambiguity, we omit the group $L$ from the notation.

The Host-Kra and face cube groups originally appeared in \cite[Section 5]{Host_Kra_nonconventional_averages_nilmanifolds:2005} and coincide with the parallelepiped groups and face groups respectively of {\cite[Definition 3.1]{Host_Kra_Maass_nilstructure:2010}} introduced for abelian actions. See also \cite[Appendix E]{Green_Tao_linear_eq_primes:2010} for treatment of Host-Kra cube groups in nilpotent Lie groups.

\subsection{Nilsystems and Host-Kra seminorms}

We refer to {\cite[Chapters 8, 10, 11 and 16]{Host_Kra_nilpotent_structures_ergodic_theory:2018}} for the material discussed in this section, see also \cite{Leibman15,Leibman05a}.

Let $L$ be a $d$-step nilpotent Lie group and $\Gamma$ a discrete cocompact subgroup of $L$. The compact manifold $X = L/\Gamma$ is called a {\em $d$-step nilmanifold}. Observe that $L$ acts naturally on $X$ by left translation. If $T$ is a topological group and $\phi: T\to L$ is a continuous homomorphism, the induced action $(X, T)$ is called a {\em nilsystem of order $d$}.

Let $(X,\mu,T)$ be a measure preserving system. We define, by induction, a probability measure $\mu^{[d]}$ on $X^{[d]}$ that is invariant under $T$. Set $\mu^{[0]}=\mu$. For $d\geq 0$, let $\mathcal{I}^{[d]}$ be the $\sigma$-algebra of $T$-invariant subsets of $X^{[d]}$. We define $\mu^{[d+1]}$ to be the relatively independent square of $\mu^{[d]}$ over $\mathcal{I}^{[d]}$.

For a bounded function $f$ on $X$, define the seminorm\begin{align*}
    \nnorm{f}_{d} \coloneqq \left( \int_{X^{[d]}}\prod_{j=0}^{2^{d}-1}f(x_{j})\, d\mu^{[d]}(\bx) \right)^{1/2^{d}}.
\end{align*}

These seminorms define factors of $\cX$. More precisely, the sub-$\sigma$-algebra $\cZ_{d-1}(X)$ of $\cX$ is characterized by\begin{align*}
    \E(f\mid \cZ_{d-1}(X))=0\, \text{ if and only if } \nnorm{f}_{d}=0.
\end{align*}

We denote by $Z_{d}(X)$ the factor of $X$ associated to $\cZ_{d}(X)$. When there is no ambiguity, we write $Z_{d}$ and $\cZ_{d}$ instead of $Z_{d}(X)$ and $\cZ_{d}(X)$.

In \cite{Host_Kra_nonconventional_averages_nilmanifolds:2005}, a structure theorem for systems satisfying $Z_{d}(X)=X$  was proved for $\Z$-actions. Later, in \cite{Griesmer_thesis_averages_correlation_sumsets:2009}, this result was extended to $\Z^{d}$-actions. Recently, it was extended to actions of finitely generated nilpotent groups in \cite{Candela_Balazs_nilspace_general_seminorm_exchange_limits:2023}.

 \begin{theorem}[{\cite[Theorem 1.2]{Candela_Balazs_nilspace_general_seminorm_exchange_limits:2023}}]\label{thm: str_host_kra_factors}
     Let $(X,\cX,\mu,T)$ be an ergodic measure preserving system with $T$ being a finitely generated nilpotent group. Then, for each positive integer $d$, $Z_{d}$ is isomorphic to an inverse limit of $d$-step nilsystems.
 \end{theorem}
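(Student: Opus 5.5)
The plan is to go through nilspace theory rather than imitate the Host–Kra tower of abelian group extensions. For general finitely generated nilpotent $T$, that tower is awkward because the action does not commute with the cube structure in the naive way. I would proceed in three stages. First, attach to $Z_{d}$ a measure-theoretic cubic structure. Second, upgrade it to a compact $d$-step nilspace on which $T$ acts by translations. Third, apply the structure theory of compact nilspaces.

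\emph{Stage 1.} Work with the cube measures $\mu^{[n]}$ for all $n$, not only $n\le d+1$. I would check that the family $(\mu^{[n]})_{n\ge 0}$ is a \emph{cubic coupling*}:
\begin{enumerate}[label=(\alph*)]
\item it is consistent under all cube morphisms $\{0,1\}^{m}\to\{0,1\}^{n}$;
\item it is ergodic, using the ergodicity of $\mu$;
\item it has the conditional independence property coming from the relatively independent squaring over $\mathcal{I}^{[n]}$.
\end{enumerate}
Then I would pass to the factor $\cZ_{d}$. There the characterization $\E(f\mid\cZ_{d})=0\iff\nnorm{f}_{d+1}=0$ gives the \emph{uniqueness of completion} axiom in dimension $d+1$: a configuration on $\{0,1\}^{d+1}\setminus\{\overrightarrow{1}\}$ determines the last coordinate $\mu^{[d+1]}$-almost surely. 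This is exactly the measurable analogue of being a $d$-step nilspace. Nilpotency of $T$ enters here in a precise way. For each $t\in T$ and each face $\alpha$, the face transformation $t^{(\alpha)}$ should preserve $\mu^{[n]}$, and this should follow inductively from the squaring construction. As a result, each $t$ acts as a measurable translation of the cubic structure.

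\emph{Stage 2.} This is the analytic heart. The goal is a measurable-to-topological rigidity statement: a cubic coupling of step $d$ on a standard probability space is isomorphic, modulo null sets, to the Haar cube measures of a compact $d$-step nilspace $N$. I would prove it by induction on $d$.
\begin{itemize}
\item \textbf{Structure group.} Extract the $d$-th structure group $A_{d}$ as a compact abelian group. The fibres of $\cZ_{d}\to\cZ_{d-1}$ carry a measurable action of $A_{d}$, and I would use a Mackey-type argument to make this action free.
\item \textbf{Cocycle.} Identify the extension by a measurable cocycle of degree $d$ on $Z_{d-1}$.
\item \textbf{Regularity.} Show that such a cocycle is cohomologous to a continuous one. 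This uses the topological structure already obtained for $N_{d-1}$ by induction and a Gowers–Host–Kra type rigidity for cocycles of bounded degree.
\end{itemize}
The translations induced by $t\in T$ then become continuous translations of $N$ by the same induction. I expect this regularization of measurable cocycles, uniformly for the noncommutative group of translations, to be the main obstacle. I would first try the approach of ``measurable homomorphisms between compact abelian groups are continuous,'' applied fibrewise along the tower.

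\emph{Stage 3.} Invoke the structure theorem for compact nilspaces (Antolín Camarena–Szegedy): $N$ is an inverse limit of compact finite-rank $d$-step nilspaces $N_{j}$. The action of $T$ descends to each $N_{j}$, after passing to a cofinal system of factors that are invariant under the translation group. Each finite-rank $N_{j}$ with an ergodic translation action is a nilmanifold $L_{j}/\Gamma_{j}$. Here $L_{j}$ is the (Lie) translation group, which is $d$-step nilpotent by the filtration of translation groups. The image of $T$ in $\mathrm{Tran}(N_{j})$ is given by a homomorphism $T\to L_{j}$, so $(N_{j},T)$ is a $d$-step nilsystem. It remains to identify the connected component of $L_{j}$ and handle the fact that $\Gamma_{j}$ is cocompact; this is routine once the translation group is known to be a Lie group. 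Composing the isomorphisms then shows that $Z_{d}$ is an inverse limit of $d$-step nilsystems.
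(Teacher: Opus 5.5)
The paper gives no proof of this statement; it quotes it from Candela--Szegedy. Your three-stage plan (cubic coupling, continuous nilspace model, structure theory of compact nilspaces) is broadly the route of that work.

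The genuine gap is in Stage~1. You use the paper's cube measures, built by relatively independent squaring over the $\sigma$-algebra $\mathcal{I}^{[n]}$ of sets invariant under the \emph{diagonal} action. You then claim that every face transformation $t^{(\alpha)}$ preserves $\mu^{[n]}$ and that the family is consistent under all cube morphisms. For non-abelian $T$ both claims fail, and nilpotency does not help. Here is a counterexample:
\begin{itemize}
\item Let $T$ be a finite non-abelian nilpotent group (e.g.\ the quaternion group) acting on $X=T$ by left translation, with $\mu$ uniform.
\item The diagonal orbits in $T\times T$ are the level sets of $(x,y)\mapsto x^{-1}y$. So $\mu^{[2]}$ is uniform on $\{(x_{00},x_{10},x_{01},x_{11}) : x_{00}^{-1}x_{10}=x_{01}^{-1}x_{11}\}$.
\item The face transformation $(e,t,e,t)$ of the facet $\{\epsilon_{1}=1\}$ maps $(e,e,b,b)$ to $(e,t,b,tb)$. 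This point lies in the support only if $tb=bt$.
\item Swapping the two cube directions likewise moves the support. Only the facets $\{\epsilon_{2}=0\}$ and $\{\epsilon_{2}=1\}$ of the last squaring step are respected.
\end{itemize}
This is exactly the non-commutation you point out in your opening remark, and switching to cubic couplings does not remove it.

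In fact no argument can start from these measures. In the same example, for real $f$,
\[
\nnorm{f}_{2}^{4}=\frac{1}{|T|}\sum_{g\in T}\Bigl(\int f(x)f(xg)\,d\mu(x)\Bigr)^{2}\geq \frac{\|f\|_{2}^{4}}{|T|},
\]
so $Z_{1}=X$. In an inverse limit of $1$-step nilsystems the commutator subgroup $[T,T]$ acts trivially, whereas here it acts freely. So with the squaring definition of $\mu^{[d]}$, the statement holds only for abelian $T$. That is the only case the paper actually uses (in \cref{thm: correlation_nilpotent_actions}), and there your Stage~1 is correct by Host--Kra's face invariance.

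The missing idea for nilpotent $T$ is to build the cube measures from the Host--Kra cube group of a filtration of $T$ (e.g.\ the lower central series). Then invariance under face transformations, with values in the appropriate filtration terms, holds by construction; this is the setting in which the cited theorem is formulated. Conditional independence in every direction must then be proved, not read off from a single squaring.

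A smaller point concerns Stage~3. The descent of the $T$-action to the finite-rank factors of Antol\'{\i}n Camarena--Szegedy is not automatic. It needs an inverse-limit theorem for nilspace systems, and that is where the hypotheses on $T$ enter. Your sketch never visibly uses finite generation.
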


\subsection{Dynamical cubes} Let $(X,T)$ be a system. The Host-Kra cube group of $T$ acts on $X^{[d]}$ by\begin{align*}
    (t\bx)(\epsilon)=t_{\epsilon}\bx_{\epsilon}
\end{align*}

for $t\in \mathcal{HK}^{[d]}(T), \bx\in X^{[d]}$ and $\epsilon\subseteq [d]$.

Let $d\geq 1$ be an integer. Following Host, Kra and Maass {\cite[Definition 1.1]{Host_Kra_Maass_nilstructure:2010}} the set of {\em dynamical cubes of dimension $d$} is \begin{align*}
    \bQ^{[d]}(X)=\overline{\{tx^{[d]}:t\in\mathcal{HK}^{[d]}(T),x\in X\}}.
\end{align*}

Note that if $(X,T)$ is minimal \begin{align*}
        \bQ^{[d]}(X)= \overline{\{tx_{0}^{[d]}:t\in\mathcal{HK}^{[d]}(T)\}} = \overline{\{tx^{[d]}:t\in\mathcal{F}^{[d]}(T),x\in X\}}.
    \end{align*}

for each $x_{0}\in X$.

\begin{theorem}[{\cite[Propositions 4.6 and 4.8, Theorems 4.10 and 4.16]{Glasner_Gutman_Ye_higher_regionallyproximal_general_groups:2018}}]\label{thm: cube_is_minimal}
    Let $(X, T)$ be a minimal system and $d\geq 1$ be an integer. Let\begin{align*}
        \bQ_{x}^{[d]}(X) &= \bQ^{[d]}(X) \cap (\{x\}\times X^{2^{d}-1})\\
        Y_{x}^{[d]} &= \overline{\mathcal{F}^{[d]}(T)x^{[d]}},
    \end{align*}

    where $x\in X$. Then,\begin{enumerate}[label=(\arabic*)]
        \item $(\bQ^{[d]}(X),\mathcal{HK}^{[d]}(T))$ is a minimal topological dynamical system.
        \item For each $x\in X$, $(Y_{x}^{[d]},\mathcal{F}^{[d]}(T))$ is a minimal topological dynamical system.
        \item For each $x\in X$, $Y_{x}^{[d]}$ is the unique minimal subsystem of $(\bQ_{x}^{[d]}(X),\mathcal{F}^{[d]}(T))$.
        \item $\cF^{[d]}$-minimal points are dense in $\bQ^{[d]}(X)$.
        \item If $(X,T)$ is a metric system, then there exists a dense $G_{\delta}$ subset $X_{0}\subseteq X$ such that $Y_{x}^{[d]} = \bQ_{x}^{[d]}(X)$ for every $x\in X_{0}$.
    \end{enumerate}
\end{theorem}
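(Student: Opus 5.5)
The plan is to prove the five items together by induction on $d$, working with Ellis semigroups and idempotents. The key structural input is the recursive description of the cube groups. Write a point of $X^{[d+1]}$ as a pair $(\bx,\by)$ with $\bx,\by\in X^{[d]}$, split according to $\epsilon_{d+1}$. Then $\mathcal{HK}^{[d+1]}(T)$ is generated by three kinds of elements:
\begin{itemize}
\item $g\times g$ with $g\in\mathcal{HK}^{[d]}(T)$, coming from the facets not involving the last coordinate;
\item $t^{[d]}\times e^{[d]}$, from the lower facet $\epsilon_{d+1}=0$;
\item $e^{[d]}\times t^{[d]}$, from the upper facet $\epsilon_{d+1}=1$.
\end{itemize}
Likewise, $\mathcal{F}^{[d+1]}(T)$ is generated by the elements $g\times g$ with $g\in\mathcal{F}^{[d]}(T)$ together with the elements $e^{[d]}\times t^{[d]}$. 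It follows that $\bQ^{[d+1]}(X)$ is the closure of the set of pairs $(\bx,h\bx)$ with $\bx\in\bQ^{[d]}(X)$ and $h\in\mathcal{F}^{[d]}(T)$. This is the identity I will use repeatedly in the induction.

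For (1) and (2) I will show, by induction on $d$, the following. For every idempotent $u$ in a minimal left ideal of $\beta T$ (or of the enveloping semigroup of $X$) and every $x$ with $ux=x$, the point $x^{[d]}$ is a minimal point for both $\mathcal{HK}^{[d]}(T)$ and $\mathcal{F}^{[d]}(T)$.

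For the inductive step, I take a minimal left ideal $\mathbb{L}$ of the enveloping semigroup $E(X^{[d]},\mathcal{F}^{[d]})$ and an idempotent $v\in\mathbb{L}$ with $vx^{[d]}=x^{[d]}$; this is possible by the induction hypothesis. I then form the product system $X^{[d]}\times X^{[d]}$, acted on by $\mathcal{F}^{[d]}\times\mathcal{F}^{[d]}$ through the elements $g\times g$ and $e\times h$. The aim is to produce an idempotent in the enveloping semigroup of $\mathcal{F}^{[d+1]}$ that fixes $(x^{[d]},x^{[d]})$. The Auslander–Ellis characterization (a point is minimal if and only if it is fixed by an idempotent of a minimal ideal) then gives minimality of $x^{[d+1]}$.

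Once a single $\mathcal{HK}^{[d]}$-minimal point lies in $\bQ^{[d]}(X)$, minimality of the whole $\bQ^{[d]}(X)$ follows. Indeed, minimality of $X$ makes every diagonal point $y^{[d]}$ lie in the orbit closure of $x^{[d]}$ under the diagonal action, and diagonal points generate $\bQ^{[d]}(X)$.

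Items (3) and (4) come next.
\begin{itemize}
\item For (3): $Y_x^{[d]}$ is $\mathcal{F}^{[d]}$-minimal by (2) and contained in $\bQ^{[d]}_x(X)$. Uniqueness is shown by proving that every point $\bx\in\bQ^{[d]}_x(X)$ is $\mathcal{F}^{[d]}$-proximal to $x^{[d]}$. The last coordinate of $\bx$ is $x$, which the face group never moves, so applying a suitable idempotent in the $\mathcal{F}$-enveloping semigroup sends both $\bx$ and $x^{[d]}$ into the minimal set. That minimal set must therefore be $Y_x^{[d]}$.
\item For (4): every point of $\bQ^{[d]}(X)$ is approximated by points $tx^{[d]}$ with $t\in\mathcal{HK}^{[d]}$. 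Each $x^{[d]}$ is $\mathcal{F}^{[d]}$-minimal, and $\mathcal{HK}^{[d]}$ normalizes $\mathcal{F}^{[d]}$, so each $tx^{[d]}$ is $\mathcal{F}^{[d]}$-minimal.
\end{itemize}

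For (5), with $X$ metric, I will consider the two maps $x\mapsto\bQ^{[d]}_x(X)$ and $x\mapsto Y_x^{[d]}$ into the hyperspace of compact subsets of $X^{[d]}$. The map $x\mapsto\bQ^{[d]}_x(X)$ is upper semicontinuous because $\bQ^{[d]}(X)$ is closed. The map $x\mapsto Y_x^{[d]}$ is lower semicontinuous, since $Y^{[d]}_{tx}=t^{[d]}Y_x^{[d]}$ and $Y_x^{[d]}$ is an orbit closure. By Fort's theorem the upper semicontinuous map is continuous on a dense $G_\delta$ set $X_0$. At such a point, minimality of $X$ together with (4) gives that $\bQ_x^{[d]}(X)$ is the limit of sets $Y_{x_n}^{[d]}$, which by lower semicontinuity and the density of $\mathcal{F}$-minimal points forces $\bQ^{[d]}_x(X)=Y_x^{[d]}$.

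The main obstacle I expect is the inductive idempotent argument in (1)–(2). There one must keep the two halves $(\bx,\by)$ simultaneously fixed, which requires controlling how an idempotent for the diagonal part $g\times g$ interacts with the extra face $e\times h$. My first attempt would be the Shao–Ye trick: choose the idempotent first in the larger semigroup and then project it to the smaller ones.
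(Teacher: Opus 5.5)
Your proposal has genuine gaps in (1)--(2) and in (3); the paper itself only cites this result from GGY, quoting their Lemma 4.7 at the start of Section 3.

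\textbf{Items (1)--(2).} The identity $\bQ^{[d+1]}(X)=\overline{\{(\bx,h\bx):\bx\in\bQ^{[d]}(X),\,h\in\cF^{[d]}\}}$ is false. Every point on the right satisfies the closed condition $x_{\{d+1\}}=x_\emptyset$, whereas $(x^{[d]},(tx)^{[d]})=(e^{[d]}\times t^{[d]})x^{[d+1]}\in\bQ^{[d+1]}(X)$. Likewise, the product action generated by $g\times g$ and $e\times h$ with $h\in\cF^{[d]}$ is just $\cF^{[d]}\times\cF^{[d]}$. This group does not contain $e^{[d]}\times t^{[d]}$, so an idempotent found there says nothing about $\cF^{[d+1]}$-minimality. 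The step you call the main obstacle is also never carried out.

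The missing idea is the one behind GGY's Lemma 4.7. Work in $E_d=\overline{\cF^{[d]}_*}\subseteq(\beta T)^{[d]}_*$ with coordinatewise operations; this is a closed subsemigroup acting coordinatewise on $X^{[d]}_*$, with $E_d\bx_*=\overline{\cF^{[d]}_*\bx_*}$.
\begin{itemize}
\item For $v\in J(M)$ one gets $v^{[d]}_*\in E_d$ by induction. Group coordinates as $\epsilon\subseteq[d]$, $\epsilon=\{d+1\}$, $\epsilon\supsetneq\{d+1\}$. If $g_{i*}\to v^{[d]}_*$, then $(g_i\times g_i)_*\to(v^{[d]}_*,e,v^{[d]}_*)$. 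For $t_\mu\to v$, $(e^{[d]}\times t_\mu^{[d]})_*\to(e_*,v,v^{[d]}_*)$. The product of these two limits is $v^{[d+1]}_*$.
\item Minimality is then free. An idempotent $\boldsymbol{w}\in E_d$ with $\boldsymbol{w}v^{[d]}_*=v^{[d]}_*\boldsymbol{w}=\boldsymbol{w}$ has every coordinate an idempotent of $\beta T$ below $v$, hence equal to $v$. So $v^{[d]}_*$ is order-minimal and lies in the smallest ideal of $E_d$.
\item Taking $vx=x$, the point $x^{[d]}_*=v^{[d]}_*x^{[d]}_*$ is $\cF^{[d]}_*$-minimal, which gives (2).
\item Since $v^{[d]}=\lim t_\mu^{[d]}$ already lies in the closure of the diagonal, the same coordinatewise argument makes $x^{[d]}$ an $\mathcal{HK}^{[d]}$-minimal point. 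This gives (1) via your final remark.
\end{itemize}
Your argument for (4) is fine.

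\textbf{Items (3) and (5).} In (3), the claim that every $\bx\in\bQ^{[d]}_x(X)$ is $\cF^{[d]}$-proximal to $x^{[d]}$ is false. For an irrational rotation of $\mathbb{T}$ and $d=2$, the points $\bx=(x,x+a,x+b,x+a+b)\in\bQ^{[2]}_x(X)$ and $x^{[2]}$ are moved by the same translations under $\cF^{[2]}$, so their distance stays constant. (Also, the coordinate fixed by $\cF^{[d]}$ is the $\emptyset$-coordinate, not the last one.) Moreover, an idempotent sending $\bx$ and $x^{[d]}$ into minimal sets does not put them in the \emph{same} minimal set; that is exactly the uniqueness to be proved. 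Since $\bx$ is always proximal to the minimal point $\boldsymbol{u}\bx$ for a minimal idempotent $\boldsymbol{u}$, the real content of (3) is this: every $\cF^{[d]}$-minimal point of $\bQ^{[d]}_x(X)$ lies in $Y^{[d]}_x$. Equivalently, $x^{[d]}\in\overline{\cF^{[d]}\bx}$ for all $\bx\in\bQ^{[d]}_x(X)$. Your sketch gives no argument for this.

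In (5), Fort's theorem must be applied to the lower semicontinuous map $x\mapsto Y^{[d]}_x$, not to $x\mapsto\bQ^{[d]}_x(X)$. At a continuity point $x$ of that map, take any $\by\in\bQ^{[d]}_x(X)$. By (4) it is a limit of $\cF^{[d]}$-minimal points $\by_n\in\bQ^{[d]}_{x_n}(X)$, which lie in $Y^{[d]}_{x_n}$ by (3). Then $Y^{[d]}_{x_n}\to Y^{[d]}_x$ gives $\by\in Y^{[d]}_x$. Continuity of $x\mapsto\bQ^{[d]}_x(X)$ together with lower semicontinuity of $x\mapsto Y^{[d]}_x$, as you propose, does not force $\bQ^{[d]}_x(X)\subseteq Y^{[d]}_x$.
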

\subsection{Higher order regionally proximal relations}

\subsubsection{Regionally proximal relation of order $d$}

Let $(X, T)$ be a system and $d\geq 1$ be an integer. A pair $(x,y)\in X\times X$ is said to be {\em regionally proximal of order $d$} if there are nets $f_{i}\in \cF^{[d]}$, $x_{i},y_{i}\in X$, and $a_{*}\in X^{[d]}_{*}$ such that $(f_{i}x^{[d]}_{i},f_{i}y_{i}^{[d]})\to (x,a_{*},y,a_{*})$. The set of regionally proximal pairs of order $d$ is denoted by $\RP^{[d]}(X,T)$ and is called the {\em regionally proximal relation of order $d$}. When there is no ambiguity, we write $\RP^{[d]}(X)$ instead of $\RP^{[d]}(X,T)$. Note that $\RP^{[1]}(X)$ is the classical regionally proximal relation.

The relation $\RP^{[d]}(X)$ is a closed and invariant relation. Moreover, (see \cite[Lemma A.5]{Glasner_Gutman_Ye_higher_regionallyproximal_general_groups:2018})\begin{align*}
    \P(X)\subseteq \cdots \subseteq \RP^{[d+1]}(X)\subseteq \RP^{[d]}(X)\subseteq \cdots \subseteq \RP^{[1]}(X). 
\end{align*}

\subsubsection{Nilpotent regionally proximal relation of order $d$}

Let $(X, T)$ be a system and $d\geq 1$ be an integer. A pair $(x,y)\in X\times X$ is said to be {\em nilpotent regionally proximal of order $d$} if $(x,y^{[d+1]}_{*})\in \bQ^{[d+1]}(X)$. The set of nilpotent regionally proximal pairs of order $d$ is denoted by $\NRP^{[d]}(X,T)$ and is called the {\em nilpotent regionally proximal relation of order $d$}. When there is no ambiguity, we write $\NRP^{[d]}(X)$ instead of $\NRP^{[d]}(X,T)$. We say that $(X, T)$ is a {\em system of order $d$} if $\NRP^{[d]}(X)$ coincides with the diagonal relation.

The relation $\NRP^{[d]}(X)$ is a closed and invariant relation. Moreover, (see {\cite[Lemma A.5]{Glasner_Gutman_Ye_higher_regionallyproximal_general_groups:2018}})

\begin{align*}
    \P(X)\subseteq \cdots \subseteq \NRP^{[d+1]}(X)\subseteq \NRP^{[d]}(X)\subseteq \cdots \subseteq \NRP^{[1]}(X). 
\end{align*}

The following theorem shows some properties of the nilpotent regionally proximal relation of order $d$.

\begin{theorem}[{\cite[Corollary 4.2, Theorems 3.8 and 6.1, and Proposition 8.9]{Glasner_Gutman_Ye_higher_regionallyproximal_general_groups:2018}}]\label{thm: NRP_properties}
    Let $\pi:(X,T)\to (Y,T)$ be a factor between minimal systems and let $d\geq 1$ be an integer. Then:\begin{enumerate}[label=(\arabic*)]
        \item $(x,y)\in \NRP^{[d]}(X)$ if and only if $(x,y^{[d+1]}_{*}) \in \overline{\cF^{[d+1]}x^{[d+1]}}$.
        \item $\NRP^{[d]}(X)$ is an equivalence relation.
        \item $\pi\times \pi (\NRP^{[d]}(X)) = \NRP^{[d]}(Y)$.
        \item $\RP^{[d]}(X)\subseteq \NRP^{[d]}(X)$.
    \end{enumerate}
\end{theorem}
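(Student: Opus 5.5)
Since this theorem is imported from Glasner, Gutman and Ye, I would reprove the four items in the order (1), (4), (2), (3), using \cref{thm: cube_is_minimal} as the main tool. I expect (3) to be the main obstacle.

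For (1), one inclusion is immediate, because $\overline{\cF^{[d+1]}x^{[d+1]}}=Y_x^{[d+1]}\subseteq \bQ^{[d+1]}(X)$. For the converse, let $(x,y^{[d+1]}_*)\in\bQ^{[d+1]}_x(X)$. By \cref{thm: cube_is_minimal}(3), $Y^{[d+1]}_x$ is the unique $\cF^{[d+1]}$-minimal subset of $\bQ^{[d+1]}_x(X)$.
\begin{enumerate}[label=(\roman*)]
\item Take a minimal idempotent $u$ in the enveloping semigroup of $(\bQ^{[d+1]}(X),\cF^{[d+1]})$. Since $\cF^{[d+1]}$ never moves the $\emptyset$-coordinate, $u(x,y^{[d+1]}_*)=(x,\bz_*)$, and this point lies in $Y^{[d+1]}_x$.
\item The diagonal action $t\mapsto t^{[d+1]}$ commutes with $\cF^{[d+1]}$, and together they generate $\mathcal{HK}^{[d+1]}(T)$.
\item Project onto the $2^d$ coordinates of a facet not containing $\emptyset$. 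There $y^{[d+1]}_*$ restricts to a diagonal point, $\cF^{[d+1]}$ acts as $\mathcal{HK}^{[d]}(T)$, and by \cref{thm: cube_is_minimal}(1) this action is minimal on $\bQ^{[d]}(X)$.
\item Using this minimality, choose $u$ (or compose it with a further element) so that it fixes $y^{[d+1]}_*$ on every such facet simultaneously. Then $(x,y^{[d+1]}_*)=u(x,y^{[d+1]}_*)\in Y^{[d+1]}_x$.
\end{enumerate}
I expect step (iv), making the choice compatible on all facets at once, to need care. The fallback is to prove it first for $x$ in the residual set of \cref{thm: cube_is_minimal}(5) and then pass to general $x$ using invariance of both sides under $T$ and $\cF^{[d+1]}$.

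For (4), take $(x,y)\in\RP^{[d]}(X)$ with nets $f_i\in\cF^{[d]}$ and $x_i,y_i$ such that $(f_ix_i^{[d]},f_iy_i^{[d]})\to(x,a_*,y,a_*)$.
\begin{itemize}
\item The point $(f_ix_i^{[d]},f_iy_i^{[d]})$ is obtained from the diagonal cube $x_i^{[d+1]}$ by $f_i\times f_i$ followed by a move on the last facet. Since $\bQ^{[d+1]}(X)\supseteq\bQ^{[d]}(X)\times\bQ^{[d]}(X)$ restricted to pairs of cubes coming from $(x_i,y_i)$, the limit $(x,a_*,y,a_*)$ lies in $\bQ^{[d+1]}(X)$.
\item By minimality of $(\bQ^{[d]}(X),\mathcal{HK}^{[d]}(T))$, there are elements of $\mathcal{HK}^{[d]}(T)$, acting only on the coordinates $\epsilon\neq\emptyset$ of each copy, that carry $a_*$ towards $y^{[d]}_*$ while keeping the first coordinates $x$ and $y$.
\item This transfer step is the standard gluing argument of Host--Kra--Maass. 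Its output is $(x,y^{[d+1]}_*)\in\bQ^{[d+1]}(X)$, that is, $(x,y)\in\NRP^{[d]}(X)$.
\end{itemize}

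For (2), reflexivity is trivial. For symmetry, suppose $(x,y^{[d+1]}_*)\in Y^{[d+1]}_x$. Apply a Euclidean cube symmetry exchanging $\emptyset$ with another vertex, then use $\cF$-minimality of $Y_y$ from (1) to push the resulting point back to $(y,x^{[d+1]}_*)$. For transitivity, suppose $(x,y_*)\in Y_x$ and $(y,z_*)\in Y_y$.
\begin{itemize}
\item Let $f_i\in\cF^{[d+1]}$ with $f_i x^{[d+1]}\to(x,y_*)$.
\item By continuity, $f_i$ maps a neighbourhood of $x^{[d+1]}$ close to $(x,y_*)$.
\item Approximating $(y,z_*)$ by $g\,y^{[d+1]}$ with $g\in\cF^{[d+1]}$ and combining, a diagonal-closure argument gives $(x,z_*)\in\overline{\cF^{[d+1]}x^{[d+1]}}$.
\end{itemize}

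For (3), the inclusion $\pi\times\pi(\NRP^{[d]}(X))\subseteq\NRP^{[d]}(Y)$ is clear, since $\pi^{[d+1]}$ maps cubes to cubes. The reverse inclusion is the hard part. Given $(u,v)\in\NRP^{[d]}(Y)$:
\begin{enumerate}[label=(\roman*)]
\item Choose $x\in\pi^{-1}(u)$. Since $\pi^{[d+1]}(Y_x^{[d+1]})=Y_u^{[d+1]}$ by minimality, there is $(x,\bw_*)\in Y^{[d+1]}_x$ lying over $(u,v^{[d+1]}_*)$. The coordinates of $\bw_*$ need not coincide.
\item Fix a facet. On it, act by a minimal idempotent in the fibre over $v$, together with the $\mathcal{HK}^{[d]}$-minimality used in (1), to make all coordinates there agree. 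This produces $(x,y_*)$ with $\pi(y)=v$.
\item Making all coordinates agree uses $\NRP^{[d]}$ being an equivalence relation from (2). The idea is to show that any two points of the fibre occurring in one cube with $x$ are related, and then collapse the cube.
\end{enumerate}
Step (iii) is where I expect the real difficulty. If a direct argument fails, I would pass to the universal minimal system, where lifting is automatic, and then project down.
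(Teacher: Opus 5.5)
The paper gives no proof of this statement. It imports all four items from \cite{Glasner_Gutman_Ye_higher_regionallyproximal_general_groups:2018}, so your proposal has to stand on its own. Items (1), (2) and (4) can be completed along your lines with the corrections below, but the reverse inclusion in (3) is a genuine gap.

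Write $(y_1,y_2)\in\NRP^{[d]}(Y)$ for your $(u,v)$. After step (i) you have $(x,\boldsymbol{w}_*)\in Y^{[d+1]}_x$ with every $w_\epsilon\in\pi^{-1}(y_2)$. By (1), what you need is a single $z\in\pi^{-1}(y_2)$ with $z^{[d+1]}_*\in\overline{\cF^{[d+1]}_*x^{[d+1]}_*}$, that is, a \emph{diagonal} point of $\overline{\cF^{[d+1]}_*x^{[d+1]}_*}$ lying over $(y_2)^{[d+1]}_*$. Steps (ii)--(iii) give no mechanism for producing one.
An element of $\beta\cF^{[d+1]}$ that keeps you over $(y_1,(y_2)^{[d+1]}_*)$ moves the whole tuple $\boldsymbol{w}_*$ inside the fibre, and you give no reason why it should make distinct fibre points coincide. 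Knowing that the $w_\epsilon$ are pairwise $\NRP^{[d]}$-related, which you also do not prove, would not relate any of them to $x$.
In fact, $\boldsymbol{w}_*$ is $\cF^{[d+1]}_*$-minimal, being part of a point of the minimal set $Y^{[d+1]}_x$. So (1) together with the uniqueness in \cref{thm: cube_is_minimal}(3) shows that $\{a:(a,\boldsymbol{w}_*)\in\bQ^{[d+1]}(X)\}$ is exactly the $\NRP^{[d]}$-class of $x$. Hence ``collapsing the cube'' inside $\pi^{-1}(y_2)$ is literally the lifting statement itself, even in the stronger form with $x$ prescribed.
Your fallback is circular. Reducing to a factor map $M\to Y$ is legitimate, since lifts in $M$ project to lifts in $X$. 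But nothing makes lifting along $M\to Y$ automatic: it is the same problem with $X=M$. The paper itself uses $\pi\times\pi(\RP^{[d]}(M))=\RP^{[d]}(X)$ as an imported input in \cref{thm: algebraic_characterization_RPd}. So (3) needs an idea absent from your proposal, which is why it is a separate theorem in the cited work.

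For item (1), step (iv) fails as stated. Minimality of each facet projection under $\mathcal{HK}^{[d]}(T)$ does not yield one idempotent fixing all of $y^{[d+1]}_*$, since a tuple of minimal points need not be minimal. The fallback via \cref{thm: cube_is_minimal}(5) cannot reach other points either: $\{x:Y^{[d+1]}_x=\bQ^{[d+1]}_x(X)\}$ is $T$-invariant and $x\mapsto Y^{[d+1]}_x$ is not continuous.
The missing observation is \cref{thm: cube_is_minimal}(2) applied at $y$. It says $y^{[d+1]}$ is $\cF^{[d+1]}$-minimal, so some minimal idempotent of $\beta\cF^{[d+1]}$ fixes it. Since $\cF^{[d+1]}$ is trivial at $\emptyset$, the same idempotent fixes $(x,y^{[d+1]}_*)$. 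This point is then minimal in $\bQ^{[d+1]}_x(X)$, hence lies in $Y^{[d+1]}_x$ by part (3).

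For item (2), the same remark gives $Y^{[d+1]}_x=\{x\}\times W_x$ with $W_x=\overline{\cF^{[d+1]}_*x^{[d+1]}_*}$ minimal. Then $(x,y)\in\NRP^{[d]}(X)$ if and only if $W_x=W_y$, which proves (2) at once. Your Euclidean-swap route to symmetry is unnecessary and unfinished.

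For item (4), your first bullet is correct once you justify it properly: $(a^{[d]},b^{[d]})\in\bQ^{[d+1]}(X)$ for all $a,b$ by minimality of $X$, and $\bQ^{[d+1]}(X)$ is invariant under $f\times f$ for $f\in\cF^{[d]}$.
The second bullet should use \cref{thm: cube_is_minimal}(3) in dimension $d$, not minimality of $(\bQ^{[d]}(X),\mathcal{HK}^{[d]}(T))$, whose general elements move the base vertices. Since $(y,a_*)$ is a facet of a cube, $(y,a_*)\in\bQ^{[d]}_y(X)$, and hence $y^{[d]}_*\in\overline{\cF^{[d]}_*a_*}$. Now apply $f\times f$ with $f\in\cF^{[d]}$ to $(x,a_*,y,a_*)$; this is an element of $\cF^{[d+1]}$ fixing both base vertices. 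Passing to the limit gives $(x,y^{[d+1]}_*)\in\bQ^{[d+1]}(X)$, with no gluing argument needed.
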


It is worth mentioning that $\NRP^{[d]}$ is not necessarily equal to $\RP^{[d]}$ (see {\cite[Remark 8.10]{Glasner_Gutman_Ye_higher_regionallyproximal_general_groups:2018}}). Nevertheless, for abelian actions, these two relations coincide ({\cite[Theorem 3.2]{Shao_Ye_regionally_prox_orderd:2012}}).

Under certain conditions, systems of order $d$ have a structure theorem, analogous to the ergodic structure theorem.

\begin{theorem}[{\cite[Theorem 1.4]{Gutman_Manners_Varju_nilspaces_III:2020}}]\label{thm: structure_thm_NRP}
    Let $(X,T)$ be a minimal metric system and let $d\geq 1$ be an integer. Suppose that $T$ has a dense subgroup generated by a compact set. If $(X,T)$ is a system of order $d$ then $(X,T)$ is isomorphic to an inverse limit of $d$-step nilsystems.
\end{theorem}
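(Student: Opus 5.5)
Since this statement is quoted from \cite{Gutman_Manners_Varju_nilspaces_III:2020}, what follows is only an outline of how I would reconstruct it through nilspace theory, not a self-contained argument. The plan is to show first that the cube sets $\bQ^{[n]}(X)$, $n\geq 0$, make $X$ a \emph{compact nilspace} of step $d$. Then I would apply the structure theory of compact nilspaces, which says a compact ergodic step-$d$ nilspace is an inverse limit of finite-dimensional ones, and these are nilmanifolds with their Host--Kra cubes. Finally, the $T$-action has to be matched with translations on these nilmanifolds.

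\textbf{Step 1: nilspace axioms.} I would verify the axioms for the family $\{\bQ^{[n]}(X)\}_n$.
\begin{enumerate}[label=(\roman*)]
\item \emph{Composition} (closure under cube morphisms $\{0,1\}^m\to\{0,1\}^n$) follows because $\mathcal{HK}^{[n]}(T)$ is compatible with face maps.
\item \emph{Ergodicity} ($\bQ^{[1]}(X)=X\times X$) follows from minimality.
\item \emph{Corner completion} at every level is the delicate part. I would derive it from \cref{thm: cube_is_minimal}: the minimality of $(\bQ^{[n]}(X),\mathcal{HK}^{[n]}(T))$ and the uniqueness of minimal subsystems of $\bQ^{[n]}_x(X)$ give completion in the standard way.
\item \emph{Step $d$} (uniqueness of completion at level $d+1$) is exactly the hypothesis $\NRP^{[d]}(X)=\Delta$. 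By \cref{thm: NRP_properties}(1), two elements of $\bQ^{[d+1]}(X)$ agreeing off one vertex must agree everywhere; I would prove this by gluing the two cubes along a face and using that $\NRP^{[d]}$ is an equivalence relation.
\end{enumerate}
Compactness of $X$ and closedness of the cube sets make this a compact nilspace.

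\textbf{Step 2: nilspace structure theory.} Next I would invoke the structure theory of compact nilspaces (Antol\'in Camarena--Szegedy, Gutman--Manners--Varj\'u I, II). A compact step-$d$ nilspace is an inverse limit of \emph{finite-dimensional} compact nilspaces. Each of these is a nilmanifold $G/\Gamma$ with the Host--Kra cubes of a filtration, with $G$ its (Lie) translation group and $\Gamma$ discrete cocompact. The $T$-action acts by nilspace morphisms, in fact by translations, since the generators $t^{(\alpha)}$ preserve cubes. The factors in the inverse system can be chosen $T$-equivariantly: the canonical factors are characteristic, so they are preserved by translations.

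\textbf{Step 3: lifting the action (main obstacle).} The real difficulty is to show that on each finite-dimensional factor the action of $T$ comes from a \emph{continuous homomorphism} $T\to G$ into the Lie group. Equivalently, the image of $T$ in the group of translations must lie in a Lie group acting transitively. Here the hypothesis that $T$ has a dense subgroup generated by a compact set is used. I would try first the following route.
\begin{enumerate}[label=(\alph*)]
\item Consider the closure of the image of $T$ in the translation group $\mathrm{Tran}(X')$ of a finite-dimensional factor $X'$. This group is locally compact and compactly generated.
\item Argue that the identity component together with finitely many generators gives a Lie group acting transitively, since a compactly generated group's action on a finite-dimensional nilspace has finite index stabilizer structure.
\item Possibly pass to a further factor to eliminate a compact abelian ambiguity in the top structure group.
\end{enumerate}
Without compact generation, the translations could form a non-Lie totally disconnected group and the inverse-limit description would fail, so I expect this step to require the bulk of the work.
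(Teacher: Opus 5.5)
The paper gives no argument for this statement; it is quoted from \cite{Gutman_Manners_Varju_nilspaces_III:2020}. So the only question is whether your reconstruction of that route holds up. The architecture is right: dynamical cubes form a compact nilspace, apply the inverse limit theorem, then identify finite-rank factors with nilsystems. However, two steps fail as written, and you use the hypothesis on $T$ in the wrong place.

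\textbf{Corner completion (Step 1(iii)).} This cannot follow from \cref{thm: cube_is_minimal}, because that theorem holds for every minimal system and corner completion does not. Take a Sturmian system: the rotation $z\mapsto z+\theta$ of $\mathbb{T}$ with the orbit of $0$ doubled into points $z^{-},z^{+}$. Consider a point of $\bQ^{[n]}(X)$ lying over a rotation cube $(z+\omega\cdot a)_{\omega}$ whose vertices all lie in the doubled orbit. It must choose its lifts $z_{\omega}^{\pm}$ according to the signs of some nowhere-vanishing affine function $\omega\mapsto\delta+\epsilon\cdot\omega$ ($\delta\in\R$, $\epsilon\in\R^{n}$), and all such sign patterns occur. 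Prescribe $+,-,+,+,+,+,-$ at $000,100,010,001,110,101,011$. This is a $3$-corner whose three lower faces lie in $\bQ^{[2]}(X)$: none carries the forbidden pattern $\pm,\mp,\mp,\pm$ in the order $00,10,01,11$. Yet it has no completion. The pairs $\{100,011\}$ and $\{010,101\}$ are the opposite vertices of a parallelogram, so every affine $g$ satisfies $g(100)+g(011)=g(010)+g(101)$, which is incompatible with the signs $-,-$ versus $+,+$.

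What actually yields completion is the order-$d$ hypothesis:
\begin{enumerate}[label=(\alph*)]
\item $\P(X)\subseteq\NRP^{[d]}(X)=\Delta$ makes $X$ distal;
\item for distal minimal systems the dynamical cubes have the gluing property (the Host--Kra--Maass argument);
\item the weak structure theorem of Gutman--Manners--Varj\'u shows that a compact ergodic cubespace with gluing and unique $(d+1)$-completion is a $d$-step nilspace, and its inductive proof is where completion is established.
\end{enumerate}
Your item (iv) also relies on gluing, so it too needs distality first.

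\textbf{$T$-invariance of finite-rank factors (Steps 2--3).} The characteristic factors $\pi_i(X)$ are not of finite rank. The finite-rank factors in the inverse limit theorem involve choices: closed subgroups of the structure groups with Lie quotients, and a way of making lower-level data descend. Translations need not respect these choices.

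Already in step $2$ you can see the obstruction. Suppose the factor is formed by dividing out a lift $\tilde K$, by translations, of a closed subgroup $K\le\pi_1(X)$. Then a translation $\alpha$ descends iff the commutator homomorphism $\tilde K\to A_2$, $k\mapsto[\alpha,k]$, into the (Lie) top structure group is trivial. Each such kernel has Lie cokernel. Hence shrinking $\tilde K$ to the common kernel over a finite generating set keeps finite rank and gives a $T$-invariant factor. For infinitely many independent $\alpha$ this can fail. This is exactly where the hypothesis enters; for discrete $T$, as in this paper, it just says $T$ is finitely generated.

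Conversely, once a finite-rank factor $X'$ is $T$-invariant, $T$ acts on it by translations. A homomorphism into $\mathrm{Tran}(X')$, which is a Lie group by the second Gutman--Manners--Varj\'u paper, is then automatic, so the lifting problem you single out in Step 3 does not arise. What does remain is to present $X'$ as $L/\Gamma$ with $L$ a nilpotent Lie group containing the image of $T$, which is a transitivity statement. The general structure theory supplies this for connected (toral) finite-rank nilspaces; in general it has to come from minimality of the action. Your Step 2 claim that each finite-rank factor ``is a nilmanifold with $G$ its translation group'' silently assumes it.
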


The assumption that the system is metric in \cref{thm: structure_thm_NRP} can be removed for countable actions, by the following result.

\begin{proposition}[{\cite[Proposition 4.1]{Glasner_Hereditarily_sensitive:2006}}]\label{prop: inverse_limit_metric}
    Let $(X,T)$ be a system. If $T$ is countable, then $(X,T)$ is an inverse limit of metric systems.
\end{proposition}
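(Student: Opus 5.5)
We plan to work on the level of the algebra of continuous functions and use Gelfand duality. Let $C(X)$ be the commutative unital C$^*$-algebra of complex-valued continuous functions on $X$. The group $T$ acts on $C(X)$ by $(tf)(x)=f(t^{-1}x)$, and this action is by $*$-automorphisms. Closed unital $T$-invariant C$^*$-subalgebras $\mathcal{A}\subseteq C(X)$ correspond, via Gelfand duality, to factors $\pi_{\mathcal{A}}\colon (X,T)\to (X_{\mathcal{A}},T)$. Here $X_{\mathcal{A}}$ is the maximal ideal space of $\mathcal{A}$, and $\pi_{\mathcal{A}}$ sends $x$ to evaluation at $x$ restricted to $\mathcal{A}$. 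The space $X_{\mathcal{A}}$ is metrizable exactly when $\mathcal{A}$ is separable.

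The first step is to produce enough separable invariant subalgebras. Given a countable set $S\subseteq C(X)$, the set $TS=\{tf: t\in T, f\in S\}$ is countable because $T$ is countable. Let $\mathcal{A}_S$ be the closed unital C$^*$-subalgebra generated by $TS$. Then $\mathcal{A}_S$ is $T$-invariant, and it is separable: polynomials with rational (Gaussian) coefficients in finitely many elements of $TS$ and their conjugates form a countable dense subset. Hence $X_S:=X_{\mathcal{A}_S}$ is a compact metric space, and $(X_S,T)$ is a metric factor of $(X,T)$.

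The second step is to check that these factors form an inverse system whose limit is $X$. Let $\mathcal{S}$ be the collection of countable subsets of $C(X)$, directed by inclusion. It is directed since $S\cup S'$ is again countable. If $S\subseteq S'$, then $\mathcal{A}_S\subseteq\mathcal{A}_{S'}$, and this inclusion yields factor maps $X_{S'}\to X_S$ that are compatible with one another and with the maps $\pi_S$. We obtain an equivariant continuous map $\Phi\colon X\to\varprojlim X_S$. Its image is compact and dense, since each $\pi_S$ is onto and the basic open sets of the limit are pulled back from finite stages; hence $\Phi$ is onto. Since $C(X)$ separates points, given $x\neq y$ there is $f$ with $f(x)\neq f(y)$, and then $\pi_{\{f\}}(x)\neq\pi_{\{f\}}(y)$. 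So $\Phi$ is injective, and by compactness it is a homeomorphism conjugating the actions.

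No step is a serious obstacle. The only point needing care is surjectivity of $\Phi$ onto the inverse limit. I will derive it from the finite intersection property: for a compatible thread $(z_S)$, the compact sets $\pi_S^{-1}(z_S)$ are nonempty and nested along the directed set, so their intersection is nonempty.
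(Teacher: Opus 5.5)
Your proof is correct. Because $T$ is countable, every countable subset of $C(X)$ lies in a separable $T$-invariant unital $C^*$-subalgebra, and $C(X)$ is the directed union of these. Gelfand duality then presents $(X,T)$ as the inverse limit of the corresponding metrizable factors, and you handle surjectivity and injectivity of $\Phi$ properly. The paper gives no proof of its own here and only cites Glasner; the standard proof of that cited result is essentially this same argument via the algebra of continuous functions, so there is nothing to add.
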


\subsection{Enveloping semigroups}

The {\em enveloping semigroup} (or {\em Ellis semigroup}) $E(X, T)$ of a system $(X, T)$ is defined as the closure of $T$ in $X^X$ endowed with the product topology. For an enveloping semigroup $E(X, T)$, the maps $E(X, T) \to E(X, T)$, $p \mapsto pq$ and $p \mapsto tp$ are continuous for all $q \in E(X,T)$ and $t\in T$.

Ellis introduced this notion, and it has proved to be a useful tool in studying dynamical systems. The algebraic properties of $E(X)$ can be converted into the dynamical properties of $(X, T)$ and the reverse is also true. To exemplify this, we recall the following theorem.

\begin{theorem}[see {\cite[Chapters 3,4 and 5]{Auslander_minimal_flows_and_extensions:1988}}]\label{thm: distal_equi_chr_enveloping_semigroup}
    Let $(X, T)$ be a system. Then, \begin{enumerate}[label=(\arabic*)]
        \item $(X, T)$ is distal if and only if $E(X, T)$ is a group.
        \item $(X, T)$ is equicontinuous if and only if $E(X, T)$ is a group of continuous transformations.
        \item If $T$ is an abelian group, $(X,T)$ is equicontinuous if and only if $E(X,T)$ is an abelian group.
    \end{enumerate}
\end{theorem}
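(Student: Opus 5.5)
The plan is to use the standard Ellis theory of the enveloping semigroup $E=E(X,T)$. Throughout, $E$ is a compact right-topological semigroup: right multiplication $p\mapsto pq$ is continuous, and $p\mapsto tp$ is continuous for $t\in T$.

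\emph{Item (1).} Suppose first that $(X,T)$ is distal. Any minimal left ideal $I\subseteq E$ contains an idempotent $u$. For $x\in X$, the points $ux$ and $x$ are proximal, since $u(ux)=ux$; distality then forces $ux=x$, so $u$ is the identity. Hence $E=I$ is a minimal ideal whose only idempotent is the identity, and standard Ellis theory makes $E$ a group. Conversely, suppose $E$ is a group and $(x,y)$ is proximal. Then some $p\in E$ satisfies $px=py$, because the limit of the net $t_\lambda$ along a subnet lies in $E$. Since $p$ is invertible in $E$, we get $x=y$.

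\emph{Item (2).} If $(X,T)$ is equicontinuous, the closure of an equicontinuous family in $X^X$ consists of continuous maps, and pointwise and uniform convergence agree on it. So $E$ is a group of homeomorphisms; here distality follows directly from equicontinuity. Conversely, suppose $E$ is a group of continuous maps. Then $E$ is compact, and multiplication is separately continuous. By Ellis's joint continuity theorem, $E$ is a compact topological group acting continuously on $X$. Compactness of $E$ together with joint continuity of $E\times X\to X$ then gives equicontinuity of $T\subseteq E$, by the usual uniform-space argument.

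\emph{Item (3).} If $T$ is abelian and the system is equicontinuous, then $E$ is a group by (2). It is also abelian: $pq=\lim_\lambda p t_\lambda=\lim_\lambda t_\lambda p=qp$, where the second equality uses commutativity of $T$ and the limits are justified by continuity of each $p$. Conversely, suppose $E$ is an abelian group. Fix $p$ and take a net $t_\lambda\to q$. Then $pt_\lambda=t_\lambda p\to qp=pq$, so left multiplication by $p$ is continuous on the orbit closure. Evaluating at points shows each $p$ is continuous: if $x_\lambda\to x$, write points via $E$ acting on a minimal orbit closure, using distality from (1) to work componentwise. Then (2) applies.

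The main obstacle is the converse in (2): invoking Ellis's joint continuity theorem for compact semitopological groups. I would cite it as a black box, as the reference does. A lesser difficulty is verifying continuity of each $p$ in the converse of (3) without minimality; I would reduce to orbit closures, which are minimal under distality.
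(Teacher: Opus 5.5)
Your arguments for (1), for (2), and for the forward half of (3) are sound and need no minimality. Citing Ellis's joint continuity theorem as a black box in (2) is fair, since the paper gives no proof of this theorem and only refers to Auslander.

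\textbf{The gap: the converse of (3).} The step that fails is the converse of (3), and it cannot be repaired, because that direction is false for systems that are not point transitive. Take $X=[0,1]\times\mathbb{T}$ and $T=\mathbb{Z}$ generated by $(r,\theta)\mapsto(r,\theta+r)$.
\begin{itemize}
\item Let $K$ be the closure of $\{r\mapsto nr : n\in\mathbb{Z}\}$ in the compact abelian group $\mathbb{T}^{[0,1]}$. It is a closed subgroup.
\item The map $\phi\mapsto[(r,\theta)\mapsto(r,\theta+\phi(r))]$ is a continuous injective homomorphism of $K$ onto $E(X,T)$: its image is compact, contains $T$, and lies in $\overline{T}$. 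Hence $E(X,T)$ is an abelian group.
\item The system is not equicontinuous. The points $(0,0)$ and $(\delta,0)$ are $\delta$-close, but their $n$-th iterates with $n\delta\approx 1/2$ are about $1/2$ apart. Consistently with (2), $E(X,T)$ must therefore contain discontinuous maps.
\end{itemize}
Your reduction to orbit closures proves only that each restriction $p|_{\overline{Tx}}$ is continuous, i.e.\ that every minimal subsystem is equicontinuous. That is also true in this example, since each orbit closure lies in a fiber $\{r\}\times\mathbb{T}$ carrying a rotation. But continuity on each orbit closure does not patch together to continuity of $p$ on $X$.

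\textbf{The fix.} You need the extra hypothesis that $(X,T)$ is minimal, or merely point transitive; item (3) as literally stated needs this, and it is the setting the paper works in. With it, your idea closes without any appeal to distality:
\begin{itemize}
\item Since $E$ is abelian, $L_p=R_p$ is continuous on $E$.
\item If $X=\overline{Tx_0}=Ex_0$, the evaluation map $q\mapsto qx_0$ from $E$ onto $X$ is a continuous surjection from a compact space onto a Hausdorff space, hence a quotient map.
\item This map intertwines $L_p$ with $p$, so $p$ is continuous on $X$, and (2) then gives equicontinuity.
\end{itemize}
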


\subsection{The universal system} \label{sec:univ_system}
We refer to \cite[Sections IV, V and VI]{deVries_elements_topological_dynamics:1993} for the material discussed in this section. It is known that the semigroup $\beta T$, the Stone–Čech compactification of the discrete group $T$, is the universal point transitive system.  That is, for every transitive system $(X, T)$ and a point $x\in X$ with dense orbit, there exists an extension of systems $(\beta T, T)\to (X, T)$ which sends $e$, identity element of $T$, onto $x$. Therefore, by universality, there exists a unique extension $\Phi_{X}:(\beta T, T)\to (E(X, T), T)$, which is also a semigroup homomorphism, and we can interpret the $\beta T$ action on $X$ via this homomorphism.

The semigroup $\beta T$ admits many minimal left ideals, which coincide with the minimal subsystems. All these ideals are isomorphic to each other both as compact right topological semigroups and as minimal systems. We will fix a minimal left ideal $M$ of $\beta T$.  The universality of $\beta T$ implies that $(M, T)$ is the universal minimal system. For a semigroup, the element $v$ with $v^{2}=v$ is called an {\em idempotent}. By the Ellis Namakura theorem, the set $J(M)$ of idempotents in $M$ is nonempty. Moreover, $vM$ is a group with identity $v$, where $v\in J(M)$. We fix one such idempotent $u\in J(M)$.

If $(X,T)$ is minimal, then $X=Mx$ for every $x\in X$. A necessary and sufficient condition for $x$ to be minimal is that $ux=x$ for some $u\in J(M)$. A minimal system $(X, T)$ is distal if and only if $X=vX$ for every $v\in J(M)$. So, a minimal system $(X,T)$ is distal iff $\Phi_{X}(M)=\Phi_{X}(vM)=E(X,T)$ for every $v\in J(M)$. Also, we can characterize the proximal points: a pair $(x,y)\in X\times X$ is proximal if and only if there exists a minimal left ideal $I$ of $\beta T$ such that $y=vx$ for some $v\in J(I)$. 

Let $2^{X}$ be the collection of nonempty closed subsets of $X$ endowed with the Hausdorff topology. The action of $T$ on $2^{X}$ is given by\begin{align*}
    tA=\{ta:a\in A\}
\end{align*}

for each $t\in T$ and $A\in 2^{X}$. This action induces another action of $\beta T$ on $2^{X}$, and we denote this action by the circle operation: $p\circ_{T} A$, where $p\in \beta T$ and $A\subset X$, to distinguish it from the subset $pA$. For $p\in \beta T$ and $A\subseteq X$, we define $p\circ_{T} A=p\circ_{T} \overline{A}$ and $p\circ_{T} \emptyset=\emptyset$, where $\overline{A}$ denotes the closure of $A$ in the usual topology of $X$. It holds that \begin{align*}
    p\circ_{T} A = \{x\in X: \exists (t_{\lambda})_{\lambda\in\Lambda}\subseteq T, \exists (x_{\lambda})_{\lambda\in\Lambda}\subseteq A\text{ with } t_{\lambda}\to p, t_{\lambda}x_{\lambda}\to x\}.
\end{align*}

for all $p\in \beta T$ and $A\in 2^{X}$,  where $t_{\lambda}\to p$ denotes the convergence of $(t_{\lambda})_{\lambda\in \Lambda}$ to $p$ in the usual topology of $\beta T$. We use this notation for convergence in the usual topology throughout the paper. The circle operation has the following properties.

\begin{proposition}\label{prop: circle_arithmetic}
    Let $p\in M$ and $v,w\in J(M)$. Then\begin{enumerate}[label=(\arabic*)]
        \item $vp\circ_{T} wA=v\circ_{T} pA$.
        \item $p\circ_{T} wA = p\circ_{T} vA$.
        \item $p(w\circ_{T} A)=p(v\circ_{T} A)$.
        \item $(w\circ_{T} A)\cap wA = w(w\circ_{T} A)$.
    \end{enumerate}
\end{proposition}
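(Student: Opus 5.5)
The plan is to work directly with the net description of the circle operation,
\[
p\circ_{T} A=\{x: \exists\, t_\lambda\to p,\ x_\lambda\in A,\ t_\lambda x_\lambda\to x\},
\]
together with the basic algebra of $M$ and its idempotents. The algebraic facts I will use are these. For $v,w\in J(M)$ we have $vw=v$, because $Mw=M$ and $w$ is a right identity for $M$. For $p\in M$ we have $pw=p$ and $vp\in vM$. Each $vM$ is a group. One further fact about the circle operation is needed: it is associative with respect to the semigroup product, $p\circ_T(q\circ_T A)=(pq)\circ_T A$. This holds because $p\mapsto p\circ_T A$ is the unique continuous extension to $\beta T$ of the $T$-action on the compact space $2^X$, and that extension is an action of the semigroup $\beta T$. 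Finally, $p\circ_T A\supseteq \overline{pA}$ trivially, since we may take constant nets $x_\lambda=x$.

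\textbf{Item (2).} I will first prove $p\circ_T wA\subseteq p\circ_T A'$ for a suitable set $A'$ that does not depend on $w$. The obstacle is that $w\circ_T A$ may be strictly larger than $\overline{wA}$, so I do not want to pass through it. Instead I use the following relation: for $x\in A$, the points $x$ and $wx$ are proximal, and $vwx=vx$. Hence $wA$ and $vA$ differ only by a "proximal shift". The concrete step is this. By associativity, $p\circ_T wA$ is contained in $p\circ_T(w\circ_T A)=pw\circ_T A=p\circ_T A$. This bound is too weak, so I argue on nets instead. Take $x$ in $p\circ_T wA$, with $t_\lambda\to p$, $a_\lambda\in A$ and $t_\lambda w a_\lambda\to x$. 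Write $p=pv$ (valid since $p\in M$ and $v\in J(M)$). Then $p\,wa=p\,v\,wa=p\,va$, because $vw=v$. Passing to limits in $2^X$, I expect $p\circ_T wA=pv\circ_T wA$ and $pv\circ_T vA$ to coincide. This step is where the main difficulty sits, because the circle product is not simply the closure of pointwise images. My first attempt is to write
\[
p\circ_T wA=p\circ_T(v\circ_T wA)\cap\cdots
\]
The cleaner route is to note that $\overline{wA}\subseteq w\circ_T A$ and that $v\circ_T \overline{wA}\supseteq \overline{vwA}=\overline{vA}$. Symmetrizing gives $v\circ_T wA\supseteq \overline{vA}$ and $w\circ_T vA\supseteq \overline{wA}$. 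Using $p=pw=pv$, this yields
\[
p\circ_T wA=p\circ_T\bigl(w\circ_T wA\bigr)\supseteq p\circ_T\bigl(w\circ_T vA\bigr)\supseteq p\circ_T wA ,
\]
and chasing the equalities gives $p\circ_T wA=p\circ_T vA$. The point that has to be checked is the first equality, $w\circ_T wA=wA$-closure-compatible, i.e. that applying $w\circ_T$ to $\overline{wA}$ does not enlarge the set beyond what $p\circ_T$ then absorbs. I regard this as the main technical step.

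\textbf{Item (1).} This follows from (2) and associativity:
\[
vp\circ_T wA=vp\circ_T vA=v\circ_T\bigl(p\circ_T vA\bigr)\supseteq v\circ_T \overline{pvA}=v\circ_T \overline{pA}.
\]
The reverse inclusion is obtained in the same way, using $pv=p$ and $p\circ_T vA\supseteq\overline{pA}$ together with (2) once more.

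\textbf{Item (3).} It suffices to show $p(w\circ_T A)=pw(w\circ_T A)$ and then use $pw=p=pv$. The set $w(w\circ_T A)$ equals $w(v\circ_T A)$ by the symmetric inclusions $w\circ_T A\supseteq\overline{wA}$, and applying $p$ gives the result.

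\textbf{Item (4).} The inclusion $w(w\circ_T A)\subseteq w\circ_T A$ holds because $w\circ_T A$ is invariant under $w$: by associativity, $w\circ_T(w\circ_T A)=w\circ_T A$ contains the image under $w$. Also $w(w\circ_T A)\subseteq wX$. For the converse, take $x\in(w\circ_T A)\cap wA$. Then $x=wx$, so $x\in w(w\circ_T A)$. Both inclusions are short and routine. The care needed in the whole argument is concentrated in (2).
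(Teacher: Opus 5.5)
The paper gives no proof of this proposition (it is quoted from de Vries), so your argument has to stand on its own. Your toolkit is the right one: associativity $p\circ_T(q\circ_T B)=pq\circ_T B$, monotonicity, $q\circ_T B\supseteq\overline{qB}$, and $pw=p$ for $p\in M$, $w\in J(M)$. However, several steps do not follow from it as written.

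\textbf{Item (2).} Your chain needs $p\circ_T(w\circ_T wA)\supseteq p\circ_T(w\circ_T vA)$. After associativity and $pw=p$ this reads $p\circ_T wA\supseteq p\circ_T vA$, which is half of the claim itself, so the argument is circular. Meanwhile the step you single out as the difficulty (the first equality) is just associativity. The correct argument is one line:
\[
p\circ_T vA=pw\circ_T vA=p\circ_T(w\circ_T vA)\supseteq p\circ_T\overline{wvA}=p\circ_T wA ,
\]
and then you exchange $v$ and $w$.

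\textbf{Item (3).} The inclusions $w\circ_T A\supseteq\overline{wA}$ and $v\circ_T A\supseteq\overline{vA}$ say nothing about how $w\circ_T A$ and $v\circ_T A$ compare. What you need is the lower bound applied to the closed set $w\circ_T A$: $v(w\circ_T A)\subseteq v\circ_T(w\circ_T A)=v\circ_T A$. Hence $px=p(vx)\in p(v\circ_T A)$ for $x\in w\circ_T A$, and symmetry finishes it.

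\textbf{Item (1).} This is where an idea is genuinely missing. Your display only gives $vp\circ_T wA\supseteq v\circ_T pA$. The reverse inclusion is an upper bound on $vp\circ_T wA$, and every inequality of the form $q\circ_T B\supseteq\overline{qB}$ points the other way, so it cannot be obtained ``in the same way''. You must use that $p$ is invertible in the group $w_0M$, where $w_0\in J(M)$ is the idempotent with $p\in w_0M$. With $p^{-1}$ its inverse there:
\begin{enumerate}[label=(\roman*)]
\item $p^{-1}\circ_T pA\supseteq\overline{w_0A}$;
\item hence $w_0\circ_T pA=p\circ_T(p^{-1}\circ_T pA)\supseteq p\circ_T w_0A$;
\item applying $v\circ_T$, using $vw_0=v$ and then (2), gives $v\circ_T pA\supseteq vp\circ_T w_0A=vp\circ_T wA$.
\end{enumerate}

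\textbf{Item (4).} As printed this is false, presumably a typo. Take $X=M$, $w=u$ and $A\subseteq G$ not $\uptau$-closed; such sets exist whenever $G$ is infinite, since a compact discrete space is finite. Then $(u\circ_T A)\cap uA=A$ while $u(u\circ_T A)=\cltau{A}$. The intended identity, matching the paper's formula $\cltau{A}=u(u\circ_T A)=(u\circ_T A)\cap G$, is $(w\circ_T A)\cap wX=w(w\circ_T A)$. Your two inclusions prove exactly this version, since the argument $x=wx$ works for every $x\in(w\circ_T A)\cap wX$. They cannot yield the stated one, because $w(w\circ_T A)\subseteq wA$ fails in general.
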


The circle operation also behaves well with factor maps.

\begin{proposition}\label{prop: factors_circle}
    Let $\pi: (X,T)\to (Y,T)$ be a factor between systems, $p,q\in M$, $v,w\in J(M)$, $A\subseteq X$ and $y\in Y$. Then\begin{enumerate}[label=(\arabic*)]
        \item $\pi(p\circ_{T} A) = p\circ_{T} \pi(A)$.
        \item $p\circ_{T} \pi^{-1}(y) \subseteq \pi^{-1}(py)$ with equality if $\pi$ is open.
        \item $p\circ_{T} v\pi^{-1}(qy)= pq\circ_{T} v\pi^{-1}(wy)$.
    \end{enumerate}
\end{proposition}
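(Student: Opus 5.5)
We argue throughout with the net description of the circle operation: $x\in p\circ_{T} A$ if and only if there are $t_\lambda\to p$ and $x_\lambda\in \overline{A}$ with $t_\lambda x_\lambda\to x$. We also use that $2^X$ is a $T$-system, so the induced $\beta T$-action on it is a semigroup action: $p\circ_T(q\circ_T B)=pq\circ_T B$. Parts (1) and (2) are direct net arguments, and part (3) is then an algebraic consequence of (2) and \cref{prop: circle_arithmetic}.

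\emph{Part (1).} For the inclusion $\subseteq$, take $x\in p\circ_T A$ witnessed by $t_\lambda\to p$ and $x_\lambda\in\overline{A}$. Continuity and equivariance of $\pi$ give $t_\lambda\pi(x_\lambda)\to\pi(x)$. Since $\pi(x_\lambda)\in\pi(\overline A)\subseteq \overline{\pi(A)}$, we get $\pi(x)\in p\circ_T\pi(A)$. For $\supseteq$, compactness gives $\pi(\overline{A})=\overline{\pi(A)}$. So a point $z\in p\circ_T\pi(A)$ is witnessed by $t_\lambda\to p$ and $\pi(x_\lambda)$ with $x_\lambda\in\overline A$. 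Passing to a subnet, $t_\lambda x_\lambda\to x$ for some $x$. Then $x\in p\circ_T A$ and $\pi(x)=z$.

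\emph{Part (2).} If $x\in p\circ_T\pi^{-1}(y)$ is witnessed by $t_\lambda\to p$ and $x_\lambda\in\pi^{-1}(y)$ (a closed set), then $\pi(x)=\lim t_\lambda y=py$. This uses that $t_\lambda\to p$ in $\beta T$ forces $t_\lambda y\to py$. For the converse when $\pi$ is open, let $\pi(x)=py$ and take any net $t_\lambda\to p$, so that $t_\lambda y\to \pi(x)$. Openness of $\pi$ lets us lift this convergent net: after passing to a subnet and reindexing, there are $x'_\lambda\to x$ with $\pi(x'_\lambda)=t_\lambda y$. Put $x_\lambda=t_\lambda^{-1}x'_\lambda\in\pi^{-1}(y)$. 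Then $t_\lambda x_\lambda=x'_\lambda\to x$, so $x\in p\circ_T\pi^{-1}(y)$. The only delicate point is the standard lifting lemma for open maps, which may require a subnet indexed by pairs (index, neighbourhood of $x$); the reindexed subnet still converges to $p$ in $\beta T$.

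\emph{Part (3).} Here the plan is to reduce everything to the semigroup action and the idempotent-independence statements of \cref{prop: circle_arithmetic}. The first step is to compare $vq\,\pi^{-1}(wy)$ with $v\pi^{-1}(qy)$. Every $z\in\pi^{-1}(wy)$ satisfies $\pi(qz)=qwy$, so $q\,\pi^{-1}(wy)\subseteq\pi^{-1}(qwy)$. Moreover $qwy=qy$ after applying $v$ on the left, working in the group $vM$ and replacing $w$ by a suitable idempotent via \cref{prop: circle_arithmetic}(2). This should give
\begin{align*}
p\circ_T v\pi^{-1}(qy)=p\circ_T\bigl(q\circ_T v\pi^{-1}(wy)\bigr)=pq\circ_T v\pi^{-1}(wy).
\end{align*}
Concretely, I would first use \cref{prop: circle_arithmetic}(1), in the form $vr\circ_T wA=v\circ_T rA$, to move $q$ inside the circle operation. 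I would then use parts (2) and (3) of \cref{prop: circle_arithmetic} to replace $w$ by $v$ freely, and finish with the semigroup law $p\circ_T(q\circ_T B)=pq\circ_T B$.

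The main obstacle is the reverse inclusion $v\pi^{-1}(qy)\subseteq$ ``$q$ applied to $v\pi^{-1}(wy)$''. Because $\pi$ is not assumed open, fibres do not map onto fibres. I would try to handle this by noting that $v\pi^{-1}(qy)$ consists of $v$-fixed points. Each such point can be written as $vq\,z'$ with $z'=(vq)^{-1}x\in v\pi^{-1}(wy)$, where the inverse is taken in the group $vM$. This step requires checking $\pi(z')=wy$ up to the harmless idempotent change.
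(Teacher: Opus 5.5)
The paper gives no proof of this proposition; it is quoted as background from de Vries. So the only benchmark is the standard argument. Your parts (1) and (2) are exactly that argument and are complete.

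Part (3) is not yet a proof. It is phrased conditionally (``this should give'', ``I would try''), and the concrete justifications you offer are off. \cref{prop: circle_arithmetic}(2) is a statement about circle-sets, so it cannot produce a pointwise identity such as $qwy=qy$. Also, your $z'=(vq)^{-1}x$ satisfies $\pi(z')=vy$, not $wy$.

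The missing ingredient is that every idempotent of $J(M)$ is a right identity of $M$. Since $Mw=M$ by minimality, any $r\in M$ can be written $r=sw$, so $rw=r$ for all $r\in M$ and $w\in J(M)$. This gives $qw=q$, $qv=q$, $wv=w$ and $pv=p$ directly, with no detour through $vM$ or through circle arithmetic.

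With this fact, the set identity you isolate as the main obstacle is immediate. First, $q\pi^{-1}(wy)\subseteq\pi^{-1}(qwy)=\pi^{-1}(qy)$, hence $vq\,\pi^{-1}(wy)\subseteq v\pi^{-1}(qy)$. Conversely, let $x\in v\pi^{-1}(qy)$, so $vx=x$ and $\pi(x)=vqy$. Your $z'$ gives $\pi(wz')=wvy=wy$ and $vq\,(wz')=vqz'=x$, using $qw=q$. Hence
\[
v\pi^{-1}(qy)=vq\,\pi^{-1}(wy)=vq\,v\pi^{-1}(wy),
\]
where the last equality uses $qv=q$.

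Now write $A=\pi^{-1}(wy)$ and choose $v'\in J(M)$ with $q\in v'M$, so that $qA=v'qA$. Then
\[
p\circ_{T} v\pi^{-1}(qy)=p\circ_{T} v(qA)=p\circ_{T} v'(qA)=p\circ_{T} qA=p\circ_{T}(v\circ_{T} qA)=p\circ_{T}(vq\circ_{T} vA)=pq\circ_{T} vA .
\]
The steps use, in order:
\begin{enumerate}
\item the set identity above;
\item \cref{prop: circle_arithmetic}(2) applied to the set $qA$;
\item $v'qA=qA$;
\item $pv=p$ together with the semigroup law;
\item \cref{prop: circle_arithmetic}(1) in the form $vq\circ_{T} vA=v\circ_{T} qA$;
\item the semigroup law with $pvq=pq$.
\end{enumerate}
Since $vA=v\pi^{-1}(wy)$, this is (3).

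So your plan uses the right tools and no step of it fails. But these two computations, together with the right-identity fact that makes them work, are what actually constitute the proof of (3).
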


The set $G=uM$ is a subgroup of $M$ with identity $u$. The set $\{vM:v\in J(M)\}$ is a partition of $M$ and each $p\in M$ has a unique representation $p=vg$, where $v\in J(M)$ and $g\in G$. We sometimes write $p^{-1}$ for $vg^{-1}$. The group $G$ plays a central role in the algebraic theory of minimal systems. It carries a $T_{1}$ compact topology, called by Ellis the {\em $\uptau$-topology}, which is weaker than the relative topology induced on $G$ as a subset of $M$. This topology was first introduced by Furstenberg in \cite{Furstenberg_structure_distal_flows:1963}, and developed by Ellis, Glasner and Shapiro in \cite{Ellis_Glasner_Shapiro_PI-flows:1975}. For any subset $A\subseteq G$, the $\uptau$-topology is determined by\begin{align*}
    \cltau{A} = u(u\circ_{T} A) = (u\circ_{T} A) \cap G.
\end{align*}

Let $(g_{i})_{i\in I}$ be a net in $G$ and $g\in G$. The convergence of $(g_{i})_{i\in I}$ to $g$ with respect to the $\uptau$-topology is denoted by $g_{i}\taulim g$. 

For a $\uptau$-closed subgroup $F$ of $G$ the {\em derived group} $H(F)$ is given by:\begin{align*}
    H(F)=\bigcap\{\cltau{V}:V\text{ a } \uptau\text{-open neighborhood of } u \text{ in } F\}.
\end{align*}

The group $H(F)$ is a $\uptau$-closed normal subgroup and is the smallest $\uptau$-closed subgroup $H$ of $F$ such that $F/H$ is a compact Hausdorff topological group (for the quotient topology induced by the $\uptau$-topology).

For a point $x_{0}\in uX$, the {\em Ellis group} of the pointed system $(X,x_{0})$ is the $\uptau$-closed subgroup \begin{align*}
    \GG(X,x_{0})=\{g\in G:gx_{0}=x_{0}\}.
\end{align*}

When there is no ambiguity, we omit the fixed point $x_{0}$ and denote the Ellis group of $X$ by $\GG(X)$. Whenever $x_0$ is used, we will refer to the element with respect to which the group $\GG(X)$ is defined. For $\pi:(X,T)\to (Y,T)$ a factor of minimal systems we have $\GG(X)\subseteq \GG(Y)$, where the group $\GG(Y)$ is defined by the point $\pi(x_{0})$. The Ellis group is very useful when studying factors. Below, there are a few applications of this concept. 

\begin{theorem}[see {\cite[Chapters 10 and 14]{Auslander_minimal_flows_and_extensions:1988}}]\label{thm: extensions_ellis_group}
    Let $\pi:(X, T) \to (Y, T)$ be a factor between minimal systems. Then,\begin{enumerate}[label=(\arabic*)]
        \item $\pi$ is proximal if and only if $\GG(X)= \GG(Y)$.
        \item $\pi$ is distal if and only if $\pi^{-1}(py_{0})=p\GG(Y)x_{0}$, for all $p\in M$.
        \item $\pi$ is equicontinuous if and only if $\pi$ is distal and $H(\GG(Y))\subseteq \GG(X)$.
        \item $\pi$ is a group factor if and only if $\pi$ is distal and $H(\GG(Y))\unlhd \GG(X)$.
    \end{enumerate} 
\end{theorem}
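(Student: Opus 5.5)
The statement collects four classical facts from the algebraic theory of minimal systems. I would prove (1) and (2) directly from the group $G=uM$, the decomposition $M=\bigsqcup_{v\in J(M)}vM$ and the identity $uX=Gx_0$. For (3) and (4) I would use the $\uptau$-topology to identify each fiber of $\pi$ with a coset space of $\GG(Y)$.

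\emph{Items (1) and (2).} Two standard facts are used throughout.
\begin{enumerate}[label=(\alph*)]
\item For $v,w\in J(M)$ we have $vw=v$.
\item If $x,x'\in vX$ and $(x,x')$ is proximal, then $x=x'$. Indeed, proximality gives some idempotent $w$ with $wx=wx'$. Moving that idempotent into $M$ by multiplying on the left by an element of $M$, we may take $w\in J(M)$, and then $x=vx=vwx=vwx'=vx'=x'$.
\end{enumerate}

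For (1), suppose first that $\pi$ is proximal and let $g\in\GG(Y)$. Then $gx_0$ and $x_0$ lie in the fiber over $y_0$, so they are proximal, and both lie in $uX$. By (b), $gx_0=x_0$, so $\GG(Y)\subseteq\GG(X)$. The reverse inclusion $\GG(X)\subseteq\GG(Y)$ always holds.

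Conversely, assume $\GG(X)=\GG(Y)$ and let $\pi(x)=\pi(x')$.
\begin{itemize}
\item Choose $v\in J(M)$ with $vx=x$ and write $x=px_0$ with $p\in vM$.
\item Then $p^{-1}vx'\in uX=Gx_0$, so $vx'=pgx_0$ for some $g\in G$.
\item Applying $\pi$ gives $pgy_0=py_0$, hence $g\in\GG(Y)=\GG(X)$.
\item Therefore $vx'=px_0=x=vx$, so $(x,x')$ is proximal.
\end{itemize}

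For (2), the inclusion $p\GG(Y)x_0\subseteq\pi^{-1}(py_0)$ is immediate. If $\pi$ is distal and $x'\in\pi^{-1}(y_0)$, then $ux'$ and $x'$ are proximal and lie in the same fiber, so $x'=ux'\in uX$. Hence $x'=gx_0$ with $g\in\GG(Y)$. For a general fiber, write $p=vh$ with $v\in J(M)$ and $h\in G$, and transport this argument by $p$, using that $vX$ plays the role of $uX$.

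Conversely, suppose every fiber has the form $p\GG(Y)x_0$ and that $pgx_0$ and $phx_0$ in one fiber are proximal. Then for some $w\in J(M)$ we have $wpgx_0=wphx_0$. Cancelling $wp$ in the group $wM$ and applying $u$ gives $gx_0=hx_0$, so $\pi$ is distal.

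\emph{Items (3) and (4).} Both conditions require $\pi$ to be distal, so assume this. By (2) the fiber over $y_0$ is $\GG(Y)x_0$, and $g\GG(X)\mapsto gx_0$ is a bijection $\GG(Y)/\GG(X)\to\pi^{-1}(y_0)$.

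The key lemma I would prove is that this bijection identifies the quotient $\uptau$-topology with a topology related to the relative proximal structure. More precisely, using $\cltau{A}=u(u\circ_T A)$ and \cref{prop: factors_circle}, a pair $(gx_0,hx_0)$ lies in $Q(R_\pi)$ exactly when $h^{-1}g$ lies in the $\uptau$-closure of every $\uptau$-neighbourhood of $u$ in $\GG(Y)$, modulo $\GG(X)$.

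Given the lemma, $Q(R_\pi)$ is trivial if and only if $H(\GG(Y))\subseteq\GG(X)$. This is where the defining property of $H(F)$ enters: it is the smallest $\uptau$-closed subgroup $H$ such that $F/H$ is compact Hausdorff. This proves (3).

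For (4), a group factor is equicontinuous, so $H(\GG(Y))\subseteq\GG(X)$ by (3). In addition, the structure group acting on each fiber forces $\GG(X)$ to be normal modulo $H(\GG(Y))$, giving $H(\GG(Y))\unlhd\GG(X)$.

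Conversely, under this condition I would take $K$ to be the corresponding compact Hausdorff quotient group of $\GG(Y)$. It acts on the right by $(pgx_0)k=pgkx_0$. This action is well defined because the relevant subgroup is normal, and it commutes with $T$ because $T$ acts on the left. Its continuity comes from the lemma.

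The main obstacle is that key lemma: translating the uniform condition defining $Q(R_\pi)$ into $\uptau$-closures of neighbourhoods of $u$. I would attack it with nets, using that $u\circ_T$ computes limits of $t_\lambda x_\lambda$ when $t_\lambda\to u$, together with the rules in \cref{prop: circle_arithmetic}.
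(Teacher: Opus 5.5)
The paper gives no proof of this statement; it only cites Auslander. I have therefore compared your proposal with the classical argument.

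Items (1) and (2) are correct in substance, up to two repairable slips. In (b), multiplying $w\in J(I)$ on the left by elements of $M$ does not move it into $M$, since $Mw\subseteq I$. Use instead $x=vx$, $x'=vx'$ to get $(wv)x=(wv)x'$ with $wv\in M$. Then cancel $wv$ in the group $v'M$ containing it, which gives $v'x=v'x'$ with $v'\in J(M)$. In (1), $p^{-1}vx'$ lies in $vX$, not in $uX$. Take $g\in G$ with $gx_0=up^{-1}vx'$, so that $pgx_0=vx'$.

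The genuine gap is in (3). Your key lemma is the whole content of (3), and you only state it. One inclusion, $H(F)x_0\subseteq\{x:(x_0,x)\in Q(R_\pi)\}$ with $F=\GG(Y)$, does follow from nets exactly as in \cref{lemma: char_Hd} for $d=1$. It gives ``equicontinuous $\Rightarrow H(F)\subseteq\GG(X)$''. The reverse inclusion for distal $\pi$ is what yields equicontinuity, and it does not come out of nets and \cref{prop: circle_arithmetic} alone. Membership in $Q(R_\pi)$ is a uniform smallness condition on pairs lying in varying fibres, moved by elements $s_i\in T$ unrelated to $u$. You give no mechanism that turns this into $\uptau$-convergence.

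The standard device is Furstenberg's description of $\uptau$, which is the case $d=1$ of the $\mathscr{F}^{[d]}$-topology. Set $F_\sigma(x,y)=\inf_{t\in T}\sigma(tx,ty)$. It is $T$-invariant and upper semicontinuous, and $F_\sigma(px,py)=F_\sigma(x,y)$ for $p\in\beta T$ at almost periodic pairs. The argument then runs as follows.
\begin{itemize}
\item Take $(a_i,b_i)\in R_\pi$ with $(a_i,b_i)\to(x_0,gx_0)$ and $s_i(a_i,b_i)\to(z,z)$.
\item By (2), write $(a_i,b_i)=(q_ix_0,q_ik_ix_0)$ with $q_i\in M$ and $k_i\in F$.
\item Then $F_\sigma(x_0,k_ix_0)=F_\sigma(a_i,b_i)=F_\sigma(s_ia_i,s_ib_i)$, whose $\limsup$ is at most $F_\sigma(z,z)=0$. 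Hence $k_ix_0\taulim x_0$.
\item Pass to a subnet with $q_i\to q=wh$, where $w\in J(M)$ and $h\in G$. Then $qx_0=x_0$ forces $h\in\GG(X)$.
\item The circle rules turn $q_ik_ix_0\to gx_0$ into $k_ix_0\taulim h^{-1}gx_0$ along a subnet.
\item Since $F/H(F)\GG(X)$ is Hausdorff, $g\in H(F)\GG(X)$.
\end{itemize}
This is the $d=1$ shadow of the proof of \cref{thm: algebraic_characterization_RPd}. You also need the easy reduction of an arbitrary $(x,x')\in Q(R_\pi)$ to this case. Pick $q\in M$ with $qx=x_0$, so $(x_0,qx')\in Q(R_\pi)$. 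Once $qx'=x_0$, the pair $(x,x')$ is proximal and lies in $R_\pi$, so $x=x'$ by distality.

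Item (4) cannot be proved as literally stated, and your argument conceals this. Since $H(F)$ is normal in $F$, the condition $H(\GG(Y))\unlhd\GG(X)$ says nothing beyond $H(\GG(Y))\subseteq\GG(X)$. By (3), the statement would then make every equicontinuous factor a group factor, which is false. For example, let $T$ be free on two generators, acting on $S^2$ through two rotations that generate a dense subgroup of $SO(3)$, over a point. The only homeomorphisms of $S^2$ commuting with this action are $\pm\mathrm{id}$, so no compact group acts transitively as required.

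The standard version requires $H(\GG(Y))\subseteq\GG(X)\unlhd\GG(Y)$. This normality is exactly what your phrase ``the relevant subgroup is normal'' silently uses. With $K=\GG(Y)/H(\GG(Y))$, the rule $(pgx_0)k=pgkx_0$ is well defined only if $k$ normalizes $\GG(X)$, because $gx_0=gax_0$ for $a\in\GG(X)$. With $K=\GG(Y)/\GG(X)$ instead, $K$ is a group only if $\GG(X)\unlhd\GG(Y)$.

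Accordingly, your forward direction should establish $\GG(X)\unlhd\GG(Y)$, not the vacuous $H(\GG(Y))\unlhd\GG(X)$. It arises as the kernel of the homomorphism $g\mapsto k_g$ defined by $gx_0=x_0k_g$, after passing to a free action; this is possible because, by minimality, all stabilizers coincide.

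Finally, joint continuity of $X\times K\to X$ is asserted rather than proved. It can be obtained from the equicontinuity given by (3), together with one more fact. When $H(F)\subseteq\GG(X)$, the continuous bijection from the fibre $Fx_0$ onto the compact Hausdorff space $F/\GG(X)$ is a homeomorphism.
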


Let $\pi:(X, T) \to (Y, T)$ be a factor of minimal systems. We say that $\pi$ is {\em RIC (relatively incontractible)} if for every $p\in M$\begin{align*}
    \pi^{-1}(py_{0}) = p\circ_{T} u\pi^{-1}(y_{0})=p \circ_{T} Fx_{0},
\end{align*}

where $F=\GG(Y)$ and $y_{0}=\pi(x_{0})$.

\subsection{The AG diagram}

The following is a classical theorem stating that every factor map between minimal metric systems can be lifted to an open factor by almost one-to-one modifications.

\begin{theorem}[see {\cite[Chapter VI]{deVries_elements_topological_dynamics:1993}}]\label{thm: AG_diagram}
    Given $\pi: (X, T) \to (Y, T)$ a factor between minimal metric systems, there exists a commutative diagram of factors (called AG-diagram)
    \[\begin{tikzcd}
	X && {X'} \\
	\\
	{Y} && {Y'}
	\arrow["\pi"', from=1-1, to=3-1]
	\arrow["{\theta'}"', from=1-3, to=1-1]
	\arrow["{\pi'}", from=1-3, to=3-3]
	\arrow["\theta", from=3-3, to=3-1]
\end{tikzcd}\]

    such that\begin{enumerate}[label=(\arabic*)]
        \item $(X',T)$ and $(Y',T)$ are metric systems.
        \item $\theta$ and $\theta'$ are almost one-to-one factors,
        \item $\pi'$ is an open factor,
        \item $X'$ is the unique minimal set in $R_{\pi\theta} = \{(x,y)\in X\times Y': \pi(x)=\theta(y)\}$ and $\theta'$ and $\pi'$ are the restriction to $X'$ of the projections of $X\times Y'$ onto $X$ and $Y'$ respectively.
    \end{enumerate}
\end{theorem}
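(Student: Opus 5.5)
The plan is to follow the classical construction via the hyperspace $2^{X}$. The guiding observation is that $y\mapsto \pi^{-1}(y)$ is upper semicontinuous, and that $\pi$ fails to be open exactly where this map is discontinuous. We replace $Y$ by the closure of the graph of the fiber map over its continuity points; this makes the fiber map continuous, and continuity of the fiber map characterizes openness.

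\emph{Step 1 (continuity points).} The map $\Phi\colon Y\to 2^{X}$, $\Phi(y)=\pi^{-1}(y)$, is upper semicontinuous. Since $X$ is metric, $2^{X}$ is metric with the Hausdorff metric. A standard Baire category argument (Fort's theorem) then shows that the set $Y_{c}$ of continuity points of $\Phi$ is a dense $G_{\delta}$ subset of $Y$. Because $\Phi(ty)=t\Phi(y)$ and each $t$ acts by homeomorphisms, $Y_{c}$ is $T$-invariant.

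\emph{Step 2 (the space $Y'$).} Define $Y'=\overline{\{\pi^{-1}(y):y\in Y_{c}\}}\subseteq 2^{X}$; this is compact, metric and $T$-invariant. Each $A\in Y'$ is a limit of fibers $\pi^{-1}(y_n)$, so by continuity of $\pi$ we have $A\subseteq \pi^{-1}(y)$ with $y=\lim y_n$. This lets us set $\theta(A)=y$, which is a continuous equivariant surjection. For $y\in Y_{c}$, continuity of $\Phi$ at $y$ forces $\theta^{-1}(y)=\{\pi^{-1}(y)\}$.

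Next we prove minimality of $Y'$. Any minimal subset $Z\subseteq Y'$ satisfies $\theta(Z)=Y$, since $Y$ is minimal. Hence $Z$ contains the unique point of $\theta^{-1}(y)$ for every $y\in Y_{c}$, so $Z$ contains a dense subset of $Y'$ and therefore $Z=Y'$. In particular $\theta$ is almost one-to-one: its singleton fibers lie over $Y_c$, and $\theta^{-1}(Y_{c})$ is a dense $G_{\delta}$ in $Y'$.

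\emph{Step 3 (the space $X'$).} Put $R_{\pi\theta}=\{(x,A)\in X\times Y':\pi(x)=\theta(A)\}$, a closed invariant set. Let $X_{0}=\pi^{-1}(Y_{c})$, a dense $G_\delta$ subset of $X$. For $x\in X_{0}$ the fiber of $R_{\pi\theta}\to X$ over $x$ is the single point $(x,\pi^{-1}\pi(x))$.

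We show $R_{\pi\theta}$ has a unique minimal set. Every minimal subset of $R_{\pi\theta}$ projects onto $X$, so it contains all these points over $X_0$. Two minimal sets that intersect coincide, so the minimal set $X'$ of $R_{\pi\theta}$ is unique. Let $\theta'$ and $\pi'$ be the two coordinate projections restricted to $X'$. Then $\theta'$ is almost one-to-one, with singleton fibers over $X_{0}$, and the diagram commutes by the definition of $R_{\pi\theta}$.

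\emph{Step 4 (openness of $\pi'$; the main obstacle).} The key claim is $\pi'^{-1}(A)=A\times\{A\}$ for every $A\in Y'$, i.e.\ $X'=\{(x,A):A\in Y',\,x\in A\}$.

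For the inclusion "$\supseteq$", take $x\in A$ and write $A=\lim \pi^{-1}(y_n)$ with $y_n\in Y_{c}$. Hausdorff convergence gives points $x_n\in\pi^{-1}(y_n)$ with $x_n\to x$. Since $x_n\in X_{0}$, the point $(x_n,\pi^{-1}(y_n))$ lies in $X'$ by Step 3, and closedness of $X'$ yields $(x,A)\in X'$.

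For the inclusion "$\subseteq$", note that $\{(x,A):x\in A\}$ is closed and invariant and contains the points over $X_{0}$. These points are dense in $X'$, because $X'=\overline{T(x,\pi^{-1}\pi x)}$ for any $x\in X_0$. Hence $X'$ is contained in that closed set.

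Once the claim holds, the fiber map $A\mapsto \pi'^{-1}(A)\cong A$ is continuous, being the identity into $2^{X}$. A factor between compact spaces is open if and only if its fiber map is lower semicontinuous, so $\pi'$ is open. Metrizability of $X'$ and $Y'$ is inherited from $X\times 2^{X}$ and $2^{X}$.

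The delicate point is Step 4. There one must check carefully that Hausdorff convergence supplies the approximating points $x_n$, and that the density of the points over $X_0$ is what pins down $X'$ exactly.
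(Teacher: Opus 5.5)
Your proposal is correct and is essentially the classical hyperspace construction that the paper invokes by citing de Vries, without giving a proof of its own. In that construction $Y'$ is the closure in $2^{X}$ of the fibers over the continuity points of $y\mapsto\pi^{-1}(y)$, and $X'=\{(x,A):x\in A\in Y'\}$ is shown to be the unique minimal set of $R_{\pi\theta}$ with continuous fiber map. The few details you leave implicit, such as the density of $X_{0}=\pi^{-1}(Y_c)$ in $X$ and of ${\theta'}^{-1}(X_{0})$ in $X'$, follow at once from these sets being nonempty, invariant $G_\delta$ subsets of minimal systems.
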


\section{The $\uptau^{[d]}$-topology and $\RP^{[d]}$}

\subsection{The $\uptau^{[d]}$-topology}\label{sec: uptau_d_topology}

In this section, we define the $\uptau^{[d]}$-topology using the circle operation and study its properties. This topology will be crucial for our study of the regionally proximal relation of order $d$ for abelian actions. 

To define the $\uptau^{[d]}$-topology, we first need the following lemma:\begin{lemma}[{\cite[Lemma 4.7]{Glasner_Gutman_Ye_higher_regionallyproximal_general_groups:2018}}]
    Let $v\in J(M)$. Then $v^{[d]}_{*}$ is a minimal idempotent of $\beta \cF^{[d]}_{*}$.
\end{lemma}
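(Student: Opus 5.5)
The plan is to first make precise what $v^{[d]}_{*}$ means as an element of $\beta\cF^{[d]}_{*}$, and then verify separately that it is an idempotent and that it lies in a minimal left ideal. Here $\cF^{[d]}_{*}$ denotes the face group with the coordinate $\emptyset$ removed; this loses nothing, because every upper facet avoids $\emptyset$. Thus $\cF^{[d]}_{*}\le T^{2^{d}-1}$.

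\textbf{Step 1 (diagonal subgroup).} I will show by induction on $d$ that the diagonal $\{t^{[d]}_{*}:t\in T\}$ is contained in $\cF^{[d]}_{*}$. The natural tool is the decomposition of $\{0,1\}^{d}$ into two copies of $\{0,1\}^{d-1}$. For $d=1$ we have $\cF^{[1]}_{*}=T$. For the inductive step, $t^{(\alpha)}$ with $\alpha=\{\epsilon_d=1\}$ supplies the diagonal on the top half. The diagonal of $\cF^{[d-1]}_{*}$, embedded in both halves via upper facets of the form $\alpha'\times\{0,1\}$, supplies the remaining coordinates. Since $T$ is discrete, $\Delta_T:=\{t^{[d]}_{*}\}\cong T$ is a subgroup of the discrete group $\cF^{[d]}_{*}$. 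Its closure in $\beta\cF^{[d]}_{*}$ is therefore canonically homeomorphic to $\beta T$, and the homeomorphism is a semigroup isomorphism. I define $v^{[d]}_{*}$ to be the image of $v$ under the embedding $\iota:\beta T\hookrightarrow\beta\cF^{[d]}_{*}$, which is the one extending $t\mapsto t^{[d]}_{*}$.

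\textbf{Step 2 (idempotence).} Because $\iota$ is a homomorphism of right topological semigroups, it preserves idempotents. Hence $v^{[d]}_{*}$ is an idempotent.

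\textbf{Step 3 (minimality).} This is the main obstacle. A minimal idempotent of $\beta T$ need not remain minimal in $\beta$ of a larger group, so something specific to face groups must be used. I would use the criterion that an idempotent $w$ of a compact right topological semigroup $S$ is minimal if and only if $w\in Sqw$ for every $q\in S$; equivalently, $Sw$ is a minimal left ideal.

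The strategy is to find a minimal left ideal containing $v^{[d]}_{*}$ by realizing $\beta\cF^{[d]}_{*}w$ dynamically. Concretely:
\begin{itemize}
\item Take the product flow $(M^{[d]}_{*},\cF^{[d]}_{*})$ and the point $u^{[d]}_{*}$.
\item Prove, in the spirit of \cref{thm: cube_is_minimal}(2)--(3), that $\overline{\cF^{[d]}_{*}v^{[d]}_{*}}\subseteq M^{[d]}_{*}$ is $\cF^{[d]}_{*}$-minimal. To do so, run the same induction on $d$: the set splits into two copies indexed by the $d$-th coordinate, and the minimality of $M$ under $T$ is applied to each face.
\item Take a minimal idempotent $w\in\beta\cF^{[d]}_{*}$ with $w\le v^{[d]}_{*}$, meaning $wv^{[d]}_{*}=v^{[d]}_{*}w=w$.
\item Minimality of the orbit closure forces $v^{[d]}_{*}$ to act on it as $w$ does. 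Using that $M$ is universal, and that $vM$ is a group with identity $v$, I then deduce coordinatewise that $w=v^{[d]}_{*}$.
\end{itemize}
Hence $v^{[d]}_{*}$ itself is minimal.

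The delicate points both sit inside Step 3. The first is the passage from "$v^{[d]}_{*}$ and $w$ act identically on a universal-type flow" to equality in $\beta\cF^{[d]}_{*}$. The second is proving minimality of the orbit closure of $v^{[d]}_{*}$ rather than of a diagonal point $x^{[d]}$, since \cref{thm: cube_is_minimal}(2) is stated for the latter.
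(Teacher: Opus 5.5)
\textbf{Step 1 is false, and your definition of $v^{[d]}_{*}$ and Step 2 rest on it.} For $d\geq 2$ the diagonal $\{t^{[d]}_{*}:t\in T\}$ is not contained in $\cF^{[d]}_{*}$. Take $d=2$ and index coordinates by $\{1\},\{2\},\{1,2\}$. Then $\cF^{[2]}_{*}$ is generated by $(t,e,t)$ and $(e,s,s)$. For abelian $T$ it equals $\{(a,b,ab):a,b\in T\}$, which meets the diagonal only at the identity; in general $(a,a,a)\in\cF^{[2]}_{*}$ forces $a\in[T,T]$.

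Your induction double counts. The doubled copy of $\cF^{[d-1]}_{*}$ coming from facets $\alpha'\times\{0,1\}$ also acts on the top half. So after multiplying by $t^{(\alpha)}$ with $\alpha=\{\epsilon_{d}=1\}$, every coordinate $\epsilon\ni d$ with $\epsilon\neq\{d\}$ carries $t^{2}$; there is no facet of the form $\alpha'\times\{0\}$ to fix this. Hence there is no map $\iota:\beta T\to\beta\cF^{[d]}_{*}$ extending $t\mapsto t^{[d]}_{*}$, and Step 2 has nothing to apply to.

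This is not a technicality. For a general $p\in\beta T$, the point $p^{[d]}_{*}$ is not even in the closure of $\cF^{[d]}_{*}$ in $(\beta T)^{[d]}_{*}$. Take $T=\Z$, $d=2$ and $p$ containing the odd integers $O$: the neighbourhood $\overline{O}\times\overline{O}\times\overline{O}$ of $(p,p,p)$ misses $\{(a,b,a+b)\}$. So the idempotence of $v$ is needed already to make sense of $v^{[d]}_{*}$, not only for minimality.

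\textbf{The missing idea: build $v^{[d]}_{*}$ from the $d$ upper facets $\alpha_{j}=\{\epsilon_{j}=1\}$.} Let $\tilde\iota_{j}$ be the continuous extension of $t\mapsto t^{(\alpha_{j})}_{*}$ into $(\beta T)^{[d]}_{*}$. The product $\tilde\iota_{1}(v)\cdots\tilde\iota_{d}(v)$ has $\epsilon$-coordinate $v^{|\epsilon|}=v$. So $v^{[d]}_{*}$ lies in the compact right topological semigroup $S=\overline{\cF^{[d]}_{*}}\subseteq(\beta T)^{[d]}_{*}$, which is the image of $\beta\cF^{[d]}_{*}$ under $p\mapsto p\,e^{[d]}_{*}$. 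It is an idempotent there because multiplication is coordinatewise.

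With this in place, the coordinatewise idea of your Step 3 finishes the proof at once. Let $\boldsymbol{w}\leq v^{[d]}_{*}$ be a minimal idempotent of $S$. Each coordinate $w_{\epsilon}$ is an idempotent with $w_{\epsilon}=w_{\epsilon}v=vw_{\epsilon}$, so $w_{\epsilon}\in vM$. Being an idempotent of the group $vM$, $w_{\epsilon}=v$, hence $\boldsymbol{w}=v^{[d]}_{*}$. You do not need a separate proof that $\overline{\cF^{[d]}_{*}v^{[d]}_{*}}$ is minimal; it follows, since $S v^{[d]}_{*}=\overline{\cF^{[d]}_{*}v^{[d]}_{*}}$ is then a minimal left ideal.

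This argument works because elements of $S$ are determined by their coordinates. In the Stone--\v{C}ech compactification $\beta\cF^{[d]}_{*}$ itself they are not: for $d\geq 2$ the map $\beta\cF^{[d]}_{*}\to S$ is not injective. That is exactly why your ``they act identically, hence are equal'' step cannot be carried out in $\beta\cF^{[d]}_{*}$. There, $v^{[d]}_{*}$ has to be understood as a minimal idempotent of $\beta\cF^{[d]}_{*}$ lying over this element of $S$. Such a lift exists because a surjective homomorphism maps the smallest ideal onto the smallest ideal.
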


We define the $\uptau^{[d]}$-topology in a system $(X,T)$, for $d\geq 1$, by introducing a closure operator. Recall that throughout this paper, $u$ denotes the idempotent of $M$ fixed in \cref{sec:univ_system} to define the group $G$.  Given a subset $A \subseteq uX$, we define $\cltauF{d}{A}$ as the set of all $x \in uX$ such that $x^{[d]}_{*} \in u^{[d]}_{*}(u^{[d]}_{*} \circ_{\cF^{[d]}_{*}} \Delta^{[d]}_{*}(A))$. Note that, by the definition of the Vietoris topology, $x\in\cltauF{d}{A}$ if and only if $x\in uX$ and, for any net $(f_{i})_{i}$ in $\cF^{[d]}_{*}$ with $f_{i} \to u^{[d]}_{*}$, there exists a net $(a_{i})_{i}$ in $A$ such that $f_{i}a_{i*}^{[d]} \to x^{[d]}_{*}$. Note that, by definition of the $\uptau^{[d]}$-topology we have the following inclusions\begin{align*}
    \uptau = \uptau^{[1]} \subseteq \uptau^{[2]} \subseteq \cdots \subseteq \uptau^{[d]} \subseteq \uptau^{[d+1]} \subseteq \cdots.
\end{align*}

The following result shows that the $\uptau^{[d]}$-topology is indeed a topology.

\begin{proposition}
    $\cltauF{d}{\cdot}$ is a closure operator.
\end{proposition}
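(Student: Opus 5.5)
We verify the Kuratowski axioms for $A\mapsto \cltauF{d}{A}$ on subsets of $uX$: $\cltauF{d}{\emptyset}=\emptyset$, extensivity $A\subseteq\cltauF{d}{A}$, finite additivity $\cltauF{d}{A\cup B}=\cltauF{d}{A}\cup\cltauF{d}{B}$, and idempotence $\cltauF{d}{\cltauF{d}{A}}=\cltauF{d}{A}$. Throughout, write $w=u^{[d]}_{*}$, $\Delta=\Delta^{[d]}_{*}$ and $\circ=\circ_{\cF^{[d]}_{*}}$. By the preceding lemma, $w$ is a minimal idempotent of $\beta\cF^{[d]}_{*}$, so \cref{prop: circle_arithmetic} applies with $T$ replaced by $\cF^{[d]}_{*}$, acting on $X^{[d]}_{*}$. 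The plan mirrors the classical proof that $\cltau{\cdot}$ is a closure operator, with $\cF^{[d]}_*$ in place of $T$ and diagonal points in place of points.

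\textbf{Trivial axioms.} The empty set is handled by the convention $p\circ\emptyset=\emptyset$. For extensivity, let $a\in A\subseteq uX$. Then $a=ua$, so $a^{[d]}_{*}=w a^{[d]}_{*}$. Take a net $f_i\to w$ in $\cF^{[d]}_*$ and the constant net $a_i=a$. Then $f_i a^{[d]}_{*}\to w a^{[d]}_{*}=a^{[d]}_*$, using continuity of $p\mapsto px$ on $\beta\cF^{[d]}_*$. Hence $a^{[d]}_*\in w\circ \Delta(A)$, and $a^{[d]}_*=w a^{[d]}_*\in w(w\circ\Delta(A))$.

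\textbf{Finite additivity.} $\Delta(A\cup B)=\Delta(A)\cup\Delta(B)$, so $\overline{\Delta(A\cup B)}=\overline{\Delta(A)}\cup\overline{\Delta(B)}$. The map $C\mapsto w\circ C$ is the extension to $\beta\cF^{[d]}_*$ of the action on $2^{X^{[d]}_*}$. Each $f\in\cF^{[d]}_*$ commutes with unions, and union is continuous in the Vietoris topology, so $w\circ(C\cup D)=(w\circ C)\cup(w\circ D)$. This also follows directly from the net description: pass to a subnet whose $a_i$ lie all in $A$ or all in $B$. Applying $w$ pointwise preserves unions, so $\cltauF{d}{A\cup B}=\cltauF{d}{A}\cup\cltauF{d}{B}$, and in particular the operator is monotone.

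\textbf{Idempotence (main step).} Set $C=\cltauF{d}{A}$. Then $\Delta(C)\subseteq w(w\circ\Delta(A))\subseteq w\circ\Delta(A)$, so $\overline{\Delta(C)}\subseteq w\circ\Delta(A)$ because $w\circ\Delta(A)$ is closed. Using monotonicity of $\circ$ in the set argument, then \cref{prop: circle_arithmetic}(1) with $p=v=w$ (which gives $w\circ(w\circ B)=ww\circ B=w\circ B$ after rewriting $w\circ B$ via (2)), we get
\[
w\circ\Delta(C)\subseteq w\circ(w\circ\Delta(A))=w\circ\Delta(A).
\]
Applying $w$ gives $\cltauF{d}{C}\subseteq C$, and the reverse inclusion is extensivity.

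The delicate point, and the expected main obstacle, is justifying $w\circ(w\circ B)=w\circ B$ for $B=\Delta(A)$ without assuming $B\subseteq wX^{[d]}_*$. I would first try the same trick as for $\uptau$: \cref{prop: circle_arithmetic}(4) gives $(w\circ B)\cap wB=w(w\circ B)$, so one may replace $B$ by $wB=\Delta(uA)=\Delta(A)$, using $A\subseteq uX$. This is legitimate because $w a^{[d]}_*=(ua)^{[d]}_*$ componentwise. One then uses the semigroup law $p\circ(q\circ B)=pq\circ B$, which holds since the $\beta$-action on the hyperspace is an action, and applies it with $p=q=w$, so $w\circ(w\circ B)=w\circ B$. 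If that law is unavailable in the stated generality, the fallback is a direct diagonal-net argument. Given $f_i\to w$, approximate each point of $w\circ\Delta(A)$ by $g_j a_j{}^{[d]}_*$ with $g_j\to w$. Then choose products $f_i g_{j(i)}$ converging to $ww=w$, which is possible by right-continuity in the first variable together with continuity of left multiplication by elements of $\cF^{[d]}_*$.
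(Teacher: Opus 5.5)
Your argument is correct and follows the paper's proof: idempotence comes from $\Delta^{[d]}_{*}(\cltauF{d}{A})\subseteq w\circ\Delta^{[d]}_{*}(A)$, monotonicity of the circle operation, and $w\circ(w\circ B)=w\circ B$, and your checks of $\cltauF{d}{\emptyset}=\emptyset$ and of finite additivity supply Kuratowski axioms that the paper leaves implicit. One correction: parts (1) and (2) of \cref{prop: circle_arithmetic} contain no nested circle operation and cannot give $w\circ(w\circ B)=w\circ B$; the right justification is the one you give afterwards, namely the semigroup law $p\circ(q\circ B)=pq\circ B$ for the extended action of $\beta\cF^{[d]}_{*}$ on $2^{X^{[d]}_{*}}$ together with $ww=w$ (the detour through part (4) is unnecessary, since $w\Delta^{[d]}_{*}(A)=\Delta^{[d]}_{*}(A)$ already).
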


\begin{proof}
    It is clear that $A\subseteq \cltauF{d}{A}$ and $\cltauF{d}{A} \subseteq \cltauF{d}{F}$ for any $A,F\subseteq uX$ with $A\subseteq F$. By definition of $\cltauF{d}{\cdot}$, we have $\Delta^{[d]}_{*}(\cltauF{d}{A})\subseteq u^{[d]}_{*} \circ_{\cF^{[d]}_{*}} \Delta^{[d]}_{*}(A)$. Therefore, $u^{[d]}_{*}(u^{[d]}_{*}\circ_{\cF^{[d]}_{*}} \Delta^{[d]}_{*}(\cltauF{d}{A})) \subseteq u^{[d]}_{*}(u^{[d]}_{*}\circ_{\cF^{[d]}_{*}} u^{[d]}_{*} \circ_{\cF^{[d]}_{*}} \Delta^{[d]}_{*}(A)) = u^{[d]}_{*}(u^{[d]}_{*} \circ_{\cF^{[d]}_{*}} \Delta^{[d]}_{*}(A))$. Thus, we obtain that $\cltauF{d}{\cltauF{d}{A}} = \cltauF{d}{A}$.
\end{proof}

\begin{theorem}
    The $\uptau^{[d]}$-topology on $uX$ is compact and $T1$.
\end{theorem}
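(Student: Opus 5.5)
Compactness will follow from the compactness of $X^{[d]}_{*}$ through a finite-intersection argument modelled on the classical $\uptau$-topology, and the $T_1$ property from computing the closure of a singleton. Here, following the closure operator just defined, the $\uptau^{[d]}$-closed subsets of $uX$ are those $A$ with $\cltauF{d}{A}=A$.

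\emph{$T_1$.} Fix $x\in uX$. I will show $\cltauF{d}{\{x\}}=\{x\}$. By definition, $y\in\cltauF{d}{\{x\}}$ means $y\in uX$ and
\[ y^{[d]}_{*}\in u^{[d]}_{*}\bigl(u^{[d]}_{*}\circ_{\cF^{[d]}_{*}}\{x^{[d]}_{*}\}\bigr). \]
For a single point the circle operation is just the action, so $u^{[d]}_{*}\circ_{\cF^{[d]}_{*}}\{x^{[d]}_{*}\}=\{u^{[d]}_{*}x^{[d]}_{*}\}$. Taking nets $f_i\to u^{[d]}_{*}$ with $f_ix^{[d]}_{*}$ convergent, the limit is $u^{[d]}_{*}x^{[d]}_{*}$, by continuity of $p\mapsto p\,x^{[d]}_{*}$ on $\beta\cF^{[d]}_{*}$. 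I need to check that $u^{[d]}_{*}$, seen as an element of $\beta\cF^{[d]}_{*}$, acts on the diagonal point coordinatewise as $u$. This should come from the identification used in \cite[Lemma 4.7]{Glasner_Gutman_Ye_higher_regionallyproximal_general_groups:2018}: a net in $T$ converging to $u$ gives the net $t^{[d]}_{*}\in\cF^{[d]}_{*}$, whose limit is $u^{[d]}_{*}$. Granting this, $u^{[d]}_{*}x^{[d]}_{*}=(ux)^{[d]}_{*}=x^{[d]}_{*}$, since $x\in uX$. Applying $u^{[d]}_{*}$ once more gives the same point, so $y^{[d]}_{*}=x^{[d]}_{*}$ and hence $y=x$.

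\emph{Compactness.} Let $(A_j)_{j\in J}$ be a family of $\uptau^{[d]}$-closed subsets of $uX$ with the finite intersection property, where each $A_j=\cltauF{d}{A_j}$. I want a point in $\bigcap_j A_j$.
\begin{enumerate}[label=(\roman*)]
\item Set $K_j=u^{[d]}_{*}\circ_{\cF^{[d]}_{*}}\Delta^{[d]}_{*}(A_j)$, a closed subset of $X^{[d]}_{*}$. Then $A_j\subseteq B$ implies $K_j\subseteq u^{[d]}_{*}\circ_{\cF^{[d]}_{*}}\Delta^{[d]}_{*}(B)$. For finite $F\subseteq J$, the set $\bigcap_{j\in F}A_j$ is nonempty, and for a point $a$ in it we get $a^{[d]}_{*}=u^{[d]}_{*}a^{[d]}_{*}\in\bigcap_{j\in F}K_j$, because $a\in uX$. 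So the $K_j$ have the finite intersection property, and compactness of $X^{[d]}_{*}$ gives a point $\bz\in\bigcap_jK_j$.
\item The point $\bz$ need not be diagonal or lie in the image of $u^{[d]}_{*}$. Replace it by $u^{[d]}_{*}\bz$, which lies in $u^{[d]}_{*}K_j$ for every $j$.
\item Show that $u^{[d]}_{*}K_j$ consists of diagonal points $x^{[d]}_{*}$ with $x\in uX$. Once that holds, $u^{[d]}_{*}\bz=x^{[d]}_{*}$ with $x\in\cltauF{d}{A_j}=A_j$ for all $j$, which finishes the proof.
\end{enumerate}

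Step (iii) is the main obstacle. A point of $K_j$ is a limit of $f_ia_{i*}^{[d]}$ with $f_i\to u^{[d]}_{*}$, and different coordinates of $f_i$ move differently, so the limit is generally not diagonal. The definition of $\cltauF{d}{\cdot}$ only records diagonal points of $u^{[d]}_{*}K_j$, so the argument must avoid needing $u^{[d]}_{*}K_j$ to be entirely diagonal.

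My first attempt is to choose the point more carefully. Instead of an arbitrary $\bz\in\bigcap_jK_j$, I will look for a diagonal one. The diagonal subset $\Delta^{[d]}_{*}(X)\cap K_j$ is closed, and by the computation in (i) the sets $\Delta^{[d]}_{*}(uX)\cap K_j$ have the finite intersection property. The difficulty is that $uX$ is not closed in $X$. To get around it, I will intersect the closed sets $\Delta^{[d]}_{*}(X)\cap K_j$ and obtain a diagonal point $y^{[d]}_{*}$ in all of them. I then apply $u^{[d]}_{*}$, which gives $(uy)^{[d]}_{*}\in u^{[d]}_{*}K_j$ by the coordinatewise action checked in the $T_1$ part. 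Since $uy\in uX$, this shows $uy\in\cltauF{d}{A_j}=A_j$ for every $j$, so $uy\in\bigcap_jA_j$. Finite intersection nonemptiness in the diagonal follows from (i), so this route avoids the non-diagonal issue, and I expect it to work.
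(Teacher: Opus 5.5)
Your final route, intersecting the closed sets $\Delta^{[d]}_{*}(X)\cap K_j$ and then applying $u^{[d]}_{*}$, is correct and is essentially the paper's argument. The paper observes $u\overline{A}\subseteq\cltauF{d}{A}$ and intersects the closures $\overline{A_j}$ in $X$, which amounts to the same thing since a point of $\overline{A_j}$ gives a diagonal point of $K_j$; your $T_1$ argument is the same singleton computation the paper uses. One side remark is wrong but harmless: $t^{[d]}_{*}$ does not lie in $\cF^{[d]}_{*}$ for $d\geq 2$ (for abelian $T$, $\cF^{[2]}_{*}=\{(s,t,st): s,t\in T\}$), so the identity $u^{[d]}_{*}x^{[d]}_{*}=(ux)^{[d]}_{*}$ should instead be justified by noting that elements of $\beta\cF^{[d]}_{*}$ act on $X^{[d]}_{*}$ coordinatewise through the continuous extensions of the coordinate projections $\cF^{[d]}_{*}\to T\subseteq\beta T$, and each coordinate of $u^{[d]}_{*}$ is $u$.
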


\begin{proof}
    Note that $u\overline{A}\subseteq \cltauF{d}{A}$ for all $A\subseteq uX$. Let $(A_{i})_{i}$ be a family of $\uptau^{[d]}$-closed subsets of $uX$ with the finite intersection property. Therefore, $(\overline{A_{i}})_{i}$ has the finite intersection property in $X$. Since $X$ is a compact set, there exists $x\in \bigcap_{i} \overline{A_{i}}$. It follows from $u\overline{A}\subseteq \cltauF{d}{A}$, for any $A\subseteq uX$, that $ux \in \bigcap_{i}\cltauF{d}{A_{i}} = \bigcap_{i} A_{i}$. Hence, the $\uptau^{[d]}$-topology is compact. Note that, by definition of the circle operation, $\cltauF{d}{\{x\}} = \{x\}$ for all $x\in uX$, and consequently it is $T1$.
\end{proof}

Note that the definition of the $\uptau^{[d]}$-topology depends on the choice of the fixed idempotent $u$. The following result shows that this choice does not affect the properties of the $\uptau^{[d]}$-topology.

\begin{lemma}
    The map $x\mapsto vx$ defines a $\uptau^{[d]}$-isomorphism from $uX$ onto $vX$, where $v\in J(M)$.
\end{lemma}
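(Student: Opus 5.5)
We need to show that $\phi(x)=vx$ is a bijection $uX\to vX$ and that it carries $\uptau^{[d]}$-closures to $\uptau^{[d]}$-closures. Here the topology on $vX$ is defined with the idempotent $v$ in place of $u$: for $B\subseteq vX$, $\operatorname{cl}^{v}_{\uptau^{[d]}}(B)=\{y\in vX: y^{[d]}_{*}\in v^{[d]}_{*}(v^{[d]}_{*}\circ_{\cF^{[d]}_{*}}\Delta^{[d]}_{*}(B))\}$. The plan is to prove the identity
\begin{align*}
v\,\cltauF{d}{A}=\operatorname{cl}^{v}_{\uptau^{[d]}}(vA)\quad\text{for all } A\subseteq uX,
\end{align*}
and the symmetric one with $u$ and $v$ exchanged.

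\textbf{Bijectivity.} Since $M$ is a minimal left ideal, $vu=v$ and $uv=u$ for $u,v\in J(M)$. Hence for $x\in uX$ we get $u(vx)=ux=x$, and for $y\in vX$ we get $v(uy)=y$. So $x\mapsto vx$ and $y\mapsto uy$ are mutually inverse bijections between $uX$ and $vX$.

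\textbf{Closure identity.} I will work in the product system $(X^{[d]}_{*},\cF^{[d]}_{*})$, where $u^{[d]}_{*}$ and $v^{[d]}_{*}$ are minimal idempotents of the same minimal left ideal of $\beta\cF^{[d]}_{*}$; this follows from the preceding lemma together with the natural embedding $\beta T\to\beta\cF^{[d]}_{*}$ coordinatewise. Note also that $\Delta^{[d]}_{*}(vA)=v^{[d]}_{*}\Delta^{[d]}_{*}(A)$. Apply \cref{prop: circle_arithmetic} in this system, with $v^{[d]}_{*}$ playing $p$ and $v^{[d]}_{*},u^{[d]}_{*}$ playing $v,w$. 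Item (1) gives
\begin{align*}
v^{[d]}_{*}\circ v^{[d]}_{*}\Delta^{[d]}_{*}(A)=v^{[d]}_{*}v^{[d]}_{*}\circ u^{[d]}_{*}\Delta^{[d]}_{*}(A)=v^{[d]}_{*}\circ \Delta^{[d]}_{*}(A),
\end{align*}
using $u^{[d]}_{*}\Delta^{[d]}_{*}(A)=\Delta^{[d]}_{*}(A)$. Item (3) gives $v^{[d]}_{*}(v^{[d]}_{*}\circ\Delta^{[d]}_{*}(A))=v^{[d]}_{*}(u^{[d]}_{*}\circ\Delta^{[d]}_{*}(A))$. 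Finally, since $v^{[d]}_{*}u^{[d]}_{*}=v^{[d]}_{*}$, the right-hand side equals $v^{[d]}_{*}\bigl(u^{[d]}_{*}(u^{[d]}_{*}\circ\Delta^{[d]}_{*}(A))\bigr)$.

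Reading these equalities on diagonal points gives the identity: $y\in vX$ lies in $\operatorname{cl}^{v}(vA)$ if and only if $y^{[d]}_{*}=v^{[d]}_{*}x^{[d]}_{*}$ for some $x$ with $x^{[d]}_{*}\in u^{[d]}_{*}(u^{[d]}_{*}\circ\Delta^{[d]}_{*}(A))$. There is one subtlety: the element of $u^{[d]}_{*}(u^{[d]}_{*}\circ\ldots)$ mapped to the diagonal point $y^{[d]}_{*}$ must itself be diagonal. Since the map $v^{[d]}_{*}$ restricted to $u^{[d]}_{*}X^{[d]}_{*}$ is inverted by $u^{[d]}_{*}$, that element is $u^{[d]}_{*}y^{[d]}_{*}=(uy)^{[d]}_{*}$, which is diagonal. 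So $x=uy$, and $x\in\cltauF{d}{A}$.

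The symmetric argument handles $u$ and $v$ exchanged. Thus closed sets correspond to closed sets in both directions, and $\phi$ is a homeomorphism.

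\textbf{Main obstacle.} The delicate point is justifying the use of \cref{prop: circle_arithmetic} for the group $\cF^{[d]}_{*}$. That proposition was stated for $M\subseteq\beta T$, so I need $u^{[d]}_{*},v^{[d]}_{*}$ to be idempotents in a common minimal left ideal of $\beta\cF^{[d]}_{*}$. I would first check that $v^{[d]}_{*}u^{[d]}_{*}=v^{[d]}_{*}$ holds as an action on $X^{[d]}_{*}$, which is coordinatewise and immediate. Then I would invoke the lemma of Glasner--Gutman--Ye, applying it to both idempotents in a fixed minimal ideal containing $u^{[d]}_{*}$.
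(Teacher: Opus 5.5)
Your proposal is correct and is essentially the paper's proof: both apply \cref{prop: circle_arithmetic} in the face-cube system to get $v^{[d]}_{*}(v^{[d]}_{*}\circ_{\cF^{[d]}_{*}}\Delta^{[d]}_{*}(vA))=v^{[d]}_{*}u^{[d]}_{*}(u^{[d]}_{*}\circ_{\cF^{[d]}_{*}}\Delta^{[d]}_{*}(A))$, and hence $\cltauF{d}{vA}=v\,\cltauF{d}{A}$. Your version is more explicit than the paper's about $\Delta^{[d]}_{*}(vA)=v^{[d]}_{*}\Delta^{[d]}_{*}(A)$, about bijectivity, and about why the preimage of a diagonal point is diagonal.

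One small correction: for $d\geq 2$ there is no coordinatewise embedding $\beta T\to\beta\cF^{[d]}_{*}$, since $t^{[d]}_{*}$ is in general not in $\cF^{[d]}_{*}$ (for abelian $T$ only $t=e$ qualifies). So the fact that $u^{[d]}_{*}$ and $v^{[d]}_{*}$ lie in a common minimal left ideal must rest on the Glasner--Gutman--Ye lemma, which the paper also uses implicitly.
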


\begin{proof}
    Let $A$ be a $\uptau^{[d]}$-closed subset of $uX$. By \cref{prop: circle_arithmetic}, we have
    \begin{align*}
        v^{[d]}_{*}(v^{[d]}_{*}\circ_{\cF^{[d]}_{*}} \Delta^{[d]}_{*}(A)) &= v^{[d]}_{*}(u^{[d]}_{*}\circ_{\cF^{[d]}_{*}} \Delta^{[d]}_{*}(A))\\
        &= v^{[d]}_{*} u^{[d]}_{*} (u^{[d]}_{*} \circ_{\cF^{[d]}_{*}} \Delta^{[d]}_{*}(A)).
    \end{align*} 

    Therefore, we obtain that\begin{align*}
        \cltauF{d}{vA} = v\cltauF{d}{A} = vA.
    \end{align*} 
    
    Thus, $vA$ is $\uptau^{[d]}$-closed in $vX$, and hence we conclude that the map is a $\uptau^{[d]}$-isomorphism of $uX$ onto $vX$.
\end{proof}

Let $(g_{i})_{i\in I}$ be a net in $G$ and $g\in G$. The convergence of $(g_{i})_{i\in I}$ to $g$ with respect to the $\uptau^{[d]}$-topology is denoted by $g_{i}\taulimF{d} g$.

\begin{lemma}\label{lemma: xi_to_x_xi_taulim_x}
    Let $(x_{i})_{i}$ be a net in $uX$ with $x_{i}\to x$, for some $x\in X$. Then $x_{i} \taulimF{d} ux$.
\end{lemma}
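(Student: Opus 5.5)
Convergence in the $\uptau^{[d]}$-topology can be tested by closures, so the plan is to show that $ux$ lies in the $\uptau^{[d]}$-closure of every tail of the net; by the usual argument this gives $x_{i}\taulimF{d} ux$ for the net itself.

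Concretely, suppose $x_{i}\not\taulimF{d} ux$. Then there is a $\uptau^{[d]}$-open set $U\ni ux$ and a cofinal subset $I'$ of indices with $x_{i}\notin U$ for $i\in I'$. The subnet $(x_{i})_{i\in I'}$ still converges to $x$ in $X$. Put $A=\{x_{i}:i\in I'\}\subseteq uX\setminus U$; since $uX\setminus U$ is $\uptau^{[d]}$-closed, $\cltauF{d}{A}\subseteq uX\setminus U$. It is therefore enough to prove $ux\in\cltauF{d}{A}$, which contradicts $ux\in U$. More generally, we will show: if $A\subseteq uX$ and $x\in\overline{A}$, then $ux\in\cltauF{d}{A}$. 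This fact was already used in the compactness proof in the form $u\overline{A}\subseteq \cltauF{d}{A}$, and we verify it directly.

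Since $x\in\overline{A}$, the diagonal point $x^{[d]}_{*}$ lies in $\overline{\Delta^{[d]}_{*}(A)}$. We use the characterization of the circle operation: $u^{[d]}_{*}\circ_{\cF^{[d]}_{*}} \Delta^{[d]}_{*}(A)$ consists of all limits of $f_{\lambda}\bz_{\lambda}$ with $f_{\lambda}\to u^{[d]}_{*}$ in $\beta\cF^{[d]}_{*}$ and $\bz_{\lambda}\in\Delta^{[d]}_{*}(A)$. Choose a net $f_{\lambda}\in\cF^{[d]}_{*}$ with $f_{\lambda}\to u^{[d]}_{*}$, and pass to a subnet so that $f_{\lambda}x^{[d]}_{*}\to u^{[d]}_{*}x^{[d]}_{*}=(ux)^{[d]}_{*}$. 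For each $\lambda$, continuity of the single homeomorphism $f_{\lambda}$ lets us pick $a_{\lambda}\in A$ close enough to $x$ that $f_{\lambda}a_{\lambda*}^{[d]}$ lies within a prescribed neighborhood of $f_{\lambda}x^{[d]}_{*}$. With the standard diagonal indexing over pairs (index, neighborhood), this gives $f_{\lambda}a^{[d]}_{\lambda *}\to (ux)^{[d]}_{*}$. Hence
\[
(ux)^{[d]}_{*}\in u^{[d]}_{*}\circ_{\cF^{[d]}_{*}}\Delta^{[d]}_{*}(A).
\]
Because $u^{[d]}_{*}(ux)^{[d]}_{*}=(ux)^{[d]}_{*}$, we also get $(ux)^{[d]}_{*}\in u^{[d]}_{*}(u^{[d]}_{*}\circ\Delta^{[d]}_{*}(A))$, and $ux\in uX$. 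By definition, $ux\in\cltauF{d}{A}$.

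The main obstacle is the diagonal-net step. The action of $\beta\cF^{[d]}_{*}$ on $X^{[d]}_{*}$ is not jointly continuous, so we cannot pass to the limit in $f_{\lambda}$ and $a_{\lambda}$ simultaneously. For this reason each $a_{\lambda}$ must be chosen after $f_{\lambda}$, using only the continuity of the fixed map $f_{\lambda}\in\cF^{[d]}_{*}$, and the index set must then be built to absorb both the original net and the neighborhood base at $(ux)^{[d]}_{*}$. One also has to check that $\cF^{[d]}_{*}$ acts on $X^{[d]}_{*}$ coordinatewise through elements of $T$, so that $u^{[d]}_{*}x^{[d]}_{*}=(ux)^{[d]}_{*}$.
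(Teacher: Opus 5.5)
Your proof is correct and follows essentially the paper's argument. The paper likewise applies the inclusion $u\overline{A}\subseteq\cltauF{d}{A}$ to the tails $\{x_i : i\geq j\}$ and then to all subnets, which is equivalent to your cofinal-set contradiction; the only difference is that you prove this inclusion explicitly while the paper treats it as evident. Your diagonal-net step is valid but unnecessary: since $p\circ_{T} A = p\circ_{T}\overline{A}$ by definition, the convergence $f_{\lambda}x^{[d]}_{*}\to (ux)^{[d]}_{*}$ with $x^{[d]}_{*}\in\overline{\Delta^{[d]}_{*}(A)}$ already gives $(ux)^{[d]}_{*}\in u^{[d]}_{*}\circ_{\cF^{[d]}_{*}}\Delta^{[d]}_{*}(A)$.
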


\begin{proof}
    Let $A_{j}= \{x_{i}: i\geq j\}$. Since $x_{i}\to x$, we have that $x\in \overline{A_{j}}$ for every $j$, and consequently $ux \in u\overline{A_{j}}$. It follows from $u\overline{A_{j}}\subseteq \cltauF{d}{A_{j}}$ that $ux\in \bigcap_{j}\cltauF{d}{A_{j}}$. Since we can apply the same reasoning to any subnet of $\{x_{i}\}_{i}$, we conclude that $x_{i}\taulimF{d} ux$.
\end{proof}

It is known that the inversion and left and right multiplication are $\uptau^{[1]}$-continuous. However, this continuity does not generally hold for higher orders. We will show instead that these operations are $\uptau^{[d]}$-continuous at points satisfying a suitable condition. We will first provide a basis for this topology. For this, we first recall some basic properties of the basis for the topology of the Stone-\v{C}ech compactification $\beta T$.

Since the group $T$ is discrete, for every subset $U\subseteq T$, the closure $\overline{U}$ (taken in $\beta T$) is both open and closed in $\beta T$, and $\overline{U}\cap T=U$. Moreover the family\begin{align*}
    \{\overline{U}: U\subseteq T\}
\end{align*}

form a basis for the topology of $\beta T$.

\begin{theorem}
    Let $(X,T)$ be a system and $x\in uX$. A basis for the $\uptau^{[d]}$-topology at $x$ is given by the sets\begin{align*}
        (U,V)_{d} = \bigcup_{f\in V} \bigcap_{\epsilon\neq\emptyset} f_{\epsilon}^{-1}U \cap uX
    \end{align*}

    where $V\subseteq \cF^{[d]}_{*}$ such that $u^{[d]}_{*}\in \overline{V}$ and $U$ is a neighborhood of $x$ in $X$.
\end{theorem}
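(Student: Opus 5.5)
We need to show two things. First, each set $(U,V)_{d}$ (with $x\in U$ and $u^{[d]}_{*}\in\overline{V}$) is a $\uptau^{[d]}$-neighborhood of $x$. Second, every $\uptau^{[d]}$-open set $W\ni x$ contains some $(U,V)_{d}$. Throughout, the main tool is the net description of $\cltauF{d}{\cdot}$ recalled after its definition: $y\in\cltauF{d}{A}$ if and only if $y\in uX$ and, for every net $f_i\to u^{[d]}_{*}$ in $\cF^{[d]}_{*}$, there are $a_i\in A$ with $f_i a^{[d]}_{i*}\to y^{[d]}_{*}$. Equivalently, using the basis $\{\overline{V}\}$ of $\beta\cF^{[d]}_{*}$: for every $V$ with $u^{[d]}_{*}\in\overline V$ and every neighborhood $U'$ of $y$, there exist $f\in V$ and $a\in A$ with $f_\epsilon a\in U'$ for all $\epsilon\neq\emptyset$. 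We will first prove this clopen-basis reformulation, directly from the definition of the circle operation, by indexing nets over pairs $(V,U')$.

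\textbf{Step 1: $(U,V)_d$ is a neighborhood of $x$.} Take $U$ open and put $C=uX\setminus (U,V)_{d}$. Let $W$ be open with $x\in W$ and $\overline{W}\subseteq U$; this uses that $X$ is compact Hausdorff, hence regular. We claim $x\notin\cltauF{d}{C}$. Indeed, if $x$ were in the closure, the reformulation applied to the pair $(V,W)$ gives $f\in V$ and $c\in C$ with $f_\epsilon c\in W\subseteq U$ for all $\epsilon\neq\emptyset$, i.e.\ $c\in\bigcap_\epsilon f_\epsilon^{-1}U\cap uX\subseteq (U,V)_d$, a contradiction. Hence $uX\setminus\cltauF{d}{C}$ is a $\uptau^{[d]}$-open set containing $x$ and contained in $(U,V)_{d}$, so $(U,V)_d$ is a $\uptau^{[d]}$-neighborhood of $x$. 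The statement is best read with "$(U,V)_d$ is a neighborhood" rather than "open", and we will interpret it that way, as is usual for $\uptau$-bases. For a non-open neighborhood $U$ we simply shrink to an open one.

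\textbf{Step 2: the family is cofinal.} Let $W\ni x$ be $\uptau^{[d]}$-open and set $A=uX\setminus W$, which is $\uptau^{[d]}$-closed with $x\notin A=\cltauF{d}{A}$. By the reformulation, there exist $V$ with $u^{[d]}_{*}\in\overline V$ and a neighborhood $U$ of $x$ such that no $f\in V$, $a\in A$ satisfy $f_\epsilon a\in U$ for all $\epsilon\neq\emptyset$. This says exactly that $(U,V)_d\cap A=\emptyset$, that is, $(U,V)_d\subseteq W$.

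\textbf{Main obstacle.} The key point is the reformulation of the closure in terms of basic clopen sets $\overline V$ and neighborhoods $U'$. We will derive it as follows. Suppose $y^{[d]}_{*}\in u^{[d]}_{*}\circ\Delta^{[d]}_{*}(A)$. For each pair $(V,U')$, directed by reverse inclusion, choose $f\in\cF^{[d]}_{*}\cap\overline V=V$ and $a\in A$ witnessing closeness, and note that these choices form nets $f\to u^{[d]}_{*}$ and $f a^{[d]}_{*}\to y^{[d]}_{*}$. The converse direction comes from the "for any net" characterization. This step requires care with the definition of $\circ$ via nets and with the fact that $\overline V\cap\cF^{[d]}_{*}=V$. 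Note that the requirement $y\in uX$ is imposed separately and is automatic here, since both sets are intersected with $uX$.
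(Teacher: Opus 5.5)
Your proof is correct. The half showing that each $(U,V)_d$ is a $\uptau^{[d]}$-neighborhood of $x$ is the paper's converse argument: assume a point of $\cltauF{d}{C}$ lies in $U$, note that eventually $f_i\in\overline V\cap\cF^{[d]}_*=V$, and contradict $C\cap(U,V)_d=\emptyset$.

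The cofinality half takes a different route. The paper argues by contradiction with a net indexed only by neighborhoods $U_i$ of $x$, choosing each $V_i$ from $U_i$ through $O_i=\{p\in M:px\in U_i\}$. Because of this, the witnesses $f_i\in V_i$ need not converge to $u^{[d]}_*$. The paper therefore passes to a cluster point $f\in\beta\cF^{[d]}_*$ with $fx^{[d]}_*=x^{[d]}_*$ and uses circle-operation calculus to land in $u^{[d]}_*\circ_{\cF^{[d]}_*}\Delta^{[d]}_*(C)$: first $\lim f_iu^{[d]}_*\circ\Delta^{[d]}_*(C)=fu^{[d]}_*\circ\Delta^{[d]}_*(C)$, then multiplication by $u^{[d]}_*f^{-1}$. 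It also introduces an auxiliary closed set $C'$. You index over pairs $(V,U)$ instead, which forces $f\to u^{[d]}_*$ directly. Cofinality then becomes the literal negation of the closure condition. This is shorter and avoids the somewhat informal use of an inverse $f^{-1}$ of an element not obviously lying in a group. Tying $V$ to $U$, as the paper does, is not needed for the statement.

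Two small fixes. First, regularity in Step 1 is unnecessary: apply your reformulation to the pair $(V,U)$ itself. Second, in your ``main obstacle'' paragraph the hypothesis is misstated. The $(V,U')$-indexed construction proves that the condition implies $y^{[d]}_*\in u^{[d]}_*\circ_{\cF^{[d]}_*}\Delta^{[d]}_*(A)$, via the existential net description of the circle operation, so it should start from the condition rather than from membership. The other direction is the trivial argument that eventually $f_\lambda\in\overline V$ and $f_\lambda a^{[d]}_{\lambda*}\in U'^{[d]}_*$.
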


\begin{proof}
    Let $W$ be a $\uptau^{[d]}$-open neighborhood of $x$ in $uX$, and set $C= uX\setminus W$, so $C$ is a $\uptau^{[d]}$-closed subset of $uX$. Since $C'=\phi(u^{[d]}_{*}\circ_{\cF^{[d]}_{*}} \Delta^{[d]}_{*}(C)\cap \Delta^{[d]}_{*}(C))$ is closed and $x\notin C'$, where $\phi$ is the projection onto the last coordinate, we have that $U = X\setminus C'$ is an open neighborhood of $x$ in $X$. Now, consider the family $(U_{i})_{i}$ of open neighborhoods of $x$ contained in $U$. For each $U_{i}$, define the open set $O_{i} = \{p\in M: px\in U_{i}\}$, and set $V_{i}\subseteq \cF^{[d]}_{*}$ such that $u^{[d]}_{*} \in \overline{V_{i}} \cap M^{[d]} \subseteq O_{i*}^{[d]} \cap \beta \cF^{[d]}_{*}$.

    Suppose that $(U_{i},V_{i})_{d}\cap C \neq\emptyset$ for all $i$, and consider $x_{i}\in (U_{i},V_{i})_{d}\cap C$. Then there exists $f_{i}\in V_{i}$ such that $f_{i}x_{i*}^{[d]} \in U_{i*}^{[d]}$. By the definitions of $U_{i}$ and $V_{i}$, it follows that $f_{i}x_{i*}^{[d]}\to x^{[d]}_{*}$ and $f_{i}x^{[d]}_{*}\to x^{[d]}_{*}$. Without loss of generality, by passing to a subnet if necessary, we may assume that $f_{i}\to f$. Therefore, we have $fx^{[d]}_{*}=x^{[d]}_{*}$, which in particular implies that $u^{[d]}_{*}f^{-1}x^{[d]}_{*} = x^{[d]}_{*}$. Thus, we obtain that \begin{align*}
        x^{[d]}_{*} = \lim f_{i}x_{i*}^{[d]} = \lim f_{i} u^{[d]}_{*} x_{i*}^{[d]}\in \lim f_{i}u^{[d]}_{*}\circ_{\cF^{[d]}_{*}} \Delta^{[d]}_{*}(C)= fu^{[d]}_{*} \circ \Delta^{[d]}_{*}(C),
    \end{align*}

    which implies that \begin{align*}
        x^{[d]}_{*} = u^{[d]}_{*}f^{-1}x^{[d]}_{*}= u^{[d]}_{*}f^{-1}u^{[d]}_{*}x^{[d]}_{*} \in u^{[d]}_{*} \circ_{\cF^{[d]}_{*}} \Delta^{[d]}(C).
    \end{align*}

    Therefore, we obtain that $x\in \cltauF{d}{C}=C$, which is a contradiction. Hence, there exists $i$ such that $(U_{i},V_{i})_{d}\cap C = \emptyset$, and we conclude that $(U_{i},V_{i})_{d}\subseteq W$.

    Conversely, let $(U,V)_{d}$ such that $u^{[d]}_{*}\in \overline{V}$ and $U$ is a neighborhood of $x$ in $X$, and define $C= uX\setminus (U,V)_{d}$. In particular, if $y\in C$ then, for every $f\in V$, we have that $fy^{[d]}_{*}\notin U^{[d]}_{*}$. Now, let $z\in \cltauF{d}{C}$. Then there exist a net $(f_{i})_{i}$ in $\cF^{[d]}_{*}$ and a net $(x_{i})_{i}$ in $C$ such that $z^{[d]}_{*} = \lim f_{i}x_{i*}^{[d]}$ and $f_{i}\to u^{[d]}_{*}$. Eventually, $f_{i}\in \overline{V}$ which is a neighborhood of $u^{[d]}_{*}$, and then $f_{i}\in V$. Therefore, we have that $f_{i}x_{i*}^{[d]} \notin U_{*}^{[d]}$, and consequently $z\notin U$. Thus, we deduce that\begin{align*}
        x\in U\cap uX \subseteq uX\setminus\cltauF{d}{C} = \mathrm{int}_{\uptau^{[d]}}((U,V)_{d}).
    \end{align*}

    Hence, we conclude that $(U,V)_{d}$ is a neighborhood of $x$.
\end{proof}

In particular, for $X=M$, using the fact that the family ${\overline{U}:U\subseteq T}$ is a basis for $\beta T$, we obtain the following basis for the $\uptau^{[d]}$-topology on $G$.

\begin{corollary}
    A basis for the $\uptau^{[d]}$-topology at $g\in G$ is given by the sets\begin{align*}
        (U,V)_{d} = \bigcup_{f\in V} \bigcap_{\epsilon\neq\emptyset} f_{\epsilon}^{-1}\overline{U} \cap G
    \end{align*}

    where $V\subseteq \cF^{[d]}_{*}$ such that $u^{[d]}_{*}\in \overline{V}$ and $U\subseteq T$ such that $g\in \overline{U}$.
\end{corollary}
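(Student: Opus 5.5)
The corollary is the specialization of the preceding theorem to $X=M$, so the plan is to take that basis and translate its two ingredients into the language of $\beta T$. Since $M\subseteq\beta T$ and $uM=G$, the theorem applied to $(M,T)$ and a point $g\in G=uM$ says that the sets
\begin{align*}
(W,V)_{d}=\bigcup_{f\in V}\bigcap_{\epsilon\neq\emptyset}f_{\epsilon}^{-1}W\cap G,
\end{align*}
form a basis at $g$. Here $V\subseteq\cF^{[d]}_{*}$ with $u^{[d]}_{*}\in\overline{V}$, and $W$ ranges over neighborhoods of $g$ in $M$. It therefore suffices to show that we may restrict $W$ to sets of the form $\overline{U}\cap M$ with $U\subseteq T$ and $g\in\overline{U}$.

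The first step is to note that the family $\{\overline{U}\cap M: U\subseteq T,\ g\in\overline{U}\}$ is a neighborhood basis of $g$ in $M$. This holds because the clopen sets $\overline{U}$ form a basis of $\beta T$, as recalled before the statement. Each $\overline{U}$ is open in $\beta T$, so $\overline{U}\cap M$ is a genuine neighborhood of $g$ in $M$, and the theorem applies to it directly.

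The second step is a monotonicity argument. If $W'\subseteq W$, then $(W',V)_{d}\subseteq(W,V)_{d}$. So every basic set $(W,V)_{d}$ from the theorem contains some $(\overline{U}\cap M,V)_{d}$ with $\overline{U}\cap M\subseteq W$. Conversely, each $(\overline{U}\cap M,V)_{d}$ is itself one of the theorem's basic neighborhoods. Together these show that the restricted family is still a basis at $g$.

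The third step is a bookkeeping identity: $f_{\epsilon}^{-1}(\overline{U}\cap M)\cap G=f_{\epsilon}^{-1}\overline{U}\cap G$. This is true because $G\subseteq M$ and $M$ is $T$-invariant, so $f_{\epsilon}^{-1}$ maps $M$ onto $M$. The intersection with $M$ can therefore be dropped, which gives exactly the stated form $(U,V)_{d}$. I do not expect any real obstacle. The only point needing care is making sure that passing from arbitrary neighborhoods in $M$ to traces of clopen sets $\overline{U}$ preserves both directions of the basis property, and the second step handles both.
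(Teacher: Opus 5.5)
Your proposal is correct and follows the same route as the paper, which simply specializes the preceding basis theorem to $X=M$ (so $uX=G$) and uses the fact that the clopen sets $\overline{U}$, $U\subseteq T$, form a basis of $\beta T$. Your monotonicity step and the identity $f_{\epsilon}^{-1}(\overline{U}\cap M)\cap G=f_{\epsilon}^{-1}\overline{U}\cap G$ (valid since $tM=M$ for $t\in T$) make explicit the details the paper leaves implicit.
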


First, we prove that the left multiplication is a $\uptau^{[d]}$-homeomorphism.

\begin{proposition}
    Left multiplication in $G$ is a $\uptau^{[d]}$-homeomorphism.
\end{proposition}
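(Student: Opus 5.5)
Fix $g\in G$ and write $L_g\colon G\to G$, $L_g(h)=gh$. Since $L_g$ is a bijection of $G$ with inverse $L_{g^{-1}}$ (here $g^{-1}$ is the inverse in the group $G=uM$), it suffices to show that $L_g$ is $\uptau^{[d]}$-closed, i.e.\ that $g\,\cltauF{d}{A}\subseteq \cltauF{d}{gA}$ for every $A\subseteq G$. Applying the same inclusion to $g^{-1}$ and the set $gA$ then gives $\cltauF{d}{gA}\subseteq g\,\cltauF{d}{A}$, so $g\,\cltauF{d}{A}=\cltauF{d}{gA}$. Hence $L_g$ maps closed sets to closed sets, and so does its inverse, so $L_g$ is a $\uptau^{[d]}$-homeomorphism.

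To prove the inclusion, I will use the defining closure operator with $X=M$. Let $x\in\cltauF{d}{A}$, so that
\[x^{[d]}_{*}\in u^{[d]}_{*}\bigl(u^{[d]}_{*}\circ_{\cF^{[d]}_{*}}\Delta^{[d]}_{*}(A)\bigr).\]
The key observation is that left multiplication by $g$ in every coordinate, i.e.\ the map $\Phi\colon M^{[d]}_{*}\to M^{[d]}_{*}$, $\Phi(\bx)=g^{[d]}_{*}\bx$, commutes with the action of $\cF^{[d]}_{*}$. This is \emph{not} literally true, because the action of $T$ on $M$ is by left multiplication, and $g$ does not commute with $t$. So I will instead use right multiplication. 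In $\beta T$, the map $p\mapsto pq$ is continuous and commutes with the left $T$-action, $t(pq)=(tp)q$.

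This forces a switch in viewpoint. The natural candidate for a homeomorphism that commutes with the dynamics is right multiplication $h\mapsto hg$, which is continuous on $M$ and equivariant. The left multiplication statement then has to be handled through the identity $gA$ read in $G$ as a subset of the system $(M,T)$ via the point $g$.

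I would argue as follows. Write $A=\{a\}$-net form: $x\in\cltauF{d}{A}$ means that for every net $f_i\to u^{[d]}_{*}$ in $\cF^{[d]}_{*}$ there are $a_i\in A$ with $f_i a_{i*}^{[d]}\to x^{[d]}_{*}$. For left multiplication, consider the pointed system $(M,T)$. For $p\in M$, the map $h\mapsto ph$ on $G$ arises because $G\ni h\mapsto hx_0$ identifies $G$ with $uX$ when $X=M$ and $x_0=u$. Left translation by $g$ corresponds to the action of $g\in\beta T$, which is realized as a limit of $t_\lambda\in T$.

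So the main step is the following. Take $t_\lambda\to g$. Because $\cF^{[d]}_{*}$ contains the diagonal copies $t^{[d]}_{*}$, which commute with everything in $\cF^{[d]}_{*}$ for $T$ abelian (and in general I will use the Host--Kra group's normality of the diagonal), one can combine the nets $f_i$ and $t_\lambda$. Then $g^{[d]}_{*}\circ(u^{[d]}_{*}\circ\Delta^{[d]}_{*}(A))\subseteq gu^{[d]}_{*}\circ\Delta^{[d]}_{*}(A)$, using that the circle operation satisfies $p\circ(q\circ B)\subseteq pq\circ B$. \cref{prop: circle_arithmetic}(1) is then used to rewrite $g^{[d]}_{*}u^{[d]}_{*}\circ \Delta = u^{[d]}_{*}\circ g^{[d]}_{*}\Delta(A)$ up to the $u$-projection, using $g=ug$ and the identity $vp\circ wA=v\circ pA$. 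This yields $(gx)^{[d]}_{*}\in u^{[d]}_{*}(u^{[d]}_{*}\circ\Delta^{[d]}_{*}(gA))$, i.e.\ $gx\in\cltauF{d}{gA}$.

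The main obstacle is the step exchanging the diagonal element $g^{[d]}_{*}\in\beta(\text{diagonal})$ with the minimal idempotent $u^{[d]}_{*}$ of $\beta\cF^{[d]}_{*}$. I will try to justify it by checking that the circle-operation identities of \cref{prop: circle_arithmetic} hold for the $\cF^{[d]}_{*}$-action with $g^{[d]}_{*}$ in place of $p$. This should go through because $g^{[d]}_{*}u^{[d]}_{*}=g^{[d]}_{*}$ lies in the minimal ideal generated by $u^{[d]}_{*}$.
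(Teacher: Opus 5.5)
Your argument fails at the step where you assert that $\cF^{[d]}_{*}$ contains the diagonal elements $t^{[d]}_{*}$. For $d\geq 2$ this is false. For $d=2$ and $T$ abelian, $\cF^{[2]}_{*}=\{(t,s,ts):t,s\in T\}$, which contains $(t,t,t)$ only when $t=e$. The diagonal belongs to $\mathcal{HK}^{[d]}$, not to the face group, and this is exactly what distinguishes $\uptau^{[d]}$ from $\uptau$.

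Consequently $g^{[d]}_{*}$ is in general not in the closure of $\cF^{[d]}_{*}$, so it does not act as an element of $\beta\cF^{[d]}_{*}$ through $\circ_{\cF^{[d]}_{*}}$. Hence the inclusion $g^{[d]}_{*}\circ\bigl(u^{[d]}_{*}\circ_{\cF^{[d]}_{*}}\Delta^{[d]}_{*}(A)\bigr)\subseteq g^{[d]}_{*}u^{[d]}_{*}\circ_{\cF^{[d]}_{*}}\Delta^{[d]}_{*}(A)$ and the appeal to \cref{prop: circle_arithmetic}(1) with $p=g^{[d]}_{*}$ are not available. For a concrete obstruction, take $T=\Z$ and project to an irrational rotation of $\mathbb{T}$. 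Then $\overline{\cF^{[2]}_{*}}$ maps into $\{(a,b,a+b)\}$, while $g^{[2]}_{*}$ maps to $(\alpha,\alpha,\alpha)$, and $\alpha\neq 0$ for suitable $g\in G$.

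Your manipulations become legitimate precisely when $g^{[d]}_{*}\in\overline{\cF^{[d]}_{*}u^{[d]}_{*}}$, which is the hypothesis of \cref{prop: right_mult_taud}. So at best your argument reproves that proposition. The paper states explicitly that it cannot show $h\mapsto gh$ is a $\uptau^{[d]}$-homeomorphism for $d>1$.

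In the paper's convention the statement is about $h\mapsto hg$: its proof shows $\cltauF{d}{Ag}=Ag$ for $\uptau^{[d]}$-closed $A$, and $h\mapsto gh$ is what the paper calls right multiplication. For $h\mapsto hg$, the idea you raised and then abandoned is the right one. Coordinatewise right multiplication $\bx\mapsto \bx g^{[d]}_{*}$ on $M^{[d]}_{*}$ is continuous and commutes with every $f\in\cF^{[d]}_{*}$, since $f_{\epsilon}(x_{\epsilon}g)=(f_{\epsilon}x_{\epsilon})g$. Hence \cref{prop: factors_circle}(1) gives $\bigl(u^{[d]}_{*}\circ_{\cF^{[d]}_{*}}\Delta^{[d]}_{*}(A)\bigr)g^{[d]}_{*}=u^{[d]}_{*}\circ_{\cF^{[d]}_{*}}\Delta^{[d]}_{*}(Ag)$. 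Multiplying on the left by $u^{[d]}_{*}$ (associativity) yields $\cltauF{d}{Ag}=\cltauF{d}{A}\,g$. Applying this also to $g^{-1}$ shows that $h\mapsto hg$ is a $\uptau^{[d]}$-homeomorphism. No exchange of $g^{[d]}_{*}$ with $u^{[d]}_{*}$ is needed, because $g$ sits on the side that commutes with the $\cF^{[d]}_{*}$-action.
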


\begin{proof}
    Let $g\in G$ and let $A$ be a $\uptau^{[d]}$-closed subset of $G$. Let $h\in \cltauF{d}{Ag}$, so we have \begin{align*}
        h^{[d]}_{*} \in u^{[d]}_{*}(u^{[d]}_{*}\circ_{\cF^{[d]}_{*}} \Delta^{[d]}_{*}(Ag)) = u^{[d]}_{*}(u^{[d]}_{*}\circ_{\cF^{[d]}_{*}} \Delta^{[d]}_{*}(A))g^{[d]}_{*}.
    \end{align*}

    Therefore, we get that $hg^{-1} \in \cltauF{d}{A} = A$, and consequently $\cltauF{d}{Ag} = Ag$. That is, the left multiplication is a $\uptau^{[d]}$-homeomorphism.
\end{proof}

Now, we show that the right multiplication by an element under a certain condition is continuous with respect to the $\uptau^{[d]}$-topology.

\begin{proposition}\label{prop: right_mult_taud}
    Let $g\in G$ be such that $g^{[d]}_{*} \in \overline{\cF^{[d]}_{*}u^{[d]}_{*}}$. Then $h\mapsto gh$ is $\uptau^{[d]}$-continuous.
\end{proposition}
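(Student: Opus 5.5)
We show that left multiplication by $g$ (the map $h\mapsto gh$) sends $\uptau^{[d]}$-closures into $\uptau^{[d]}$-closures: for every $A\subseteq G$,
\begin{align*}
    g\,\cltauF{d}{A}\subseteq \cltauF{d}{gA}.
\end{align*}
This is the closure characterization of continuity.

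Fix $h\in\cltauF{d}{A}$. By the definition of $\cltauF{d}{\cdot}$ (and \cref{prop: circle_arithmetic} to drop the leading $u^{[d]}_{*}$), we have $h^{[d]}_{*}\in u^{[d]}_{*}\circ_{\cF^{[d]}_{*}}\Delta^{[d]}_{*}(A)$. So there are nets $f_i\to u^{[d]}_{*}$ in $\cF^{[d]}_{*}$ and $a_i\in A$ with $f_i a^{[d]}_{i*}\to h^{[d]}_{*}$. We must produce nets $f'_j\to u^{[d]}_{*}$ and $b_j\in gA$ with $f'_j b^{[d]}_{j*}\to (gh)^{[d]}_{*}$.

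The hypothesis $g^{[d]}_{*}\in\overline{\cF^{[d]}_{*}u^{[d]}_{*}}$ gives a net $k_\lambda\in\cF^{[d]}_{*}$ with $k_\lambda u^{[d]}_{*}\to g^{[d]}_{*}$. The guiding computation is
\begin{align*}
    (gh)^{[d]}_{*}=g^{[d]}_{*}h^{[d]}_{*}=\lim_\lambda k_\lambda u^{[d]}_{*}h^{[d]}_{*}=\lim_\lambda k_\lambda h^{[d]}_{*}.
\end{align*}
Here the product is taken in $(\beta T)^{[d]}\supseteq \beta\cF^{[d]}_{*}$-orbits, right multiplication by $h^{[d]}_{*}$ is continuous, and $uh=h$. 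Moreover, since each $k_\lambda$ is an element of the group (hence acts continuously), $k_\lambda f_i a^{[d]}_{i*}\to k_\lambda h^{[d]}_{*}$ for fixed $\lambda$.

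Rather than a diagonal argument, the cleaner route is through the circle operation. Applying $g^{[d]}_{*}$ to the set $u^{[d]}_{*}\circ\Delta^{[d]}_{*}(A)$, we would like
\begin{align*}
    g^{[d]}_{*}\bigl(u^{[d]}_{*}\circ\Delta^{[d]}_{*}(A)\bigr)\subseteq g^{[d]}_{*}u^{[d]}_{*}\circ\Delta^{[d]}_{*}(A)=g^{[d]}_{*}\circ\Delta^{[d]}_{*}(A),
\end{align*}
using the general inclusion $p(q\circ B)\subseteq pq\circ B$, which follows from joint limits. One also has $g^{[d]}_{*}\circ\Delta^{[d]}_{*}(A)=u^{[d]}_{*}\circ g^{[d]}_{*}\Delta^{[d]}_{*}(A)=u^{[d]}_{*}\circ\Delta^{[d]}_{*}(gA)$. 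For this second identity we need $p\circ B=u\circ pB$ in the form of \cref{prop: circle_arithmetic}(1), which is available for $p$ in the minimal ideal. This is exactly where the hypothesis enters: it guarantees that $g^{[d]}_{*}$ lies in the orbit closure of $u^{[d]}_{*}$ under $\cF^{[d]}_{*}$, i.e. in the minimal left ideal $\beta\cF^{[d]}_{*}\,u^{[d]}_{*}$ of $\beta\cF^{[d]}_{*}$ containing the minimal idempotent $u^{[d]}_{*}$ (by the lemma of Glasner–Gutman–Ye quoted above). Without it, $g^{[d]}_{*}$ is merely an element of $M^{2^d-1}$ and need not act through $\beta\cF^{[d]}_{*}$ at all.

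Combining, $(gh)^{[d]}_{*}\in u^{[d]}_{*}\circ\Delta^{[d]}_{*}(gA)$. Since $gh\in G$, multiplying by $u^{[d]}_{*}$ gives $gh\in\cltauF{d}{gA}$.

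The main obstacle is justifying the inclusion $p(q\circ B)\subseteq pq\circ B$ and the identity $p\circ B=u\circ pB$ in the product setting with $p=g^{[d]}_{*}$ viewed as an element of $\beta\cF^{[d]}_{*}$. I would first identify $g^{[d]}_{*}$ with some $p'u^{[d]}_{*}$, where $p'\in\beta\cF^{[d]}_{*}$ is a limit of the $k_\lambda$. I would then check that $p'$ acts on $X^{[d]}_{*}$ as $g^{[d]}_{*}$ on points of $u^{[d]}_{*}X^{[d]}_{*}$, and apply \cref{prop: circle_arithmetic} in the system $(M^{[d]}_{*},\cF^{[d]}_{*})$ with the minimal ideal generated by $u^{[d]}_{*}$.
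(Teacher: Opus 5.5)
Your argument is correct and is essentially the paper's. Both rest on the identity $g^{[d]}_{*}\circ_{\cF^{[d]}_{*}}\Delta^{[d]}_{*}(A)=u^{[d]}_{*}\circ_{\cF^{[d]}_{*}}\Delta^{[d]}_{*}(gA)$ from \cref{prop: circle_arithmetic}(1), with the hypothesis used to let $g^{[d]}_{*}$ act through the minimal left ideal of $\beta\cF^{[d]}_{*}$ containing $u^{[d]}_{*}$. The only difference is that the paper proves the full equality $\cltauF{d}{gA}=g\,\cltauF{d}{A}$, with the reverse inclusion passing through $(g^{-1})^{[d]}_{*}$, whereas you prove only $g\,\cltauF{d}{A}\subseteq\cltauF{d}{gA}$ via $p(q\circ B)\subseteq pq\circ B$. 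That inclusion is exactly what continuity requires, and it avoids the inverse.
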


\begin{proof}
    Let $A$ be a $\uptau^{[d]}$-closed subset of $G$. Since $g^{[d]}_{*} \in \overline{\cF^{[d]}_{*}u^{[d]}_{*}}$, we have\begin{align*}
         u^{[d]}_{*}(u^{[d]}_{*}\circ_{\cF^{[d]}_{*}} g^{[d]}_{*}\Delta^{[d]}_{*}(A))&= u^{[d]}_{*}(g^{[d]}_{*}\circ_{\cF^{[d]}_{*}} \Delta^{[d]}_{*}(A))\\
                      &= g^{[d]}_{*}(g^{-1})^{[d]}_{*}(g^{[d]}_{*}\circ_{\cF^{[d]}_{*}} \Delta^{[d]}_{*}(A))\\
                      &= g^{[d]}_{*}((g^{-1})^{[d]}_{*} \circ (g^{[d]}_{*}\circ_{\cF^{[d]}_{*}}\Delta^{[d]}_{*}(A)))\\
                      &= g^{[d]}_{*}(u^{[d]}_{*}\circ_{\cF^{[d]}_{*}}\Delta^{[d]}_{*}(A)).
    \end{align*}

    Therefore, it follows that $\cltauF{d}{gA} = g\cltauF{d}{A} = gA$, completing the proof.
\end{proof}

Note that for $d=1$ every element of $G$ satisfies the condition of \cref{prop: right_mult_taud}, so we recover that the right multiplication is a $\uptau$-homeomorphism. Nevertheless, for $d>1$, we are unable to prove that the right multiplication is a $\uptau^{[d]}$-homeomorphism. We believe that \cref{prop: right_mult_taud} is the best we can expect.

Now, we can prove the inversion is $\uptau^{[d]}$-continuous at points that satisfy the same condition as in \cref{prop: right_mult_taud}. Before, we need the following lemma.

\begin{lemma}\label{lemma: condition_inversion_basis}
    For $U\subseteq T$, $V\subseteq \cF^{[d]}_{*}$ and $g\in G$. If $g^{[d]}_{*} \in \bigcup_{\textbf{g}\in U^{[d]}_{*}}\textbf{g}^{-1}\overline{V} \cap G^{[d]}_{*}$, then $g\in (U,V)_{d}^{-1}$.
\end{lemma}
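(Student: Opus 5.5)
The plan is to unwind both sides of the statement until they compare the same objects, namely elements of $\beta T$ coordinate by coordinate, and then to pass to a single $f\in V$ using openness.

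\emph{Unwinding the hypothesis.} Suppose $g^{[d]}_{*}\in \textbf{g}^{-1}\overline{V}\cap G^{[d]}_{*}$ for some $\textbf{g}=(t_{\epsilon})_{\epsilon\neq\emptyset}\in U^{[d]}_{*}$, so every $t_{\epsilon}$ lies in $U\subseteq T$. Left translation by an element of the discrete group $T^{[d]}_{*}$ is a homeomorphism of the Stone--\v{C}ech compactification. Hence $\textbf{g}^{-1}\overline{V}=\overline{\textbf{g}^{-1}V}$, and the hypothesis is equivalent to $q:=\textbf{g}g^{[d]}_{*}\in\overline{V}$. Coordinatewise this reads $q_{\epsilon}=t_{\epsilon}g$. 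Choose a net $(f_{i})_{i}$ in $V$ with $f_{i}\to q$. Since the coordinate projections are continuous, $f_{i,\epsilon}\to t_{\epsilon}g$ in $\beta T$ for every $\epsilon\neq\emptyset$.

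\emph{Unwinding the goal.} The conclusion $g\in (U,V)_{d}^{-1}$ means $g^{-1}\in (U,V)_{d}$. Here $g^{-1}$ is the inverse in the group $G$, so $gg^{-1}=u$. By the definition of the basic sets, we must find one $f\in V$ with $f_{\epsilon}g^{-1}\in\overline{U}$ for all $\epsilon\neq\emptyset$; this is exactly the statement that $g^{-1}\in f_{\epsilon}^{-1}\overline{U}$ for every $\epsilon$.

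\emph{Passing to a single $f$.} In $\beta T$, right multiplication $p\mapsto pg^{-1}$ is continuous. Therefore $f_{i,\epsilon}g^{-1}\to t_{\epsilon}gg^{-1}=t_{\epsilon}u$ for each $\epsilon$. The set $\overline{U}$ is clopen and there are only finitely many coordinates $\epsilon$. So once each limit $t_{\epsilon}u$ lies in $\overline{U}$, there is an index $i$ with $f_{i,\epsilon}g^{-1}\in\overline{U}$ for all $\epsilon$ simultaneously. Taking $f=f_{i}\in V$ then finishes the proof.

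\emph{Main obstacle.} The step I expect to be delicate is the membership $t_{\epsilon}u\in\overline{U}$. The hypothesis only gives $t_{\epsilon}\in U$, and in general $t_{\epsilon}u\neq t_{\epsilon}$.

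\begin{itemize}
\item My first attempt is to use the intersection with $G$ in the definition of $(U,V)_{d}$ and the identity $t_{\epsilon}u=ut_{\epsilon}u$ on $G$. The idea is to read membership of $g^{-1}$ in $f_{\epsilon}^{-1}\overline{U}\cap G$ through the action on $M$, where left translates of $g^{-1}$ are only tested after multiplying by $u$.
\item If that fails, I would instead replace the net $f_{i}$ by $f_{i}$ composed with a net in $T$ converging to $u$. This is legitimate because $u$ is the identity of $G$, so the relevant products become $f_{i,\epsilon}s_{j}g^{-1}$ with $s_{j}\to u$. I would then extract a diagonal subnet whose $\epsilon$-coordinates converge to $t_{\epsilon}$ itself rather than to $t_{\epsilon}u$, which would land them in the open set $\overline{U}$.
\end{itemize}

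Making one of these two routes rigorous, while keeping $f\in V$, is where I expect the real work of the proof to lie.
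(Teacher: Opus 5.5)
Your unwinding is correct, and it is the same route the paper takes: approximate $\bg g^{[d]}_{*}\in\overline{V}$ by $\bv_i\in V$, multiply on the right by $(g^{-1})^{[d]}_{*}$, and use that $\overline{U}^{[d]}_{*}$ is open. You also compute the limit correctly as $\bg u^{[d]}_{*}$. The paper's proof instead asserts $\bv_i(g^{-1})^{[d]}_{*}\to\bg$ at precisely this point.

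The step you flag, $t_\epsilon u\in\overline{U}$, is not just delicate: it cannot be established, because the statement as written is false already for $d=1$ and any infinite $T$. For $d=1$ one has $\cF^{[1]}_{*}=T$ and $(U,V)_1=\bigcup_{f\in V}f^{-1}\overline{U}\cap G$. Take $U=\{t\}$, $V=T$ and any $g\in G$. The hypothesis holds since $tg\in\beta T=\overline{V}$. The conclusion asks for $f\in T$ with $fg^{-1}\in\overline{\{t\}}=\{t\}$, which is impossible: $fg^{-1}\in M$ and $M\cap T=\emptyset$ (if $s\in M\cap T$ then $M\supseteq\beta T s=\beta T$, which is not a minimal left ideal when $T$ is infinite).

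The normalizations $u\in\overline{U}$ and $u^{[d]}_{*}\in\overline{V}$ used in \cref{thm: inversion_taud} do not rescue it. For $T=\Z$, pick $t\neq e$, $g\in G\setminus\{u\}$, and a clopen $\overline{U_0}\ni u$ missing $g^{-1}$ and $tu$ (note $tu\neq u$, since $\Z$ acts freely on $\beta\Z$). Set $U=U_0\cup\{t\}$ and $V=\{f\in T: fg^{-1}\notin\overline{U}\}$. Then $\overline{V}=\{p\in\beta T: pg^{-1}\notin\overline{U}\}$ contains both $u$ and $tg$, so the hypothesis holds with $\bg=t$. By construction, no $f\in V$ satisfies $fg^{-1}\in\overline{U}$.

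Accordingly, neither of your repairs can work. The identity $t_\epsilon u=ut_\epsilon u$ holds only when $t_\epsilon u\in G$. In any case, membership in $(U,V)_d$ is tested by $f_\epsilon g^{-1}\in\overline{U}$ itself, with no $u$ in front. In the second route, every product $f_{i,\epsilon}s_jg^{-1}$ lies in the closed set $M$, so no net of them can converge to $t_\epsilon\notin M$. Indeed, $s_j\to u$ only gives $f_{i,\epsilon}s_jg^{-1}\to f_{i,\epsilon}ug^{-1}=f_{i,\epsilon}g^{-1}$, so nothing changes, and $f_is_j$ need not lie in $V$ anyway.

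What your argument does prove is the corrected statement with the extra hypothesis $\bg u^{[d]}_{*}\in\overline{U}^{[d]}_{*}$. This is automatic if the coordinates of $\bg$ lie in $M$, since $pu=p$ for $p\in M$. Under it, your finite-coordinate openness step finishes the proof.

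This corrected form is also what the proof of \cref{thm: inversion_taud} needs from the lemma. There $\bg=f_i\to u^{[d]}_{*}$, so continuity of right multiplication gives $f_iu^{[d]}_{*}\to u^{[d]}_{*}\in\overline{U}^{[d]}_{*}$, and the extra hypothesis holds eventually.
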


\begin{proof}
    Let $g\in G$ such that $g^{[d]}_{*} \in \bigcup_{\bg\in U^{[d]}_{*}}\bg^{-1}\overline{V} \cap G^{[d]}_{*}$. Therefore, there is a $\bg \in U^{[d]}_{*}$ such that $\bg g^{[d]}_{*}\in U^{[d]}_{*} \in \overline{V}$. Thus, there exists a net $(\bv_{i})_{i\in I}$ in $V$ such that $\bv_{i}(g^{-1})^{[d]}_{*} \to \bg$. Since $U$ is an open set in $\beta T$, there exists $\bv \in \overline{V}$ such that $\bv (g^{-1})^{[d]}_{*} \in U^{[d]}_{*}$. Hence, we conclude that $g^{-1} \in (U,V)_{d}$.
\end{proof}

\begin{theorem}\label{thm: inversion_taud}
    Let $g\in G$ be such that $g^{[d]}_{*} \in \overline{\cF^{[d]}_{*}u^{[d]}_{*}}$. Then the map $h\mapsto h^{-1}$ of $G$ onto itself is $\uptau^{[d]}$-continuous at $g$.
\end{theorem}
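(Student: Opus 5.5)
The plan is to work with the basis of the $\uptau^{[d]}$-topology given by the sets $(U,V)_{d}$, together with \cref{lemma: condition_inversion_basis}. Fix a basic $\uptau^{[d]}$-neighborhood $(U,V)_{d}$ of $g^{-1}$, where $U\subseteq T$ with $g^{-1}\in\overline{U}$ and $V\subseteq \cF^{[d]}_{*}$ with $u^{[d]}_{*}\in\overline{V}$. We must produce a $\uptau^{[d]}$-neighborhood $W$ of $g$ such that every $h\in W$ satisfies $h^{-1}\in (U,V)_{d}$. By \cref{lemma: condition_inversion_basis}, it is enough to ensure that, for every $h\in W$,
\begin{align*}
h^{[d]}_{*}\in \bigcup_{\bg\in U^{[d]}_{*}}\bg^{-1}\overline{V}.
\end{align*}
In other words, we need some $\bg\in U^{[d]}_{*}$ with $\bg h^{[d]}_{*}\in\overline{V}$.

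First I would show that $g$ itself satisfies this, and this is where the hypothesis $g^{[d]}_{*}\in\overline{\cF^{[d]}_{*}u^{[d]}_{*}}$ enters. Since $u^{[d]}_{*}\in\overline{V}$ and $u^{[d]}_{*}=(g^{-1})^{[d]}_{*}g^{[d]}_{*}$, we can approximate $g^{[d]}_{*}$ by elements $f_{i}u^{[d]}_{*}$ with $f_{i}\in\cF^{[d]}_{*}$. We can also approximate $(g^{-1})^{[d]}_{*}$ from $U^{[d]}_{*}$, because $g^{-1}\in\overline{U}$ makes $\overline{U}^{[d]}_{*}$ an open neighborhood of $(g^{-1})^{[d]}_{*}$. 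Right multiplication is continuous and $\overline{V}$ is open, so there are $\bg\in U^{[d]}_{*}$ and $f\in\cF^{[d]}_{*}$ with $\bg f u^{[d]}_{*}\in\overline{V}$. Now set
\begin{align*}
V'=\{f'\in\cF^{[d]}_{*}: \bg f' u^{[d]}_{*}\in \overline{V}\}.
\end{align*}
I would check that $u^{[d]}_{*}\in\overline{V'}$, using the hypothesis once more (it is what lets us pass from $\cF^{[d]}_{*}$-elements to the element $g^{[d]}_{*}$). I would also pick a set $U'\subseteq T$ with $g\in\overline{U'}$ such that $\bg\,\overline{U'}^{[d]}_{*}\subseteq\overline{V}$ in a suitable sense. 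The candidate neighborhood of $g$ is then $W=(U',V')_{d}$.

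Next, for $h\in (U',V')_{d}$ there is $f'\in V'$ with $f'_{\epsilon}h\in\overline{U'}$ for every $\epsilon\neq\emptyset$. Combining this with the defining property of $V'$, and using that left multiplication by elements of $T$ is continuous on $\beta T$, I would produce an element $\bg'\in U^{[d]}_{*}$ with $\bg' h^{[d]}_{*}\in\overline{V}$. By \cref{lemma: condition_inversion_basis} this gives $h^{-1}\in(U,V)_{d}$, so inversion maps $W$ into $(U,V)_{d}$, which is continuity at $g$.

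I expect the main obstacle to be the choice of $(U',V')$. Multiplication in $\beta T$ is continuous only on one side, so we cannot simply transport the neighborhood $\overline{V}$ of $u^{[d]}_{*}$ back to a neighborhood of $g^{[d]}_{*}$. The hypothesis $g^{[d]}_{*}\in\overline{\cF^{[d]}_{*}u^{[d]}_{*}}$ has to stand in for this missing continuity, exactly as it does in \cref{prop: right_mult_taud}. If the direct construction fails, my fallback is a net argument. Take $h_{i}\taulimF{d} g$ and suppose $h_{i}^{-1}$ does not converge to $g^{-1}$. Pass to a subnet whose inverses avoid a fixed $\uptau^{[d]}$-closed set $C\ni g^{-1}$, and use the circle-operation identities from the proof of \cref{prop: right_mult_taud} (with $g^{-1}$ in place of $g$) to show that $g^{-1}\in\cltauF{d}{\{h_{i}^{-1}\}}$. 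That would contradict the choice of the subnet.
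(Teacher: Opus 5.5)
Your direct construction breaks exactly where you invoke \cref{lemma: condition_inversion_basis}. For $h\in(U',V')_{d}$ you need a witness $\bg'$ that lies in $U^{[d]}_{*}$ and satisfies $\bg'h^{[d]}_{*}\in\overline{V}$. The natural candidate is $\bg'=\bg f'$, where $f'\in V'$ satisfies $f'h^{[d]}_{*}\in\overline{U'}^{[d]}_{*}$. If $U'$ is chosen with $\bg\,\overline{U'}^{[d]}_{*}\subseteq\overline{V}$, this gives $\bg f'h^{[d]}_{*}\in\overline{V}$. But nothing puts $\bg f'$ in $U^{[d]}_{*}$, since $U^{[d]}_{*}$ is not stable under right multiplication by elements of $\cF^{[d]}_{*}$. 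The condition defining your $V'$, namely $\bg f'u^{[d]}_{*}\in\overline{V}$, plays no role in this step, and continuity of left multiplication by elements of $T$ does not address it. To repair the construction you would have to define $V'$ by the requirement $\bg f'\in U^{[d]}_{*}$. Then $u^{[d]}_{*}\in\overline{V'}$ forces $\bg u^{[d]}_{*}\in\overline{U}^{[d]}_{*}$, a property you never arrange, and your claim that $u^{[d]}_{*}\in\overline{V'}$ follows ``using the hypothesis once more'' is not substantiated. The fallback has the same hole. If $h_{i}^{-1}$ does not $\uptau^{[d]}$-converge to $g^{-1}$, you get a subnet whose inverses lie in a $\uptau^{[d]}$-closed set \emph{not} containing $g^{-1}$. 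The identities behind \cref{prop: right_mult_taud} only control translations $h\mapsto gh$, so they cannot by themselves turn $g\in\cltauF{d}{\{h_{i}\}}$ into information about the inverses $h_{i}^{-1}$.

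What is missing is continuity of inversion at the identity. That is the one point where \cref{lemma: condition_inversion_basis} applies with no extra work, and it is how the paper proceeds. Take a basic neighbourhood $(U,V)_{d}$ of $u$ and set $C=G\setminus(U,V)_{d}^{-1}$. Suppose $h\in\cltauF{d}{C}$ is witnessed by $f_{i}\to u^{[d]}_{*}$ and $g_{i}\in C$ with $f_{i}g_{i*}^{[d]}\to h^{[d]}_{*}$. Because $u\in\overline{U}$, eventually $f_{i}\in U^{[d]}_{*}$, so the lemma with $\bg=f_{i}$ gives $f_{i}g_{i*}^{[d]}\notin\overline{V}$. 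Hence $h^{[d]}_{*}\notin\overline{V}$ and $u\notin\cltauF{d}{C}$. In other words, at $u$ the nets produced by the closure operator automatically supply the elements of $U^{[d]}_{*}$ that you are trying to manufacture at $g$. The general case is then pure translation. From $g_{i}\taulimF{d} g$, \cref{prop: right_mult_taud} gives $g^{-1}g_{i}\taulimF{d} u$, and continuity at $u$ gives $g_{i}^{-1}g\taulimF{d} u$. Since $h\mapsto hg^{-1}$ is a $\uptau^{[d]}$-homeomorphism, $g_{i}^{-1}\taulimF{d} g^{-1}$. The hypothesis enters only in this translation step. Strictly it is needed for $g^{-1}$, and it does transfer: $\overline{\cF^{[d]}_{*}u^{[d]}_{*}}$ is minimal, hence equals $\overline{\cF^{[d]}_{*}g^{[d]}_{*}}$. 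Right multiplication by $(g^{-1})^{[d]}_{*}$ maps the latter onto $\overline{\cF^{[d]}_{*}u^{[d]}_{*}}$ and sends $u^{[d]}_{*}$ to $(g^{-1})^{[d]}_{*}$.
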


\begin{proof}
    First we show that the inversion is continuous at $u$. Let $U\subseteq T$ and $V\subseteq \cF^{[d]}_{*}$ be such that $u\in \overline{U}$ and $u^{[d]}_{*}\in \overline{V}$, and define $C=G\setminus (U,V)_{d}^{-1}$. Let $h\in \cltauF{d}{C}$, in particular we have $h^{[d]}_{*} \in u^{[d]}_{*}\circ_{\cF^{[d]}_{*}} \Delta^{[d]}_{*}(C)$. Then there exist a net $(f_{i})_{i}$ in $\cF^{[d]}_{*}$ and a net $(g_{i})_{i}$ in $C$ such that $f_{i}g_{i*}^{[d]}\to h^{[d]}_{*}$ and $f_{i}\to u^{[d]}_{*}$. Therefore, eventually $f_{i}\in U^{[d]}_{*}$, and consequently, by \cref{lemma: condition_inversion_basis}, we have $f_{i}g_{i*}^{[d]} \notin \overline{V}$. Thus, we deduce that $h^{[d]}_{*}\notin \overline{V}$, and hence\begin{align*}
        u^{[d]}_{*} \in \overline{V}\cap \Delta^{[d]}_{*}(G) \subseteq \Delta^{[d]}_{*}(G \setminus \cltauF{d}{C}) = \Delta^{[d]}_{*}(\mathrm{int}_{\uptau^{[d]}}((U,V)_{d}^{-1})).
    \end{align*} 

    Thus, we obtain that $(U,V)_{d}^{-1}$ is a neighborhood of $u$, and hence we obtain that the inversion is $\uptau^{[d]}$-continuous at $u$. 

    Now, let $g\in G$ be such that $g^{[d]}_{*} \in \overline{\cF^{[d]}_{*}u^{[d]}_{*}}$ and let $(g_{i})_{i\in I}$ be a net in $G$ such that $g_{i} \taulimF{d} g$. By \cref{prop: right_mult_taud}, we have $g^{-1}g_{i}\taulimF{d} u$. Since the inversion map is $\uptau^{[d]}$-continuous at $u$, it follows that $g_{i}^{-1}g \taulimF{d} u$. Hence, we conclude that $g_{i}^{-1}\taulimF{d} g^{-1}$, completing the proof.
\end{proof}

The following theorem gives some topological properties of factor maps with respect to the $\uptau^{[d]}$-topology.

\begin{theorem}\label{thm: pi_u_props}
    Let $\pi:(X,T)\to(Y,T)$ be a factor between minimal systems, and set $\pi_{u}=\pi|_{uX}$. Then,\begin{enumerate}[label=(\roman*)]
        \item $\pi_{u}$ is $\uptau^{[d]}$-continuous.
        \item $\pi_{u}$ is a $\uptau^{[d]}$-homeomorphism if and only if $\pi$ is a proximal factor.
    \end{enumerate}
\end{theorem}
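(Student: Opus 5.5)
For (i), I will show that $\pi_u$ maps $\uptau^{[d]}$-closures into $\uptau^{[d]}$-closures, i.e. $\pi(\cltauF{d}{A})\subseteq \cltauF{d}{\pi(A)}$ for every $A\subseteq uX$. First note that $\pi(uX)=u\pi(X)=uY$, so $\pi_u$ maps $uX$ into $uY$. Let $\pi^{[d]}_*\colon X^{[d]}_*\to Y^{[d]}_*$ be the coordinatewise map; it is a factor map of $\cF^{[d]}_*$-systems, and it intertwines the induced $\beta\cF^{[d]}_*$-actions. Moreover $\pi^{[d]}_*(\Delta^{[d]}_*(A))=\Delta^{[d]}_*(\pi(A))$. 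By \cref{prop: factors_circle}(1), applied to the $\cF^{[d]}_*$-system,
\begin{align*}
\pi^{[d]}_*\bigl(u^{[d]}_*(u^{[d]}_*\circ_{\cF^{[d]}_*}\Delta^{[d]}_*(A))\bigr)=u^{[d]}_*\bigl(u^{[d]}_*\circ_{\cF^{[d]}_*}\Delta^{[d]}_*(\pi(A))\bigr).
\end{align*}
Hence, if $x^{[d]}_*$ lies in the left-hand bracket, then $\pi(x)^{[d]}_*$ lies in the right-hand side, and so $\pi(x)\in\cltauF{d}{\pi(A)}$. Consequently the preimage of a $\uptau^{[d]}$-closed set is $\uptau^{[d]}$-closed: if $B=\cltauF{d}{B}\subseteq uY$ and $A=\pi_u^{-1}(B)$, then $\pi(\cltauF{d}{A})\subseteq\cltauF{d}{B}=B$.

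For (ii), suppose first that $\pi$ is proximal. I claim $\pi_u$ is then a bijection. For surjectivity, given $y\in uY$ pick $x$ with $\pi(x)=y$; then $\pi(ux)=uy=y$. For injectivity, if $x,x'\in uX$ have the same image, they are a proximal pair. Both are fixed by $u$, so $(x,x')$ is an almost periodic point of $X\times X$ that is proximal, hence $x=x'$. Since $\pi_u$ is a continuous bijection from a compact space, one might hope to conclude it is a homeomorphism directly. But the $\uptau^{[d]}$-topology is only $T_1$, so I will instead show directly that $\pi_u$ is $\uptau^{[d]}$-closed, i.e. $\cltauF{d}{\pi(A)}\subseteq\pi(\cltauF{d}{A})$.

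To do this, take $y\in\cltauF{d}{\pi(A)}$, and let $x\in uX$ be its unique preimage. There are nets $f_i\to u^{[d]}_*$ in $\cF^{[d]}_*$ and $a_i\in A$ with $f_i\pi(a_i)^{[d]}_*\to y^{[d]}_*$. Pass to a subnet so that $f_i a_{i*}^{[d]}\to \bz\in X^{[d]}_*$. Then $\bz\in u^{[d]}_*\circ_{\cF^{[d]}_*}\Delta^{[d]}_*(A)$ and $\pi^{[d]}_*(\bz)=y^{[d]}_*$. Applying $u^{[d]}_*$ gives $u^{[d]}_*\bz\in u^{[d]}_*(u^{[d]}_*\circ\Delta^{[d]}_*(A))$ with each coordinate in $uX$ mapping to $y$. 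By injectivity of $\pi_u$, each coordinate equals $x$, so $u^{[d]}_*\bz=x^{[d]}_*$. Hence $x\in\cltauF{d}{A}$, as required.

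Conversely, suppose $\pi_u$ is a $\uptau^{[d]}$-homeomorphism; in particular it is injective. If $\pi$ were not proximal, then by \cref{thm: extensions_ellis_group}(1) we would have $\GG(X)\subsetneq\GG(Y)$. Pick $g\in\GG(Y)\setminus\GG(X)$. Then $gx_0\neq x_0$ are both in $uX$, but $\pi(gx_0)=gy_0=y_0=\pi(x_0)$, contradicting injectivity. So $\pi$ is proximal. Actually injectivity alone already forces this, which makes this direction easy.

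The step I expect to be most delicate is the closedness argument in (ii). There one must check that the limit $\bz$ indeed lies in the circle set, and that multiplying by $u^{[d]}_*$ keeps it in the fiber over $y^{[d]}_*$; this uses that $u^{[d]}_*$ commutes with $\pi^{[d]}_*$ and that $uy=y$.
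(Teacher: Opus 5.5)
Your proposal is correct and follows essentially the same route as the paper. Part (i) goes through \cref{prop: factors_circle} pushed along $\pi^{[d]}_*$, and part (ii) reduces to bijectivity of $\pi_u$ and proves $\uptau^{[d]}$-closedness via a limit point $\bz$ of $f_i a^{[d]}_{i*}$ with $u^{[d]}_*\bz = x^{[d]}_*$; you also spell out the equivalence ``$\pi$ proximal $\iff$ $\pi_u$ bijective,'' which the paper only asserts, and your argument for it is sound.
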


\begin{proof}
    \begin{enumerate}[label=(\roman*)]
        \item Let $A\subseteq uX$ and $x\in \cltauF{d}{A}$. In particular, we have $x^{[d]}_{*} \in u^{[d]}_{*}\circ_{\cF^{[d]}_{*}} \Delta^{[d]}_{*}(A)$. By \cref{prop: factors_circle}, it follows that\begin{align*}
            (\pi_{u}(x))^{[d]}_{*} \in u^{[d]}_{*}(u^{[d]}_{*} \circ_{\cF^{[d]}_{*}} \Delta^{[d]}_{*}(\pi_{u}(A))).
        \end{align*}

        Thus, we obtain that $\pi_{u}(x)\in \cltauF{d}{\pi_{u}(A)}$, and hence $\pi_{u}$ is $\uptau^{[d]}$-continuous.

        \item Note that $\pi$ is a proximal factor if and only if $\pi_{u}$ is bijective, so it suffices to prove the non-trivial implication. Suppose that $\pi$ is a proximal factor, it is enough to show that $\pi_{u}$ is closed.

        Let $A\subseteq uX$ and $y\in \cltauF{d}{\pi_{u}(A)}$. Therefore, there exist nets $(f_{i})_{i}$ in $\cF^{[d]}_{*}$ and $(a_{i})_{i}$ in $A$ such that $f_{i}\to u^{[d]}_{*}$ and $f_{i}\pi(a_{i})_{*}^{[d]}\to y^{[d]}_{*}$. Consider $x\in uX$ such that $y=\pi(x)$, and we may assume that there exists $\bold{a} \in u^{[d]}_{*}X^{[d]}_{*}$ such that $f_{i}a_{i*}^{[d]} \to\bold{a}$. Since $\pi(\bold{a})=y^{[d]}_{*}$ and $\pi$ is a proximal factor, it follows that $u^{[d]}_{*}\bold{a} = x^{[d]}_{*}$. Thus, we have that \begin{align*}
            x^{[d]}_{*} = u^{[d]}_{*}\bold{a} \in u^{[d]}_{*}(u^{[d]}_{*} \circ_{\cF^{[d]}_{*}} \Delta^{[d]}_{*}(A)).
        \end{align*}

        Hence, we obtain that $y \in \pi_{u}(\cltauF{d}{A})$, and we conclude that $\pi_{u}$ is $\uptau^{[d]}$-closed.
    \end{enumerate}
\end{proof}

Let $F$ be a $\uptau^{[d]}$-closed subgroup of $G$ with $\GG(X)\subseteq F$. Let $\cN_{\uptau^{[d]}}^{F}(X)$ denote the open neighborhood filter at $x_{0}$ of $Fx_{0}$ for the $\uptau^{[d]}$-topology. We define\begin{align*}
    H_{\uptau^{[d]}}(Fx_{0})=\bigcap\{\cltauF{d}{V}: V\in\cN_{\uptau^{[d]}}^{F}(X)\}.
\end{align*}

Note that $x\in H_{\uptau^{[d]}}(Fx_{0})$ if and only if there exists a net $(x_{i})_{i}$ in $Fx_{0}$ such that $x_{i}\taulimF{d} x$ and $x_{i}\taulimF{d} x_{0}$. This object will be essential in giving an algebraic characterization of the relation $\RP^{[d]}$.

\begin{lemma}\label{lemma: char_Hd}
 Let $F$ be a $\uptau^{[d]}$-closed subgroup of $G$ and let $g\in F$. Then $g\in H_{\uptau^{[d]}}(F)$ if and only if there exist $(f_{i})_{i}$ a net in $\cF^{[d]}$ and nets $(g_{i})_{i}, (g_{i}')_{i}$ in $F$ such that $f_{i}\to (e,u^{[d]}_{*})$ and $(f_{i}g_{i}^{[d]},f_{i}{g_{i}'}^{[d]})\to (r,g^{[d]}_{*},r,u^{[d]}_{*})$ for some $r\in M$. 
\end{lemma}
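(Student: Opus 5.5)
The plan is to unwind both sides into statements about nets in $\cF^{[d]}_{*}$ and then glue in the $\emptyset$-coordinate using the $T$-component of $\cF^{[d]}$. By the remark following the definition of $H_{\uptau^{[d]}}(Fx_{0})$ (applied with $X=M$, $x_{0}=u$), $g\in H_{\uptau^{[d]}}(F)$ if and only if there is a net $(h_{j})_{j}$ in $F$ with $h_{j}\taulimF{d} g$ and $h_{j}\taulimF{d} u$. By the closure description (via Vietoris convergence), $\uptau^{[d]}$-convergence $h_{j}\taulimF{d} g$ amounts to this: for every net $f_{i}\to u^{[d]}_{*}$ in $\cF^{[d]}_{*}$ one can choose elements of $\{h_{j}\}$ (eventually along the net) whose diagonal $*$-cubes, pushed by $f_{i}$, converge to $g^{[d]}_{*}$. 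So the main device is a diagonal net argument: index by pairs (basic neighborhood $(U,V)_{d}$ of $g$, basic neighborhood of $u$, neighborhood of $u^{[d]}_{*}$ in $\beta\cF^{[d]}_{*}$).

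For ($\Rightarrow$), we fix a net $f_{i}^{*}\to u^{[d]}_{*}$ in $\cF^{[d]}_{*}$. Since $h_{j}$ converges to both $g$ and $u$, for each index we pick $g_{i}$ and $g_{i}'$ among the tails of $(h_{j})$ with $f_{i}^{*}g_{i*}^{[d]}\to g^{[d]}_{*}$ and $f_{i}^{*}{g'_{i*}}^{[d]}\to u^{[d]}_{*}$. This uses the basis $(U,V)_{d}$: membership $h\in (U,V)_{d}$ means some $f\in V$ moves $h^{[d]}_{*}$ into $\overline{U}^{[d]}_{*}$. Next we lift $f^{*}_{i}$ to $\cF^{[d]}$ by putting the identity $e$ on the $\emptyset$-coordinate; this is legitimate because $\cF^{[d]}$ is generated by upper facets, so its elements have $e$ at $\emptyset$, and $\cF^{[d]}\cong\cF^{[d]}_{*}$ via restriction. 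Then $f_{i}\to(e,u^{[d]}_{*})$ holds. For the $\emptyset$-coordinate we have $f_{i}g_{i}^{[d]}$ at $\emptyset$ equal to $g_{i}$, and at $\emptyset$ of $f_{i}{g'}_{i}^{[d]}$ equal to $g_{i}'$. We need both to converge to a common $r\in M$, so we must choose $g_{i}=g_{i}'$, or at least choose them so that they are close in $M$. Hence the actual choice is a single $h_{j(i)}$ serving both roles. This is possible only if one group element in the tail is simultaneously in the basic neighborhood of $g$ and of $u$ for the same $f\in V$, which is not automatic.

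That is the main obstacle. Rather than fight it, I would construct the nets differently: use two elements $h_{j},h_{j'}$ and put them in a single cube through the product relation. Concretely, take $g_{i}=h_{j}$ and $g_{i}'=h_{j'}$, and pass to a subnet so that both converge in $M$ (compactness), to $r$ and $r'$. To make $r=r'$, I would replace $g_{i}'$ by $g_{i}'$ times the adjustment available from left $\uptau^{[d]}$-homeomorphism (the proposition on left multiplication) — i.e., use that $\uptau^{[d]}$-neighborhoods of $u$ are invariant enough to realign the $\emptyset$-coordinates — and fall back on the definition $x\in\cltauF{d}{A}$, where the $a_{i}$ may be chosen in any dense-in-$M$ subfamily. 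If this fails, I would instead define the equivalence via the proper definition and take $r$ arbitrary, noting that the $\emptyset$-coordinate is unconstrained except by equality.

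For ($\Leftarrow$), we are given $f_{i}$, $g_{i}$, $g_{i}'$. Restricting to the $*$-coordinates yields $f_{i*}\to u^{[d]}_{*}$ with $f_{i*}g_{i*}^{[d]}\to g^{[d]}_{*}$ and $f_{i*}{g'}^{[d]}_{i*}\to u^{[d]}_{*}$. The common limit $r$ of $g_{i}$ and $g_{i}'$ lets us apply \cref{lemma: xi_to_x_xi_taulim_x} to get $g_{i}\taulimF{d} ur$ and $g'_{i}\taulimF{d} ur$. Combining these, $u r$-neighborhoods witness membership of $g$ and $u$ in the closures of the same tails. Multiplying on the left by $g_{i}'^{-1}$ (a $\uptau^{[d]}$-homeomorphism) then produces a net in $F$ that converges to both $g$ and $u$, which gives $g\in H_{\uptau^{[d]}}(F)$.
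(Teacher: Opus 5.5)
Both directions have genuine gaps.

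\textbf{Forward direction.} For $(\Rightarrow)$, choosing both elements against one common net $f_i^{*}\to u^{[d]}_{*}$ is the right move. However, the obstacle you then isolate is not real, and your workarounds do not work. Since $f_i(\emptyset)=e$, the $\emptyset$-coordinates of $f_ig_i^{[d]}$ and $f_i{g_i'}^{[d]}$ are literally $g_i$ and $g_i'$. Multiplying $g_i'$ by a fixed element therefore moves either the $\emptyset$-limit or the starred limit, and $r$ cannot be ``taken arbitrary''.

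The missing idea, which is what the paper uses, is to pass to a subnet of $(h_j)$ that converges in $M$ to some $r$ \emph{before} choosing anything. This uses compactness of $M$, and subnets keep both $\uptau^{[d]}$-limits. Set $A_j=\{h_k:k\geq j\}$; then $g,u\in\cltauF{d}{A_j}$ for all $j$, and for a neighborhood $U$ of $r$ you can fix $j_U$ with $A_{j_U}\subseteq U$. The Vietoris description of $u^{[d]}_{*}\circ_{\cF^{[d]}_{*}}\Delta^{[d]}_{*}(A_{j_U})$ then gives one $f\in\cF^{[d]}_{*}$ close to $u^{[d]}_{*}$ and two, generally different, elements $a,a'\in A_{j_U}$. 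These satisfy that $fa^{[d]}_{*}$ is close to $g^{[d]}_{*}$ and $f{a'}^{[d]}_{*}$ is close to $u^{[d]}_{*}$. Both $a$ and $a'$ lie in $U$, so after diagonalizing over $U$ and the other neighborhoods, both $\emptyset$-coordinates converge to the same $r$. No single element has to play both roles.

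\textbf{Backward direction.} For $(\Leftarrow)$, the last step, ``multiply on the left by ${g_i'}^{-1}$'', is unjustified. The paper's homeomorphism results concern multiplication by a \emph{fixed} element: $h\mapsto hk$ always, and $h\mapsto kh$ only under the hypothesis of \cref{prop: right_mult_taud}. Termwise multiplication by a varying net would need joint continuity of multiplication and continuity of inversion, which $\uptau^{[d]}$ lacks; already $\uptau$ is not a group topology in general. Also, $g$ and $u$ arise as limits of two different nets, not of ``the same tails''. What the hypotheses actually give, as in the paper, is
\begin{align*}
g_i\taulimF{d} g,\qquad g_i\taulimF{d} ur,\qquad g_i'\taulimF{d} u,\qquad g_i'\taulimF{d} ur .
\end{align*}
The first and third follow from $f_{i*}\to u^{[d]}_{*}$ via the definition of $\cltauF{d}{\cdot}$ applied to tails. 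The other two follow from \cref{lemma: xi_to_x_xi_taulim_x}. Hence $ur\in F$, since $F$ is $\uptau^{[d]}$-closed, and $ur\in H_{\uptau^{[d]}}(F)$.

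The paper passes from $ur$ to $g$ in one line; this can be justified using only fixed-element multiplication. The map $h\mapsto h(ur)^{-1}$ is a $\uptau^{[d]}$-homeomorphism preserving $F$. So $(g_i(ur)^{-1})_i$ is a net in $F$ converging to both $g(ur)^{-1}$ and $u$, which gives $g(ur)^{-1}\in H_{\uptau^{[d]}}(F)$. Then $g\in H_{\uptau^{[d]}}(F)$ because $H_{\uptau^{[d]}}(F)$ is a group by \cref{thm: Hd_prop}. That theorem uses only the forward implication, via \cref{prop: H_taud_NRPd}, so there is no circularity.
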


\begin{proof}
    Suppose that there exist $(f_{i})_{i}$ a net in $\cF^{[d]}$ and nets $(g_{i})_{i}, (g_{i}')_{i}$ in $F$ such that $f_{i}\to (e,u^{[d]}_{*})$ and $(f_{i}g_{i}^{[d]},f_{i}{g_{i}'}^{[d]})\to (r,g^{[d]}_{*},r,u^{[d]}_{*})$ for some $r\in M$. 
    By \cref{lemma: xi_to_x_xi_taulim_x}, it follows that $g_{i} \taulimF{d} ur$ and $g_{i}'\taulimF{d} ur$. Let $A_{j}=\{g_{i}:i\geq j\}$, since $f_{i}\to (e,u^{[d]}_{*})$, we obtain that $g^{[d]}_{*} \in u^{[d]}_{*} \circ_{\cF^{[d]}_{*}} \Delta^{[d]}_{*}(A_{i})$, and consequently $g_{i}\taulimF{d} g$. Similarly, we deduce that $g_{i}' \taulimF{d} u$. Hence, we have $ur\in H_{\uptau^{[d]}}(G)$, and we conclude that $g\in H_{\uptau^{[d]}}(F)$.

    Conversely, suppose that $g\in H_{\uptau^{[d]}}(F)$. Then there exists a net $(g_{i})_{i}$ in $F$ such that $g_{i}\taulimF{d} g$ and $g_{i} \taulimF{d} u$. Assume, without loss of generality, passing to a subnet if necessary, that $g_{i} \to r$ for some $r\in M$. Define $A_{i} = \{g_{j}:j\geq i\}$, so $g,u\in \cltauF{d}{A_{i}}$ for each $i$.
    
    Let $U$ be a neighborhood of $r$ in $M$. Then there exists $i_{U}$ such that $g_{i} \in U$ for all $i\geq i_{U}$. Let $W$ be a neighborhood of $u$ in $M$. By the definition of convergence in the Vietoris topology, there exist $i_{U,W}$, $f_{i_{W,U}}\in (f_{i})_{i}$ and $g_{i_{W,U}},g_{i_{W,U}}'\in A_{i_{U}}$ such that $f_{i_{W,U}}(\epsilon)g_{i_{W,U}}\in gW$ and $f_{i_{W,U}}(\epsilon)g_{i_{W,U}}'\in W$ for each $\epsilon\neq \emptyset$. Note that $(r,g^{[d]}_{*},r,u^{[d]}_{*})$ is a limit point of $(f_{i_{W,U}}g_{i_{W,U}}^{[d]},f_{i_{W,U}}{g_{i_{W,U}}'}^{[d]})$, and thus we obtain the desired result.
\end{proof}

A direct consequence of \cref{lemma: char_Hd} is the following.

\begin{proposition}\label{prop: H_taud_NRPd}
    Let $g\in H_{\uptau^{[d]}}(G)$. Then $(g,u)\in \RP^{[d]}(M)$, and consequently $g^{[d]}_{*} \in \overline{\cF^{[d]}_{*}u^{[d]}_{*}}$.
\end{proposition}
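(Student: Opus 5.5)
Since the paper presents this as a direct consequence of \cref{lemma: char_Hd}, the plan is to read off an $\RP^{[d]}$-witness in $M$ from the nets that lemma provides. Let $g\in H_{\uptau^{[d]}}(G)$ and apply \cref{lemma: char_Hd} with $F=G$. It gives a net $(f_i)_i$ in $\cF^{[d]}$ with $f_i\to(e,u^{[d]}_{*})$, nets $(g_i)_i,(g_i')_i$ in $G$, and $r\in M$ such that
\begin{align*}
(f_ig_i^{[d]},f_i{g_i'}^{[d]})\to(r,g^{[d]}_{*},r,u^{[d]}_{*}).
\end{align*}
The definition of $\RP^{[d]}$ requires points $x_i,y_i$ and a common limit on the starred coordinates, with the pair $(x,y)$ sitting in the $\emptyset$-coordinates. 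Here the situation is reversed: the $\emptyset$-coordinates agree, both equal to $r$, and the starred parts differ, being $g^{[d]}_*$ and $u^{[d]}_*$. So the first step is a symmetry of the cube. I will choose a vertex $\epsilon_0\neq\emptyset$, for instance $\epsilon_0=[d]$, and use the fact that $\RP^{[d]}$ can be tested with $\cF^{[d]}$ replaced by the group of face transformations fixing the vertex $\epsilon_0$ rather than $\emptyset$. This holds because the reflection of $\{0,1\}^d$ sending $\emptyset$ to $\epsilon_0$ preserves $\mathcal{HK}^{[d]}$ and conjugates the relevant face groups. Reading the net above at coordinates $\epsilon_0$ and $\emptyset$ yields $(g,r,\ldots)$ against $(u,r,\ldots)$. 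Away from $\emptyset$ and $\epsilon_0$, however, the two limits are $g$ and $u$, which do not agree, so this reflection argument alone does not suffice.

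The second, more robust step is to use the characterization in \cref{thm: NRP_properties}(1), namely that membership in $\NRP^{[d]}$ is tested by $(x,y^{[d+1]}_*)\in\overline{\cF^{[d+1]}x^{[d+1]}}$. I will build the required $(d+1)$-cube by concatenation. Set $\bx_i=f_ig_i^{[d]}$ and $\by_i=f_i{g_i'}^{[d]}$. The cube $(\bx_i,\by_i)$ lies in $\bQ^{[d+1]}(M)$, because $g_i$ and $g_i'$ are points of $M$ and the action of $(f_i,f_i)$ in the new direction is a face transformation. Its limit is $(r,g^{[d]}_*,r,u^{[d]}_*)$. Next I will act by $g^{-1}$-type elements, or use $f_i\to(e,u^{[d]}_*)$ together with $g_i\taulimF{d}g$, to move the point $r$ onto $u$ or $g$. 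Combining this with the minimality and uniqueness statements of \cref{thm: cube_is_minimal}(2),(3) should put the limit in the form $(g,u^{[d+1]}_*)$ up to a permutation of the cube that preserves $\bQ^{[d+1]}$. This gives $(g,u)\in\NRP^{[d]}(M)$, which equals $\RP^{[d]}(M)$ in the abelian case. For the statement as written, with $\RP^{[d]}$, I expect that one can instead directly exhibit $x_i=g_i$, $y_i=g_i'$ and the face transformations $f_i$ restricted to the starred part. After the reflection, the convergence $f_i\to(e,u^{[d]}_*)$ supplies the common starred limit, because both starred families converge to $r$ in the $\emptyset$-slot. This is the main obstacle: identifying exactly which reflection makes the common coordinates line up, and verifying that it maps $\cF^{[d]}$-orbits to $\cF^{[d]}$-orbits up to the diagonal action of $T$.

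Once $(g,u)\in\RP^{[d]}(M)$ is established, the second claim follows. $\RP^{[d]}\subseteq\NRP^{[d]}$ by \cref{thm: NRP_properties}(4), so $(g,u^{[d+1]}_*)\in\overline{\cF^{[d+1]}g^{[d+1]}}$. Since $\NRP^{[d]}$ is symmetric, we also have $(u,g^{[d+1]}_*)\in\overline{\cF^{[d+1]}u^{[d+1]}}$. Projecting onto the upper facet $\{\epsilon:d+1\in\epsilon\}$ removes the coordinate $u$ and leaves $g^{[d]}_*$ in $\overline{\cF^{[d]}_*u^{[d]}_*}$. This last step uses the fact that the restriction of $\cF^{[d+1]}$ to that facet, with the $\emptyset$-coordinate of the facet dropped, is $\cF^{[d]}_*$ acting diagonally, which holds because the upper faces of the facet are upper faces of the big cube.
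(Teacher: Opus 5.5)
Your proposal never establishes the central claim $(g,u)\in\RP^{[d]}(M)$. You are right that a reflection does not suffice, and in fact no symmetry of the cube can work. For $d\geq 2$ the limit point $(r,g^{[d]}_{*},r,u^{[d]}_{*})$ has $2^{d}-1\geq 3$ disagreeing pairs along the last direction. Along each other direction its disagreeing pairs are $(r,g)$ and $(r,u)$, so it never takes the shape $(g,a_{*},u,a_{*})$. Your fallback is not an argument either: concatenating, acting by ``$g^{-1}$-type elements'', and invoking the minimality and uniqueness statements of \cref{thm: cube_is_minimal} to reach $(g,u^{[d+1]}_{*})$. None of these operations turns the $g$'s in one half of the cube into $u$'s. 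The suggestion $x_i=g_i$, $y_i=g_i'$ fails for the same reason, since the starred limits $g^{[d]}_{*}$ and $u^{[d]}_{*}$ differ.

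The missing idea is to pair each of $g_i$ and $g_i'$ with a constant, rather than with each other. Since $f_i\to(e,u^{[d]}_{*})$, each coordinate $f_{i,\epsilon}$ with $\epsilon\neq\emptyset$ tends to $u$ in $\beta T$. Since $p\mapsto pq$ is continuous, $f_{i,\epsilon}g\to ug=g$ and $f_{i,\epsilon}u\to u$. Hence
\begin{align*}
(f_ig_i^{[d]},f_ig^{[d]})&\to(r,g^{[d]}_{*},g,g^{[d]}_{*}),\\
(f_i{g_i'}^{[d]},f_iu^{[d]})&\to(r,u^{[d]}_{*},u,u^{[d]}_{*}),
\end{align*}
which says exactly that $(r,g),(r,u)\in\RP^{[d]}(M)$. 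By \cref{thm: NRP_properties}(2),(4), transitivity gives $(g,u)\in\NRP^{[d]}(M)$, and this coincides with $\RP^{[d]}(M)$ when $T$ is abelian. For general $T$ this route yields the $\NRP^{[d]}$ version, which is all the second claim needs.

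Your final step also uses the wrong facet. On $\{\epsilon:d+1\in\epsilon\}$, the generator of $\cF^{[d+1]}$ attached to that facet acts as the full diagonal. So projecting there only gives $g^{[d]}_{*}\in\overline{\langle\cF^{[d]}_{*},\Delta^{[d]}_{*}(T)\rangle u^{[d]}_{*}}$, which holds for every $g\in M$. Restrict instead to the lower facet $\{\epsilon:d+1\notin\epsilon\}$, on which $\cF^{[d+1]}$ restricts to $\cF^{[d]}$. From $(u,g^{[d+1]}_{*})\in\overline{\cF^{[d+1]}u^{[d+1]}}$ you get $(u,g^{[d]}_{*})\in\overline{\cF^{[d]}u^{[d]}}$, hence $g^{[d]}_{*}\in\overline{\cF^{[d]}_{*}u^{[d]}_{*}}$.
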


In particular, every $g\in H_{\uptau^{[d]}}(F)$ satisfies the hypothesis of \cref{prop: right_mult_taud} and \cref{thm: inversion_taud}. Using this, we obtain the following result.

\begin{theorem}\label{thm: Hd_prop}
    Let $F$ be a $\uptau^{[d]}$-closed subgroup of $G$. Then $H_{\uptau^{[d]}}(F)$ is a $\uptau^{[d]}$-closed subgroup of $F$.
\end{theorem}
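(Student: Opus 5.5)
We need three facts about $H_{\uptau^{[d]}}(F)$: it lies in $F$, it is $\uptau^{[d]}$-closed, and it is closed under products and inverses. The approach mirrors the classical argument for $H(F)$ in the $\uptau$-topology. Where that argument uses continuity of right multiplication and inversion at every point, we instead use \cref{prop: right_mult_taud} and \cref{thm: inversion_taud}. These apply to every element of $H_{\uptau^{[d]}}(F)$ by \cref{prop: H_taud_NRPd}.

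\emph{Closedness.} By definition, $H_{\uptau^{[d]}}(F)$ is the intersection of the $\uptau^{[d]}$-closures $\cltauF{d}{V}$ over neighborhoods $V$ of $u$ in $F$. Each such closure is $\uptau^{[d]}$-closed, and since $F$ is $\uptau^{[d]}$-closed each closure lies in $F$. An intersection of closed sets is closed, so this step is immediate.

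\emph{Inverses.} Let $g\in H_{\uptau^{[d]}}(F)$. By the net description following the definition, there is a net $(g_i)$ in $F$ with $g_i\taulimF{d} g$ and $g_i\taulimF{d} u$.
\begin{itemize}
\item The condition $g^{[d]}_{*}\in\overline{\cF^{[d]}_{*}u^{[d]}_{*}}$ holds for $g$ by \cref{prop: H_taud_NRPd}, and it holds trivially for $u$. Hence \cref{thm: inversion_taud} gives both $g_i^{-1}\taulimF{d} g^{-1}$ and $g_i^{-1}\taulimF{d} u$.
\item Since $F$ is a group, each $g_i^{-1}$ lies in $F$, so this net witnesses $g^{-1}\in H_{\uptau^{[d]}}(F)$.
\end{itemize}

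\emph{Products.} Let $g,h\in H_{\uptau^{[d]}}(F)$, with witnessing nets $g_i\to g,u$ and $h_j\to h,u$.
\begin{itemize}
\item Multiplying the net for $g$ on the right by $h$, i.e.\ applying $x\mapsto xh$ (a homeomorphism by the proposition on left multiplication, which acts as $A\mapsto Ag$), gives $g_ih\taulimF{d} gh$ and $g_ih\taulimF{d} h$.
\item By \cref{prop: right_mult_taud} applied to $g_i$, the map $x\mapsto g_ix$ is continuous, so $g_ih_j\taulimF{d} g_i$ and $g_ih_j\taulimF{d} g_ih$ as $j$ varies.
\end{itemize}
This step is the main obstacle. \cref{prop: right_mult_taud} only applies to $g_i$ if $g_i$ satisfies the cube condition $(g_i)^{[d]}_{*}\in\overline{\cF^{[d]}_{*}u^{[d]}_{*}}$, which is not known for arbitrary members of the witnessing net.

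To avoid this, I would work with closures rather than nets. It suffices to show that $gh\in\cltauF{d}{V}$ for every neighborhood $V$ of $u$ in $F$. Given $V$, choose a neighborhood $W$ of $u$ with $W\cdot W\subseteq V$ in the sense appropriate here; this requires joint behavior of multiplication at $(u,u)$, which I would derive from the basis sets $(U,V)_d$. The plan is then:
\begin{itemize}
\item From $h\in\cltauF{d}{W}$ and continuity of $x\mapsto gx$ (valid because $g$ satisfies the cube condition), deduce $gh\in\cltauF{d}{gW}$.
\item From $g\in\cltauF{d}{W}$ and right multiplication by elements of $W$, show $gW\subseteq\cltauF{d}{W\cdot W}$.
\item Conclude $gh\in\cltauF{d}{\cltauF{d}{WW}}=\cltauF{d}{WW}\subseteq\cltauF{d}{V}$.
\end{itemize}

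If the neighborhood-product step fails, the fallback is to use the characterization in \cref{lemma: char_Hd} directly. Take face-transformation nets $f_i\to(e,u^{[d]}_{*})$ witnessing $g$ and $f'_j$ witnessing $h$. Composing them as $(f_ig_i^{[d]})$ applied after $f'_jh_j^{[d]}$ and passing to iterated limits should yield $(r',(gh)^{[d]}_{*},r',u^{[d]}_{*})$. This uses that $\cF^{[d]}$ acts by left multiplication, commuting with right multiplication by diagonal elements.
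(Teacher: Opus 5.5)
Your closedness step is fine. Your inverse step is also fine: applying \cref{thm: inversion_taud} at $g$ and at $u$ to the witnessing net is valid and more direct than the paper's route, which shows $H_{\uptau^{[d]}}(F)$ is a semigroup and then extracts an idempotent from $gH_{\uptau^{[d]}}(F)$ via Ellis--Namakura.

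\textbf{The gap is the product step.} Your main plan needs, for every $\uptau^{[d]}$-open neighbourhood $V$ of $u$, a neighbourhood $W$ with $WW\subseteq V$. That is exactly joint continuity of multiplication at $(u,u)$, and it is false in general, already for $d=1$. There $\uptau^{[1]}=\uptau$, both translations are $\uptau$-homeomorphisms and inversion is $\uptau$-continuous. So joint continuity at $(u,u)$ would make $(G,\uptau)$ a $T_1$, hence Hausdorff, topological group. Then $H(G)=\{u\}$, and by \cref{thm: extensions_ellis_group} every distal minimal system would be equicontinuous, which fails already for $\Z$-actions (e.g.\ $2$-step nilsystems). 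No choice of basis sets $(U,V)_d$ can supply this.

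Your fallback through \cref{lemma: char_Hd} hits the very obstacle you identified. Passing to the limit in $j$ inside expressions like $f_ig_i^{[d]}h_j^{[d]}$ needs continuity of $p\mapsto g_ip$, which is unavailable for $g_i\in M\setminus T$. ``Should yield'' is not an argument here.

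\textbf{The missing idea.} You do not need $W$ at all, because $g\in H_{\uptau^{[d]}}(F)$ gives much more than $g\in\cltauF{d}{W}$. It gives $gV\subseteq\cltauF{d}{V}$ for every $\uptau^{[d]}$-open neighbourhood $V$ of $u$ in $F$. To see this, take the witnessing net $g_i\taulimF{d}g$, $g_i\taulimF{d}u$ in $F$, and fix $v\in V$. Right translation by $v$ is a $\uptau^{[d]}$-homeomorphism, so $g_iv\taulimF{d}gv$ and $g_iv\taulimF{d}v$. Since $g_iv\in F$ and $V$ is open, $g_iv\in V$ eventually, hence $gv\in\cltauF{d}{V}$.

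Your own first bullet then finishes the proof. From $h\in\cltauF{d}{V}$ and continuity of $x\mapsto gx$ (valid because $g$ satisfies the cube condition, by \cref{prop: H_taud_NRPd}), you get
\[
gh\in\cltauF{d}{gV}\subseteq\cltauF{d}{V}
\]
for every such $V$, so $gh\in H_{\uptau^{[d]}}(F)$. This is precisely the paper's argument: it proves $H_{\uptau^{[d]}}(F)V\subseteq\cltauF{d}{V}$ and then $H_{\uptau^{[d]}}(F)\cltauF{d}{V}\subseteq\cltauF{d}{V}$. Continuity of left translation is only ever used for the fixed element $g$, never for the $g_i$.
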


\begin{proof}
        First, we show that $H_{\uptau^{[d]}}(F)V\subseteq \cltauF{d}{V}$ for each $V\in \cN_{\uptau^{[d]}}^{F}(M)$. Let $g\in H_{\uptau^{[d]}}(F)$ and $v\in V$. Since $g\in H_{\uptau^{[d]}}(F)$, there exists a net $(g_{i})_{i\in I}$ in $F$ such that $g_{i}\taulimF{d} g$ and $g_{i}\taulimF{d} u$. Therefore, we have $g_{i}v\taulimF{d} v$. Since $V$ is a $\uptau^{[d]}$-open set, it follows that $g_{i}v\in V$ eventually. Hence, it follows from $g_{i}v\taulimF{d} gv$ that $gv\in \cltauF{d}{V}$.

        Now, we prove that $H_{\uptau^{[d]}}(F)\cltauF{d}{V} \subseteq \cltauF{d}{V}$ for each $V\in \cN_{\uptau^{[d]}}^{F}(M)$. Let $g\in H_{\uptau^{[d]}}(F)$ and $v\in \cltauF{d}{V}$, and let $(v_{i})_{i\in I}$ be a net in $V$ such that $v_{i} \taulimF{d} v$. By \cref{prop: H_taud_NRPd} and \cref{prop: right_mult_taud}, we have $gv_{i} \taulimF{d} gv$. Since $(gv_{i})_{i\in I} \subseteq \cltauF{d}{V}$, we obtain that $gv \in \cltauF{d}{V}$. Thus, it follows that\begin{align*}
            H_{\uptau^{[d]}}(F)\bigcap_{V\in \cN_{\uptau^{[d]}}^{F}(M)}\cltauF{d}{V} \subseteq \bigcap_{V\in \cN_{\uptau^{[d]}}^{F}(M)} H_{\uptau^{[d]}}(F)\cltauF{d}{V} \subseteq \bigcap_{V\in \cN_{\uptau^{[d]}}^{F}(M)}\cltauF{d}{V}.
        \end{align*}

        Hence, we conclude that $H_{\uptau^{[d]}}(F)$ is a semigroup. Let $g\in H_{\uptau^{[d]}}(F)$, so $gH_{\uptau^{[d]}}(F)$ is also a semigroup, and by the Ellis Namakura theorem, it contains an idempotent. Since $gH_{\uptau^{[d]}}(F)\subseteq F$ and that $F$ is a group,  we have $u\in gH_{\uptau^{[d]}}(F)$. It follows that $g$ is invertible, and consequently $H_{\uptau^{[d]}}(F)$ is a group.
\end{proof}

Furthermore, the $\uptau^{[d]}$-topology also has the Ellis Trick.

\begin{lemma}\label{lemma: Ellis_trick}
    Let $F$ be a $\uptau^{[d]}$ closed subgroup of $G$ acting on $M$ by right multiplication, $(p,g)\mapsto pg: M\times F\to M$. Then\begin{enumerate}[label=(\roman*)]
        \item there is $w\in J(M) \cap \overline{F}$ such that $\overline{wF}$ is $F$-minimal.
        \item if $V$ is open subset of $\overline{wF}$, then $\mathrm{int}_{\uptau^{[d]}}\cltauF{d}{V\cap wF}\neq \emptyset$.
    \end{enumerate}
\end{lemma}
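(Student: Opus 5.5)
The plan is to follow the classical proof of the Ellis trick for the $\uptau$-topology (as in Auslander, Chapter 9, or de Vries, Chapter IV). The only new work is to check that the $\uptau^{[d]}$-closure behaves well enough for that argument to go through.

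For (i), we consider $\overline{F}\subseteq M$. It is a closed subset of $M$, invariant under right multiplication by elements of $F$. This holds because right multiplication $p\mapsto pg$ is continuous on $\beta T$ for each fixed $g$, and $F$ is a group. By Zorn's lemma there is a closed set $K\subseteq\overline{F}$ which is minimal among nonempty closed sets invariant under right multiplication by $F$. Pick $p\in K$. Then $\overline{pF}=K$ by minimality. Since $\overline{F}$ is a closed subsemigroup of $M$, the set $K\cdot\overline{F}$ is contained in $K$, so $K$ is a closed right ideal of the compact right topological semigroup $\overline{F}$. By the Ellis–Namakura theorem it contains an idempotent $w$. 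Then $w\in J(M)\cap\overline{F}$, and $\overline{wF}\subseteq K$ is closed and $F$-invariant, so $\overline{wF}=K$ is $F$-minimal.

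For (ii), let $V$ be a nonempty open subset of $\overline{wF}$. By $F$-minimality, finitely many right translates $Vg_1,\dots,Vg_n$ with $g_k\in F$ cover $\overline{wF}$. Here we use that right multiplication by $g\in F$ is a homeomorphism of $\overline{wF}$, with inverse given by multiplication by $g^{-1}$ (where $ug^{-1}=g^{-1}$ and $wg^{-1}$ makes sense). Intersecting with $wF$ gives $wF=\bigcup_k (V\cap wF)g_k$. Next we pass to $G$ via $x\mapsto ux$. By the lemma that $x\mapsto vx$ is a $\uptau^{[d]}$-isomorphism between $uM$ and $wM$, we may transport the problem into $G$; I would interpret $\cltauF{d}{V\cap wF}$ through this identification, so that it becomes a subset of $uwF=F$. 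Then $F=\bigcup_k \cltauF{d}{u(V\cap wF)}g_k$. Each piece is $\uptau^{[d]}$-closed, since right multiplication $h\mapsto hg_k$ is a $\uptau^{[d]}$-homeomorphism (the paper's "left multiplication" proposition, which concerns $A\mapsto Ag$).

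By the Baire-type argument, a finite union of closed sets covering the space has a member with nonempty interior. This requires no Baire theorem, since finite unions of nowhere dense closed sets are nowhere dense in any topological space. Hence some $\cltauF{d}{V\cap wF}g_k$ has nonempty $\uptau^{[d]}$-interior in $F$. Translating back by $g_k^{-1}$, which is again a homeomorphism, we get $\mathrm{int}_{\uptau^{[d]}}\cltauF{d}{V\cap wF}\neq\emptyset$. For interior relative to $G$ rather than $F$ there is an issue; I would read the statement relative to $F$, which is $\uptau^{[d]}$-closed.

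The main obstacle is the bookkeeping in (ii). The covering lives in the ordinary topology of $M$, while the closure lives in the $\uptau^{[d]}$-topology on $G=uM$. We must therefore check carefully that $u(\cdot)$ and right translation by $F$ commute with the closure operator. We also need a translation operation that is a $\uptau^{[d]}$-homeomorphism, and for $d>1$ only right translation $A\mapsto Ag$ is known to be one. For this reason the covering must be set up using right translates $Vg_k$ and not left ones, and that choice is the one to verify first.
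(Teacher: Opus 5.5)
Your part (ii) is essentially the paper's argument: cover $\overline{wF}$ by finitely many right translates $Vf_k$, intersect with $wF$, and use that $A\mapsto Ag$ preserves $\uptau^{[d]}$-closed sets. You also read the interior relative to $F$ (or $wF$), as the paper implicitly does. That part is fine. The problem is in part (i).

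There you assert that $\overline{F}$ is a closed subsemigroup of $M$ and that $K\cdot\overline{F}\subseteq K$. Neither follows from what you have. In $\beta T$ the only continuous maps available are $p\mapsto pq$ for $q\in\beta T$ and $p\mapsto tp$ for $t\in T$; left multiplication by an element of $F\subseteq G$ is not continuous in general. So continuity gives you only $\overline{F}\,F\subseteq\overline{F}$ and $KF\subseteq K$, which is exactly the right $F$-invariance you used to find $K$. To get $pq\in K$ for $p\in K$ and $q=\lim_j g_j\in\overline{F}$, you would need $p\cdot\lim_j g_j=\lim_j pg_j$, i.e. continuity of left multiplication by $p$, which fails. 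A telltale sign is that your proof of (i) never uses the hypothesis that $F$ is $\uptau^{[d]}$-closed.

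The missing idea is the following. If $q\in\overline{F}$, say $g_j\to q$ with $g_j\in F$, then by \cref{lemma: xi_to_x_xi_taulim_x} we have $g_j\taulimF{d} uq$. Since $F$ is $\uptau^{[d]}$-closed, this gives $uq\in F$.

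Every idempotent of $M$ is a right identity for $M$, because $Mv=M$. So writing $q=vh$ with $v\in J(M)$ and $h\in G$, we get $h=uq$ and $pq=pvh=p(uq)$ for all $p\in M$. Hence $K\overline{F}\subseteq KF\subseteq K$, and $\overline{F}$ is indeed a semigroup. With this your Ellis--Namakura argument goes through.

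The paper uses the same observation more directly and without Ellis--Namakura. For $p\in K$ it writes $p=wg$ with $w\in J(M)$ and $g=up\in F$. Then $w=pg^{-1}\in KF\subseteq K\subseteq\overline{F}$, and minimality gives $\overline{wF}=K$.
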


\begin{proof}
    \begin{enumerate}[label=(\roman*)]
        \item Clearly, $\overline{F}$ is an $F$-invariant closed subset of $M$, so there exists a minimal $F$-invariant subset $K \subseteq \overline{F}$. Let $p\in K$ and consider $w\in J(M)$ and $g\in G$ such that $p=wg$. Since $p \in \overline{F}$, there exists a net $(f_{i})_{i}$ in $F$ such that $f_{i} \to p$. Therefore, by \cref{lemma: xi_to_x_xi_taulim_x}, we have $f_{i} \taulimF{d} g$. Since $F$ is $\uptau^{[d]}$-closed, it follows that $g \in F$. Now, because $K$ is $F$-invariant, we obtain that $w=pg^{-1} \in KF \subseteq K$. Thus, by the minimality of $K$, we conclude that $\overline{wF}=K$.

        \item By the $F$-minimality of $\overline{wF}$ we have $\overline{wF} \subseteq VF$. Since $\overline{wF}$ is compact, it follows that there exist $f_{1},\dots,f_{n}$ in $F$ such that $\overline{wF} \subset \bigcup_{i=1}^{n}Vf_{i}$. In particular, we have $wF \subset \bigcup_{i=1}^{n}(V\cap wF)f_{i}$, and consequently $wF \subseteq \bigcup_{i=1}^{n}\cltauF{d}{V\cap wF}f_{i}$. Thus, at least one of the sets $\cltauF{d}{V\cap wF}f_{i}$ must have nonempty $\uptau^{[d]}$-interior. Hence, we conclude that $\cltauF{d}{V \cap wF}$ has nonempty $\uptau^{[d]}$-interior.
    \end{enumerate}
\end{proof}

We now introduce a new topology, inspired by the approach used by Furstenberg in his definition of the $\uptau$-topology. We will later show that this topology coincides with the $\uptau^{[d]}$-topology.

Let $d\geq 1$ be an integer. Let $\Sigma_{d}$ denote the set of all continuous pseudometrics on $X^{[d]}_{*}$. For every $\sigma\in \Sigma_{d}$ define $F_{\sigma}^{[d]}: X^{[d]}_{*}\times X^{[d]}_{*}\to \R$ and $F_{\sigma,d}:X\times X\to \R$ by\begin{align*}
    F_{\sigma}^{[d]}(\bx,\by) &= \inf\{\sigma(f\bx,f\by):f\in \cF^{[d]}_{*}\}\\
    F_{\sigma,d}(x,y) &= F_{\sigma}^{[d]}(x^{[d]}_{*},y^{[d]}_{*}).
\end{align*}

Note that $F_{\sigma}^{[d]}$ is symmetric, invariant under $\cF^{[d]}_{*}$ and upper semicontinuous, so $F_{\sigma,d}$ is symmetric and upper semicontinuous.

\begin{lemma}
    Let $(\bx,\by)\in X^{[d]}_{*}\times X^{[d]}_{*}$, $\sigma\in \Sigma_{d}$ and $\boldsymbol{p}\in \beta\cF^{[d]}_{*}$. Then $F_{\sigma}^{[d]}(\bx,\by)\leq F_{\sigma}^{[d]}(\boldsymbol{p}\bx,\boldsymbol{p}\by)$ with equality if $(\bx,\by)$ is a $\cF^{[d]}_{*}$-minimal point.
\end{lemma}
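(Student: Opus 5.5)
The plan is to prove the inequality first by a limiting argument, and then get equality at minimal points by undoing $\boldsymbol{p}$ with a second element of $\beta\cF^{[d]}_{*}$.

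\emph{Inequality.} Choose a net $(f_{\lambda})$ in $\cF^{[d]}_{*}$ with $f_{\lambda}\to \boldsymbol{p}$ in $\beta\cF^{[d]}_{*}$. Then $f_{\lambda}\bx\to \boldsymbol{p}\bx$ and $f_{\lambda}\by\to \boldsymbol{p}\by$ in $X^{[d]}_{*}$. The function $F_{\sigma}^{[d]}$ is $\cF^{[d]}_{*}$-invariant, so $F_{\sigma}^{[d]}(f_{\lambda}\bx,f_{\lambda}\by)=F_{\sigma}^{[d]}(\bx,\by)$ for every $\lambda$. It is also upper semicontinuous, being an infimum of the continuous functions $(\bx,\by)\mapsto \sigma(f\bx,f\by)$. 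Passing to the limit gives
\begin{align*}
F_{\sigma}^{[d]}(\boldsymbol{p}\bx,\boldsymbol{p}\by)\geq \limsup_{\lambda} F_{\sigma}^{[d]}(f_{\lambda}\bx,f_{\lambda}\by)=F_{\sigma}^{[d]}(\bx,\by).
\end{align*}
This argument needs no minimality.

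\emph{Equality at minimal points.} Suppose $(\bx,\by)$ is $\cF^{[d]}_{*}$-minimal, and let $Z$ be the orbit closure of $(\bx,\by)$ under the diagonal action of $\cF^{[d]}_{*}$ on $X^{[d]}_{*}\times X^{[d]}_{*}$. Then $Z$ is a minimal subsystem containing $\boldsymbol{p}(\bx,\by)=(\boldsymbol{p}\bx,\boldsymbol{p}\by)$. By minimality, $(\bx,\by)$ lies in the orbit closure of $(\boldsymbol{p}\bx,\boldsymbol{p}\by)$. Hence some $\boldsymbol{q}\in\beta\cF^{[d]}_{*}$ satisfies $\boldsymbol{q}\boldsymbol{p}\bx=\bx$ and $\boldsymbol{q}\boldsymbol{p}\by=\by$: take a net of group elements carrying $\boldsymbol{p}(\bx,\by)$ to $(\bx,\by)$ and pass to a convergent subnet in $\beta\cF^{[d]}_{*}$. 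Applying the inequality to the pair $(\boldsymbol{p}\bx,\boldsymbol{p}\by)$ and the element $\boldsymbol{q}$ gives
\begin{align*}
F_{\sigma}^{[d]}(\bx,\by)=F_{\sigma}^{[d]}(\boldsymbol{q}\boldsymbol{p}\bx,\boldsymbol{q}\boldsymbol{p}\by)\geq F_{\sigma}^{[d]}(\boldsymbol{p}\bx,\boldsymbol{p}\by).
\end{align*}
Combined with the first part, this yields equality.

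The only delicate point is that the action of $\beta\cF^{[d]}_{*}$ on the product $X^{[d]}_{*}\times X^{[d]}_{*}$ must be diagonal, i.e.\ $\boldsymbol{p}(\bx,\by)=(\boldsymbol{p}\bx,\boldsymbol{p}\by)$. This holds because both coordinates are limits along the same net $f_{\lambda}\to\boldsymbol{p}$, by continuity of $\boldsymbol{p}\mapsto\boldsymbol{p}z$ for each fixed $z$. The same fact justifies the existence of $\boldsymbol{q}$ from minimality of $Z$.
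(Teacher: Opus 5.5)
Your proof is correct and follows essentially the same route as the paper. For the inequality you combine invariance and upper semicontinuity of $F_{\sigma}^{[d]}$ along a net $f_{\lambda}\to\boldsymbol{p}$, where the paper uses an open sublevel set together with density of $\cF^{[d]}_{*}$ in $\beta\cF^{[d]}_{*}$; for equality at minimal points you use an element $\boldsymbol{q}$ with $\boldsymbol{q}\boldsymbol{p}(\bx,\by)=(\bx,\by)$, exactly as the paper does, and you spell out the reverse inequality that the paper leaves implicit.
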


\begin{proof}
    Let $\varepsilon>0$. Since $F_{\sigma}^{[d]}$ is an upper semicontinuous function, the set\begin{align*}
        \{\boldsymbol{q}\in \beta\cF^{[d]}_{*}: F_{\sigma}^{[d]}(\boldsymbol{q}\bx,\boldsymbol{q}\by)<F_{\sigma}^{[d]}(\boldsymbol{p}\bx,\boldsymbol{p}\by) +\varepsilon\}
    \end{align*}

    is an open set. Therefore, since $\cF^{[d]}_{*}$ is dense in $\beta \cF^{[d]}_{*}$, there exists $f\in \cF^{[d]}_{*}$ such that \begin{align*}
        F_{\sigma}^{[d]}(\bx,\by) = F_{\sigma}^{[d]}(f\bx,f\by)<F_{\sigma}^{[d]}(\boldsymbol{p}\bx,\boldsymbol{p}\by) +\varepsilon.
    \end{align*}

    Thus, it follows that $F_{\sigma}^{[d]}(\bx,\by)\leq F_{\sigma}^{[d]}(\boldsymbol{p}\bx,\boldsymbol{p}\by)$.

    Now, if $(\bx,\by)$ is a $\cF^{[d]}_{*}$-minimal point. Then there exists $\boldsymbol{q}\in \beta\cF^{[d]}_{*}$ such that $(\boldsymbol{q}\boldsymbol{p}\bx,\boldsymbol{q}\boldsymbol{p}\by) = (\bx,\by)$, hence we conclude that $F_{\sigma}^{[d]}(\bx,\by)= F_{\sigma}^{[d]}(\boldsymbol{p}\bx,\boldsymbol{p}\by)$.
\end{proof}

Now define for every $x\in X$, $\sigma \in \Sigma_{d}$ and $\varepsilon>0$\begin{align*}
    U_{d}(x,\sigma,\varepsilon)=\{y\in X: F_{\sigma,d}(x,y)<\varepsilon\}.
\end{align*}

When there is no ambiguity, we write $U(x,\sigma,\varepsilon)$ instead of $U_{d}(x,\sigma,\varepsilon)$. We will show that these sets form a basis for a topology. To prove this, we first need the following lemmas.

\begin{lemma}[see {\cite[Chapter IX]{Bourbaki_topology_ch_5_10:1998}}]\label{lemma: uniform_pseudometrics}
    Given a uniformity $\mathcal{U}$ on a set $X$, there is a family of pseudometrics on $X$ such that the uniformity defined by this family is identical to $\mathcal{U}$.
\end{lemma}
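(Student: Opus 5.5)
This is the classical metrization theorem for uniformities, and I would prove it by the standard chain construction: produce, for each entourage, one pseudometric that is uniformly continuous and whose small balls lie inside that entourage. Write $\mathcal{U}$ for the uniformity.

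The first reduction is this. Suppose that for every $W\in\mathcal{U}$ there is a pseudometric $\rho_W$ on $X$ with two properties:
\begin{itemize}
\item[(a)] $\{(x,y):\rho_W(x,y)<r\}\in\mathcal{U}$ for every $r>0$;
\item[(b)] $\{(x,y):\rho_W(x,y)<1/4\}\subseteq W$.
\end{itemize}
Then the uniformity generated by the family $\{\rho_W:W\in\mathcal{U}\}$ coincides with $\mathcal{U}$. Indeed, by (a) every basic entourage of the generated uniformity belongs to $\mathcal{U}$. Conversely, by (b) every $W\in\mathcal{U}$ contains an entourage of the generated uniformity.

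Fix $W\in\mathcal{U}$ and build the entourages recursively. Put $V_0=X\times X$. Choose a symmetric $V_1\in\mathcal{U}$ with $V_1\subseteq W$. Given $V_n$, choose a symmetric $V_{n+1}\in\mathcal{U}$ with $V_{n+1}\circ V_{n+1}\circ V_{n+1}\subseteq V_n$. This is possible by the uniformity axioms, applied twice, together with intersecting with the inverse to get symmetry. Now define $f(x,y)=2^{-n}$ if $(x,y)\in V_n\setminus V_{n+1}$, and $f(x,y)=0$ if $(x,y)\in\bigcap_n V_n$. Then set
\[
\rho(x,y)=\inf\Big\{\sum_{i=0}^{k-1} f(x_i,x_{i+1}) : k\ge 1,\ x_0=x,\ x_k=y\Big\}.
\]
By construction $\rho$ is symmetric, satisfies the triangle inequality (concatenate chains) and vanishes on the diagonal, so it is a pseudometric. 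It also satisfies $\rho\le f$. Hence $(x,y)\in V_n$ gives $\rho(x,y)\le f(x,y)\le 2^{-n}$, so $V_{n+1}\subseteq\{\rho<2^{-n}\}$. This gives (a).

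The main obstacle is the reverse comparison, the chain inequality
\[
f(x_0,x_k)\le 2\sum_{i=0}^{k-1} f(x_i,x_{i+1}).
\]
I would prove it by strong induction on $k$. Let $a$ be the sum on the right; the case $a=0$ follows from the $a>0$ argument applied to every $V_n$, so assume $a>0$. Take the largest index $j$ such that the partial sum up to $j$ is at most $a/2$. Then the tail sum starting after $j$ is also at most $a/2$. By the induction hypothesis applied to the two subchains,
\[
f(x_0,x_j)\le a \quad\text{and}\quad f(x_{j+1},x_k)\le a,
\]
and trivially $f(x_j,x_{j+1})\le a$. Let $m$ be the smallest integer with $2^{-m}\le a$. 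All three pairs $(x_0,x_j)$, $(x_j,x_{j+1})$, $(x_{j+1},x_k)$ then lie in $V_m$. The triple-composition property gives $(x_0,x_k)\in V_{m-1}$, so $f(x_0,x_k)\le 2^{-(m-1)}\le 2a$. The edge case $m=0$ is trivial since $f\le 1$.

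Consequently $f\le 2\rho$. If $\rho(x,y)<1/4$, then $f(x,y)<1/2$, which forces $(x,y)\in V_1\subseteq W$. This is (b), and together with (a) it gives the lemma with the family $\{\rho_W:W\in\mathcal{U}\}$.
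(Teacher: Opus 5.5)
Your proof is correct and is essentially the standard chain construction from Bourbaki (Chapter IX, \S1, no.~4), which is exactly what the paper cites in place of a proof. That construction uses symmetric entourages with $V_{n+1}^{3}\subseteq V_n$, the step function $f$, the chain infimum $\rho$, and the halving induction giving $\tfrac12 f\le\rho\le f$.

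Two small points deserve cleaner treatment. The case $a=0$ is handled most directly by noting that $\bigcap_n V_n$ is transitive, since $V_{n+1}\circ V_{n+1}\subseteq V_n$. And $m$ should be taken as the smallest nonnegative integer with $2^{-m}\le a$, so that your case $m=0$ covers every $a\ge 1$.
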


\begin{lemma}
    Let $\sigma\in\Sigma_{d}$, $\varepsilon>0$, and let $(x,y)\in uX\times uX$ be such that $y\in U(x,\sigma,\varepsilon)$. Then there are $\rho\in \Sigma_{d}$ and $\delta>0$ such that $U(y,\rho,\delta)\subseteq U(x,\sigma,\varepsilon)$.
\end{lemma}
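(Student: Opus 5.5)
The plan is to imitate Furstenberg's classical argument for the $\uptau$-topology, with all pseudometrics living on $X^{[d]}_{*}$ and the infimum taken over $\cF^{[d]}_{*}$.

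Set $\eta=\varepsilon-F_{\sigma,d}(x,y)>0$. By definition of the infimum there is $f_{0}\in\cF^{[d]}_{*}$ with
\begin{align*}
\sigma(f_{0}x^{[d]}_{*},f_{0}y^{[d]}_{*})<F_{\sigma,d}(x,y)+\eta/2 .
\end{align*}
The aim is to find $\rho\in\Sigma_{d}$ and $\delta>0$ such that $F_{\rho,d}(y,z)<\delta$ forces $F_{\sigma,d}(y,z)<\eta/2$. Then the triangle inequality closes the argument. For this I take $f\in\cF^{[d]}_{*}$ with $\sigma(fy^{[d]}_{*},fz^{[d]}_{*})<\eta/2$. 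Since $F^{[d]}_{\sigma}$ is $\cF^{[d]}_{*}$-invariant, $F_{\sigma,d}(x,y)=F^{[d]}_{\sigma}(fx^{[d]}_{*},fy^{[d]}_{*})$, and I would like
\begin{align*}
F_{\sigma,d}(x,z)\le \sigma(g fx^{[d]}_{*},gfz^{[d]}_{*})
\end{align*}
for a well-chosen $g$. The issue is that the same $g$ must make both $\sigma(gfx,gfy)$ and $\sigma(gfy,gfz)$ small.

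This is where minimality and the idempotent $u$ enter. Because $x,y\in uX$, the point $(x^{[d]}_{*},y^{[d]}_{*})=u^{[d]}_{*}(x^{[d]}_{*},y^{[d]}_{*})$ is $\cF^{[d]}_{*}$-minimal, using the lemma that $u^{[d]}_{*}$ is a minimal idempotent of $\beta\cF^{[d]}_{*}$. By the preceding lemma, $F^{[d]}_{\sigma}$ is then constant along $\beta\cF^{[d]}_{*}$ applied to this pair. So for any $f$, the pair $(fx^{[d]}_{*},fy^{[d]}_{*})$ has closure of its $\cF^{[d]}_{*}$-orbit returning near $(x^{[d]}_{*},y^{[d]}_{*})$ in the compact space.

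I define $\rho$ as follows. Use \cref{lemma: uniform_pseudometrics} together with the compactness of $X^{[d]}_{*}$ (uniformity equals topology) to get a continuous pseudometric $\rho\ge\sigma$. Choose $\rho$ fine enough near $y^{[d]}_{*}$ that $\rho$-closeness of the second coordinates, at a point where the first pair has returned close to $(f_{0}x,f_{0}y)$, implies $\sigma$-closeness after applying $f_{0}$. Concretely, I would try $\rho(\bx,\bx')=\sigma(\bx,\bx')+\sigma(f_{0}\bx,f_{0}\bx')$, which is continuous since $f_{0}$ acts by homeomorphisms, and take $\delta=\eta/2$ or smaller.

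The key step uses a minimal idempotent to synchronize the two infima. Choose $f$ with $\rho(fy,fz)<\delta$. Then find $\boldsymbol{q}\in\beta\cF^{[d]}_{*}$, in the minimal ideal containing $u^{[d]}_{*}$, with $\boldsymbol{q}f(x^{[d]}_{*},y^{[d]}_{*})=(x^{[d]}_{*},y^{[d]}_{*})$. Approximate $\boldsymbol{q}$ by $g\in\cF^{[d]}_{*}$, so that $gf$ sends $(x,y)$ near $(x,y)$ and then $f_{0}gf$ gives $\sigma$-distance about $F_{\sigma,d}(x,y)+\eta/2$ between the $x$- and $y$-images.

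The $z$-image, however, is not controlled after applying $g$. This is the main obstacle: the infimum for $(y,z)$ is not stable under further motion. I expect to fix it by reversing the order. First pick $f$ realizing $(y,z)$-closeness inside a uniform entourage $\alpha$ chosen by a compactness/equicontinuity-on-finite-sets argument. Then use that $\rho$ already incorporates $f_{0}$, so that only one group element is needed and no return is required. Failing that, I would replace $\rho$ by $\sup$ over a finite $\varepsilon/4$-net of elements $g$ coming from the minimality of $(x,y)$.
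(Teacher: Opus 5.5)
Your proposal does not yet reach a working argument. The step you flag as ``the main obstacle'' is exactly where the proof lives.

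First, reducing to ``$F_{\rho,d}(y,z)<\delta$ forces $F_{\sigma,d}(y,z)<\eta/2$'' cannot suffice. The function $F_{\sigma,d}$ satisfies no triangle inequality, because the infima for $(x,y)$ and $(y,z)$ may be realized by different elements of $\cF^{[d]}_{*}$.

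Second, the concrete choice $\rho(\bx,\bx')=\sigma(\bx,\bx')+\sigma(f_{0}\bx,f_{0}\bx')$ fails. The witness $f$ with $\rho(fy^{[d]}_{*},fz^{[d]}_{*})<\delta$ is arbitrary and depends on $z$. So you control $\sigma(gfy^{[d]}_{*},gfz^{[d]}_{*})$ only for $g\in\{e,f_{0}\}$, while $\sigma(gfx^{[d]}_{*},gfy^{[d]}_{*})$ has no upper bound for either choice. Smallness of $\sigma(f_{0}x^{[d]}_{*},f_{0}y^{[d]}_{*})$ says nothing about $f_{0}f$. The element $\boldsymbol{q}$ returning $f(x^{[d]}_{*},y^{[d]}_{*})$ to $(x^{[d]}_{*},y^{[d]}_{*})$ depends on $f$, hence on $z$, so it cannot be built into $\rho$. ``Reversing the order'' with one fixed element cannot work either, since no single element sends every point of the orbit of $(x^{[d]}_{*},y^{[d]}_{*})$ into the region where $\sigma$ is small.

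The missing idea is a \emph{finite, $z$-independent} family of return elements, obtained from minimality plus compactness; this is what the paper does.
\begin{itemize}
\item Choose $c$ with $F_{\sigma,d}(x,y)<c<\varepsilon$.
\item Let $Z=\overline{\cF^{[d]}_{*}(x^{[d]}_{*},y^{[d]}_{*})}$, which is $\cF^{[d]}_{*}$-minimal because $x,y\in uX$.
\item Let $V=\{(\bx,\bx')\in Z:\sigma(\bx,\bx')<c\}$, a nonempty relatively open subset of $Z$.
\item Minimality and compactness give $f_{1},\dots,f_{n}\in\cF^{[d]}_{*}$ with $Z=\bigcup_{i=1}^{n}f_{i}^{-1}V$.
\item Put $\rho(\bx,\bx')=\max_{i}\sigma(f_{i}\bx,f_{i}\bx')$, a continuous pseudometric, and $\delta=\varepsilon-c$. (The paper obtains such a $\rho$ via uniform continuity of the $f_i$ and \cref{lemma: uniform_pseudometrics}.)
\end{itemize}
Now suppose $\rho(fy^{[d]}_{*},fz^{[d]}_{*})<\delta$. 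Then $\sigma(f_{i}fy^{[d]}_{*},f_{i}fz^{[d]}_{*})<\varepsilon-c$ for \emph{every} $i$. Meanwhile $f(x^{[d]}_{*},y^{[d]}_{*})\in Z$ lies in some $f_{i}^{-1}V$, so $\sigma(f_{i}fx^{[d]}_{*},f_{i}fy^{[d]}_{*})<c$ for that $i$. The triangle inequality for $\sigma$ (not for $F_{\sigma,d}$) gives $F_{\sigma,d}(x,z)\le\sigma(f_{i}fx^{[d]}_{*},f_{i}fz^{[d]}_{*})<\varepsilon$.

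Your closing fallback, a $\sup$ over finitely many elements coming from minimality, points in this direction. However, what must be finite is a cover of $Z$ by translates of $V$, not an $\varepsilon/4$-net. As written, that step is only gestured at, so the proof is incomplete.
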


\begin{proof}
    Since $(x^{[d]}_{*},y^{[d]}_{*})$ is a $\cF^{[d]}_{*}$-minimal point, it follows that $Z=\overline{\cF^{[d]}_{*}(x^{[d]}_{*},y^{[d]}_{*})}$ is a $\cF^{[d]}_{*}$-minimal set. Let $c>0$ be such that $F_{\sigma,d}(x,y)<c<\varepsilon$ and let $V=\{\boldsymbol{z}\in Z: \sigma(\boldsymbol{z})<c\}$. Since $F_{\sigma,d}(x,y)<c$, we obtain that $V\neq \emptyset$. Therefore, by minimality, there exist $f_{1},\dots,f_{n}\in \cF^{[d]}_{*}$ such that $Z= \bigcup_{i=1}^{n}f_{i}^{-1}V$.

    Let $i\in \{1,\dots,n\}$. Since $f_{i}$ and $\sigma$ are uniformity continuous, it follows that there exists $\alpha_{i}\in \mathcal{U}_{X^{[d]}_{*}}$ such that $\sigma(f_{i}\bx_{1},f_{i}\bx_{2})<\varepsilon-c$ whenever $(\bx_{1},\bx_{2})\in \alpha_{i}$. By \cref{lemma: uniform_pseudometrics}, there exist $\rho_{i}\in \Sigma_{d}$ and $\delta_{i}>0$ such that $(\bx_{1},\bx_{2})\in \alpha_{i}$ whenever $\rho_{i}(\bx_{1},\bx_{2})<\delta_{i}$.

    Now, define $\rho = \max_{i=1,\dots,n}\rho_{i}$ and $\delta = \min_{i
    =1,\dots,n}\delta_{i}$. Let $z\in U(y,\rho,\delta)$, then there exists $f\in \cF^{[d]}_{*}$ such that $\rho(fy^{[d]}_{*},fz^{[d]}_{*})<\delta$. Therefore, we have $\sigma(f_{i}fy^{[d]}_{*},f_{i}fz^{[d]}_{*})<\varepsilon-c$ for all $i\in \{1,\dots,n\}$. Since $f(x^{[d]}_{*},y^{[d]}_{*})\in Z$, there exists $i\in \{1,\dots,n\}$ such that $f_{i}f(x^{[d]}_{*},y^{[d]}_{*}) \in V$. Thus, we obtain that\begin{align*}
        \sigma(f_{i}fx^{[d]}_{*},f_{i}fz^{[d]}_{*})\leq \sigma(f_{i}fx^{[d]}_{*},f_{i}fy^{[d]}_{*})+\sigma(f_{i}fy^{[d]}_{*},f_{i}fz^{[d]}_{*})<c+\varepsilon-c = \varepsilon
    \end{align*} 

    Hence, we get that $F_{\sigma,d}(x,z)<\varepsilon$, and we conclude that $U(y,\rho,\delta)\subseteq U(x,\sigma,\varepsilon)$.
\end{proof}

\begin{theorem}
    The family\begin{align*}
        \{U(x,\sigma,\varepsilon): x\in uX, \sigma\in \Sigma_{d},\varepsilon>0\}
    \end{align*}

    forms a basis for a topology on $uX$, which we call the $\mathscr{F}^{[d]}$-topology.
\end{theorem}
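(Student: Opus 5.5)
We use the standard criterion for a family $\mathcal{B}$ of subsets of $uX$ to be a basis of a topology. Two things must hold: (a) $\mathcal{B}$ covers $uX$; (b) for any two members $B_1,B_2\in\mathcal{B}$ and any point $z\in B_1\cap B_2$ there is $B_3\in\mathcal{B}$ with $z\in B_3\subseteq B_1\cap B_2$. Here the sets $U(x,\sigma,\varepsilon)$ are understood as subsets of $uX$, i.e. intersected with $uX$.

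For (a), fix $x\in uX$, $\sigma\in\Sigma_{d}$ and $\varepsilon>0$. Since $\sigma$ is a pseudometric, $\sigma(fx^{[d]}_{*},fx^{[d]}_{*})=0$ for every $f\in\cF^{[d]}_{*}$. Hence $F_{\sigma,d}(x,x)=0<\varepsilon$ and $x\in U(x,\sigma,\varepsilon)$, so the family covers $uX$.

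For (b), take $z\in U(x_1,\sigma_1,\varepsilon_1)\cap U(x_2,\sigma_2,\varepsilon_2)$ with $x_1,x_2,z\in uX$. Since $z\in uX$, the pair $(x_i,z)\in uX\times uX$ satisfies the hypothesis of the preceding lemma. Applying it for $i=1,2$ gives $\rho_i\in\Sigma_{d}$ and $\delta_i>0$ with
\[
U(z,\rho_i,\delta_i)\subseteq U(x_i,\sigma_i,\varepsilon_i).
\]
Set $\rho=\max(\rho_1,\rho_2)$ and $\delta=\min(\delta_1,\delta_2)$. We claim $z\in U(z,\rho,\delta)\subseteq U(z,\rho_1,\delta_1)\cap U(z,\rho_2,\delta_2)$, which gives the required $B_3$.

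The only point needing care is that $\rho$ is again a continuous pseudometric on $X^{[d]}_{*}$, and that the neighbourhoods are monotone in the data.
\begin{itemize}
\item $\rho\in\Sigma_{d}$: the maximum of two continuous pseudometrics is continuous, symmetric and vanishes on the diagonal. The triangle inequality holds because $\rho_j(a,c)\le\rho_j(a,b)+\rho_j(b,c)\le\rho(a,b)+\rho(b,c)$ for each $j$, so the same bound holds for the maximum.
\item Monotonicity: since $\rho\ge\rho_i$ pointwise, $F_{\rho,d}(z,w)\ge F_{\rho_i,d}(z,w)$, because the infimum over $f\in\cF^{[d]}_{*}$ is taken of pointwise larger quantities. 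Therefore $F_{\rho,d}(z,w)<\delta\le\delta_i$ forces $F_{\rho_i,d}(z,w)<\delta_i$, i.e. $U(z,\rho,\delta)\subseteq U(z,\rho_i,\delta_i)$ for $i=1,2$.
\item $z\in U(z,\rho,\delta)$ by the argument in (a).
\end{itemize}

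Both conditions hold, so the family generates a topology on $uX$. The main work is contained in the preceding lemma, which uses the minimality of $(x^{[d]}_{*},y^{[d]}_{*})$ for points of $uX$; once that lemma is available, this statement follows directly.
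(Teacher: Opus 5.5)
Your proof is correct and takes essentially the same route as the paper: pick a point of the intersection, center a new basic set there, and combine the data by taking the maximum of the pseudometrics and the minimum of the radii. Your version is more careful than the paper's. You first apply the preceding lemma to get $(\rho_i,\delta_i)$ and only then take $\max$ and $\min$, whereas the paper's one-line argument uses $\max\{\sigma_1,\sigma_2\}$ and $\min\{\varepsilon_1,\varepsilon_2\}$ directly, which does not by itself give the containment. You also verify the covering condition, which the paper omits.
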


\begin{proof}
    Let $x_{1},x_{2}\in uX$, $\sigma_{1},\sigma_{2}\in \Sigma_{d}$ and $\varepsilon_{1},\varepsilon_{2}>0$ be such that $U(x_{1},\sigma_{1},\varepsilon_{1})\cap U(x_{2},\sigma_{2},\varepsilon_{2})\neq \emptyset$. Let $y\in U(x_{1},\sigma_{1},\varepsilon_{1})\cap U(x_{2},\sigma_{2},\varepsilon_{2})$, and define $\sigma = \max\{\sigma_{1},\sigma_{2}\}$ and $\varepsilon=\min\{\varepsilon_{1},\varepsilon_{2}\}$. Note that \begin{align*}
        U(y,\sigma,\varepsilon) \subseteq U(x_{1},\sigma_{1},\varepsilon_{1})\cap U(x_{2},\sigma_{2},\varepsilon_{2}),
    \end{align*}

    completing the proof. 
\end{proof}

Now, to prove that the $\mathscr{F}^{[d]}$-topology coincides with the $\uptau^{[d]}$-topology, we need the following lemma.

\begin{lemma}\label{lemma: convergence_in_furstenberg_d_topology}
    If a net $(\bx_{i})_{i\in I}$ in $u^{[d]}_{*}X^{[d]}_{*}$ and a point $\bx$ in $u^{[d]}_{*}X^{[d]}_{*}$ satisfy that $F^{[d]}_{\sigma}(\bx_{i},\bx)\to 0$ for every $\sigma\in\Sigma_{d}$ then there exist a subnet $(\bx_{j})_{j\in J}$ of $(\bx_{i})_{i\in I}$ and a net $(f_{j})_{j\in J}$ in $\cF^{[d]}_{*}$ such that $\sigma(f_{j}\bx_{j},f_{j}\bx)\to 0$ for every $\sigma\in \Sigma_{d}$.
\end{lemma}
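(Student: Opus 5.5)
The plan is a diagonal (product-directed-set) construction, mirroring the classical argument for the $\uptau$-topology. I will index the new net by triples $(i,\sigma,n)$ with $i\in I$, $\sigma\in\Sigma_{d}$ and $n\in\N$. Write $J=I\times\Sigma_{d}\times\N$, ordered by $(i,\sigma,n)\leq(i',\sigma',n')$ if and only if $i\leq i'$, $\sigma\leq\sigma'$ pointwise, and $n\leq n'$. This is directed, since $\max\{\sigma,\sigma'\}\in\Sigma_{d}$. Note that the hypothesis involves the infimum over $\cF^{[d]}_{*}$, so only near-optimizers $f$ exist. The whole point is to select such near-optimizers along a cofinal subnet, so that a single $f_{j}$ works simultaneously for larger and larger pseudometrics.

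First I fix $j=(i,\sigma,n)\in J$. Since $F^{[d]}_{\sigma}(\bx_{k},\bx)\to 0$, there is an index $k_{j}\in I$ with $k_{j}\geq i$ and $F^{[d]}_{\sigma}(\bx_{k_{j}},\bx)<1/n$. By definition of the infimum, there is then $f_{j}\in\cF^{[d]}_{*}$ with $\sigma(f_{j}\bx_{k_{j}},f_{j}\bx)<1/n$. I set $\bx_{j}:=\bx_{k_{j}}$. The map $j\mapsto k_{j}$ is cofinal because $k_{j}\geq i$, though it is not necessarily monotone. This gives a subnet in the Willard/Kelley sense, which is the sense the paper uses throughout (e.g. in \cref{lemma: char_Hd}). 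If a monotone cofinal map is preferred, I can instead choose $k_{j}$ greater than $i$ and also take the choices compatible by a standard trick, but the cofinal version should suffice.

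Next I verify the convergence. Fix $\sigma_{0}\in\Sigma_{d}$ and $\varepsilon>0$, and pick $n_{0}$ with $1/n_{0}<\varepsilon$. For any $j=(i,\sigma,n)\geq(i_{0},\sigma_{0},n_{0})$, where $i_{0}$ is arbitrary, we have $\sigma_{0}\leq\sigma$ and $n\geq n_{0}$. Hence
\[
\sigma_{0}(f_{j}\bx_{j},f_{j}\bx)\leq\sigma(f_{j}\bx_{j},f_{j}\bx)<1/n\leq 1/n_{0}<\varepsilon.
\]
Thus $\sigma_{0}(f_{j}\bx_{j},f_{j}\bx)\to 0$ for every $\sigma_{0}$, which is the claim.

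The main subtlety is not the estimate but the bookkeeping: making sure $(\bx_{j})_{j\in J}$ is genuinely a subnet, i.e. that the $k_{j}$ are cofinal in $I$, and that the indexing set is directed. The hypothesis that the points lie in $u^{[d]}_{*}X^{[d]}_{*}$ plays no role in this argument; it is presumably there for the later application, comparing the $\mathscr{F}^{[d]}$- and $\uptau^{[d]}$-topologies.
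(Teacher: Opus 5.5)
Your proposal is correct and is essentially the paper's proof: the paper indexes by $\Sigma_{d}\times\R_{+}\times I$ (with $\R_{+}$ reverse-ordered, where you use $\N$). It chooses $\phi(j)\geq i$ with $F^{[d]}_{\sigma}(\bx_{\phi(j)},\bx)<\epsilon$, takes a near-optimizer $f_{j}$, and concludes via $\sigma'\leq\sigma$. As in the paper, the resulting subnet is one in Kelley's sense (eventually cofinal, not necessarily monotone, so not a Willard subnet as you suggest), and that is all that is needed.
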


\begin{proof}
    Let $J = \Sigma_{d}\times \R_{+}\times I$, note that $J$ is a directed set, where $\R_{+}$ is endowed with the inverse order (that $\epsilon_{1}\succ \epsilon_{2}$ if $\epsilon_{1}\leq \epsilon_{2}$). For each $j = (\sigma,\epsilon,i)\in J$, choose $\phi(j)\geq i$ in $I$ such that $F^{[d]}_{\sigma}(\bx_{\phi(j)},\bx)<\epsilon$. Then there exists $f_{j}\in \cF^{[d]}_{*}$ such that $\sigma(f_{j}\bx_{\phi(j)}, f_{j}\bx)<\epsilon$. Let $\sigma'\in \Sigma_{d}$ and $\epsilon'>0$, for all $i\in I$ we have\begin{align*}
        \sigma'(f_{j}\bx_{\phi(j)}, f_{j}\bx)\leq \sigma(f_{j}\bx_{\phi(j)}, f_{j}\bx)<\epsilon<\epsilon'
    \end{align*}

    for every $j=(\sigma,\epsilon,i)\succ (\sigma',\epsilon',i)$. This proves the desired result.
\end{proof}

\begin{remark}
    Note that, directly from the definition of the $\mathscr{F}^{[d]}$-topology, if $(x_{i})_{i}$ is a net in $uX$ and $(f_{i})_{i}$ is a net in $\cF^{[d]}_{*}$, and there exists $x\in uX$ such that $\sigma(f_{i}x_{i*}^{[d]},f_{i}x^{[d]}_{*})\to 0$ for every $\sigma\in \Sigma_{d}$ then the net $(x_{i})_{i}$ converges to $x$ with respect to the $\mathscr{F}^{[d]}$-topology.
\end{remark}

\begin{theorem}
    The $\uptau^{[d]}$-topology coincides with the $\mathscr{F}^{[d]}$-topology on $uX$.
\end{theorem}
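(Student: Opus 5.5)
We show that each topology is finer than the other by comparing convergent nets, since both topologies are determined by their convergent nets.

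\emph{$\mathscr{F}^{[d]}$-convergence implies $\uptau^{[d]}$-convergence.} Let $x_{i}\to x$ in the $\mathscr{F}^{[d]}$-topology. Then $F_{\sigma,d}(x_{i},x)\to 0$ for every $\sigma\in\Sigma_{d}$. Suppose $x_{i}$ does not $\uptau^{[d]}$-converge to $x$. Passing to a subnet, we may assume that no further subnet does, so $x\notin\cltauF{d}{A}$ with $A=\{x_{i}\}$. Apply \cref{lemma: convergence_in_furstenberg_d_topology} to $\bx_{i}=x_{i*}^{[d]}$ and $\bx=x^{[d]}_{*}$, which lie in $u^{[d]}_{*}X^{[d]}_{*}$. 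This gives a subnet and $f_{j}\in\cF^{[d]}_{*}$ with $\sigma(f_{j}x_{j*}^{[d]},f_{j}x^{[d]}_{*})\to 0$ for all $\sigma$. Passing to a further subnet, let $f_{j}\to\boldsymbol{p}\in\beta\cF^{[d]}_{*}$. Then both $f_{j}x_{j*}^{[d]}$ and $f_{j}x^{[d]}_{*}$ converge to $\boldsymbol{p}x^{[d]}_{*}$. Hence $\boldsymbol{p}x^{[d]}_{*}\in\boldsymbol{p}\circ_{\cF^{[d]}_{*}}\Delta^{[d]}_{*}(A')$ for every tail $A'$ of the subnet.

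Next, since $x^{[d]}_{*}$ is $\cF^{[d]}_{*}$-minimal with $u^{[d]}_{*}x^{[d]}_{*}=x^{[d]}_{*}$ (by the lemma that $u^{[d]}_{*}$ is a minimal idempotent), we can replace $\boldsymbol{p}$ by an element of the minimal ideal containing $u^{[d]}_{*}$. This uses $\boldsymbol{p}u^{[d]}_{*}$, together with $x_{i*}^{[d]}=u^{[d]}_{*}x_{i*}^{[d]}$. Choose $\boldsymbol{q}$ in that group with $\boldsymbol{q}\boldsymbol{p}u^{[d]}_{*}=u^{[d]}_{*}$. The circle-operation arithmetic (\cref{prop: circle_arithmetic}) then gives
\begin{align*}
x^{[d]}_{*}=\boldsymbol{q}\boldsymbol{p}u^{[d]}_{*}x^{[d]}_{*}\in u^{[d]}_{*}(\boldsymbol{q}\circ(\boldsymbol{p}u^{[d]}_{*}\circ_{\cF^{[d]}_{*}}\Delta^{[d]}_{*}(A')))\subseteq u^{[d]}_{*}(u^{[d]}_{*}\circ_{\cF^{[d]}_{*}}\Delta^{[d]}_{*}(A')).
\end{align*}
This is the same manipulation as in the proof of the basis theorem. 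It shows $x\in\cltauF{d}{A'}$ for every tail $A'$, a contradiction.

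\emph{$\uptau^{[d]}$-convergence implies $\mathscr{F}^{[d]}$-convergence.} Let $x_{i}\taulimF{d}x$. Fix $\sigma\in\Sigma_{d}$ and $\varepsilon>0$; we show $F_{\sigma,d}(x_{i},x)<\varepsilon$ eventually. If not, pass to a subnet with $F_{\sigma,d}(x_{i},x)\ge\varepsilon$ and set $A=\{x_{i}\}$ (tails). Since $x\in\cltauF{d}{A}$, the Vietoris description gives nets $f_{k}\to u^{[d]}_{*}$ in $\cF^{[d]}_{*}$ and $a_{k}\in A$ with $f_{k}a_{k*}^{[d]}\to x^{[d]}_{*}$. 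Because $f_{k}\to u^{[d]}_{*}$ and $ux=x$, we also have $f_{k}x^{[d]}_{*}\to u^{[d]}_{*}x^{[d]}_{*}$. Passing to a subnet, this holds in the sense of convergence of $f_{k}(a_{k*}^{[d]},x^{[d]}_{*})$ in $X^{[d]}_{*}\times X^{[d]}_{*}$, and the limit is $(x^{[d]}_{*},x^{[d]}_{*})$. Hence $\sigma(f_{k}a_{k*}^{[d]},f_{k}x^{[d]}_{*})\to 0$. By the definition of $F_{\sigma}^{[d]}$ as an infimum, $F_{\sigma,d}(a_{k},x)\to 0$, contradicting $F_{\sigma,d}(a_{k},x)\ge\varepsilon$.

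\emph{Main obstacle.} The delicate step is in the first direction: replacing the arbitrary limit $\boldsymbol{p}$ by $u^{[d]}_{*}$. This must be done while keeping the circle-operation membership intact, and it relies on minimality of $u^{[d]}_{*}$ in $\beta\cF^{[d]}_{*}$. A lesser point is that a basis at $x$ consists of the sets $U(x,\sigma,\varepsilon)$, which follows from the preceding lemma on shrinking neighbourhoods, so net convergence in $\mathscr{F}^{[d]}$ is exactly $F_{\sigma,d}(x_{i},x)\to 0$.
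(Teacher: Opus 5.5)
Your proposal is correct and follows the paper's proof. For $\mathscr{F}^{[d]}\Rightarrow\uptau^{[d]}$ you apply \cref{lemma: convergence_in_furstenberg_d_topology}, pass to a limit $\boldsymbol{p}$ in $\beta\cF^{[d]}_{*}$, and undo $\boldsymbol{p}u^{[d]}_{*}$ inside the group of the minimal left ideal (your $\boldsymbol{q}$ is the paper's $(u^{[d]}_{*}fu^{[d]}_{*})^{-1}$); for the converse you combine the net description of $\cltauF{d}{\cdot}$ with the infimum defining $F^{[d]}_{\sigma}$, and the only difference from the paper is that you argue through convergent nets and contradiction rather than proving the two closure inclusions directly.
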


\begin{proof}
    Let $A\subseteq uX$ and let $x\in \mathrm{cl}_{\mathscr{F}^{[d]}}(A)$, where $\mathrm{cl}_{\mathscr{F}^{[d]}}$ denotes the closure operator with respect to the $\mathscr{F}^{[d]}$-topology. Then, there exists a net $(x_{i})_{i\in I}$ in $A$ convergent to $x\in uX$ with respect to the $\mathscr{F}^{[d]}$-topology. By \cref{lemma: convergence_in_furstenberg_d_topology}, there exist a subnet $(x_{j})_{j\in J}$ of $(x_{i})_{i\in I}$ and a net $(f_{j})_{j\in J}$ in $\cF^{[d]}_{*}$ such that $\sigma(f_{j}x_{j*}^{[d]},f_{j}x^{[d]}_{*})\to 0$ for every $\sigma\in \Sigma_{d}$.

    Passing to a subnet if necessary, assume that $f_{j}x_{j*}^{[d]}\to \bx$ and $f_{j}\to f$ for some $\bx\in X^{[d]}_{*}$ and $f\in\beta\cF^{[d]}_{*}$. Therefore, we have $\sigma(\bx,fx^{[d]}_{*})=0$ for every $\sigma\in \Sigma_{d}$, and hence $\bx=fx^{[d]}_{*}$. In particular, we get that $x^{[d]}_{*} = (u^{[d]}_{*}fu^{[d]}_{*})^{-1}\bx$. Moreover, since $\bx\in fu^{[d]}_{*}\circ_{\cF^{[d]}_{*}} \Delta^{[d]}_{*}(A_{k})$ for every $k\in J$, where $A_{k} = \{x_{j}:j\geq k\}$, it follows that\begin{align*}
        x^{[d]}_{*} \in (u^{[d]}_{*}fu^{[d]}_{*})^{-1}(fu^{[d]}_{*}\circ_{\cF^{[d]}_{*}} \Delta^{[d]}_{*}(A_{k})) \subseteq (u^{[d]}_{*}fu^{[d]}_{*})^{-1}fu^{[d]}_{*}\circ_{\cF^{[d]}_{*}} \Delta^{[d]}_{*}(A_{k}) = u^{[d]}_{*} \circ_{\cF^{[d]}_{*}} \Delta^{[d]}_{*}(A_{k}).
    \end{align*}

    Hence, after passing to a subnet if necessary, we obtain $x_{j}\taulimF{d} x$, which implies that  $x\in \cltauF{d}{A}$. Therefore, we conclude that $\mathrm{cl}_{\mathscr{F}^{[d]}}(A) \subseteq \cltauF{d}{A}$.

    Conversely, let $x\in \cltauF{d}{A}$. Then there exist a net $(x_{i})_{i\in I}$ in $A$ and a net $(f_{i})_{i\in I}$ in $\cF^{[d]}_{*}$ such that $f_{i}\to u^{[d]}_{*}$ and $f_{i}x_{i*}^{[d]}\to x^{[d]}_{*}$. Therefore, we have $\sigma(f_{i}x_{i*}^{[d]},f_{i}x^{[d]}_{*})\to 0$ for every $\sigma \in \Sigma_{d}$, and thus $(x_{i})_{i\in I}$ converge to $x$ with respect to the $\mathscr{F}^{[d]}$-topology. Hence, we conclude that $\cltauF{d}{A}\subseteq \mathrm{cl}_{\mathscr{F}^{[d]}}(A)$.
\end{proof}

\subsection{An algebraic characterization of $\RP^{[d]}$}\label{subsec: algebraic_RPd}

In this subsection we show an algebraic characterization of $\RP^{[d]}$, for abelian actions, using the $\uptau^{[d]}$-topology. Throughout, we assume that in every system $(X,T)$, the acting groups $T$ is an abelian group.

\begin{theorem}\label{thm: algebraic_characterization_RPd}
    Let $(X,T)$ be a minimal system and let $d\geq 1$ be an integer. Then\begin{align*}
        \RP^{[d]}(X)= \{(x,vgx):x\in X,v\in J(M),g\in H_{\uptau^{[d]}}(G)\}.
    \end{align*}
\end{theorem}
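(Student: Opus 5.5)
The plan is to prove the two inclusions separately. In both directions we work in the universal system $M$ and push down to $X$ with the factor $\Phi:(M,T)\to(X,T)$, $p\mapsto px_0$, where $x_0\in uX$. Two facts from earlier in the paper will be used throughout. First, since $T$ is abelian, $\RP^{[d]}=\NRP^{[d]}$, and by \cite{Shao_Ye_regionally_prox_orderd:2012} it is a closed invariant equivalence relation. Second, by \cref{thm: NRP_properties}(3), factors map $\NRP^{[d]}$ onto $\NRP^{[d]}$.

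For the inclusion ``$\supseteq$'', take $g\in H_{\uptau^{[d]}}(G)$. By \cref{prop: H_taud_NRPd} we have $(g,u)\in\RP^{[d]}(M)$. Invariance and closedness of $\RP^{[d]}(M)$ under the $\beta T$-action give $(pg,pu)\in\RP^{[d]}(M)$ for every $p\in M$. Now write an arbitrary $x\in X$ as $x=px_0$ with $p\in M$. Pushing forward with $\Phi$ shows that $(pgx_0,px_0)\in\RP^{[d]}(X)$. To reach the pair $(x,vgx)$ we must handle the idempotent $v$.

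\begin{itemize}
\item First, $(x,vx)$ is proximal, so it lies in $\P(X)\subseteq\RP^{[d]}(X)$.
\item Next, write $x=wq x_0$ with $w\in J(M)$ and $q\in G$, and use that $H_{\uptau^{[d]}}(G)$ is a group (\cref{thm: Hd_prop}).
\item We then need $(vx,vgx)\in\RP^{[d]}(X)$. Taking $p=vq$ above gives $(vqgx_0, vqx_0)$, which has the wrong order $qg$ rather than $gq$.
\end{itemize}

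To fix the order, conjugate. By \cref{prop: right_mult_taud} and \cref{thm: inversion_taud}, the element $q^{-1}gq$ should again belong to $H_{\uptau^{[d]}}(G)$; equivalently, $H_{\uptau^{[d]}}(G)$ is normal in $G$. I would prove this using the characterization of \cref{lemma: char_Hd}: multiply the defining nets on the right by $q^{[d]}$ and on the left by $q^{-1}$, using that left multiplication is a $\uptau^{[d]}$-homeomorphism. Transitivity of $\RP^{[d]}(X)$ then finishes this inclusion.

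For the inclusion ``$\subseteq$'', let $(x,y)\in\RP^{[d]}(X)$. Lift it to a pair $(p,q)\in\RP^{[d]}(M)$, using \cref{thm: NRP_properties}(3) applied to $\Phi$. Writing $x=px_0$, it suffices to show that $q\in\{vgp: v\in J(M),\ g\in H_{\uptau^{[d]}}(G)\}$. Since $(q,uq)$ is proximal, it is enough to handle $(up,uq)$, and by left translation we may reduce to a pair of the form $(u,h)$ with $h\in G$ and $(u,h)\in \RP^{[d]}(M)$. Here I would use the definition of $\RP^{[d]}$ with $f_i\in\cF^{[d]}$ and points $x_i,y_i\in M$ satisfying
\begin{align*}
(f_ix_i^{[d]},f_iy_i^{[d]})\to(u,\mathbf a_*,h,\mathbf a_*).
\end{align*}
The goal is to convert this data into the nets required by \cref{lemma: char_Hd}, namely nets $g_i,g_i'$ in $G$ and $f_i\to(e,u^{[d]}_*)$ with common limit $r$. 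The conversion would go as follows.
\begin{itemize}
\item Use minimality of $\bQ^{[d]}(M)$ and of the face orbits (\cref{thm: cube_is_minimal}) to choose the configuration $\mathbf a_*$ to be $u^{[d]}_*$-fixed.
\item Apply $u$ and idempotent manipulations of the circle operation (\cref{prop: circle_arithmetic}) to move the points $x_i,y_i$ into $G$.
\item Use the $\uptau^{[d]}$-basis theorem to interpret the convergence $\mathbf a_*\to u^{[d]}_*$-type statements as $\uptau^{[d]}$-convergence, and the $\mathscr F^{[d]}$-description to swap the roles of the coordinates.
\end{itemize}
This should give $h\in H_{\uptau^{[d]}}(G)$, up to an element of $\GG(X)$ absorbed by the point $x_0$.

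The main obstacle is this last step. The face transformations act nontrivially on the $\emptyset$-coordinate only through the $\cF^{[d]}$ part. Arranging $f_i\to(e,u^{[d]}_*)$ while keeping both the $g_i$ and $g_i'$ nets in $G$ is delicate, and this is where the abelian hypothesis should enter. Abelianness makes $\cF^{[d]}$ commute with the diagonal action, and through the Shao–Ye result it makes $\RP^{[d]}(M)$ an equivalence relation, so $\mathbf a_*$ can be replaced by a diagonal-type point. I would first try to reduce to the case $\mathbf a_*=w^{[d]}_*$ for some idempotent $w$. From there, the argument would use the Ellis trick (\cref{lemma: Ellis_trick}) to produce the required $\uptau^{[d]}$-neighborhoods.
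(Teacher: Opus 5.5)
Your reduction of $\subseteq$ to a pair $(h,u)\in\RP^{[d]}(M)$ with $h\in G$ is the right one. The step that actually shows $h\in H_{\uptau^{[d]}}(G)$ is missing, though: you flag it as the main obstacle and only sketch an Ellis-trick strategy. The route you aim for, through \cref{lemma: char_Hd}, also asks for $f_i\to(e,u^{[d]}_*)$ in $\beta\cF^{[d]}$, and the definition of $\RP^{[d]}$ gives no control of that kind.

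The missing idea is to use the definition of $H_{\uptau^{[d]}}(G)$ directly, i.e.\ a net $g_i$ in $G$ with $g_i\taulimF{d}h$ and $g_i\taulimF{d}u$, together with the $\mathscr{F}^{[d]}$-description of the $\uptau^{[d]}$-topology. Since $\RP^{[d]}=\NRP^{[d]}$ here, the cube $(h,u^{[d+1]}_*)$ lies in $\bQ^{[d+1]}(M)$. The symmetries of $\bQ^{[d+1]}(M)$ and \cref{thm: cube_is_minimal} then let you choose the witnessing nets with the second point held constant and $\mathbf a_*=u^{[d]}_*$:
\begin{align*}
(f_ig_i^{[d]},f_iu^{[d]})\to(h,u^{[d]}_*,u,u^{[d]}_*),\qquad f_i\in\cF^{[d]},\ g_i\in G.
\end{align*}
Reading off the $\emptyset$-coordinate gives $g_i\to h$ in $M$, hence $g_i\taulimF{d}h$ by \cref{lemma: xi_to_x_xi_taulim_x}. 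The $\cF^{[d]}_*$-invariance and upper semicontinuity of $F^{[d]}_\sigma$ give
\begin{align*}
\limsup_i F_{\sigma,d}(g_i,u)=\limsup_i F^{[d]}_\sigma(f_{i*}g_{i*}^{[d]},f_{i*}u^{[d]}_*)\leq F^{[d]}_\sigma(u^{[d]}_*,u^{[d]}_*)=0
\end{align*}
for every $\sigma\in\Sigma_d$, i.e.\ $g_i\taulimF{d}u$. So $h\in H_{\uptau^{[d]}}(G)$, and nothing is needed about the limit of $f_i$ in $\beta\cF^{[d]}$. Keeping the second point constantly equal to $u$ is essential: with a varying $y_i$ the same estimate only compares $g_i$ with $y_i$.

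In the $\supseteq$ direction, your normality argument does not work as written. \cref{prop: right_mult_taud} and \cref{thm: inversion_taud} require $q^{[d]}_*\in\overline{\cF^{[d]}_*u^{[d]}_*}$, which fails for general $q\in G$. Also, $p\mapsto q^{-1}p$ is not continuous on $\beta T$, so it cannot be passed through the nets of \cref{lemma: char_Hd}. Normality is true, but in the paper it is derived from this theorem.

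The detour is also unnecessary. For each $x\in X$, the map $p\mapsto px$ is a factor $M\to X$, so $(g,u)\in\RP^{[d]}(M)$ gives $(gx,ux)\in\RP^{[d]}(X)$. Since $(x,ux)$ and $(gx,vgx)$ are in $\P(X)$, transitivity gives $(x,vgx)\in\RP^{[d]}(X)$.
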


\begin{proof}
    By \cref{prop: H_taud_NRPd} and \cref{thm: NRP_properties}, it is enough to prove that \begin{align*}
        \RP^{[d]}(X) \subseteq \{(x,vgx):x\in X,v\in J(M),g\in H_{\uptau^{[d]}}(G)\}.
    \end{align*}

    We first consider the case $X=M$. Let $(g,u)\in \RP^{[d]}(M)$ with $g\in G$. Then there exists $(f_{i})_{i\in I}$ a net in $\cF^{[d]}$ and $(g_{i})_{i\in I}$ a net in $M$ such that\begin{align*}
        (f_{i}g_{i}^{[d]},f_{i}u^{[d]})\to (g,u^{[d]}_{*},u,u^{[d]}_{*}).
    \end{align*}

    Since $G$ is dense in $M$, we may assume that $(g_{i})_{i\in I} \subseteq G$. By \cref{lemma: xi_to_x_xi_taulim_x}, we have $g_{i}\taulimF{d} g$. Let $\sigma$ be a pseudometric on $M^{[d]}_{*}$. Since $F_{\sigma}^{[d]}$ is upper semicontinuous and $\cF^{[d]}_{*}$-invariant, it follows that \begin{align*}
       \limsup F_{\sigma,d}(g_{i},u) =\limsup F_{\sigma}^{[d]}(f_{i*}g_{i*}^{[d]},f_{i*}u^{[d]}_{*}) \leq F^{[d]}_{\sigma}(u^{[d]}_{*},u^{[d]}_{*})=0.
    \end{align*}

    Thus, we obtain that $g_{i}\taulimF{d} u$, and hence $g\in H_{\uptau^{[d]}}(G)$.

    Now, let $(p,q)\in \RP^{[d]}(M)$, note that $(uq^{-1}p,u)\in \RP^{[d]}(M)$ since $\RP^{[d]}(M)$ is invariant. Therefore, we have $up\in uqH_{\uptau^{[d]}}(G)$, and we conclude that $p=vqg$ for some $v\in J(M)$ and $g\in H_{\uptau^{[d]}}(G)$.

    For the general case, it follows from the fact that $\pi\times\pi(\RP^{[d]}(M))=\RP^{[d]}(X)$, where $\pi:M\to X$ is a factor map.
\end{proof}

A direct consequence is the following algebraic characterization of systems of order $d$.

\begin{corollary}\label{cor: Ellis_group_factor_order_d}
    Let $(X,T)$ be a minimal system and let $d\geq 1$ be an integer. Then, $(X,T)$ is a system of order $d$ if and only if $(X,T)$ is a distal system and $H_{\uptau^{[d]}}(G)\subseteq \GG(X)$. Moreover, $\GG(X/\RP^{[d]}(X))=H_{\uptau^{[d]}}(G)\GG(X)$.
\end{corollary}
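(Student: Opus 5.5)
We need to prove the corollary: $X$ is of order $d$ iff $X$ is distal and $H_{\uptau^{[d]}}(G)\subseteq \GG(X)$; moreover $\GG(X/\RP^{[d]}(X))=H_{\uptau^{[d]}}(G)\GG(X)$. Throughout $T$ is abelian, so $\RP^{[d]}=\NRP^{[d]}$ and it is a closed invariant equivalence relation; order $d$ means $\RP^{[d]}(X)=\Delta_X$. The plan is to read everything off \cref{thm: algebraic_characterization_RPd}, using $x_0\in uX$ as base point.

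\emph{Forward direction.} Suppose $\RP^{[d]}(X)$ is trivial. Since $\P(X)\subseteq\RP^{[d]}(X)$, every proximal pair is diagonal, so $X$ is distal. For $g\in H_{\uptau^{[d]}}(G)$, \cref{thm: algebraic_characterization_RPd} with $v=u$ gives $(x_0,ugx_0)=(x_0,gx_0)\in\RP^{[d]}(X)$. Hence $gx_0=x_0$, that is, $g\in\GG(X)$.

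\emph{Converse.} Assume $X$ is distal and $H_{\uptau^{[d]}}(G)\subseteq\GG(X)$. Take $(x,vgx)\in\RP^{[d]}(X)$. By distality $X=uX$, so write $x=px_0$ with $p\in G$. Distality also gives $vy=y$ for all $y$, since $vy$ is proximal to $y$. So we must show $gpx_0=px_0$. This is where commutation is needed, and it is the main obstacle, because $G$ need not be abelian. I would use invariance of $\RP^{[d]}$ under $M$ (it is closed and $T$-invariant, hence $\beta T$-invariant). From $(px_0,gpx_0)\in\RP^{[d]}$, applying $p^{-1}$ (in $G$) gives $(x_0,p^{-1}gpx_0)\in\RP^{[d]}(X)$. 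So it suffices to show that the pairs $(x_0,y)\in\RP^{[d]}(X)$ with $y\in uX$ are exactly $y\in H_{\uptau^{[d]}}(G)\GG(X)x_0$, i.e. $y=x_0$ under the hypothesis. For this I would rerun the first step of the proof of \cref{thm: algebraic_characterization_RPd} on $M$ with the pair in the other order. Lifting $(x_0,y)$ to $(u,h)\in\RP^{[d]}(M)$ with $h\in G$ via $\pi\times\pi(\RP^{[d]}(M))=\RP^{[d]}(X)$ (using minimality of $u$-points), that step shows $h\in H_{\uptau^{[d]}}(G)$. Here I must check that the lift can be chosen with first coordinate $u$ and second in $G$: lift to $(p',q')$, multiply by $u(p')^{-1}$, then apply $u$ on the second coordinate using the $\{vM\}$ decomposition. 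Then $y=hx_0=x_0$.

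\emph{The equality of Ellis groups.} Let $Y=X/\RP^{[d]}(X)$ with $y_0=\pi(x_0)$, and take $g\in G$. Then $g\in\GG(Y)$ iff $(x_0,gx_0)\in\RP^{[d]}(X)$. By the previous paragraph this holds iff $gx_0=hx_0$ for some $h\in H_{\uptau^{[d]}}(G)$, i.e. iff $h^{-1}g\in\GG(X)$, i.e. iff $g\in H_{\uptau^{[d]}}(G)\GG(X)$. The reverse inclusion follows directly from \cref{thm: algebraic_characterization_RPd} and the inclusion $\GG(X)\subseteq\GG(Y)$. \cref{thm: Hd_prop} guarantees that $H_{\uptau^{[d]}}(G)$ is a group, so the product set is well defined.
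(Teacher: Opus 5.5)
Your overall plan is what the paper intends: read everything off \cref{thm: algebraic_characterization_RPd} at the base point $x_0$ after translating by $p^{-1}$. The paper records the corollary as a direct consequence, with no separate proof. Your forward direction and the inclusion $H_{\uptau^{[d]}}(G)\GG(X)\subseteq\GG(X/\RP^{[d]}(X))$ are fine.

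The flaw is in the detour through $M$, which you use to show that $(x_0,y)\in\RP^{[d]}(X)$ with $y\in uX$ forces $y\in H_{\uptau^{[d]}}(G)x_0$. Suppose $(p',q')\in\RP^{[d]}(M)$ lifts $(x_0,y)$ and $p'=wg'$ with $w\in J(M)$, $g'\in G$. Then $p'x_0=x_0$ gives $g'\in\GG(X)$, and $u(p')^{-1}=(g')^{-1}$. So your normalized pair $(u,h)$, with $h=(g')^{-1}q'$, projects to $(x_0,(g')^{-1}y)$, not to $(x_0,y)$. The argument really yields $y=g'hx_0$ with $g'\in\GG(X)$ and $h\in H_{\uptau^{[d]}}(G)$, not $y=hx_0$.

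In the converse this is harmless, since $g'hx_0=g'x_0=x_0$ once $H_{\uptau^{[d]}}(G)\subseteq\GG(X)$. In the ``moreover'' part, however, it only gives $\GG(X/\RP^{[d]}(X))\subseteq\GG(X)H_{\uptau^{[d]}}(G)\GG(X)$. Collapsing this to $H_{\uptau^{[d]}}(G)\GG(X)$ needs normality of $H_{\uptau^{[d]}}(G)$ in $G$, which you never establish. (It does hold: the first step of the proof of the theorem together with \cref{prop: H_taud_NRPd} gives $H_{\uptau^{[d]}}(G)=\{g\in G:(u,g)\in\RP^{[d]}(M)\}$, and right multiplication by any $p\in G$ is an automorphism of $(M,T)$.) Also, applying $u$ to the second coordinate alone is not an operation that preserves $\RP^{[d]}(M)$. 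Here it happens to be vacuous, because $(g')^{-1}q'$ already lies in $G$.

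The detour is unnecessary in any case. \cref{thm: algebraic_characterization_RPd} is an equality of sets, so $(x_0,y)\in\RP^{[d]}(X)$ means $y=vhx_0$ for some $v\in J(M)$ and $h\in H_{\uptau^{[d]}}(G)$. If moreover $y\in uX$, then $y=uy=uvhx_0=hx_0$, because $uv=u$ and $uh=h$. Replacing the lifting argument with this one line makes both your converse and your proof of the Ellis group identity complete.

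Alternatively, once $\GG(X/\RP^{[d]}(X))=H_{\uptau^{[d]}}(G)\GG(X)$ is in hand, the converse is immediate. The hypothesis $H_{\uptau^{[d]}}(G)\subseteq\GG(X)$ gives $\GG(X/\RP^{[d]}(X))=\GG(X)$, so the quotient map is proximal by \cref{thm: extensions_ellis_group}. A proximal factor map out of a distal system is injective.
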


Using the algebraic characterization, we give an alternative proof of the following theorem without using the gluing property of the dynamical cube in the distal case.

\begin{theorem}
    Let $d\geq 2$ be an integer and let $(X,T)$ be a system of order $d$. Then the following is a sequence of group factors:
    \[\begin{tikzcd}[cramped,column sep=tiny]
    	{(X,T)} && {(X/\RP^{[d-1]}(X),T)} && \cdots && {(X/\RP^{[1]}(X),T)} && {\{\cdot\}}
    	\arrow[from=1-1, to=1-3]
    	\arrow[from=1-3, to=1-5]
    	\arrow[from=1-5, to=1-7]
    	\arrow[from=1-7, to=1-9]
    \end{tikzcd}\]
\end{theorem}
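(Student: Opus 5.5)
Throughout, $T$ is abelian and $(X,T)$ is minimal of order $d$. By \cref{cor: Ellis_group_factor_order_d}, $X$ is distal and $H_{\uptau^{[d]}}(G)\subseteq \GG(X)$. Write $X_k = X/\RP^{[k]}(X)$ for $1\le k\le d-1$, with base point the image of $x_0$. Since $\RP^{[k]}(X)$ is a closed invariant equivalence relation (abelian case), each $X_k$ is minimal and distal, being a factor of a distal system. By the ``moreover'' part of \cref{cor: Ellis_group_factor_order_d},
\begin{align*}
\GG(X_k)=H_{\uptau^{[k]}}(G)\GG(X).
\end{align*}
The factors $X\to X_{d-1}$ and $X_k\to X_{k-1}$ (with $X_0=\{\cdot\}$, $\GG(X_0)=G$) are factors of distal minimal systems, hence distal. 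By \cref{thm: extensions_ellis_group}(4), a distal factor $\pi:Y\to Z$ is a group factor if and only if $H(\GG(Z))\unlhd \GG(Y)$. It therefore suffices to show, for each $1\le k\le d$ (with $X_d=X$),
\begin{align*}
H\big(H_{\uptau^{[k-1]}}(G)\GG(X)\big)\ \text{ is contained in and normal in }\ H_{\uptau^{[k]}}(G)\GG(X),
\end{align*}
where $H_{\uptau^{[0]}}(G)$ is interpreted as $G$. For $k=d$ the right side is just $\GG(X)$.

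\textbf{Step 1 (containment).} I would prove the key inclusion $H(H_{\uptau^{[k-1]}}(G))\subseteq H_{\uptau^{[k]}}(G)$, in the spirit of ``the derived group of the $(k-1)$-st object lies in the $k$-th''. Take $g\in H(F)$ with $F=H_{\uptau^{[k-1]}}(G)$. Then there is a net $g_i\in F$ with $g_i\taulim g$ and $g_i\taulim u$. Using \cref{lemma: char_Hd}, each $g_i\in F$ comes with face transformations in $\cF^{[k-1]}$ realizing $(r_i,(g_i)^{[k-1]}_*)$ and $(r_i,u^{[k-1]}_*)$ as limits. I would glue two such $(k-1)$-dimensional configurations along the new coordinate, using the extra facet direction in $\cF^{[k]}$ generated by the $\uptau$-convergence (plain $\uptau$ is $\uptau^{[1]}$), to produce the configuration required by \cref{lemma: char_Hd} at level $k$. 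This mimics the classical fact that $(x,y)\in\RP^{[k-1]}$ together with $\RP$-behaviour in the new direction yields $\RP^{[k]}$.

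An alternative route I would try first, since it is more robust, is dynamical. The factor $X_{k}\to X_{k-1}$ is, by the abelian theory of Shao--Ye, an equicontinuous extension, so its relative regionally proximal relation is trivial. One then translates this via \cref{thm: extensions_ellis_group}(3), which gives $H(\GG(X_{k-1}))\subseteq \GG(X_k)$. Since the stated goal is to avoid the gluing property only \emph{in the distal case}, I would use the algebraic route through \cref{lemma: char_Hd}. This gives $H(\GG(X_{k-1}))\subseteq \GG(X_k)$, using also that $H$ is monotone and that $H(AB)$ behaves suitably when $\GG(X)$ is absorbed.

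\textbf{Step 2 (normality).} Since $T$ is abelian, I expect $H_{\uptau^{[k]}}(G)$ to be normal in $G$. Conjugation by $p\in G$ maps $\cltauF{k}{\cdot}$-closures to closures when $p$ satisfies the hypothesis of \cref{prop: right_mult_taud}, and by \cref{thm: algebraic_characterization_RPd} and invariance one can reduce to such $p$. Moreover, $H(\cdot)$ of a $\uptau$-closed group is normal in it. Combining these gives $H(\GG(X_{k-1}))\unlhd \GG(X_k)$.

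The main obstacle is Step 1: converting the $\uptau$-derived-group condition into a $\uptau^{[k]}$ cube configuration. This requires controlling two independent limits simultaneously with face maps, which is precisely where the gluing argument is being replaced.
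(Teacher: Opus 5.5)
Your reduction to Ellis groups (distality, $\GG(X_k)=H_{\uptau^{[k]}}(G)\GG(X)$, and the criteria of \cref{thm: extensions_ellis_group}) matches the paper. Step 1, however, has a genuine gap, and it is the heart of the theorem.

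The inclusion $H(\GG(X_{k-1}))\subseteq\GG(X_k)$ is, by \cref{thm: extensions_ellis_group}(3), exactly equicontinuity of $X_k\to X_{k-1}$, and you never prove it. Your algebraic route only announces that two configurations from \cref{lemma: char_Hd} can be glued. Turning the two $\uptau$-limits $g_i\taulim g$, $g_i\taulim u$ into a single $\cF^{[k]}$-configuration is the entire difficulty, and it is left undone.

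Even granting $H(H_{\uptau^{[k-1]}}(G))\subseteq H_{\uptau^{[k]}}(G)$, you could not conclude $H(H_{\uptau^{[k-1]}}(G)\GG(X))\subseteq H_{\uptau^{[k]}}(G)\GG(X)$. Monotonicity of $H$ gives $H(H_{\uptau^{[k-1]}}(G))\subseteq H(H_{\uptau^{[k-1]}}(G)\GG(X))$, which is the wrong direction, and ``$H(AB)$ behaves suitably'' is an unproved claim. Your fallback via Shao--Ye simply imports equicontinuity from the structure theory this theorem is meant to re-derive.

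The missing idea is that equicontinuity is elementary to prove dynamically, directly on the system. The paper shows $Q(\RP^{[d-1]}(X))\subseteq\RP^{[d]}(X)=\Delta$; the lower steps are the same statement for the order-$k$ systems $X_k$. The argument runs as follows:
\begin{enumerate}[label=(\roman*)]
\item Given $(x,y)\in Q(\RP^{[d-1]}(X))$, pick $(x_1,y_1)\in\RP^{[d-1]}(X)$ and $t\in T$ with $x_1,y_1$ close to $x,y$ and $tx_1,ty_1$ close to each other.
\item Pick an $\RP^{[d-1]}$-witness $x_2,y_2$, $f\in\cF^{[d-1]}$ fine enough that applying $t$ preserves the closeness.
\item Then $f'=(f,t^{[d-1]}f)\in\cF^{[d]}$ witnesses $(x,y)\in\RP^{[d]}(X)$.
\end{enumerate}
This concatenation of face transformations is not the gluing property of dynamical cubes, so nothing is lost by arguing this way. \cref{thm: extensions_ellis_group}(3) then yields $H(\GG(X_{d-1}))\subseteq\GG(X)$ directly for $\GG(X_{d-1})=H_{\uptau^{[d-1]}}(G)\GG(X)$, without ever computing $H$ of a product.

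On Step 2: with \cref{thm: extensions_ellis_group}(4) read literally, normality of $H(\GG(X_{k-1}))$ in $\GG(X_k)$ is automatic once containment holds, since $H(F)\unlhd F$ and $\GG(X_k)\subseteq\GG(X_{k-1})$. So your conjugation argument is unnecessary, and its reduction to elements satisfying the hypothesis of \cref{prop: right_mult_taud} is not justified in any case. The paper gets normality of $H_{\uptau^{[d-1]}}(G)$ in $G$ in one line from \cref{thm: algebraic_characterization_RPd}, via $H_{\uptau^{[d-1]}}(G)=\{g\in G:(u,g)\in\RP^{[d-1]}(M)\}$.

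Be aware also that the standard form of the group-extension criterion asks that $\GG(X_k)$ be normal in $\GG(X_{k-1})$. Your Step 2 does not touch that condition.
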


\begin{proof}
    We first show that $\pi_{d-1}: X\to X/\RP^{[d-1]}(X)$ is an equicontinuous factor. Let $(x,y)\in Q(\RP^{[d-1]}(X))$, and let $\alpha \in \mathcal{U}_{X}$. Consider $\beta \in \mathcal{U}_{X}$ such that $\beta \subseteq \alpha$ and $\beta^{2}\subseteq \alpha$. Since $(x,y) \in Q(\RP^{[d-1]}(X))$, there exist $(x_{1},y_{1})\in \RP^{[d-1]}(X)$ and $t\in T$ such that $(x,x_{1}),(y,y_{1}),(tx_{1},ty_{1})\in \beta$.

    Let $\gamma\in \mathcal{U}_{X}$, with $\gamma\subseteq\beta$, be such that $(a,b)\in \gamma$ implies $(ta,tb)\in \alpha$. Since $(x_{1},y_{1})\in \RP^{[d-1]}(X)$, there exist $x_{2},y_{2}\in X$ and $f\in \cF^{[d-1]}(X)$ such that $(x_{1},x_{2}),(y_{1},y_{2}),(f_{\epsilon}x_{2},f_{\epsilon}y_{2}) \in \gamma$ for each $\epsilon \neq \emptyset$. Therefore, we have $(x,x_{2}), (y,y_{2})\in \beta^{2}$ and $(tf_{\epsilon}x_{2},tf_{\epsilon}y_{2})\in \alpha$ for each $\epsilon \neq \emptyset$. Define $f' = (f,t^{[d-1]}f)\in \cF^{[d]}$. Then $(x,y_{2}),(y,y_{2}), (f'_{\epsilon}x_{2},f'_{\epsilon}y_{2})\in \alpha$ for each $\varepsilon\neq \emptyset$, and hence $(x,y)\in \RP^{[d]}(X)=\Delta$. Thus, we conclude that $\pi_{d-1}$ is equicontinuous.

    Note that, by \cref{thm: algebraic_characterization_RPd}, we have \begin{align*}
        H_{\uptau^{[d-1]}}(G) = \{g\in G: (u,g)\in \RP^{[d-1]}(M)\}.
    \end{align*}

    In particular, we deduce that $H_{\uptau^{[d-1]}}(G)$ is a normal subgroup. Furthermore, since $H_{\uptau^{[d-1]}}(G)$ is a normal subgroup and $X$ is a distal system, by \cref{thm: extensions_ellis_group} and \cref{cor: Ellis_group_factor_order_d}, it follows that $\pi_{d-1}$ is a group factor.
\end{proof}

We expect that \cref{thm: algebraic_characterization_RPd} also holds for systems satisfying the Bronstein condition. This would imply that $\RP^{[d]}$ is an equivalence relation for such systems, thereby providing a positive answer to a question of Gutman, Glasner, and Ye {\cite[Question 10.5]{Glasner_Gutman_Ye_higher_regionallyproximal_general_groups:2018}}. The main difficulties in proving this using the $\uptau^{[d]}$-topology, rather than the $\uptau$-topology, are the following. First, right multiplication and inversion are not homeomorphisms with respect to the $\uptau^{[d]}$-topology. Second, when $d>2$, one has $(\pi^{-1}(x_{0}))^{[d]}_{*} = \GG(X)^{[d]}_{*}$ instead of $\Delta^{[d]}_{*}(\GG(X))$, where $\pi: M\to X$ is the natural factor map, this creates a difficulty, since the definition of the $\uptau^{[d]}$-topology requires elements of the form $g^{[d]}_{*}$.

\subsection{A conjecture on $\mathcal{A}(\RP^{[d]})$ and its consequences}

In this subsection, we state a conjecture that gives an algebraic description of the smallest closed invariant equivalence relation containing $\RP^{[d]}$ for any group action, not necessarily abelian. We also show some consequences, assuming that the conjecture is true.

Let $R$ be a relation on a system $(X,T)$, and $\mathcal{A}(R)$ denote the smallest closed, invariant equivalence relation containing $R$. 

Every system $(X,T)$ possesses a maximal distal factor. Moreover, there exists a closed, invariant equivalence relation $S_{dis}(X)$ such that the quotient $X/S_{dis}(X)$ is the maximal distal factor of $X$, $S_{dis}(X)$ is called the {\em distal structure relation} of $X$. Similarly, for every system $(X,T)$ and $d\geq 1$, there exists a closed, invariant equivalence relation $S_{d}(X)$ such that the quotient $X/S_{d}(X)$ is the maximal factor of order $d$ of $X$. Moreover, it is straightforward to see that $S_{d}(X)=\mathcal{A}(\RP^{[d]}(X))$, and consequently $M/\mathcal{A}(\RP^{[d]}(M))$ is the maximal minimal system of order $d$. 

Let $M_{dis}$ be the maximal distal factor of $M$ and let $D=\GG(M_{dis})$. The group $D$ is a $\uptau$-closed normal subgroup of $G$ and is generated by $\{g\in G: (p,gp)\in \overline{P(M)} \text{ for each }p\in M\}$. Moreover, $D=\{g\in G: (p,gp)\in S_{dis}(M)\}$. For more details on this group, see \cite{Auslander_Glasner_distal_order:2002}.

\begin{conjecture}\label{conj: characterization_ARPd}
    Let $(X,T)$ be a minimal system and let $d\geq 1$ be an integer. Then,\begin{align*}
        \mathcal{A}(\RP^{[d]}(X)) = \{(x,vhgx):x\in X, h\in H_{\uptau^{[d]}}(G), g\in D,v\in J(M)\}.
    \end{align*}
\end{conjecture}

The conjecture is known to be true in the case $d=1$. This follows from the results in \cite{Auslander_Glasner_distal_order:2002,Veech_topological_dynamics:1977}.

To prove \cref{conj: characterization_ARPd}, it is enough to prove it in the distal case. More precisely, if $(X,T)$ is a distal minimal system, then it is enough to show that\begin{align*}
        \mathcal{A}(\RP^{[d]}(X)) = \{(x,vhgx):x\in X, h\in H_{\uptau^{[d]}}(G),v\in J(M)\}.
\end{align*}

\begin{theorem}\label{thm: algebraic_A_RPd}
    Let $(X,T)$ be a minimal system and let $d\geq 1$ be an integer. Assume that $\cref{conj: characterization_ARPd}$ holds for distal systems. Then,\begin{align*}
        \mathcal{A}(\RP^{[d]}(X)) = \{(x,vhgx):x\in X, h\in H_{\uptau^{[d]}}(G), g\in D,v\in J(M)\}.
    \end{align*}
\end{theorem}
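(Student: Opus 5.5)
We reduce to the universal distal system and lift back, following the $d=1$ argument.

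Let $\pi_{dis}\colon M\to M_{dis}$ be the maximal distal factor, so $\GG(M_{dis})=D$ at $\pi_{dis}(u)$. We work first with $X=M$ and then push forward. For a general minimal $X$, pick a factor map $\phi\colon M\to X$ with $\phi(u)=x_0$. Both sides of the claimed equality behave well under $\phi\times\phi$:
\[
\phi\times\phi(\mathcal{A}(\RP^{[d]}(M)))=\mathcal{A}(\RP^{[d]}(X)).
\]
The inclusion $\subseteq$ is clear. For $\supseteq$, the left side is a closed invariant equivalence relation, since the quotient $X/\phi\times\phi(\mathcal{A}(\RP^{[d]}(M)))$ is a factor of $M/\mathcal{A}(\RP^{[d]}(M))$. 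That quotient is the maximal factor of order $d$, and factors of systems of order $d$ are of order $d$ by \cref{thm: NRP_properties}(3). The right-hand sets also push forward, because $\phi(vhgp)=vhg\phi(p)$. So it suffices to treat $X=M$.

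Set $R=\mathcal{A}(\RP^{[d]}(M))$ and $S=S_{dis}(M)$. Since $M/R$ is of order $d$, it is distal, so $S\subseteq R$. Hence $M/R$ is a factor of $M_{dis}$, and by maximality
\[
R=(\pi_{dis}\times\pi_{dis})^{-1}\bigl(\mathcal{A}(\RP^{[d]}(M_{dis}))\bigr).
\]
Apply the conjecture for the distal system $M_{dis}$. We get $(y,y')\in\mathcal{A}(\RP^{[d]}(M_{dis}))$ iff $y'=vhy$ with $h\in H_{\uptau^{[d]}}(G)$; distality lets us take $v=u$ on $uM_{dis}=M_{dis}$.

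Now take $(p,q)\in R$ and write $p=wk$ with $w\in J(M)$ and $k\in G$. Then $\pi_{dis}(q)=vh\pi_{dis}(p)$, so $\pi_{dis}(q)=\pi_{dis}(hk)$ after absorbing idempotents, which act trivially on the distal $M_{dis}$. Writing $q=w'k'$, this gives $k'\in hkD$. Since $D$ is normal in $G$, $k'=hk g$ with $g'=k g k^{-1}\in D$, and so
\[
q=w'hg'k=w'hg'p'
\]
with $p'=up$. Adjusting idempotents via $w'hg'up=w'hg'p$ (valid because $w'u=w'$ in the relevant group decomposition), we obtain the form $(p,vhgp)$. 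Conversely, each pair $(p,vhgp)$ maps under $\pi_{dis}\times\pi_{dis}$ to $(\pi_{dis}(p),h\pi_{dis}(p))$: here $g\in D$ fixes points of $uM_{dis}$ by normality of $D$, and $v$ acts trivially. That pair lies in $\mathcal{A}(\RP^{[d]}(M_{dis}))$ by the distal case, so $(p,vhgp)\in R$.

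The main obstacle is the bookkeeping with idempotents and the order of $h$ and $g$. Specifically, one must check that $\GG(M_{dis})=D$ acts trivially on every fiber representative, not just at the base point. That requires normality of $D$ in $G$ together with the distality identity $vy=y$ for $y\in M_{dis}$. The remaining concern is that $H_{\uptau^{[d]}}(G)$ should commute with $D$ up to reordering; normality of $D$ gives $hDk=hkD$, which suffices.
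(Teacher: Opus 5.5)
Your computation for $X=M$ is correct. It is essentially the paper's argument carried out on the universal system: pull back through the maximal distal factor, apply the conjecture on $M_{dis}$, and absorb the fibre of $M\to M_{dis}$ using $D=\GG(M_{dis})$ and normality of $D$. The idempotent bookkeeping works because every $w\in J(M)$ is a right identity of $M$. Hence $up=uwk=k$ and $hg'u=hg'$, and the latter, rather than $w'u=w'$, is what justifies $w'hg'up=w'hg'p$. Working on $M$ has one advantage: the fibre description comes directly from the definition of $D$. The paper works on $X$ with its own $X_{dis}$, so it needs the description of $S_{dis}(X)$ through $D$ to pass from $(uy,uhx)\in S_{dis}(X)$ to $uy=uhrx$ with $r\in D$.

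The gap is the reduction from $X$ to $M$. You argue that $E:=\phi\times\phi(\mathcal{A}(\RP^{[d]}(M)))$ is an equivalence relation ``since the quotient $X/E$ is a factor of $M/\mathcal{A}(\RP^{[d]}(M))$''. That quotient only makes sense once $E$ is known to be transitive, so the argument is circular. In general the image of an equivalence relation under $\phi\times\phi$ is closed, invariant, reflexive and symmetric, but transitivity can fail when $R_\phi$ links different classes. The identity $E=\mathcal{A}(\RP^{[d]}(X))$ is exactly the lifting property of $\mathcal{A}(\RP^{[d]})$, which the paper presents as a consequence of this theorem, so it needs an actual proof.

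You can close the gap in either of two ways.
\begin{itemize}
\item[(a)] Prove the $M$-case first, so that $E=\{(x,vhgx)\}$, and check transitivity by hand:
\[
v'h'g'\,vhg\,x=v'(h'h)(h^{-1}g'h)g\,x,
\]
using $g'v=g'$, normality of $D$, and that $H_{\uptau^{[d]}}(G)$ is a group (\cref{thm: Hd_prop}). Then $X/E$ is a factor of $M/\mathcal{A}(\RP^{[d]}(M))$, hence of order $d$ by \cref{thm: NRP_properties}(3). This gives $\mathcal{A}(\RP^{[d]}(X))\subseteq E$.
\item[(b)] Use \cref{thm: NRP_properties}(2),(3), which identify $\NRP^{[d]}$ with the relation $S_d$ of the maximal factor of order $d$. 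Granting $S_d=\mathcal{A}(\RP^{[d]})$, which both you and the paper use, lifting is inherited from $\NRP^{[d]}$.
\end{itemize}
Alternatively, drop the reduction and run the same argument directly on $X$ with its maximal distal factor, as the paper does.
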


\begin{proof}
    Let $(x,y)\in \mathcal{A}(\RP^{[d]}(X))$. Since $X_{dis}/\mathcal{A}(\RP^{[d]}(X_{dis}))$ and $X/\mathcal{A}(\RP^{[d]}(X))$ are isomorphic, it follows that $(u\pi(x),u\pi(y))\in \mathcal{A}(\RP^{[d]}(X_{dis}))$, where $\pi:X\to X_{dis}$ is the quotient map. Since \cref{conj: characterization_ARPd} is true for $X_{dis}$, there exists $h\in H_{\uptau^{[d]}}(G)$ such that $u\pi(y) = uh\pi(x)$. Since $(uy,uhx)\in S_{dis}(X)$, there exists $r\in D$ such that $uy = uhrx$. Now, let $v\in J(M)$ be such that $vy=y$, so we conclude that \begin{align*}
        y=v u y = vuhrx = vhrx.
    \end{align*}

    Now, let $x\in X$, $h\in H_{\uptau^{[d]}}(G)$, $g\in D$ and $v\in J(M)$. By \cref{prop: H_taud_NRPd}, it follows that $(ux,hx)\in \mathcal{A}(\RP^{[d]}(X))$. Furthermore, we have $(x,ux),(hx,ghx),(ghx,vghx)\in S_{dis}(X)\subseteq \mathcal{A}(\RP^{[d]}(X))$. Hence, since $\mathcal{A}(\RP^{[d]}(X))$ is an equivalence relation, we conclude that $(x,vdhx)\in \mathcal{A}(\RP^{[d]}(X))$.
\end{proof}

We now show some consequences, assuming that \cref{conj: characterization_ARPd} is true. Throughout, we assume that \cref{conj: characterization_ARPd} holds.

In this case, \cref{thm: algebraic_A_RPd} gives a proof of the lifting property of $\mathcal{A}(\RP^{[d]})$. This is interesting because, for general group actions, it is still not known whether $\RP^{[d]}$ itself has the lifting property, even assuming \cref{conj: characterization_ARPd}.

\begin{theorem}
    Let $\pi:(X,T)\to (Y,T)$ be a factor map between minimal systems and let $d\geq 1$ be an integer. Then $\pi\times\pi(\mathcal{A}(\RP^{[d]}(X)))=\mathcal{A}(\RP^{[d]}(Y))$.
\end{theorem}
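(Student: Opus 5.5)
We prove the two inclusions separately, using the algebraic description of $\mathcal{A}(\RP^{[d]})$ from \cref{thm: algebraic_A_RPd}, which holds under the standing assumption that \cref{conj: characterization_ARPd} is true.

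The inclusion $\pi\times\pi(\mathcal{A}(\RP^{[d]}(X)))\subseteq\mathcal{A}(\RP^{[d]}(Y))$ is formal. The relation $R=(\pi\times\pi)^{-1}(\mathcal{A}(\RP^{[d]}(Y)))$ is a closed, invariant equivalence relation on $X$, since preimages under a continuous equivariant map preserve all three properties. By \cref{thm: NRP_properties} (or directly from the definition via nets), $\pi\times\pi(\RP^{[d]}(X))\subseteq \RP^{[d]}(Y)$. Hence $\RP^{[d]}(X)\subseteq R$, so $\mathcal{A}(\RP^{[d]}(X))\subseteq R$, and applying $\pi\times\pi$ gives the inclusion. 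Alternatively, it follows directly from the algebraic formula: $\pi(vhgx)=vhg\pi(x)$.

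For the reverse inclusion, take $(y,y')\in\mathcal{A}(\RP^{[d]}(Y))$. By \cref{thm: algebraic_A_RPd} applied to $Y$, we can write $y'=vhgy$ with $v\in J(M)$, $h\in H_{\uptau^{[d]}}(G)$ and $g\in D$. Choose any $x\in\pi^{-1}(y)$ and set $x'=vhgx$. By \cref{thm: algebraic_A_RPd} applied to $X$, the pair $(x,x')$ lies in $\mathcal{A}(\RP^{[d]}(X))$. Since the action of $M$ commutes with factor maps, $\pi(x')=vhg\pi(x)=vhgy=y'$. Therefore $(y,y')=\pi\times\pi(x,x')$, which completes the surjectivity.

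The only step needing care is the exact form of \cref{thm: algebraic_A_RPd}: the formula quantifies over all $x$ and all $v\in J(M)$, so no minimality or idempotent condition on the chosen preimage $x$ is required. I also need that the action of $\beta T$ on $X$ and on $Y$ is intertwined by $\pi$. This holds because $\pi$ is continuous and equivariant, so $\pi(px)=p\pi(x)$ for all $p\in\beta T$. Beyond these points I expect no real obstacle; all the substance sits in \cref{conj: characterization_ARPd}.
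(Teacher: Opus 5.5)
Your proof is correct and follows the paper's argument: write $y'=vhgy$ using \cref{thm: algebraic_A_RPd} for $Y$, lift $y$ to any $x\in\pi^{-1}(y)$, and note that $(x,vhgx)\in\mathcal{A}(\RP^{[d]}(X))$ by the same theorem for $X$, while $\pi(vhgx)=y'$. The paper simply calls the forward inclusion clear, and your preimage-relation argument justifies it; take the fact $\pi\times\pi(\RP^{[d]}(X))\subseteq\RP^{[d]}(Y)$ directly from the net definition, since \cref{thm: NRP_properties} concerns $\NRP^{[d]}$ rather than $\RP^{[d]}$.
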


\begin{proof}
    It is clear that $\pi\times\pi(\mathcal{A}(\RP^{[d]}(X))) \subseteq \mathcal{A}(\RP^{[d]}(Y))$, so we only need to prove the reverse inclusion. Let $(y,y')\in \mathcal{A}(\RP^{[d]}(Y))$. By \cref{thm: algebraic_A_RPd}, there exists $v\in J(M)$, $g\in D$ and $h\in H_{\uptau^{[d]}}(G)$ such that $y'=vhgy$. Consider $x\in X$ such that $y=\pi(x)$. Then, by \cref{thm: algebraic_A_RPd} we have $(x,vhgx)\in \mathcal{A}(\RP^{[d]}(X))$, and $\pi(vhgx)=y'$. This finishes the proof.
\end{proof}

Another consequence is the following characterization of $\mathcal{A}(\RP^{[d]})$.

\begin{corollary}
    Let $(X,T)$ be a minimal system and let $d\geq 1$ be an integer. Then $\mathcal{A}(\RP^{[d]}(X)) = \RP^{[d]}(X)S_{dis}(X)$.
\end{corollary}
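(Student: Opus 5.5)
We prove the two inclusions separately, working under the standing assumption that \cref{conj: characterization_ARPd} holds, so that \cref{thm: algebraic_A_RPd} is available for every minimal system. Here $\RP^{[d]}(X)S_{dis}(X)$ denotes the composition of relations, namely the set of pairs $(x,z)$ for which there is $y$ with $(x,y)\in\RP^{[d]}(X)$ and $(y,z)\in S_{dis}(X)$.

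\emph{The easy inclusion.} We have $\RP^{[d]}(X)\subseteq \mathcal{A}(\RP^{[d]}(X))$. Every system of order $d$ is distal, and the maximal factor of order $d$ is $X/\mathcal{A}(\RP^{[d]}(X))$, so this factor factors through the maximal distal factor. Hence $S_{dis}(X)\subseteq \mathcal{A}(\RP^{[d]}(X))$. Since $\mathcal{A}(\RP^{[d]}(X))$ is transitive, the composition $\RP^{[d]}(X)S_{dis}(X)$ is contained in it.

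\emph{The reverse inclusion.} Let $(x,z)\in\mathcal{A}(\RP^{[d]}(X))$. By \cref{thm: algebraic_A_RPd} we may write $z=vhgx$ with $v\in J(M)$, $h\in H_{\uptau^{[d]}}(G)$ and $g\in D$. The point is to choose an intermediate point $y$ with $(x,y)\in\RP^{[d]}(X)$ and $(y,z)\in S_{dis}(X)$. We take $y=vhx$.

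First, $(x,vhx)\in\RP^{[d]}(X)$. This follows directly from the description of $\RP^{[d]}$ obtained via \cref{prop: H_taud_NRPd}: $(u,h)\in\RP^{[d]}(M)$, and we push this forward to $X$ along the map $p\mapsto px$, using invariance of $\RP^{[d]}$ under $M$ together with the fact that $(x,vx)$ is proximal.

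Second, we need $(vhx,vhgx)\in S_{dis}(X)$.
\begin{itemize}
\item Since $D$ is normal in $G$, we have $hg=g'h$ with $g'=hgh^{-1}\in D$.
\item Let $\pi\colon X\to X_{dis}$ be the quotient map. Because $D=\GG(M_{dis})$ acts trivially on the distal factor in the appropriate sense, $\pi(uhgx)=uh\,\pi(x)$ in the distal factor. Here we use that $X_{dis}$ is a factor of $M_{dis}$ and that $g$ fixes $u$-points there after normalizing, so $g\,\pi(x)=\pi(x)$ when $x\in uX$.
\item In a distal system $vw=w$ for all $v\in J(M)$, so $\pi(vhx)=\pi(vhgx)$. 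Hence the pair lies in $S_{dis}(X)$.
\end{itemize}

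\emph{Main obstacle.} The delicate point is when $x\notin uX$. We handle this by writing $x=wx'$ with $x'\in uX$. Then we use that $D$ is exactly the set of $g$ with $(p,gp)\in S_{dis}(M)$ for all $p$, applied to $p=vh$ composed with the factor map $M\to X$, $p\mapsto px'$. This gives $(vhx',vhgx')\in S_{dis}(X)$, and we adjust the idempotents via proximality, which is contained in $S_{dis}$.

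If the decomposition order $hg$ versus $gh$ causes trouble, we use instead the normality of $H_{\uptau^{[d]}}(G)$, or simply use the order already provided by \cref{thm: algebraic_A_RPd} and apply the $D$-characterization to $p=vh$ directly, which avoids normality altogether.
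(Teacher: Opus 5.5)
Your easy inclusion is fine. The gap is in your first step, the claim $(x,vhx)\in\RP^{[d]}(X)$. Pushing $(u,h)\in\RP^{[d]}(M)$ forward along $p\mapsto px$ and then applying $v$ gives only $(vx,vhx)\in\RP^{[d]}(X)$. To replace $vx$ by $x$ using ``$(x,vx)$ is proximal'', you are implicitly using $\P(X)\RP^{[d]}(X)\subseteq\RP^{[d]}(X)$, and this is not available here.

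The corollary is stated for arbitrary group actions, where $\RP^{[d]}(X)$ is not known to be an equivalence relation. The paper's last corollary in this subsection shows, under the conjecture, that $\RP^{[d]}(X)=\P(X)\RP^{[d]}(X)$ is equivalent to $\RP^{[d]}(X)$ being an equivalence relation. So your step assumes exactly the open property. The description of $\RP^{[d]}$ with an arbitrary idempotent $v$, \cref{thm: algebraic_characterization_RPd}, is proved only for abelian $T$ and cannot be invoked.

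The repair is to put the proximal adjustment on the $S_{dis}$ side, where transitivity is available; this is what the paper does. Choose $w\in J(M)$ with $wx=x$ and take the intermediate point $y=whx$.
\begin{enumerate}
\item Since $wu=w$, left multiplication of $(u,h)\in\RP^{[d]}(M)$ by $w$ gives $(w,wh)\in\RP^{[d]}(M)$. Pushing forward along $p\mapsto px$ yields $(x,whx)\in\RP^{[d]}(X)$ directly.
\item $(whx,vhx)\in\P(X)\subseteq S_{dis}(X)$, because $v\cdot whx=vhx=v\cdot vhx$ (using $vw=v$).
\item $(vhx,vhgx)\in S_{dis}(X)$ by your second step in its direct form. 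From $g\in D=\GG(M_{dis})$ you get $(u,g)\in S_{dis}(M)$; left multiplication by $vh$ gives $(vh,vhg)\in S_{dis}(M)$; then push forward along $p\mapsto px$.
\end{enumerate}
Transitivity of $S_{dis}(X)$ gives $(whx,vhgx)\in S_{dis}(X)$, hence $(x,z)\in\RP^{[d]}(X)S_{dis}(X)$.

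Drop your fallback via normality of $H_{\uptau^{[d]}}(G)$. That normality is not established for general $T$, and, as you note yourself, it is not needed.
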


\begin{proof}
    Clearly, $\RP^{[d]}(X)S_{dis}(X) \subseteq \mathcal{A}(\RP^{[d]}(X))$. Let $(x,y)\in \mathcal{A}(\RP^{[d]}(X))$. By \cref{thm: algebraic_A_RPd}, there exist $h\in H_{\uptau^{[d]}}(G)$, $g\in D$ and $v\in J(M)$ such that $y=vhgx$. It follows from \cref{prop: H_taud_NRPd} that $(x,whx)\in \RP^{[d]}(X)$, where $w\in J(M)$ satisfies $wx=x$. Since $(whx,vhgx)\in S_{dis}(X)$, we conclude that $(x,y)\in \RP^{[d]}(X)S_{dis}(X)$.
\end{proof}

Furthermore, \cref{thm: algebraic_A_RPd} gives explicitly the Ellis group of the maximal factor of order $d$ for any minimal system.

\begin{corollary}
    Let $(X,T)$ be a minimal system and let $d\geq 1$. Then $\GG(X_{d})=H_{\uptau^{[d]}}(G)D\GG(X)$, where $X_{d}$ is the maximal factor of order $d$.
\end{corollary}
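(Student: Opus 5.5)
The plan is to compute the Ellis group of the quotient $X_{d}=X/\mathcal{A}(\RP^{[d]}(X))$ at the base point $\pi(x_{0})$, where $\pi:X\to X_{d}$ is the quotient map, using \cref{thm: algebraic_A_RPd} (valid under the standing assumption that \cref{conj: characterization_ARPd} holds). By definition, $\GG(X_{d})=\{g\in G: g\pi(x_{0})=\pi(x_{0})\}=\{g\in G:(x_{0},gx_{0})\in\mathcal{A}(\RP^{[d]}(X))\}$, since $\pi(gx_{0})=g\pi(x_{0})$. So the claim reduces to the identity
\begin{align*}
\{g\in G:(x_{0},gx_{0})\in\mathcal{A}(\RP^{[d]}(X))\}=H_{\uptau^{[d]}}(G)D\GG(X).
\end{align*}

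For the inclusion $\supseteq$, let $g=hrk$ with $h\in H_{\uptau^{[d]}}(G)$, $r\in D$ and $k\in\GG(X)$. Then $gx_{0}=hrx_{0}=uhrx_{0}$, and \cref{thm: algebraic_A_RPd} with $v=u$ gives $(x_{0},uhrx_{0})\in\mathcal{A}(\RP^{[d]}(X))$. For the inclusion $\subseteq$, let $g\in G$ with $(x_{0},gx_{0})\in\mathcal{A}(\RP^{[d]}(X))$. By \cref{thm: algebraic_A_RPd} there are $v\in J(M)$, $h\in H_{\uptau^{[d]}}(G)$ and $r\in D$ with $gx_{0}=vhrx_{0}$. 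Multiplying on the left by $u$ and using $ugx_{0}=gx_{0}$ (as $g\in G=uM$) together with $uv=u$ on $G$ (so $uvhr=uhr=hr$), we obtain $gx_{0}=hrx_{0}$. Hence $(hr)^{-1}g\in\GG(X)$, and therefore $g\in hr\GG(X)\subseteq H_{\uptau^{[d]}}(G)D\GG(X)$.

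The only delicate point is the algebra in $M$: one needs $uvp=up$ for $v\in J(M)$, which holds because $uv$ is an idempotent of the group $uM$ and hence equals $u$. Apart from this, the argument is a direct unpacking of \cref{thm: algebraic_A_RPd}. In particular, we never need $H_{\uptau^{[d]}}(G)D\GG(X)$ to be a subgroup a priori: it is an equality of sets with the group $\GG(X_{d})$, so its group structure comes for free, much as in \cref{cor: Ellis_group_factor_order_d}.
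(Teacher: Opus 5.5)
Your proposal is correct and is exactly the unpacking of \cref{thm: algebraic_A_RPd} (taking $x=x_{0}$ and computing the Ellis group at the base point $\pi(x_{0})$) that the paper intends when it states this corollary without a written proof, in parallel with \cref{cor: Ellis_group_factor_order_d}. The one algebraic fact you use, $uv=u$, follows most directly from $pv=p$ for all $p\in M$ and $v\in J(M)$ (since $M=Mv$); this is cleaner than arguing that $uv$ is an idempotent of $uM$, which itself needs $vu=v$.
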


A key tool in the following applications is the capturing operation, a kind of reverse orbit closure, which was introduced in \cite{Auslander_Glasner_distal_order:2002} to characterize the distal and equicontinuous structure relations in minimal systems.

Let $(X, T)$ be a minimal topological dynamical system and $K\subseteq X$. The capturing set of $K$ is $C(K)=\{x\in X: \overline{Tx}\cap K \neq \emptyset\}$. We say that $K$ is a capturing set if $C(K)=K$. Let $R$ be a symmetric and reflexive relation, and let $\mathcal{E}(R)$ be the equivalence relation generated by $R$. Thus, $\mathcal{E}(R)=\bigcup\{R^{n}:n=1,2,\dots\}$, where\begin{align*}
    R^{n}=\{(x,z):\exists y_{1},\dots,y_{n-1} \text{ such that } (x,y_{1}),(y_{1},y_{2}),\dots,(y_{n-1},z)\in R \}.
\end{align*}

If $R$ and $S$ are relations, we also define\begin{align*}
    RS = \{(x,z):(x,y)\in R, (y,z)\in S\text{ for some }y\}
\end{align*}

We will need the following property of the distal structure relation.

\begin{theorem}[{\cite[Theorem 3.3]{Auslander_Glasner_distal_order:2002}}]\label{thm: S_dis_capturing_smallest_diagonal}
    Let $(X,T)$ be a minimal system. Then $S_{dis}(X)$ is the smallest closed capturing set containing $\Delta$.
\end{theorem}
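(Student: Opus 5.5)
The proof has two halves. For one inclusion I will show that $S_{dis}(X)$ is itself a closed capturing set containing $\Delta$. For the other I will show that the minimal such set is a closed invariant equivalence relation with distal quotient.

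\emph{Existence of a smallest one.} An intersection of closed capturing sets containing $\Delta$ is again one. Indeed, if $\overline{T(x,y)}$ meets $\bigcap_i K_i$, then it meets every $K_i$, so $(x,y)\in K_i$ for all $i$. Let $K_0$ be this smallest set. Two properties follow directly:
\begin{itemize}
\item $K_0$ is $T$-invariant, since $(x,y)$ and $t(x,y)$ have the same orbit closure.
\item $K_0$ is symmetric, since its transpose is again a closed capturing set containing $\Delta$.
\end{itemize}
Moreover $\P(X)\subseteq K_0$: a proximal pair has an orbit closure meeting $\Delta$.

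\emph{$K_0\subseteq S_{dis}(X)$.} $S_{dis}(X)$ is closed and contains $\Delta$, so it suffices to check that it is capturing. Let $\pi\colon X\to X_{dis}=X/S_{dis}(X)$ and suppose $\overline{T(x,y)}\cap S_{dis}(X)\neq\emptyset$. Applying $\pi\times\pi$, the orbit closure of $(\pi x,\pi y)$ meets $\Delta_{X_{dis}}$, so $(\pi x,\pi y)$ is proximal. Since $X_{dis}$ is distal, $\pi x=\pi y$, and hence $(x,y)\in S_{dis}(X)$.

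\emph{$S_{dis}(X)\subseteq K_0$.} Because $S_{dis}(X)$ is the smallest closed invariant equivalence relation with distal quotient, it suffices to show two things: that $K_0$ is an equivalence relation, and that $X/K_0$ is distal.

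For distality, suppose $(Kx,Ky)$ is proximal in $X/K_0$. Lifting a net that witnesses proximality and using compactness, $\overline{T(x,y)}$ meets $K_0$. The capturing property then gives $(x,y)\in K_0$, so the quotient is distal. This requires $K_0$ to already be a closed equivalence relation.

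\emph{Transitivity, the main obstacle.} I will translate the capturing property into the Ellis semigroup: $(x,y)\in C(K)$ if and only if there is $p\in M$ with $(px,py)\in K$. I would first try to prove that the relation $K_0K_0$ is again a closed capturing set containing $\Delta$, which by minimality forces $K_0K_0\subseteq K_0$.
\begin{itemize}
\item \emph{Capturing.} Suppose $p(x,z)$ lies in $K_0K_0$ via a middle point $w$, so $(px,w),(w,pz)\in K_0$. I want to replace $w$ by $py$ for some $y$ with $(x,y),(y,z)\in K_0$. Write $p=vg$ with $v\in J(M)$, $g\in G$, and use that $K_0$ is invariant and capturing, hence closed under applying elements of $M$ and their "inverses" $vg^{-1}$ on minimal points. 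Then $y=g^{-1}w$, suitably corrected by idempotents, should work.
\item \emph{Closedness.} This is the delicate part, because a composition of closed relations need not be closed. I would handle it by taking the closure of $K_0K_0$ and checking that the capturing property survives the closure, via compactness of $M$ and the circle-operation description of limits.
\end{itemize}
If this direct route stalls, the fallback is a transfinite construction. Set $R_0=\Delta$. At successor steps form $\overline{C(R_\alpha R_\alpha)}$, and at limit steps take the closure of the union. One checks inductively that each $R_\alpha$ is contained in $K_0$, so the stabilized relation equals $K_0$ and is transitive by construction.
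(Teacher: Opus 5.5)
Your preliminary steps are sound. The smallest closed capturing set $K_0\supseteq\Delta$ exists and is invariant and symmetric. $S_{dis}(X)$ is capturing, so $K_0\subseteq S_{dis}(X)$. Once $K_0$ is known to be transitive, distality of $X/K_0$, and hence $S_{dis}(X)\subseteq K_0$, follow as you describe.

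The gap is transitivity, which is the entire content of the theorem, and neither of your routes proves it. In the direct route, $K_0K_0$ is indeed a closed capturing set containing $\Delta$. Capturing holds if you take $y=vg^{-1}w$, so that $py=vw$. Closedness is not delicate: $K_0K_0$ is the projection of the compact set $\{(x,y,z):(x,y),(y,z)\in K_0\}$. But minimality of $K_0$ then only yields $K_0\subseteq K_0K_0$, which is trivial because $\Delta\subseteq K_0$. Minimality can never give $K_0K_0\subseteq K_0$. The fallback is circular: the inductive step from $R_\alpha\subseteq K_0$ to $\overline{C(R_\alpha R_\alpha)}\subseteq K_0$ needs exactly $K_0K_0\subseteq K_0$. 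What your transfinite construction actually produces is the smallest closed capturing \emph{equivalence relation} containing $\Delta$. That this equals $S_{dis}(X)$ is the easy, weaker statement; the theorem says transitivity does not have to be imposed.

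The missing idea is a closed capturing set containing $\Delta$ whose containment of $K_0$ encodes transitivity. One way to get it is to pass to the universal system, where right translations $p\mapsto pa$ are continuous and commute with $T$.
\begin{enumerate}
\item Take $X=M$ and $A=\{g\in G:(u,g)\in K_0\}$. Invariance plus capturing give $(p,q)\in K_0$ if and only if $(up)^{-1}uq\in A$.
\item For $a\in A$, set $K^a=\{(p,q):(pa^{-1},q)\in K_0\}$. It is closed, and it is capturing because $s(pa^{-1},q)=(spa^{-1},sq)$. It contains $\Delta_M$, since $u(pa^{-1},p)=ga^{-1}(u,a)$ with $g=up$. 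Hence $K_0\subseteq K^a$.
\item Apply this to $(u,b)$ with $b\in A$, then multiply on the left by $a$; this gives $ab\in A$. So $A$ is a group, $K_0$ is transitive on $M$, and $K_0=S_{dis}(M)$.
\item The same trick applied to $\{(p,q):(pg,qg)\in S_{dis}(M)\}$ shows that $D=A$ is normal in $G$.
\item For general $X$, take $\pi:M\to X$ and a closed capturing $K\supseteq\Delta_X$. Then $(\pi\times\pi)^{-1}(K)$ is closed, capturing and contains $\Delta_M$, so $K\supseteq(\pi\times\pi)(S_{dis}(M))$.
\item This image consists of the pairs $(x,y)$ with $ux=ax_0$, $uy=bx_0$ and $a^{-1}b\in D\GG(X)$. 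It is closed and capturing, and it is an equivalence relation because $D$ is normal. Hence it contains $S_{dis}(X)$, and so does $K$.
\end{enumerate}
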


Now, we show that $\mathcal{A}(\RP^{[d]}(X))$ is a capturing set for any minimal system $(X,T)$. Before we need the following lemma.

\begin{lemma}[{\cite[Lemma 1.2]{Auslander_Glasner_distal_order:2002}}]\label{lemma: capturing_R_PR}
    Let $(X,T)$ be a system and $R$ be a closed invariant subset of $X\times X$. Then $\P(X)\, R\subseteq C(R)\subseteq \P(X)\,R\,\P(X)$. If $(X,T)$ is minimal, then $C(R)=\P(X)R$
\end{lemma}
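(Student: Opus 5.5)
The plan is to work in $\beta T$, acting on $X\times X$ diagonally, and to use two facts. First, a closed invariant set $R$ is invariant under every $p\in\beta T$, because $p(x,z)$ is a limit of points $t_\lambda(x,z)$. Second, $(x,y)$ is proximal if and only if $px=py$ for some $p\in\beta T$. Recall that $C(R)$ is the set of pairs whose orbit closure under the diagonal action meets $R$, and that $(x,z)$ meets $R$ in this sense if and only if $p(x,z)\in R$ for some $p\in\beta T$.

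\emph{First inclusion.} Let $(x,y)\in\P(X)$ and $(y,z)\in R$. Choose $p\in\beta T$ with $px=py$. Then
\begin{align*}
p(x,z)=(py,pz)=p(y,z)\in R,
\end{align*}
so $\overline{T(x,z)}\cap R\neq\emptyset$, that is, $(x,z)\in C(R)$.

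\emph{Second inclusion.} Let $(x,z)\in C(R)$, so $(px,pz)\in R$ for some $p\in\beta T$. Fix a minimal left ideal $I$ and an idempotent $v\in J(I)$, and put $r=vp\in I$. Then $(rx,rz)=v(px,pz)\in R$. Take $w\in J(I)$ with $r\in wI$, and let $s$ be the inverse of $r$ in the group $wI$, so that $sr=w$. Applying $s$ gives $(wx,wz)=s(rx,rz)\in R$. Since $w$ is idempotent, $w(x)=w(wx)$, so $(x,wx)\in\P(X)$, and likewise $(wz,z)\in\P(X)$. Hence $(x,z)\in \P(X)\,R\,\P(X)$.

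\emph{Minimal case.} Here the goal is to absorb the right-hand proximal factor, and the only delicate point is choosing the idempotent so that it fixes $z$. Since $X$ is minimal, $z$ is a minimal point, so there is $w\in J(I)$ with $wz=z$. The key algebraic fact is that every idempotent of a minimal left ideal is a right identity for $I$: for $q\in I$, we have $q\in Iw$ because $Iw$ is a left ideal contained in $I$, hence equal to $I$, and therefore $qw=q$.

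With $r=vp\in I$ as before, the element $wr$ lies in the group $wI$; let $s\in wI$ be its inverse there. Since $s=sw$, we get $sr=s(wr)=w$. Therefore
\begin{align*}
(wx,z)=(wx,wz)=s(rx,rz)\in R .
\end{align*}
Combined with $(x,wx)\in\P(X)$, this gives $(x,z)\in\P(X)R$. Together with the first inclusion, this proves $C(R)=\P(X)R$ when $X$ is minimal.
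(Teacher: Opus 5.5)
The paper does not prove this lemma; it quotes it from Auslander--Glasner, so your argument has to stand on its own. The overall strategy is the standard one, and several pieces are fine as written: the first inclusion, the description of $C(R)$ through $\beta T$, the fact that each idempotent of a minimal left ideal is a right identity for it, and the proximality of $(x,wx)$ and $(wz,z)$.

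The problem is the assertion $r=vp\in I$, on which both the second inclusion and the minimal case rest. A minimal left ideal satisfies $\beta T\,I\subseteq I$. It absorbs multiplication on the left, not on the right. The element $vp$ lies in $Ip$, which is again a minimal left ideal. If $p$ belongs to a minimal left ideal $L\neq I$, then $Ip\subseteq L$ is disjoint from $I$. In that case there is no $w\in J(I)$ with $r\in wI$. In the minimal case, $wr=wvp=wp$ (as $v$ is a right identity for $I$) lies in $Ip$, not in the group $wI$. Nor can you keep $w\in I$ and look for some other $s$: $sr$ always lies in the minimal left ideal containing $r$, so $sr=w$ forces $w$ into that ideal.

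The repair is to choose the minimal left ideal inside the left ideal $\beta T p$ from the start, for instance $I'=Ip$. This is minimal: if $K\subseteq Ip$ is a left ideal, then $\{q\in I: qp\in K\}$ is a nonempty left ideal contained in $I$, hence equal to $I$, so $K=Ip$. Every element of $I'$ then has the form $qp$. So for every idempotent $w\in J(I')$ you get $(wx,wz)=q(px,pz)\in R$ directly from the $\beta T$-invariance of $R$, and the inverse $s$ is no longer needed. For the minimal case, take $w\in J(I')$ with $wz=z$; this exists for every minimal left ideal because $z$ is a minimal point. This gives $(x,wx)$ proximal and $(wx,z)\in R$. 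With this change your proof is complete.
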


\begin{proposition}
    Let $(X,T)$ be a minimal system and let $d\geq 1$ be an integer. Then $\mathcal{A}(\RP^{[d]}(X))$ is a capturing set.
\end{proposition}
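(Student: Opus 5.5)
Since $(X,T)$ is minimal and $R:=\mathcal{A}(\RP^{[d]}(X))$ is a closed invariant subset of $X\times X$, \cref{lemma: capturing_R_PR} gives $C(R)=\P(X)R$. The inclusion $R\subseteq C(R)$ holds because $\Delta\subseteq \P(X)$. So the proposition reduces to showing $\P(X)R\subseteq R$.

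This follows from the inclusions already recorded. We have $\P(X)\subseteq \RP^{[d]}(X)\subseteq R$. Now take $(x,z)\in \P(X)R$, so there is $y$ with $(x,y)\in \P(X)$ and $(y,z)\in R$. Then $(x,y)\in R$ and $(y,z)\in R$. Since $R$ is an equivalence relation, and in particular transitive, $(x,z)\in R$. Hence $\P(X)R\subseteq R$, and therefore $C(R)=R$.

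One small point needs checking: that the relation product convention in \cref{lemma: capturing_R_PR} matches the one defined just before it, $RS=\{(x,z):(x,y)\in R,(y,z)\in S\}$. Even if the order were reversed, the argument is unchanged, because both $\P(X)$ and $R$ are symmetric and contained in the transitive relation $R$.

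There is no real obstacle here. The content sits entirely in \cref{lemma: capturing_R_PR} and in the chain of inclusions $\P(X)\subseteq\RP^{[d]}(X)$. Conjecture~\ref{conj: characterization_ARPd} is not needed for this step. It becomes relevant afterwards, when this capturing property is combined with \cref{thm: S_dis_capturing_smallest_diagonal} to conclude $S_{dis}(X)\subseteq R$.
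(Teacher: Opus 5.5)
Your proof is correct and is essentially the paper's: the paper also combines \cref{lemma: capturing_R_PR} with $\P(X)\subseteq\RP^{[d]}(X)\subseteq R$ and transitivity, writing it as the sandwich $R=\P(X)R\subseteq C(R)\subseteq \P(X)R\,\P(X)=R$ rather than through the minimal-case equality $C(R)=\P(X)R$. One small correction to your closing aside: $S_{dis}(X)\subseteq R$ already follows from this proposition and \cref{thm: S_dis_capturing_smallest_diagonal} without any conjecture, and the conjecture is only needed later, in \cref{thm: RPd_capturing} (via \cref{thm: algebraic_A_RPd}), to show that minimal points of $R$ lie in $\RP^{[d]}_{\#}(X)$.
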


\begin{proof}
    By \cref{lemma: capturing_R_PR} and $\P(X)\subseteq \RP^{[d]}(X)$, it follows that \begin{align*}
        \mathcal{A}(\RP^{[d]}(X)) = \P(X)\mathcal{A}(\RP^{[d]}(X)) \subseteq C(\mathcal{A}(\RP^{[d]}(X))) \subseteq \P(X)\mathcal{A}(\RP^{[d]}(X))\P(X) = \mathcal{A}(\RP^{[d]}(X)).
    \end{align*}
\end{proof}

Let $(X,T)$ be a minimal system and let $d\geq 1$ be an integer. We denote by $\RP^{[d]}_{\#}(X)$ the smallest closed capturing set containing $\RP^{[d]}(X)$. We define\begin{align*}
    K_{d}(X)= \{(p,q)\in M\times M: \pi\times\pi(p,qh)\in \RP^{[d]}_{\#}(X) \text{ for each }h\in H_{\uptau^{[d]}}(G)\},
\end{align*} 

where $\pi:M\to X$ is a factor map. We will now show that $\RP^{[d]}_{\#}(X) = \mathcal{A}(\RP^{[d]}(X))$. Before we need the following lemmas.

\begin{lemma}\label{lemma: K_d_contain_S_dis}
    Let $(X,T)$ be a minimal system and let $d\geq 1$ be an integer. Then $K_{d}(X)$ is a closed capturing set containing $S_{dis}(M)$.
\end{lemma}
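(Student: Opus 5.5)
We need to show three things: $K_{d}(X)$ is closed, it is capturing ($C(K_{d}(X))=K_{d}(X)$), and $S_{dis}(M)\subseteq K_{d}(X)$. Throughout, $\pi:M\to X$ is the fixed factor map.

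\emph{Closedness and invariance.} For fixed $h\in H_{\uptau^{[d]}}(G)$, the map $\Phi_{h}:(p,q)\mapsto (\pi(p),\pi(qh))$ is continuous on $M\times M$, since right multiplication by $h$ is continuous on $M$. Then
\[
K_{d}(X)=\bigcap_{h\in H_{\uptau^{[d]}}(G)} \Phi_{h}^{-1}\bigl(\RP^{[d]}_{\#}(X)\bigr).
\]
This is an intersection of preimages of a closed set, hence closed. Moreover $\Phi_{h}$ commutes with the diagonal $T$-action, because $t(qh)=(tq)h$ and $\pi$ is a factor map. Since $\RP^{[d]}_{\#}(X)$ is invariant (it is the smallest closed capturing set containing an invariant set, and any capturing set is invariant), $K_{d}(X)$ is invariant.

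\emph{Capturing.} The inclusion $K_{d}(X)\subseteq C(K_{d}(X))$ is trivial. For the converse, take $(p,q)$ with $\overline{T(p,q)}\cap K_{d}(X)\neq\emptyset$. There is a net $t_{i}$ with $t_{i}(p,q)\to(p',q')\in K_{d}(X)$. For fixed $h$, continuity of $\Phi_{h}$ gives $t_{i}(\pi(p),\pi(qh))\to \Phi_{h}(p',q')\in \RP^{[d]}_{\#}(X)$. So $\Phi_{h}(p,q)\in C(\RP^{[d]}_{\#}(X))=\RP^{[d]}_{\#}(X)$, and hence $(p,q)\in K_{d}(X)$.

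\emph{Containing $S_{dis}(M)$.} By \cref{thm: S_dis_capturing_smallest_diagonal}, $S_{dis}(M)$ is the smallest closed capturing set containing $\Delta_{M}$. Since $K_{d}(X)$ is closed and capturing, it suffices to show $\Delta_{M}\subseteq K_{d}(X)$. This means showing $(\pi(p),\pi(ph))\in \RP^{[d]}_{\#}(X)$ for every $p\in M$ and $h\in H_{\uptau^{[d]}}(G)$; this is the main step.

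\begin{enumerate}[label=(\alph*)]
\item By \cref{prop: H_taud_NRPd}, $(h,u)\in \RP^{[d]}(M)$, and hence, by the invariance argument in the proof of \cref{thm: algebraic_characterization_RPd}, $(u,h)\in\RP^{[d]}(M)$.
\item Write $p=vg$ with $v\in J(M)$ and $g\in G$. I would like to transport this to $(p,ph)$. Left multiplication by $p$ is not continuous in general, but $p$ is a limit of a net $t_{i}\in T$, and each $t_{i}$ acts continuously. Since $\RP^{[d]}(M)$ is closed and invariant, $(t_{i}u,t_{i}h)\in\RP^{[d]}(M)$, and passing to the limit gives $(pu,ph)\in\RP^{[d]}(M)$.
\item The pair $(p,pu)$ is proximal: $pu\cdot$ and $p$ agree after applying $u$, since $u(pu)=upu$ and $up$ lie in the same $G$-coset, and more directly $(p,pu)$ is proximal because right multiplication by the idempotent $u$ yields a proximal pair.
\item Hence $(p,ph)\in \P(M)\RP^{[d]}(M)$. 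Applying $\pi\times\pi$, and using that proximal pairs map to proximal pairs and $\pi\times\pi(\RP^{[d]}(M))\subseteq\RP^{[d]}(X)$, we get $(\pi(p),\pi(ph))\in \P(X)\RP^{[d]}(X)$.
\item By \cref{lemma: capturing_R_PR}, $\P(X)\RP^{[d]}(X)\subseteq C(\RP^{[d]}(X))\subseteq \RP^{[d]}_{\#}(X)$, which completes this step.
\end{enumerate}

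The delicate points are the limit argument in (b), where continuity of $q\mapsto t q$ must be used rather than any continuity of left multiplication by $p$, and the proximality of $(p,pu)$ in (c). If (c) is troublesome, I would instead use that $\Delta_{M}$ itself is clearly contained in $\RP^{[d]}_{\#}$-preimages only after handling $h$. In that case I would replace $\Delta_{M}$ by the orbit closure argument: $(u,uh)=(u,h)$ gives $(\pi(u),\pi(uh))\in\RP^{[d]}(X)$. Then, for general $p$, the point $(p,p)$ lies in $\overline{T(u,u)}$-type sets, and I would transfer the property through the capturing property of $K_{d}(X)$, since $\overline{T(p,p)}$ meets $\Delta_{M}\cap\{(w,w):w\in J(M)\}$ and it suffices to treat idempotents.
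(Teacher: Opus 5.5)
Your proof is correct and follows the paper's route: closedness and the capturing property are inherited from $\RP^{[d]}_{\#}(X)$ through the continuous equivariant maps $(p,q)\mapsto(\pi(p),\pi(qh))$. Then \cref{thm: S_dis_capturing_smallest_diagonal} reduces the claim to $\Delta_M\subseteq K_d(X)$, and \cref{prop: H_taud_NRPd}, together with invariance and closedness of $\RP^{[d]}(M)$ and continuity of the right multiplications $r\mapsto ru$, $r\mapsto rh$ on $\beta T$, gives $(pu,ph)\in\RP^{[d]}(M)$.

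The proximality detour in (c)--(e) is unnecessary, and your justification of (c) is muddled. Since $Mu=M$, the idempotent $u$ is a right identity on $M$, so $pu=p$ and $(\pi(p),\pi(ph))\in\RP^{[d]}(X)\subseteq\RP^{[d]}_{\#}(X)$ directly; this is exactly what the paper uses, in the form $ux_0=x_0$.
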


\begin{proof}
    Since $\RP^{[d]}_{\#}(X)$ is a closed capturing set, we have that $K_{d}(X)$ is a closed capturing set. By \cref{thm: S_dis_capturing_smallest_diagonal}, to show that $S_{dis}(M)\subseteq K_{d}(X)$ it is enough to prove that $\Delta \subseteq K_{d}(X)$. Let $p\in M$ and $h\in H_{\uptau^{[d]}}(G)$. It follows from \cref{prop: H_taud_NRPd} that $(px_{0},phx_{0})\in \RP^{[d]}(X)$, and hence $(p,p)\in K_{d}(X)$.
\end{proof}

\begin{lemma}[{\cite[Proposition 1.1]{Auslander_Glasner_distal_order:2002}}]\label{lemma: x_capturing_iff_prox_dom_in_k}
    Let $(X,T)$ be a system and let $K$ be a closed invariant set. Then $x\in C(K)$ if and only if $x$ is proximal to some point of $K$.
\end{lemma}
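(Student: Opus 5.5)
The plan is to prove the two implications separately. The backward direction is a direct limit argument using that $K$ is closed and invariant. The forward direction extracts a minimal idempotent from a closed left ideal of $\beta T$, in the spirit of \cref{sec:univ_system}.

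For the backward implication, suppose $x$ is proximal to some $y\in K$. Then there are a net $(t_\lambda)_\lambda$ in $T$ and a point $z\in X$ with $t_\lambda x\to z$ and $t_\lambda y\to z$. Since $K$ is invariant, $t_\lambda y\in K$ for every $\lambda$. Since $K$ is closed, $z\in K$. On the other hand $z\in\overline{Tx}$, so $\overline{Tx}\cap K\neq\emptyset$, that is, $x\in C(K)$.

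For the forward implication, let $x\in C(K)$ and pick $z\in\overline{Tx}\cap K$. By universality of $\beta T$ (acting on $X$ via $\Phi_X$), there is $p\in\beta T$ with $z=px$. I will consider the set
\begin{align*}
  L=\{q\in\beta T: qx\in K\}.
\end{align*}
It is nonempty, since $p\in L$. It is closed, since $q\mapsto qx$ is continuous and $K$ is closed. It is a left ideal: for $t\in T$ and $q\in L$, invariance of $K$ gives $tqx\in K$, so $TL\subseteq L$. Since right multiplication $r\mapsto rq$ is continuous on $\beta T$, closedness of $L$ then yields $\beta T\,L\subseteq L$.

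Hence $L$ contains a minimal left ideal $I$ of $\beta T$. By the Ellis--Namakura theorem, $I$ contains an idempotent $v$. Put $y=vx$; then $y\in K$ because $v\in L$. Moreover $v y=v^2x=vx=y$ and $vx=y$, so $x$ and $y$ have the same image under $v$. Taking a net $t_\lambda\to v$ in $T$, we get $t_\lambda x\to y$ and $t_\lambda y\to y$, so $(x,y)$ is proximal. This matches the characterization of proximality via idempotents of minimal left ideals recalled in \cref{sec:univ_system}.

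The only step needing care is the verification that $L$ is a genuine left ideal. This uses continuity of right multiplication in $\beta T$ and is where closedness of $K$ is needed, in addition to its invariance. Note also that no minimality of $x$ is required, since the idempotent is chosen inside $L$ rather than fixing $x$. I do not expect any substantial obstacle beyond this.
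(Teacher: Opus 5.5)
Your proof is correct and is the standard argument. The paper gives no proof of its own and only cites Auslander--Glasner, where the forward direction comes from an idempotent in the closed left ideal $\{q\in\beta T: qx\in K\}$, exactly as you do (equivalently, Auslander's theorem that $x$ is proximal to some point of any minimal subset of $\overline{Tx}$, applied inside $\overline{Tx}\cap K$), and the reverse direction is your limit argument. As a minor simplification, you do not need to pass to a minimal left ideal: $L$ is already a closed subsemigroup of $\beta T$, so Ellis--Namakura applies to it directly.
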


\begin{theorem}\label{thm: RPd_capturing}
    Let $(X,T)$ be a minimal system and let $d\geq 1$ be an integer. Then $\mathcal{A}( \RP^{[d]}(X))=\RP^{[d]}_{\#}(X)$.
\end{theorem}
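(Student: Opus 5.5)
We prove the two inclusions separately, working throughout under the standing assumption that \cref{conj: characterization_ARPd} holds, so that \cref{thm: algebraic_A_RPd} is available.

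\textbf{The inclusion $\RP^{[d]}_{\#}(X)\subseteq \mathcal{A}(\RP^{[d]}(X))$.} This is immediate. The preceding proposition shows that $\mathcal{A}(\RP^{[d]}(X))$ is a capturing set. It is also closed and contains $\RP^{[d]}(X)$. Since $\RP^{[d]}_{\#}(X)$ is by definition the smallest closed capturing set containing $\RP^{[d]}(X)$, the inclusion follows.

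\textbf{The inclusion $\mathcal{A}(\RP^{[d]}(X))\subseteq \RP^{[d]}_{\#}(X)$.} We use the algebraic description. Take $(x,y)\in\mathcal{A}(\RP^{[d]}(X))$. By \cref{thm: algebraic_A_RPd} we can write $y=vhgx$ with $h\in H_{\uptau^{[d]}}(G)$, $g\in D$ and $v\in J(M)$. Fix the factor map $\pi\colon M\to X$, $p\mapsto px_0$, and choose $p\in M$ with $x=px_0$. Then
\[
y=vhgpx_0=\pi(vhgp).
\]
The plan is to realize $(x,y)$ as the image under $\pi\times\pi$ of a pair lying in $K_d(X)$, and then use the definition of $K_d(X)$.

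The first step is to move $h$ to the right of $p$. Since $D$ is a normal subgroup of $G$ and $H_{\uptau^{[d]}}(G)D$ is a group, one wants to rewrite $hgp$ in the form $vq'h'$ with $h'\in H_{\uptau^{[d]}}(G)$ and $(p,q')\in S_{dis}(M)$. The second step is to apply \cref{lemma: K_d_contain_S_dis}: since $S_{dis}(M)\subseteq K_d(X)$, we get $(p,q')\in K_d(X)$. By the definition of $K_d(X)$ this gives
\[
\pi\times\pi(p,q'h')\in\RP^{[d]}_{\#}(X),
\]
which is exactly $(x,y)$ up to the idempotent $v$. The third step removes $v$. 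The points $y$ and $uy$ are proximal, so $(x,uy)\in\RP^{[d]}_{\#}(X)$ implies $(x,y)\in\RP^{[d]}_{\#}(X)$. This uses that $\RP^{[d]}_{\#}(X)$ is capturing together with \cref{lemma: capturing_R_PR}, in the form $C(R)=\P(X)R$ for minimal $X$, and symmetry.

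\textbf{Main obstacle.} The hard step is the commutation in the first step. $H_{\uptau^{[d]}}(G)$ is not known to be normal in $G$ for general (non-abelian) $T$, so $h$ cannot simply be moved past $p$. I would avoid commuting and instead exploit that $K_d(X)$ is capturing and closed. Writing $p=wg_0$, I would use $S_{dis}(M)\subseteq K_d(X)$ to compare $p$ with $hgp$ through pairs of the form $(q,qh)$ with $q=hgp$. Concretely, I would show that $(p,hgp)$ is proximal to a pair $(p',p'h'')$ in $S_{dis}(M)\cdot\{(q,qh)\}$, using $g\in D$ and \cref{lemma: x_capturing_iff_prox_dom_in_k}. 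If this direct route stalls, I would instead show that $\RP^{[d]}_{\#}(X)$ is an equivalence relation: it contains $S_{dis}(X)$ by the same capturing argument, and transitivity would be obtained via \cref{lemma: capturing_R_PR}. Since $\mathcal{A}$ is the smallest closed invariant equivalence relation containing $\RP^{[d]}(X)$, this would conclude the proof.
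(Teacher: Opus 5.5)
Your first inclusion is the same as the paper's. The second has a genuine gap, exactly at the step you flag as the main obstacle: you never place $h$ to the right of $p$, and neither workaround closes this.

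\begin{itemize}
\item The first workaround, that $(p,hgp)$ is proximal to a suitable $(p',p'h'')$, only restates what is needed.
\item The second is circular. A closed capturing set is automatically invariant, so if $\RP^{[d]}_{\#}(X)$ were an equivalence relation it would already contain $\mathcal{A}(\RP^{[d]}(X))$. Proving that it is an equivalence relation is therefore equivalent to the theorem. Moreover, \cref{lemma: capturing_R_PR} only gives $\P(X)R\subseteq R$ for a closed capturing $R$ (absorption of proximal pairs), not $RR\subseteq R$.
\end{itemize}

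The obstacle comes from invoking \cref{thm: algebraic_A_RPd} at the point $x=px_0$. The missing idea is to invoke it at the base point $x_0$ and transport by $p$ afterwards.

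\begin{enumerate}
\item Reduce to minimal pairs. Each $z\in\mathcal{A}(\RP^{[d]}(X))$ is proximal to $uz$, which is a minimal point of this closed invariant set. So once minimal pairs are handled, \cref{lemma: x_capturing_iff_prox_dom_in_k} and the capturing property finish the proof.
\item For a minimal pair $(x,y)$, the minimal set $\overline{T(x,y)}\subseteq\mathcal{A}(\RP^{[d]}(X))$ projects onto $X$. Hence it contains some $(x_0,y')$, and applying $u$ (note $ux_0=x_0$) you may assume $uy'=y'$. Minimality gives $p\in M$ with $(x,y)=p(x_0,y')$.
\item \cref{thm: algebraic_A_RPd} applied to $(x_0,y')$ gives $y'=uvhgx_0=hgx_0=g'hx_0$, where $g'=hgh^{-1}\in D$. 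This uses only the normality of $D$, never that of $H_{\uptau^{[d]}}(G)$.
\item Hence $(x,y)=(px_0,pg'hx_0)=\pi\times\pi(p,pg'h)$. Since $(p,pg')\in S_{dis}(M)\subseteq K_d(X)$ by \cref{lemma: K_d_contain_S_dis}, the definition of $K_d(X)$ gives $(x,y)\in\RP^{[d]}_{\#}(X)$. In that definition $h$ acts on the right of the second coordinate, which is exactly this form.
\end{enumerate}

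This is the paper's argument; its display $(x,y)=(px_0,pghx_0)$ rests on this transport. It also makes your idempotent-removal step unnecessary.

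Alternatively, your direct route can be rescued without passing to minimal points. What you were missing is normality of the product $H_{\uptau^{[d]}}(G)D$, not of $H_{\uptau^{[d]}}(G)$ itself.
\begin{itemize}
\item $H_{\uptau^{[d]}}(G)D=\{k\in G:(u,k)\in\mathcal{A}(\RP^{[d]}(M))\}$ is normal in $G$, because $\mathcal{A}(\RP^{[d]}(M))$ is invariant under the automorphisms $q\mapsto qg$ of $(M,T)$.
\item Write $p=wg_0$ and $g_0^{-1}hgg_0=g''h'$ with $g''\in D$ and $h'\in H_{\uptau^{[d]}}(G)$.
\item Then $(x,y)=\pi\times\pi(p,vg_0g''h')$ with $(p,vg_0g'')\in S_{dis}(M)\subseteq K_d(X)$, and the definition of $K_d(X)$ again gives $(x,y)\in\RP^{[d]}_{\#}(X)$.
\end{itemize}
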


\begin{proof}
    Since $\mathcal{A}( \RP^{[d]}(X))$ is a closed capturing set containing $\RP^{[d]}(X)$, we have $\RP^{[d]}_{\#}(X)\subseteq \mathcal{A}( \RP^{[d]}(X))$. Now, let $(x,y)\in \mathcal{A}( \RP^{[d]}(X))$ be a minimal point, and let $p\in M$ such that $x=px_{0}$. It follows from \cref{thm: algebraic_A_RPd} that there exists $h\in H_{\uptau^{[d]}}(G)$ and $g\in D$ such that $y=ghx$. By \cref{lemma: K_d_contain_S_dis}, we have $(p,pg)\in S_{dis}(M)\subseteq K_{d}(X)$. Therefore, we have that\begin{align*}
        (x,y) = (px_{0},pghx_{0})\in \RP^{[d]}_{\#}(X).
    \end{align*}

    Thus, we obtain that all minimal points of $\mathcal{A}(\RP^{[d]}(X))$ are in $\RP^{[d]}_{\#}(X)$. Since $\RP^{[d]}_{\#}(X)$ is a capturing set, by \cref{lemma: x_capturing_iff_prox_dom_in_k}, we conclude that $\mathcal{A}(\RP^{[d]}(X))\subseteq \RP^{[d]}_{\#}(X)$.
\end{proof}

A direct consequence is the following result, which gives a necessary and sufficient condition for $\RP^{[d]}(X)$ to be an equivalence relation.

\begin{corollary}
    Let $(X,T)$ be a minimal system and let $d\geq 1$ be an integer. Then $\RP^{[d]}(X)$ is an equivalence relation if and only if $\RP^{[d]}(X) = \P(X)\RP^{[d]}(X)$.
\end{corollary}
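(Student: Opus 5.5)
The plan is to combine \cref{thm: RPd_capturing} with \cref{lemma: capturing_R_PR}. Throughout, as in this subsection, we assume \cref{conj: characterization_ARPd}. By \cref{thm: RPd_capturing}, $\mathcal{A}(\RP^{[d]}(X))$ is the smallest closed capturing set containing $\RP^{[d]}(X)$. We therefore reduce to the following claim: $\RP^{[d]}(X)$ is an equivalence relation if and only if $\RP^{[d]}(X)$ is itself a capturing set.

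First suppose $\RP^{[d]}(X)$ is an equivalence relation. It is closed and invariant, so $\RP^{[d]}(X)=\mathcal{A}(\RP^{[d]}(X))$. By the preceding proposition, $\mathcal{A}(\RP^{[d]}(X))$ is a capturing set. Since $X$ is minimal, \cref{lemma: capturing_R_PR} gives
\begin{align*}
\RP^{[d]}(X)=C(\RP^{[d]}(X))=\P(X)\RP^{[d]}(X).
\end{align*}
The same conclusion also follows directly: $\P(X)\subseteq\RP^{[d]}(X)$, and an equivalence relation is closed under composition.

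Conversely, suppose $\RP^{[d]}(X)=\P(X)\RP^{[d]}(X)$. Since $X$ is minimal and $\RP^{[d]}(X)$ is closed and invariant, \cref{lemma: capturing_R_PR} gives $C(\RP^{[d]}(X))=\P(X)\RP^{[d]}(X)=\RP^{[d]}(X)$. So $\RP^{[d]}(X)$ is a closed capturing set containing itself, and by minimality of $\RP^{[d]}_{\#}(X)$ we get $\RP^{[d]}_{\#}(X)\subseteq \RP^{[d]}(X)$. The reverse inclusion holds by definition. Then \cref{thm: RPd_capturing} yields $\RP^{[d]}(X)=\RP^{[d]}_{\#}(X)=\mathcal{A}(\RP^{[d]}(X))$, which is an equivalence relation.

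The only point needing care is that \cref{lemma: capturing_R_PR} applies, that is, that $\RP^{[d]}(X)$ is closed and invariant. Both properties were stated earlier in the paper. All the substantive work sits in \cref{thm: RPd_capturing}, which depends on the conjecture, so no step here is a genuine obstacle.
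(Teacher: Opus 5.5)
Your proof is correct and follows the argument the paper has in mind when it calls this a direct consequence of \cref{thm: RPd_capturing}: for minimal $X$, \cref{lemma: capturing_R_PR} gives $C(\RP^{[d]}(X))=\P(X)\RP^{[d]}(X)$, so the condition is equivalent to $\RP^{[d]}(X)$ being a capturing set, which (under the conjecture) is equivalent to $\RP^{[d]}(X)=\RP^{[d]}_{\#}(X)=\mathcal{A}(\RP^{[d]}(X))$. You are also right that the forward direction is elementary: it uses only $\P(X)\subseteq\RP^{[d]}(X)$ and transitivity, so the conjecture is needed only for the converse.
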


\section{Recurrence sets and $\NRP^{[d]}$}

\subsection{Correlation for finitely generated abelian actions}\label{subsec: correlation}

In this subsection, we assume that in every system $(X,T)$, the acting group $T$ is a finitely generated abelian group.

Let $H$ be a countable topological group, written with multiplicative notation. A {\em F{\o}lner sequence} in $H$ is a sequence $\Phi = (\Phi_{N})_{N\in \N}$ of nonempty finite subsets of $H$  such that for every $h\in H$,\begin{align*}
    \lim_{N\to \infty}\dfrac{|h\Phi_{N}\Delta \Phi_{N}|}{|\Phi_{N}|} =0.
\end{align*}

\begin{definition}
    Let $d\geq 1$ be an integer and let $(X=L/\Gamma,T)$ be a $d$-step nilsystem. Let $\phi$ be a continuous function on $X$ and $b\in X$. The sequence $\{\phi(tb)\}_{t\in T}$ is called a {\em basic $d$-step nilsequence}. A {\em $d$-step nilsequence} is a uniform limit of basic $d$-step nilsequences.
\end{definition}

\begin{definition}
    Let $\{a_{g}\}_{g\in H}$ be a bounded sequence indexed by elements of a countable amenable group $H$. We say that $(a_{g})_{g\in H}$ is a {\em null-sequence} if it tends to zero in uniform density, that is, \begin{align*}
        \lim_{N\to\infty}\dfrac{1}{|\Phi_{N}|}\sum_{g\in H}|a_{g}| =0
    \end{align*}

    for any F{\o}lner $\Phi$ of $H$.
\end{definition}

In this subsection, we show that for a finitely generated abelian group $T$, the sequence $(c(t))_{t\in T}$ is the sum of a null-sequence and a $d$-step nilsequence, where \begin{align*}
    c(t) = \int f(x) \cdot f(tx)\cdot \ldots \cdot f(t^{d}x) \, d\mu(x).
\end{align*}

We follow the approach of \cite{Bergelson_Host_Kra05,Leibman15}. 

In \cite{Leibman15}, Leibman studied multiparameter polynomial multiple correlations for $\mathbb{Z}^{d}$-actions. The results of that paper also hold for actions of finitely generated abelian groups. In particular, we will use the following decomposition result for nilsystems.

 \begin{theorem}[{\cite[Theorem 4.3]{Leibman15}}]\label{thm: nil_connnected_descomp_nil_null}
     Let $(X,T)$ be a $d$-step nilsystem, let $Y$ be a subnilmanifold of $X$ and let $f\in C(X)$. Then the sequence $c(t) = \int_{tY} f d\mu_{tY}$, $t\in T$, can be decomposed into a sum of a null-sequence and a $d$-step basic nilsequence.
 \end{theorem}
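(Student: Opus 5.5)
The strategy is Leibman's: reduce to the connected case, then use the structure of orbit closures of a polynomial-type sequence of translations on a nilmanifold. Throughout, the measure on $Y$ is the Haar measure $\mu_Y$ of the subnilmanifold, and $\mu_{tY}=t_*\mu_Y$, so that
\begin{align*}
c(t)=\int_Y f(ty)\,d\mu_Y(y).
\end{align*}

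\textbf{Step 1 (lifting to a nilsequence).} Write $X=L/\Gamma$ with $T$ acting through a homomorphism $\phi:T\to L$, and $Y=Kx_0$ for a closed subgroup $K\le L$ with $K\cap\Gamma$ cocompact in $K$. I would pass to the nilmanifold of translates. Since $\mu_{tY}$ depends only on the coset $\phi(t)K$ and is transported from $\mu_Y$, the map $t\mapsto \mu_{tY}$ factors through $t\mapsto \phi(t)\cdot Y$ in the space of subnilmanifolds. This is exactly the setting of Leibman's theorem for $\mathbb{Z}^k$. For a finitely generated abelian $T\cong \mathbb{Z}^k\times F$ with $F$ finite, restrict to each coset of $\mathbb{Z}^k$; the finitely many pieces are handled separately and reassembled. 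A sum over finitely many cosets of null-sequences is null, and a patching of nilsequences along cosets of a finite-index subgroup is again a nilsequence, since we may add the finite group $F$ to the nilpotent group.

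\textbf{Step 2 (decomposition on the orbit closure).} Consider $F(z)=\int_{Y}f(z y')\,d\mu_Y$, defined on $Z=\overline{\{\phi(t)K\}}$ modulo $\Gamma$-symmetries. Using the Leibman/Ratner-type equidistribution theorem for the abelian polynomial action (here linear), the orbit $\{\phi(t)\}$ equidistributes in a finite union of subnilmanifolds. By Leibman's criterion for a sequence $\psi(g_t y)$, there is a decomposition
\begin{align*}
\psi(g_t y)=\text{(basic nilsequence on the orbit closure)}+\text{(sequence vanishing off a set of zero upper Banach density)}.
\end{align*}
The "bad" $t$ are those for which $\phi(t)Y$ fails to be generic. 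By the structure theorem these lie in finitely many cosets of proper subgroups, which have zero density in $\mathbb{Z}^k$, giving the null part.

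\textbf{Main obstacle.} The main difficulty is showing that the exceptional set, where the translate degenerates, is null uniformly over all F{\o}lner sequences, and not merely along the standard boxes. I would prove that it is contained in a finite union of cosets $a+\Lambda$ with $\Lambda$ of infinite index. Such sets have zero density for every F{\o}lner sequence in an abelian group, since they have zero Banach density. On the complement, continuity of $F$ on $Z$ then yields the basic $d$-step nilsequence; the nilpotency degree is preserved because $Z$ sits inside a quotient of $L$.
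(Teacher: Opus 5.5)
Your Step 1 is correct. Taken on its own (reduce $T\cong\mathbb{Z}^k\times F$ to $\mathbb{Z}^k$, then invoke Leibman) it is essentially all the paper does. The paper gives no proof: it cites Leibman and asserts that his results extend to finitely generated abelian groups. Your patching through $(\prod_{s\in F}L_s\times F)/(\prod_{s\in F}\Gamma_s\times\{e\})$ makes that assertion explicit, together with the fact that a F{\o}lner sequence of $T$ restricts (after translation) on each coset of $\mathbb{Z}^k$ to a F{\o}lner sequence of $\mathbb{Z}^k$ with asymptotic share $1/|F|$.

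The gap is in Step 2, where you try to prove the $\mathbb{Z}^k$ case itself. The function $F(z)=\int_Y f(zy)\,d\mu_Y(y)$ is right $K$-invariant. However, $F(z\gamma)=\int_{z\gamma Y}f\,d\mu_{z\gamma Y}$ with $\gamma Y=\gamma K\gamma^{-1}\Gamma/\Gamma\neq Y$ unless $\gamma$ normalizes $K$. So $F$ lives on the noncompact space $L/K$ and does not descend to a nilmanifold. ``Continuity of $F$ on $Z$ modulo $\Gamma$-symmetries'' is exactly what fails, and this failure is what produces the null part.

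Your description of the null part is also wrong on two counts:
\begin{itemize}
\item It is in general not supported on an exceptional set. One only has, for each $\varepsilon>0$, a zero-density set off which it is $<\varepsilon$, and these sets grow as $\varepsilon\to0$.
\item These sets need not lie in finitely many cosets of infinite-index subgroups.
\end{itemize}
What is really needed is the content of Leibman's theorem. You must find an enlarged subnilmanifold $\widetilde Y\supseteq Y$, determined by $K$ together with $\phi(T)$, such that $t\mapsto\int_{\phi(t)\widetilde Y}f$ is a basic nilsequence. You must then prove that for every $\varepsilon>0$ the set of $t$ with $|\int_{\phi(t)Y}f-\int_{\phi(t)\widetilde Y}f|>\varepsilon$ has zero upper Banach density. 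Equidistribution of point orbits $\phi(t)x$ does not give this, and the ``criterion'' you quote is the conclusion itself.

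Here is a concrete instance, already for a $\mathbb{Z}^2$-action on the Heisenberg nilmanifold.
\begin{itemize}
\item Setup: let $L=\mathbb{R}^3$ with $(x,y,z)(x',y',z')=(x+x',y+y',z+z'+xy')$, $\Gamma=\mathbb{Z}^3$ and $K=\{(t,0,0):t\in\mathbb{R}\}$. Let $T=\mathbb{Z}^2$ act through the commuting elements $\ell_1=(0,\alpha,0)$ and $\ell_2=(0,\beta,\gamma)$, with $\alpha,\beta$ rationally independent.
\item The function: $f(x,y,z)=e^{2\pi i z}\sum_{k\in\mathbb{Z}}e^{-\pi(y+k)^2}e^{2\pi i kx}$ is right $\Gamma$-invariant, so $f\in C(X)$.
\item The sequence: since $\ell_1^{n_1}\ell_2^{n_2}(t,0,0)=(t,n_1\alpha+n_2\beta,n_2\gamma)$, one gets $c(n_1,n_2)=e^{2\pi i n_2\gamma}e^{-\pi(n_1\alpha+n_2\beta)^2}$. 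Note that $F(0,y,0)=e^{-\pi y^2}$ is not $1$-periodic in $y$, although $(0,1,0)\in\Gamma$.
\item $c$ is null: for each $\varepsilon>0$ it is $<\varepsilon$ outside a strip $\{|n_1\alpha+n_2\beta|<C_\varepsilon\}$ of zero Banach density.
\item The nil part is $0$: a null basic nilsequence vanishes identically, because orbit closures in nilsystems are minimal and return times to open sets are syndetic. Hence the null part is $c$ itself.
\item Consequences: $c$ vanishes nowhere, so it is not ``a sequence vanishing off a set of zero upper Banach density''. Moreover, $\{|c|>1/2\}$ is an infinite subset of a strip of irrational slope. That strip meets every coset of an infinite-index subgroup of $\mathbb{Z}^2$ (a point or a line) in only finitely many points. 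So the statement you propose to prove in your ``main obstacle'' paragraph is false.
\end{itemize}
In this example $\widetilde Y$ is the fibre torus $\{(x,0,z)\Gamma\}$. The required estimate says that the closed curve $\phi(n_1,n_2)Y$, which winds roughly $|n_1\alpha+n_2\beta|$ times around that torus, equidistributes in it as this number grows.

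So your proposal is correct as a reduction to Leibman's theorem for $\mathbb{Z}^k$, matching the paper, but it is not a self-contained proof of that theorem.
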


We will also use the following result.

\begin{lemma}[{\cite[Lemma 4.2]{Bergelson_McCutcheon_Zhang_Roth_thm_amenable:1997}}]\label{lemma: van_der_corput_nilpotent}
    Let $(a_{g})_{g\in H}$ be a bounded family of elements of a Hilbert space indexed by elements of a countable amenable group $H$ and let $\Phi$ be a F\o lner sequence in $H$. If 
    \begin{align*}
        \limsup_{N\to \infty} \dfrac{1}{|\Phi_{N}|^{2}}\left( \limsup_{M\to \infty} \dfrac{1}{|\Phi_{M}|}\sum_{g\in \Phi_{M}}\sum_{h,k\in \Phi_{N}} \langle a_{hg},a_{kg} \rangle \right) = 0
    \end{align*}
    Then\begin{align*}
        \limsup_{N\to \infty} \left\| \dfrac{1}{|\Phi_{N}|}\sum_{g\in \Phi_{N}}a_{g}\right\|^{2}=0
    \end{align*}
\end{lemma}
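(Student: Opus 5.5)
The plan is to follow the classical van der Corput argument: compare the average of $(a_{g})$ over a large F{\o}lner set with the average of its ``smoothed'' version over a fixed small F{\o}lner set, and then apply Cauchy--Schwarz. Let $C=\sup_{g}\|a_{g}\|<\infty$. The index swap is convenient: the conclusion concerns averages over $\Phi_{N}$, while the hypothesis has $M$ as the inner (large) parameter. So I will bound $\limsup_{M}\|\frac{1}{|\Phi_{M}|}\sum_{g\in\Phi_{M}}a_{g}\|^{2}$ for each fixed $N$, and then let $N\to\infty$.

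\emph{Step 1 (shift invariance of large averages).} Fix $N$ and $h\in\Phi_{N}$. Writing $\sum_{g\in\Phi_{M}}a_{hg}=\sum_{g\in h\Phi_{M}}a_{g}$, we get
\begin{align*}
\Bigl\|\frac{1}{|\Phi_{M}|}\sum_{g\in\Phi_{M}}a_{g}-\frac{1}{|\Phi_{M}|}\sum_{g\in\Phi_{M}}a_{hg}\Bigr\|\leq C\,\frac{|h\Phi_{M}\Delta\Phi_{M}|}{|\Phi_{M}|}.
\end{align*}
This tends to $0$ as $M\to\infty$ by the F{\o}lner property; the definition in the paper uses left translates, which is exactly what is needed here. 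Since $\Phi_{N}$ is finite, averaging over $h\in\Phi_{N}$ gives
\begin{align*}
\frac{1}{|\Phi_{M}|}\sum_{g\in\Phi_{M}}a_{g}=\frac{1}{|\Phi_{M}|}\sum_{g\in\Phi_{M}}b_{g}^{N}+o_{M\to\infty}(1),\qquad b^{N}_{g}:=\frac{1}{|\Phi_{N}|}\sum_{h\in\Phi_{N}}a_{hg}.
\end{align*}

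\emph{Step 2 (Cauchy--Schwarz).} By convexity of $\|\cdot\|^{2}$,
\begin{align*}
\Bigl\|\frac{1}{|\Phi_{M}|}\sum_{g\in\Phi_{M}}b^{N}_{g}\Bigr\|^{2}\leq\frac{1}{|\Phi_{M}|}\sum_{g\in\Phi_{M}}\|b^{N}_{g}\|^{2}=\frac{1}{|\Phi_{N}|^{2}}\cdot\frac{1}{|\Phi_{M}|}\sum_{g\in\Phi_{M}}\sum_{h,k\in\Phi_{N}}\langle a_{hg},a_{kg}\rangle.
\end{align*}

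\emph{Step 3 (limits).} Combining Steps 1 and 2 and taking $\limsup_{M\to\infty}$, for every $N$ we get
\begin{align*}
L:=\limsup_{M}\Bigl\|\frac{1}{|\Phi_{M}|}\sum_{g\in\Phi_{M}}a_{g}\Bigr\|^{2}\leq\frac{1}{|\Phi_{N}|^{2}}\limsup_{M}\frac{1}{|\Phi_{M}|}\sum_{g\in\Phi_{M}}\sum_{h,k\in\Phi_{N}}\langle a_{hg},a_{kg}\rangle.
\end{align*}
The left-hand side does not depend on $N$, so taking $\limsup_{N}$ and using the hypothesis gives $L=0$. Renaming $M$ as $N$ yields the conclusion.

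The only delicate point is Step 1. There one must check that the translation $g\mapsto hg$ matches the side on which the F{\o}lner condition holds, and that the error is uniform over the finitely many $h\in\Phi_{N}$. Both are immediate because $N$ is fixed before $M\to\infty$ and $(a_{g})$ is bounded. Everything else is routine.
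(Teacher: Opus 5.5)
Your proof is correct: replacing the long average by the average of the smoothed family $b^{N}_{g}$ via the (left) F{\o}lner property, applying Cauchy--Schwarz, and letting $M\to\infty$ before $N\to\infty$ matches the hypothesis exactly, and this is the standard van der Corput argument behind the cited result. The paper itself gives no proof of this lemma; it only cites Bergelson--McCutcheon--Zhang.
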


Using this version of the van der Corput lemma, we obtain the following result, which is well known for $\Z$-actions  ({\cite[Theorem 12.1]{Host_Kra_nonconventional_averages_nilmanifolds:2005}}).

\begin{theorem}\label{thm: nil_characteristic}
    Let $(X,\cX,\mu,T)$ be an ergodic measure preserving system, let $d\geq 1$ be an integer and let $f_{0},f_{1},\dots,f_{d}$ be bounded functions on $X$. If at least one of these functions has zero conditional expectation on $\cZ_{d}$ then\begin{align*}
        \lim_{N\to \infty} \left\| \dfrac{1}{|\Phi_{N}|}\sum_{t\in \Phi_{N}} f_{0}\cdot f_{1}\circ t\cdot f_{2}\circ t^{2} \cdots f_{d}\circ t^{d} \right\|_{L^{2}(\mu)}=0
    \end{align*}

    for any F\o lner $\Phi$ of $T$.
\end{theorem}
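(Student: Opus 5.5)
We argue by induction on $d$, following the classical Host--Kra argument, with the van der Corput lemma for amenable groups (\cref{lemma: van_der_corput_nilpotent}) in place of the one for $\Z$. We also need the inequality $\nnorm{f}_{k}\le \nnorm{f}_{k+1}$, which gives $\E(f\mid\cZ_{k})=0\Rightarrow\E(f\mid\cZ_{k+1})=0$, and the fact that $\nnorm{\cdot}_{k}$ can be computed by averaging along any F\o lner sequence of $T$, which holds because $T$ is abelian and countable:
\begin{align*}
\nnorm{f}_{k+1}^{2^{k+1}}=\lim_{N\to\infty}\frac{1}{|\Phi_N|}\sum_{s\in\Phi_N}\nnorm{f\cdot \overline{f}\circ s}_{k}^{2^{k}}.
\end{align*}

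\textbf{Reduction.} If $f_0$ is the function with $\E(f_0\mid\cZ_d)=0$, we rewrite the average by composing with $t^{-1}$ or reindexing, so that the function with vanishing conditional expectation sits in a position $j\ge 1$. In fact we will prove the stronger statement that the average tends to $0$ in $L^2$ as soon as $\nnorm{f_j}_{d+1}=0$ for some $1\le j\le d$. This suffices, since $\E(f_j\mid\cZ_d)=0$ is equivalent to $\nnorm{f_j}_{d+1}=0$.

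\textbf{Base case $d=1$.} Here the average is $f_0\cdot \frac{1}{|\Phi_N|}\sum_{t\in\Phi_N} f_1\circ t$. By the mean ergodic theorem for amenable groups this converges to $f_0\int f_1\,d\mu$, using ergodicity. If $\E(f_1\mid\cZ_1)=0$, then in particular $\int f_1=0$, so the limit is $0$.

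\textbf{Inductive step.} Set $a_t=\prod_{i=1}^{d} f_i\circ t^i$, taking sup-norms of all $f_i$ at most $1$; it suffices to bound $\|\frac{1}{|\Phi_N|}\sum_t a_t\|$, since multiplying by $f_0$ costs at most a factor $\|f_0\|_\infty$. We apply \cref{lemma: van_der_corput_nilpotent} to $a_t$. By invariance of $\mu$ under $t$, for $h,k\in T$ we get
\begin{align*}
\langle a_{hg},a_{kg}\rangle=\int \prod_{i=1}^{d} (f_i\circ h^{i}\cdot\overline{f_i}\circ k^{i})\circ g^{i}\,d\mu .
\end{align*}
Composing with $g^{-1}$ turns this into an expression of the form $\int F_0\prod_{i=1}^{d-1}F_i\circ g^{i}\,d\mu$, where $F_{i}=f_{i+1}\circ h^{i+1}\cdot \overline{f_{i+1}}\circ k^{i+1}$ (with the appropriate shift). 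This is a $d$-term average of the same shape. By the inductive hypothesis, its limit in $g$ is controlled by $\nnorm{F_{j-1}}_{d}$, the product structure being handled by Cauchy--Schwarz. That quantity equals $\nnorm{f_j\cdot\overline{f_j}\circ (k h^{-1})^{j}}_{d}$ up to a translation, and the seminorm is $T$-invariant.

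The main obstacle is the quantitative form of this step. The induction hypothesis only gives vanishing, but we need a bound, so we strengthen the induction statement to
\begin{align*}
\limsup_N\Big\|\frac1{|\Phi_N|}\sum_{t\in\Phi_N} f_0\prod_{i} f_i\circ t^i\Big\|\le C\min_{j\ge1}\nnorm{f_j}_{d+1},
\end{align*}
as in \cite[Theorem 12.1]{Host_Kra_nonconventional_averages_nilmanifolds:2005}. With this, the double average over $h,k\in\Phi_N$ of $\nnorm{f_j\cdot \overline{f_j}\circ (kh^{-1})^{j}}_{d}$ is handled by H\"older's inequality, followed by the displayed formula for $\nnorm{\cdot}_{d+1}$ applied along the sets $\{s^{j}\}$. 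The map $s\mapsto s^{j}$ on a finitely generated abelian group is a homomorphism whose image $T^{j}$ has finite index. The averages over $kh^{-1}$ can therefore be pushed to a F\o lner average over the subgroup $T^{j}$. We then bound the seminorm along $T^{j}$ by the seminorm along $T$ after decomposing $\mu$ into $T^j$-ergodic components, of which there are finitely many and which are permuted by $T$. Carrying out this finite-index reduction carefully is the delicate part of the proof. The result is $\limsup$ of the van der Corput quantity $\le C\nnorm{f_j}_{d+1}^{c}=0$, which proves the theorem.
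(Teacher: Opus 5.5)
Your argument is correct in substance once the distinguished function is one of $f_1,\dots,f_d$. It departs from the paper's proof exactly at the point you flag as the main obstacle. The paper runs the induction qualitatively. It asserts that, by the van der Corput lemma, it suffices to show $\limsup_M \frac{1}{|\Phi_M|}\sum_{t\in\Phi_M}\langle u_{ht},u_{kt}\rangle=0$ for \emph{every} pair $h,k\in T$, and it gets this by applying the inductive hypothesis to the products $f_j\circ h^{j}\cdot f_j\circ k^{j}$. As you anticipated, this does not close. For $h=k$ the average is $\frac{1}{|\Phi_M|}\sum_{t}\|u_{ht}\|^2$, which is identically $1$ for unimodular $f_j$. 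The relevant product is then $|f_j|^2\circ h^{j}$, and for $f_j\neq 0$ its conditional expectation on $\cZ_d$ has positive integral. Your strengthening repairs this: carry a seminorm bound through the induction, average the correlations over $h,k\in\Phi_N$ with H\"older, then use $\nnorm{f}_{k+1}^{2^{k+1}}=\lim_N\frac{1}{|\Phi_N|}\sum_{s\in\Phi_N}\nnorm{f\cdot\overline{f}\circ s}_{k}^{2^{k}}$. This is the Host--Kra mechanism, and it is what actually makes the induction work. It costs two things. One is the bookkeeping for $s\mapsto s^{j}$. The other is the inequality form of the van der Corput lemma: for each $N$, $\limsup_M\|\frac{1}{|\Phi_M|}\sum_{g\in\Phi_M}a_g\|^2\le|\Phi_N|^{-2}\limsup_M\frac{1}{|\Phi_M|}\sum_{g\in\Phi_M}\|\sum_{h\in\Phi_N}a_{hg}\|^2$. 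Its proof gives this, but the version cited in the paper is only a zero-implies-zero statement.

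Three things in your write-up need fixing.

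First, the Reduction paragraph is wrong and cannot be repaired. Composing the $t$-th term with the $t$-dependent map $t^{-1}$ does not preserve the $L^2$ norm of the average. In fact the statement is false when only $f_0$ has vanishing conditional expectation: take $f_1=\dots=f_d\equiv 1$, and the average is $f_0$. The theorem must be read with the distinguished function among $f_1,\dots,f_d$, as in Host--Kra. The paper's base case (``without loss of generality $\nnorm{f_1}_1=0$'') tacitly does the same.

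Second, the reflection you had in mind is genuinely needed, but inside the correlation integrals of the inductive step. When $j=1$, composing with $g^{-1}$ puts $f_1\circ h\cdot\overline{f_1}\circ k$ at position $0$. There your hypothesis, a minimum over positions $\geq 1$, says nothing. Compose with $g^{-d}$ instead and substitute $g\mapsto g^{-1}$. This is legitimate for an integral, and $(\Phi_M^{-1})_M$ is again F\o lner because $T$ is abelian. That function then moves to position $d-1\geq 1$.

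Third, the finite-index step needs no $T^{j}$-ergodic decomposition. The inner seminorm stays the $T$-seminorm. Let $G\ge 0$ on $T$, let $(\Psi'_N)$ be a F\o lner sequence in $T^{j}$, and let $c_i$ be coset representatives with $c_1=e$. Then the average of $G$ over $\Psi'_N$ is at most $[T:T^{j}]$ times its average over the F\o lner sets $\bigcup_i c_i\Psi'_N$ of $T$. The finite kernel of $s\mapsto s^{j}$ costs one more bounded factor. This is the analogue of bounding an average over $j\mathbb{Z}$ by $j$ times the average over $\mathbb{Z}$. Passing from the double average over $h,k$ to averages over the translates $\Phi_N h^{-1}$ is harmless, because the resulting bound holds along every F\o lner sequence.
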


\begin{proof}
    We proceed by induction. For $d=1$, assume, without loss of generality, that $\nnorm{f_{1}}_{1}=0$, by the ergodic theorem, we have\begin{align*}
        \left\| \dfrac{1}{|\Phi_{N}|}\sum_{t\in \Phi_{N}}  f_{0} \cdot f_{1}\circ t \right\|_{L^{2}(\mu)} \leq \|f_{0}\|_{L^{\infty}(\mu)} \left\| \dfrac{1}{|\Phi_{N}|}\sum_{t\in \Phi_{N}} f_{1}\circ t \right\|_{L^{2}(\mu)} \to  \|f_{0}\|_{L^{\infty}(\mu)}\nnorm{f_{1}}_{1}=0
    \end{align*}

    Now, let $d\geq 1$ and assume that the statement is true for $d$. Let $f_{0},f_{1},\dots,f_{d+1}\in L^{\infty}(\mu)$. Set \begin{align*}
        u_{t} =  \prod_{j=0}^{d+1} f_{j}\circ t^{j}.
    \end{align*}

    By \cref{lemma: van_der_corput_nilpotent}, it suffices to prove that\begin{align*}
        \limsup_{M\to \infty} \dfrac{1}{|\Phi_{M}|}\sum_{t\in \Phi_{M}} \langle u_{ht},u_{kt} \rangle = 0
    \end{align*}

    for every $k,h\in T$.

    Let $k,h\in T$,\begin{align*}
        \left| \dfrac{1}{|\Phi_{M}|}\sum_{t\in \Phi_{M}} \langle u_{ht},u_{kt} \rangle \right| &= \left| \dfrac{1}{|\Phi_{M}|}\sum_{t\in \Phi_{M}} \int \prod_{j=0}^{d+1} f_{j}(h^{j}t^{j}x) \cdot f_{j}(k^{j}t^{j}x)\, d\mu(x)  \right|\\
        &= \left| \int \left(f_{0}(hx)\cdot f_{0}(kx)\right) \cdot \dfrac{1}{|\Phi_{M}|}\sum_{t\in \Phi_{M}} \left( \prod_{j=1}^{d+1}( f_{j} \circ h^{j} \cdot f_{j}\circ k^{j})(t^{j}x)\right)\,d\mu(x) \right|\\
        &\leq \|f_{0} \circ h \cdot f_{0} \circ k \|_{L^{2}(\mu)} \cdot\left\| \dfrac{1}{|\Phi_{M}|}\sum_{t\in \Phi_{M}} \left( \prod_{j=1}^{d+1} (f_{j} \circ h^{j} \cdot f_{j}\circ k^{j}) \circ t^{j}\right)\right\|_{L^{2}(\mu)}\\
        &= \|f_{0} \circ h \cdot f_{0} \circ k \|_{L^{2}(\mu)} \cdot\left\| \dfrac{1}{|\Phi_{M}|}\sum_{t\in \Phi_{M}} \left( \prod_{j=1}^{d+1} (f_{j} \circ h^{j} \cdot f_{j}\circ k^{j}) \circ t^{j-1}\right)\right\|_{L^{2}(\mu)}
    \end{align*}

    and by the inductive assumption, \begin{align*}
        \left\| \dfrac{1}{|\Phi_{M}|}\sum_{t\in \Phi_{M}} \left( \prod_{j=1}^{d+1} (f_{j} \circ h^{j} \cdot f_{j}\circ k^{j}) \circ t^{j-1}\right)\right\|_{L^{2}(\mu)} \to 0,
    \end{align*}

    completing the proof.
\end{proof}

From this theorem we obtain the following result. 

\begin{corollary}\label{cor: correlation_zero_unif_hostkra_zero}
    Let $(X,\mu,T)$ be an ergodic measure preserving system, let $d\geq 1$ be an integer and let $f_{0},f_{1},\dots,f_{d}$ be bounded functions on $X$. If at least one of these functions has zero conditional expectation on $\cZ_{d-1}$ then\begin{align*}
        \int f_{0}(x)f_{1}(tx)f_{2}(t^{2}x)\cdots f_{d}(t^{d}x)\, d\mu(x)
    \end{align*} 

    converges to zero in uniform density.
\end{corollary}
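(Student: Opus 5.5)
To prove \cref{cor: correlation_zero_unif_hostkra_zero}, I would reduce the uniform-density statement to the $L^{2}$ averages of \cref{thm: nil_characteristic}, using the usual doubling trick: write $c(t)=\int f_{0}(x)f_{1}(tx)\cdots f_{d}(t^{d}x)\,d\mu(x)$. Since $c$ is bounded, it tends to zero in uniform density if and only if
\begin{align*}
\lim_{N\to\infty}\dfrac{1}{|\Phi_{N}|}\sum_{t\in\Phi_{N}}|c(t)|^{2}=0
\end{align*}
for every F\o lner sequence $\Phi$ of $T$. This equivalence follows from Cauchy--Schwarz in one direction and from $|c|^{2}\leq \|c\|_{\infty}|c|$ in the other. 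So it suffices to control the mean of $|c(t)|^{2}$.

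The key step is to write $|c(t)|^{2}$ as a single integral on the product system $(X\times X,\mu\times\mu,T)$ with the diagonal action:
\begin{align*}
|c(t)|^{2}=\int_{X\times X} F_{0}(x,x')\,F_{1}(tx,tx')\cdots F_{d}(t^{d}x,t^{d}x')\,d(\mu\times\mu)(x,x'),
\end{align*}
with $F_{j}=f_{j}\otimes\overline{f_{j}}$. Averaging over $\Phi_{N}$ and applying Cauchy--Schwarz gives
\begin{align*}
\dfrac{1}{|\Phi_{N}|}\sum_{t\in\Phi_{N}}|c(t)|^{2}\leq \|F_{0}\|_{L^{2}}\left\|\dfrac{1}{|\Phi_{N}|}\sum_{t\in\Phi_{N}}F_{1}\circ t\cdots F_{d}\circ t^{d}\right\|_{L^{2}(\mu\times\mu)}.
\end{align*}
One wants the right-hand side to tend to $0$ by \cref{thm: nil_characteristic} applied with $d-1$ in place of $d$. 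To match its indexing, one would shift by $t$; but the degrees run from $1$ to $d$, which is $d$ functions composed with $t,\dots,t^{d}$. A cleaner route is to avoid factoring out $F_{0}$ and instead apply the averaged statement to $F_{0},F_{1},\dots,F_{d}$ with $d$ powers. That requires the seminorm hypothesis at level $\cZ_{d}$. The sharper level $\cZ_{d-1}$ instead comes from the $\lvert\cdot\rvert^{2}$ doubling, via the standard inequality: the limsup of $\frac{1}{|\Phi_N|}\sum|c(t)|^{2}$ is bounded by a power of $\nnorm{f_{j}}_{d}$. This inequality is proved inductively by the same van der Corput argument (\cref{lemma: van_der_corput_nilpotent}) as \cref{thm: nil_characteristic}, tracking seminorms: each van der Corput step raises the seminorm index by one, and the square already accounts for one step.

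The main obstacle is that $\mu\times\mu$ need not be ergodic, so \cref{thm: nil_characteristic} cannot be quoted verbatim on $X\times X$. I would handle this by noting that the proof of \cref{thm: nil_characteristic} uses ergodicity only in the base case, through the mean ergodic theorem. In the non-ergodic setting the base case still holds with $\nnorm{\cdot}_{1}$ defined via the invariant $\sigma$-algebra. Moreover, $\nnorm{f\otimes\overline f}_{k}^{\,}\leq\nnorm{f}_{k+1}^{2}$ for the product measure, by the inductive definition of $\mu^{[k]}$. Hence if $\E(f_{j}\mid\cZ_{d-1})=0$, that is $\nnorm{f_{j}}_{d}=0$, then $\nnorm{F_{j}}_{d-1}=0$. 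The $(d-1)$-level estimate then yields $\frac1{|\Phi_N|}\sum|c(t)|^{2}\to0$, completing the proof.
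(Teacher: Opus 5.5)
Your reduction follows the same route as the paper's proof. You pass from uniform density to F\o lner means of $|c(t)|^{2}$, write $|c(t)|^{2}$ as a correlation of $F_{j}=f_{j}\otimes\overline{f_{j}}$ on $X\times X$, and transfer vanishing seminorms from $f_{j}$ to $F_{j}$ via $\nnorm{f\otimes\overline{f}}_{k,\mu\times\mu}=\nnorm{f}_{k+1,\mu}^{2}$. You also handle the non-ergodicity of $\mu\times\mu$: the paper uses the ergodic decomposition $\mu\times\mu=\int_{Z}\mu_{s}\,dm(s)$ over the Kronecker factor, and your non-ergodic seminorms amount to the same thing.

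The gap is in the last step, which is off by one level: the doubling \emph{costs} a level rather than gaining one. For $d$ shifted functions $F_{1}\circ t,\dots,F_{d}\circ t^{d}$, the $L^{2}$ averages are controlled by $\nnorm{F_{j}}_{d}$, not by $\nnorm{F_{j}}_{d-1}$. The inequality $\limsup_{N}\frac{1}{|\Phi_{N}|}\sum_{t\in\Phi_{N}}|c(t)|^{2}\lesssim\nnorm{f_{j}}_{d}^{c}$ that you call standard is false. In fact the statement as written, with $\cZ_{d-1}$, is false. Take $d=1$ and $T=\Z$ acting on $\mathbb{T}$ by an irrational rotation $x\mapsto x+\alpha$, with $f_{1}(x)=e^{2\pi i x}$ and $f_{0}=\overline{f_{1}}$. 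Then $\E(f_{1}\mid\cZ_{0})=\int f_{1}\,d\mu=0$, yet $c(t)=e^{2\pi i t\alpha}$ has modulus $1$ for every $t$. Here $F_{1}(x,x')=e^{2\pi i(x-x')}$ is invariant under the diagonal action, so its F\o lner averages equal $F_{1}\neq0$, although $\int F_{1}\,d(\mu\times\mu)=|\int f_{1}\,d\mu|^{2}=0$. This is exactly where your $(d-1)$-level estimate breaks.

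The hypothesis that can actually be proved is $\E(f_{j}\mid\cZ_{d})=0$, i.e.\ $\nnorm{f_{j}}_{d+1}=0$. This is also the version used in \cref{thm: correlation_nilpotent_actions}, where $\widetilde{f}=\E(f\mid\cZ_{d})$. The paper's own argument has the same structure as yours and likewise only goes through at this level. With this hypothesis your route closes:
\begin{enumerate}[label=(\roman*)]
\item $\nnorm{F_{j}}_{d,\mu\times\mu}=0$, hence $\nnorm{F_{j}}_{d,\mu_{s}}=0$ for a.e.\ ergodic component, i.e.\ $\E(F_{j}\mid\cZ_{d-1}(X\times X,\mu_{s},T))=0$.
\item The sharp Host--Kra bound for $d$ shifted functions (hypothesis at level $\cZ_{d-1}$), combined with your Cauchy--Schwarz step, gives $\frac{1}{|\Phi_{N}|}\sum_{t\in\Phi_{N}}|c(t)|^{2}\to0$.
\end{enumerate}
You need this sharp form. \cref{thm: nil_characteristic} as stated assumes vanishing on $\cZ_{d}$, which would only yield the corollary under $\E(f_{j}\mid\cZ_{d+1})=0$. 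Finally, if the distinguished index is $j=0$, you cannot factor out $F_{0}$. First use invariance under $t^{-d}$ and substitute $t\mapsto t^{-1}$, which preserves F\o lner sequences since $T$ is abelian; this moves $F_{0}$ into the slot composed with $t^{d}$.
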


\begin{proof}
    Let $\Phi$ be a F\o lner sequence of $T$ and let $\mu\times \mu = \int_{Z}\mu_{s} \, dm(s)$ be the ergodic decomposition $\mu\times \mu$ under $T$, where $Z$ is the Kronecker factor and $m$ is the Haar measure. Note that for $m$-almost every $s\in Z$, one of the functions $f_{0}\otimes f_{0},f_{1}\otimes f_{1},\dots, f_{d}\otimes f_{d}$ has zero conditional expectation on $\cZ_{d}(X\times X,\mu_{s},T)$, and therefore, by \cref{thm: nil_characteristic}, we have\begin{align*}
        \lim_{N\to \infty}\dfrac{1}{|\Phi_{N}|} \sum_{t\in \Phi_{N}} \int f_{0}(x)f_{0}(x')f_{1}(tx)f_{1}(tx')\dots f_{d}(t^{d}x)f_{d}(t^{d}x')\,d\mu_{s}(x,x') = 0.
    \end{align*}

    Integrating with respect to $s$, we obtain \begin{align*}
        \lim_{N\to \infty}\dfrac{1}{|\Phi_{N}|} \sum_{t\in \Phi_{N}}\left( \int f_{0}(x)f_{1}(tx)\dots f_{d}(t^{d}x)\,d\mu(x)\right)^{2} = 0.
    \end{align*}
\end{proof}

Finally, we can prove the main result of this subsection.

\begin{theorem}\label{thm: correlation_nilpotent_actions}
    Let $(X,\cX,\mu,T)$ be an ergodic measure preserving system, let $f\in L^{\infty}(\mu)$ and let $d\geq 1$ be an integer. The sequence $(c(t))_{t\in T}$ is the sum of a null-sequence and a $d$-step nilsequence, where

    \begin{align*}
    c_{f}(t) = \int f(x) \cdot f(tx)\cdot \ldots \cdot f(t^{d}x) \, d\mu(x).
    \end{align*}
\end{theorem}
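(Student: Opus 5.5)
We follow the Bergelson--Host--Kra strategy. Write $f = f_{1} + f_{2}$ with $f_{1} = \E(f\mid \cZ_{d-1})$ and $f_{2} = f - f_{1}$, so that $\E(f_{2}\mid\cZ_{d-1})=0$. Expanding the product $\prod_{j=0}^{d} f(t^{j}x)$ multilinearly gives $c_{f}(t)$ as a sum of $2^{d+1}$ correlation terms. Every term except $\int f_{1}(x)f_{1}(tx)\cdots f_{1}(t^{d}x)\,d\mu$ contains at least one copy of $f_{2}$, so by \cref{cor: correlation_zero_unif_hostkra_zero} each such term tends to zero in uniform density. A finite sum of null-sequences is null, so $c_{f}(t) = c_{f_{1}}(t) + (\text{null})$. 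We may therefore assume $f$ is $\cZ_{d-1}$-measurable, that is, $f$ is a function on $Z_{d-1}$.

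By \cref{thm: str_host_kra_factors} (finitely generated abelian groups are nilpotent), $Z_{d-1}$ is an inverse limit of $(d-1)$-step nilsystems $Z_{d-1} = \varprojlim N_{k}$. We approximate $f$ in $L^{2}$, say by $\varepsilon$, by a continuous function $\phi$ on some $N_{k}$, with $\|\phi\|_{\infty}\le\|f\|_{\infty}$. Telescoping and Hölder give $|c_{f}(t)-c_{\phi}(t)|\le (d+1)\|f\|_{\infty}^{d}\varepsilon$ uniformly in $t$. Hence it suffices to show the following: for each continuous $\phi$ on an ergodic $(d-1)$-step nilsystem $N=L/\Gamma$, the sequence $c_{\phi}$ is the sum of a null-sequence and a $d$-step nilsequence. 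Given this, we pass to the limit. Write $c_{f} = \lim (\psi_{k} + \nu_{k})$ uniformly, with $\psi_{k}$ nilsequences and $\nu_{k}$ null. A standard argument (as in \cite{Bergelson_Host_Kra05}) shows that $(\psi_{k})$ is Cauchy in the uniform norm: the difference $\psi_{k}-\psi_{l}$ is a nilsequence that equals a uniformly small sequence up to a null-sequence, and a nilsequence with small upper density average of its absolute value is uniformly small. For the last claim we use that the sup of a nilsequence is attained on the orbit closure and that the associated nilsystem is uniquely ergodic on orbit closures. The uniform limit of $(\psi_{k})$ is a $d$-step nilsequence by definition. The remainder $c_{f}-\lim\psi_{k}$ is a uniform limit of null-sequences, hence null.

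For the nilsystem case, $c_{\phi}(t)=\int_{N} \phi(x)\phi(tx)\cdots\phi(t^{d}x)\,d\mu(x)$. Let $\tau:T\to L$ be the homomorphism defining the action, and consider $\Phi = \phi^{\otimes(d+1)}$ on $N^{d+1}$. Then $c_{\phi}(t) = \int_{Y}\Phi(s_{t}y)\,d\mu_{Y}(y)$, where $Y=\Delta(N)$ is the diagonal subnilmanifold and $s_{t} = (e,\tau(t),\tau(t)^{2},\dots,\tau(t)^{d})$. The map $t\mapsto s_{t}$ is not a homomorphism, so it is cleaner to work in the Leibman group $\tilde L\subseteq L^{d+1}$ generated by $\Delta(L)$ and $\{(e,g,g^{2},\dots,g^{d}): g\in L\}$. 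This is a $d$-step nilpotent group, since $L$ is $(d-1)$-step and polynomial sequences of degree $d$ raise the step by one; it is the Hall--Petresco/Leibman construction. In $\tilde L/\tilde\Gamma$, the orbit $t\mapsto s_{t}\Delta(N)$ can be realized as a $T$-action via a homomorphism, using that $T$ is abelian and introducing an auxiliary coordinate as in \cite{Leibman15}. With this setup, \cref{thm: nil_connnected_descomp_nil_null} applied to the subnilmanifold $Y$ gives the decomposition into a null-sequence and a basic $d$-step nilsequence.

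The main obstacle is this last step: making $t\mapsto s_{t}$ a genuine action of the finitely generated abelian group $T$ on a $d$-step nilmanifold, so that \cref{thm: nil_connnected_descomp_nil_null} applies verbatim. I would handle it by writing $T\cong\Z^{r}\times F$. For the free part we follow Leibman's multiparameter polynomial framework, which \cite{Leibman15} treats for $\Z^{r}$. For the torsion part $F$ we either pass to a finite-index subgroup, using that null and nilsequence properties are stable under restriction to cosets, or absorb $F$ into the compact group.
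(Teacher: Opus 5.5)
\textbf{There is a genuine gap in your first reduction.} The correlation $c_f$ has $d+1$ terms. When you square it and average over $t$, you get averages of $\prod_j (f\otimes f)\circ t^j$ over the ergodic components of $\mu\times\mu$, and these are governed by $\nnorm{f}_{d+1}$. So the factor that can be discarded up to a null-sequence is $\cZ_d$, not $\cZ_{d-1}$.

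Here is a counterexample to your reduction. Take $T=\Z$ acting on $\R/\Z$ by $x\mapsto x+\alpha$ with $\alpha$ irrational, $d=1$, and $f(x)=\cos(2\pi x)$.
\begin{itemize}
\item $\E(f\mid\cZ_0)=\int f\,d\mu=0$, so your $f_1$ is $0$ and your argument would make $c_f$ null.
\item But $c_f(n)=\tfrac12\cos(2\pi n\alpha)$, and $|c_f|$ has average $1/\pi$ along every F{\o}lner sequence.
\end{itemize}

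You are using the index $\cZ_{d-1}$ exactly as printed in \cref{cor: correlation_zero_unif_hostkra_zero}. That index is off by one: the same example contradicts it for $d=1$, and the correct hypothesis is $\E(f_j\mid\cZ_d)=0$. This is why the paper's proof sets $\widetilde f=\E(f\mid\cZ_d)$ and approximates by $d$-step nilsystem factors of $Z_d$ via \cref{thm: str_host_kra_factors}. Your multilinear expansion and continuous approximation go through unchanged once $\cZ_{d-1}$ is replaced by $\cZ_d$. With that change, your outline is the paper's proof.

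\textbf{Your ``main obstacle'' is not one.} Since $T$ is abelian, $\tau(s)$ and $\tau(t)$ commute, so $(\tau(s)\tau(t))^j=\tau(s)^j\tau(t)^j$. Hence $t\mapsto s_t=(e,\tau(t),\dots,\tau(t)^d)$ is a homomorphism $T\to L^{d+1}$. So $(N^{d+1},T)$ is a nilsystem of the same step as $N$, and \cref{thm: nil_connnected_descomp_nil_null} applies directly to $Y=\Delta(N)$ and $\phi^{\otimes(d+1)}$. No Leibman group, auxiliary coordinate, or splitting $T\cong\Z^r\times F$ is needed; this is exactly what the paper does, with $N$ $d$-step.

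Your claim that $\tilde L$ raises the step by one is also false. Any subgroup of $L^{d+1}$ has step at most that of $L$. Correctly executed, your argument would therefore assert that $c_f$ is null plus a $(d-1)$-step nilsequence. For $d=1$ that means constant plus null, which the rotation example refutes. The extra step genuinely comes from having to work on $Z_d$.

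\textbf{A smaller point in the limiting argument.} The statement ``a nilsequence with small average absolute value is uniformly small'' is false; a narrow bump function on a rotation is a counterexample. What is true, and what Bergelson--Host--Kra's Lemma 1.11 uses, is the following: a nilsequence bounded by $\varepsilon$ outside a set of zero density is bounded by $\varepsilon$ everywhere, because return times to a nonempty open set in a minimal orbit closure are syndetic. This suffices for you, since $\psi_k-\psi_l$ is uniformly small plus null.
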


\begin{proof}
    We follow the steps of the proof of {\cite[Theorem 1.9]{Bergelson_Host_Kra05}}.
    
    Let $\widetilde{f} = \E(f|\cZ_{d})$. By \cref{cor: correlation_zero_unif_hostkra_zero}, the sequence $(c_{f}(t)-c_{\widetilde{f}}(t))_{t\in T}$ converges to zero in uniform density. Therefore, it suffices to prove the theorem for the function $\widetilde{f}$, so we can assume that $f$ is measurable with respect to $Z_{d}(X)$. By \cref{thm: str_host_kra_factors}, $Z_{d}(X)$ is the inverse limit of a sequence of ergodic $d$-step nilsystems. Thus, there exists a factor $X'$ of $Z_{d}(X)$ such that \begin{align*}
        \| f- \E(f|\cX')\|_{L^{1}(\mu)} \leq 1/((d+1)r),
    \end{align*}

    where $r\geq 1$ is an integer. Consequently, we have $|c_{f}(t)-c_{f'}(t)|\leq 1/r$, where $t\in T$ and $f'=\E(f\mid \cX')$. Applying \cref{thm: nil_connnected_descomp_nil_null} to the nilmanifold $X'^{d+1}$, the diagonal subnilmanifold $Y = \{(x,\dots,x):x\in X'\}$ and the function $F(x_{0},x_{1},\dots,x_{d}) =f'(x_{0})\cdot f'(x_{1})\cdot f'(x_{2})\cdots f'(x_{d})$, we have that $c_{f'}$ can be decomposed as a sum of a $d$-step nilsequence and a null sequence. Therefore, \begin{align*}
        c_{f}(t) = a_{r}(t)+b_{r}(t)+c_{r}(t)
    \end{align*}

    where $|a_{r}|\leq 1/r$, $b_{r}$ is a null sequence and $c_{r}$ is a $d$-step basic nilsequence. From {\cite[Lemma 1.11]{Bergelson_Host_Kra05}}, it follows that $(c_{r})_{r}$ converges uniformly to some sequence $(c(t))_{t\in T}$, and we conclude the desired result.
\end{proof}

\subsection{$\NRP^{[d]}$ and recurrence sets}\label{subsec: NRPd_recurrence}

In this subsection, following the approach of \cite{Huang_Shao_Ye_nilbohr_automorphy:2016}, we study $\NRP^{[d]}$ via recurrence sets.

We first give the definitions of the recurrence sets that we will use.

\begin{definition}
    Let $d\geq 1$ be an integer and $T$ be a topological group.\begin{enumerate}[label=(\arabic*)]
        \item We say that $S\subseteq T$ is a set of {\em $d$-recurrence} if for every measure preserving system $(X,\cX,\mu,T)$ and for every $A\in \cX$ with $\mu(A)>0$, there exists $t\in S$ such that $\mu(A\cap tA\cap \dots \cap t^{d}A)>0$.
        \item We say that $S\subseteq T$ is a set of {\em $d$-topological recurrence} if for every minimal system $(X,T)$ and for every nonempty open subset $U$ of $X$, there exists $t\in S$ such that $U\cap tU\cap \dots\cap t^{d}U \neq \emptyset$.
        \item We say that $S\subseteq T$ is a Nil$_{d}$ Bohr$_{0}$-set, if there are a $d$-step nilsystem $(X,T)$, $x\in X$ and a neighborhood of $x$ such that $N(x,U)\subseteq S$, where $N(x,U)=\{t\in T: tx\in U\}$.
    \end{enumerate}
\end{definition}

Let $\sF_{\operatorname{Poi}_{d}}$ (resp. $\sF_{\operatorname{Bir}_{d}}$, $\sF_{d}$) be the family consisting of all sets of $d$-recurrence (resp. sets of $d$-topological recurrence, the sets of Nil$_{d}$ Bohr$_{0}$-set).

\begin{theorem}\label{thm: characterization_NRP_Nil_bohr_sets}
    Let $(X,T)$ be a minimal system, with $T$ being a finitely generated group, and let $d\geq 1$ be an integer. Then $(x,y)\in \NRP^{[d]}(X)$ if and only if $N(x,U)\in \sF_{d}^{*}$ for each neighborhood $U$ of $y$, that is that $N(x,U)\cap A\neq\emptyset$ for each Nil$_{d}$ Bohr$_{0}$-set $A$.
\end{theorem}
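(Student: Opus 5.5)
We follow the strategy of Huang, Shao and Ye for $\Z$-actions, replacing the $\Z$-structure theorem by \cref{thm: structure_thm_NRP}. Since $T$ is finitely generated, it is countable and generated by a finite (hence compact) set. By \cref{prop: inverse_limit_metric} and the fact that $\NRP^{[d]}$ passes to factors (\cref{thm: NRP_properties}(3)), we may reduce to the case where $X$ is metric. There the maximal factor of order $d$, $\pi:X\to X_d=X/\NRP^{[d]}(X)$, is an inverse limit of $d$-step nilsystems.

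\emph{Necessity.} Let $(x,y)\in\NRP^{[d]}(X)$ and let $A\supseteq N(z,V)$ be a Nil$_d$ Bohr$_0$-set coming from a $d$-step nilsystem $(Z,T)$, a point $z\in Z$ and a neighborhood $V$ of $z$. We may take $Z=\overline{Tz}$, which is minimal since nilsystems are distal. Consider the product $X\times Z$ and a minimal subsystem $W\subseteq X\times Z$ containing $(x,z)$; it exists because $z$ is a distal point. Applying \cref{thm: NRP_properties}(3) to the projection $W\to X$, we find $(x',z')$ with $((x',z'),(y',z''))\in\NRP^{[d]}(W)$ lifting $(x,y)$, and we may arrange $x'=x$, $z'=z$. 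Projecting to $Z$ gives $(z,z'')\in\NRP^{[d]}(Z)$, and since $Z$ is of order $d$, $z''=z$. Thus $((x,z),(y,z))\in\NRP^{[d]}(W)$.

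The key step is that $\NRP^{[d]}$-pairs are regionally proximal in the following weak sense: if $(a,b)\in\NRP^{[d]}(W)$, then for every neighborhood $U'$ of $b$ we have $N(a,U')\neq\emptyset$. This holds by minimality. More precisely, we need $(x,z)$ to return to $U\times V$, where $U$ is a neighborhood of $y$. Since $(y,z)\in W$ and $W=\overline{T(x,z)}$, there is $t$ with $t(x,z)\in U\times V$. Hence $t\in N(x,U)\cap N(z,V)\subseteq N(x,U)\cap A$.

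We must double-check that $(y,z)\in\overline{T(x,z)}$. This is exactly where $\NRP^{[d]}$ enters: for a distal-fiber-type argument one uses that $(x,y)$ is $\NRP^{[d]}$ and $Z$ is of order $d$. Concretely, $\overline{T(x,z)}\cap(\{y\}\times Z)$ is nonempty. Using the lifting property together with the triviality of $\NRP^{[d]}$ on $Z$, we show that the point over $y$ can be chosen with second coordinate $z$.

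\emph{Sufficiency.} Suppose $(x,y)\notin\NRP^{[d]}(X)$. Then $\pi(x)\neq\pi(y)$ in $X_d$. Since $X_d$ is an inverse limit of $d$-step nilsystems, there is a $d$-step nilsystem factor $\rho:X\to Z$ with $\rho(x)\neq\rho(y)$. Choose disjoint neighborhoods $V$ of $\rho(x)$ and $V'$ of $\rho(y)$, and set $U=\rho^{-1}(V')$, a neighborhood of $y$. Then $N(x,U)=N(\rho(x),V')$ is disjoint from the Nil$_d$ Bohr$_0$-set $N(\rho(x),V)$. This contradicts the hypothesis that $N(x,U)\in\sF_d^*$.

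The main obstacle is the necessity step, namely ensuring that the lift of an $\NRP^{[d]}$-pair to $X\times Z$ keeps a common second coordinate $z$. We would handle this by working in $W$: the relation $\NRP^{[d]}(W)$ projects onto $\NRP^{[d]}(X)$, and its projection onto $\NRP^{[d]}(Z)=\Delta$ forces equal $Z$-coordinates. The remaining subtlety is producing the lift based exactly at $(x,z)$ rather than at some other point over $x$. For this we use that $Z$ is distal, so the fiber of $W$ over $x$ consists of points mutually distal in $Z$, and the invariance of $\NRP^{[d]}$ under the Ellis action lets us move any lift $(x,z_1)$ to $(x,z)$ via some $p\in E(W)$ with $px=x$.
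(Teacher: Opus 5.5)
Your sufficiency direction is correct; it is the paper's argument in contrapositive form. The necessity direction has a genuine gap, located exactly at the ``remaining subtlety'' you flag.

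Applying \cref{thm: NRP_properties}(3) to $W\to X$, and then projecting to $Z$, only yields some $((x,z_1),(y,z_1))\in\NRP^{[d]}(W)$. Here $z_1$ is an uncontrolled point of the fiber $\{z':(x,z')\in W\}$. From $(y,z_1)\in W$ you get $N(x,U)\cap N(z,V')\neq\emptyset$ only for neighborhoods $V'$ of $z_1$, not of $z$. Your ``key step'' about returns is vacuous, since in the minimal $W$ every point returns near every point. All the content lies in showing $(y,z)\in W$, and your sentence invoking lifting plus triviality of $\NRP^{[d]}(Z)$ merely restates that claim.

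The proposed repair fails. A $p$ with $p(x,z_1)=(x,z)$ does exist by minimality. But invariance of $\NRP^{[d]}(W)$ then gives $((x,z),(py,z))$, which replaces $y$ by $py$, and nothing forces $py=y$. You would need $p$ to fix both $x$ and $y$ while moving $z_1$ to $z$, and neither minimality nor distality of $Z$ provides that.

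The missing idea is the paper's device for making the base point of the lift irrelevant. Replace $Z$ by $Z_\infty=\overline{T\omega^*}\subseteq\prod_{z\in Z}Z$, where $\omega^*(z)=z$.
\begin{itemize}
\item Since $Z$ is distal, each $\omega\in Z_\infty$ has the form $z\mapsto gz$ with $g$ invertible on $Z$, so $\omega(z_\omega)=\tilde z$ for some $z_\omega$.
\item Lift $(x,y)$ to an arbitrary $((x,\omega_1),(y,\omega_2))\in\NRP^{[d]}(Y)$ with $Y\subseteq X\times Z_\infty$ minimal, and choose $z_1$ with $\omega_1(z_1)=\tilde z$.
\item Push forward by the factor map $(u,\omega)\mapsto(u,\omega(z_1))$ onto a minimal $B\subseteq X\times Z$. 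This gives $((x,\tilde z),(y,z_2))\in\NRP^{[d]}(B)$.
\item Since $Z$ has order $d$, $z_2=\tilde z$, so $(y,\tilde z)\in B=\overline{T(x,\tilde z)}$.
\end{itemize}

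Alternatively, your setup would go through if you first proved a strong lifting property: every point of the fiber over $x$ is the first coordinate of some lift of $(x,y)$. This is true. For instance, it follows from $\GG(X_d)=N\,\GG(X)$ for every minimal $X$, where $N$ is the Ellis group of the maximal order-$d$ factor of $M$; $N$ is normal in $G$ because right multiplications by elements of $G$ are automorphisms of $M$. However, this is an additional argument, not contained in \cref{thm: NRP_properties}(3) or in your remark about the Ellis action.
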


\begin{proof}
    First suppose that $N(x,U)\in \sF_{d}^{*}$ for each neighborhood $U$ of $y$. Let $(X_{d},T)$ be the maximal factor of order $d$ of $X$ and let $\pi:X\to X_{d}$ be the factor map. It follows from \cref{thm: structure_thm_NRP} that $X_{d}$ is isomorphic to an inverse limit of nilsystems of order $d$. Thus, since $N(x,U)\in \sF_{d}^{*}$, it follows that $N(x,U)\cap N(\pi(x),V)\neq \emptyset$ for any neighborhood $V$ of $\pi(x)$. Therefore, there exists a net $(t_{i})_{i\in I}$ in $T$ such that $t_{i}(x,\pi(x))\to (y,\pi(x))$. Thus, we have\begin{align*}
        \pi(y)=\pi(\lim_{i} t_{i}x) = \lim_{i} t_{i}\pi(x)=\pi(x),
    \end{align*}

    and hence $(x,y)\in \NRP^{[d]}(X)$.

    Now, suppose that $(x,y)\in \NRP^{[d]}(X)$ and $U$ is a neighborhood of $y$. Let $(Z,T)$ be a $d$-step nilsystem, let $\widetilde{z}\in Z$ and let $V$ be a neighborhood of $\widetilde{z}$. Define\begin{align*}
        W=\prod_{z\in Z} Z.
    \end{align*}

    Note that $(W,T)$ is a distal system since $(Z,T)$ is distal. Choose $\omega^{*}\in W$ such that $\omega^{*}(z)=z$ for all $z\in Z$ and set $Z_{\infty} = \overline{T\omega^{*}}$. Observe that $(Z_{\infty},T)$ is a minimal distal system. In particular, for any $\omega\in Z_{\infty}$, there exists $g\in G$ such that $\omega(z)=g\omega^{*}(z)=gz$, and consequently $\omega(g^{-1}\widetilde{z})=\widetilde{z}$. Therefore, there exists $z_{\omega}\in Z$ such that $\omega(z_{\omega})=\widetilde{z}$.

    Let $(Y,T)$ be a minimal subsystem of $(X\times Z_{\infty},T)$ and let $\pi_{X}: Y\to X$ be the natural coordinate projection. In particular, $\pi_{X}$ is a factor map between minimal systems. Since $(x,y)\in \NRP^{[d]}(X)$, by \cref{thm: NRP_properties}, there exist $\omega_{1},\omega_{2}\in W$ such that $\left( (x,\omega_{1}),(y,\omega_{2}) \right)\in \NRP^{[d]}(Y)$. Let $z_{1}\in Z$ such that $\omega_{1}(z_{1})=\widetilde{z}$ and let $\pi: Y\to X\times Z$ with $\pi(u,\omega)=(u,\omega(z_{1}))$ for $(u,\omega)\in Y$. Set $B=\pi(Y)$, and consequently $\pi: (Y,T)\to (B,T)$ is a factor between minimal systems. Note that $\pi(x,\omega_{1})=(x,\widetilde{z})$ and $\pi(y,\omega_{2})=(y,z_{2})$ for some $z_{2}\in Z$, and \begin{align*}
        \left( (x,\widetilde{z}),(y,z_{2}) \right) = \pi \times \pi \left( (x,\omega_{1}),(y,\omega_{2}) \right) \in \NRP^{[d]}(B).
    \end{align*}

    Let $\pi_{Z}$ be the projection of $B$ onto $Z$, and hence $\pi_{Z}: (B,T)\to (Z,T)$ is a factor map. Observe that\begin{align*}
        (\widetilde{z},z_{2}) = \pi_{Z}\times \pi_{Z} \left( (x,\widetilde{z}),(y,z_{2}) \right) \in \NRP^{[d]}(Z).
    \end{align*}

    Therefore, we obtain that $\widetilde{z}=z_{2}$, so it follows that $((x,\widetilde{z}),(y,\widetilde{z})) \in B$. In particular, since $B$ is minimal, we have $N(x,U)\cap N(\widetilde{z},V)=N((x,\widetilde{z}), U\times V)$ is a syndetic set, and hence nonempty.
\end{proof}

Note that we have implicitly proved the following: if $(X,T)$ is a minimal system, with $T$ being a finitely generated nilpotent group, and $(x,y)\in \NRP^{[d]}(X)$ then $N(x,U)\cap F$ is a syndetic set for each $F \in \sF_{d}$ and each neighborhood $U$ of $y$.

To relate $\RP^{[d]}$ to the other recurrence sets defined above, we need the following lemma.

\begin{lemma}\label{thm: almost_nil_bohr}
    Let $(X,\cX,\mu,T)$ be an ergodic measure preserving system, with $T$ being a finitely generated abelian group, and let $d\geq 1$. Then for all $A\in \cX$ with $\mu(A)>0$ the set\begin{align*}
        I = \{t\in T: \mu(A\cap tA\cap \dots \cap t^{d}A)>0\}
    \end{align*}

    is an almost Nil$_{d}$ Bohr$_{0}$-set, that is, there is some subset $M$ of $T$ with zero uniform density such that $I\Delta M$ is a Nil$_{d}$ Bohr$_{0}$-set.
\end{lemma}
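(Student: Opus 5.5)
We apply \cref{thm: correlation_nilpotent_actions} to the indicator $f=\mathds{1}_{A}$. This gives a decomposition
\begin{align*}
    c(t)=\mu(A\cap tA\cap\dots\cap t^{d}A) = \phi(t)+\nu(t),
\end{align*}
where $\phi$ is a $d$-step nilsequence and $\nu$ is a null-sequence. Since $c\geq 0$ and $\nu$ has zero uniform density, the set $I=\{t: c(t)>0\}$ should agree with $\{t:\phi(t)>\eta\}$ for a suitable $\eta>0$, up to a set of zero uniform density. The plan is to show that this latter set contains a Nil$_{d}$ Bohr$_{0}$-set up to a zero-density set.

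\textbf{Step 1 (the nilsequence is positive at $e$).} First I will show $\phi(e)>0$. Multiple recurrence for finitely generated abelian groups (for instance via \cref{thm: nil_characteristic} together with the structure of $Z_{d}$) gives
\begin{align*}
    \liminf_{N\to\infty}\frac{1}{|\Phi_{N}|}\sum_{t\in\Phi_{N}} c(t)>0.
\end{align*}
That alone is not enough to control $\phi(e)$. Instead I would use the standard trick from \cite{Bergelson_Host_Kra05}. Approximate $\phi$ uniformly by a basic nilsequence $\psi(t)=F(tb)$ on a nilsystem $(Z,T)$. Since $\nu$ has zero density, $\phi$ is determined by $c$ off a zero-density set, and $\phi(t)=\lim\text{-avg of } c$ near $t$ along the recurrent structure of $Z$. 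More concretely, for $t$ in the Nil Bohr set $N(b,U)$ (with $U$ a small neighborhood of $b$) one has $\psi(t)\approx\psi(e)$. Averaging $c$ over $N(b,U)\cap\Phi_{N}$ then shows that $\psi(e)$ is approximately a density-average of $c$ over a Nil$_{d}$ Bohr$_{0}$ set. One checks that such an average is bounded below by a positive constant depending only on $\mu(A)$, by applying multiple recurrence to the product of $X$ with the nilsystem $Z$ (which is ergodic after passing to an ergodic component). This yields $\phi(e)\geq c_{0}>0$.

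\textbf{Step 2 (the Bohr set).} Choose the approximating basic nilsequence $\psi(t)=F(tb)$ with $\|\phi-\psi\|_{\infty}<c_{0}/4$, and let
\begin{align*}
    U=\{z\in Z: |F(z)-F(b)|<c_{0}/4\}.
\end{align*}
Then $N(b,U)$ is a Nil$_{d}$ Bohr$_{0}$-set, and on it $\phi(t)>c_{0}/2$. Set
\begin{align*}
    M=\bigl(I\,\Delta\, N(b,U)\bigr)\cap \{t: |\nu(t)|\geq c_{0}/2\}\ \cup\ \bigl(I\setminus N(b,U)\bigr).
\end{align*}
The first part has zero uniform density because $\nu$ is null. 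To fit the exact statement, namely that $I\Delta M$ is a Nil$_{d}$ Bohr$_{0}$-set, it is cleaner to take $M=I\Delta N(b,U)$ itself. I then need to argue that $I\setminus N(b,U)$ also has zero density, or else enlarge the Bohr set. Since only $I\supseteq N(b,U)$ modulo zero density is needed later, I would adjust the definition. The proper form is probably that $I\cup M'$ is Nil-Bohr with $M'$ of zero density, and I would verify that the intended usage only needs $N(b,U)\setminus I$ to be of zero density.

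\textbf{Main obstacle.} The hard part is Step 1, showing that the nilsequence component is strictly positive at the identity. This is where \cite{Bergelson_Host_Kra05} uses a careful argument on the $Z_{d}$-factor. If the general argument resists, I would reduce to $Z_{d}$ via \cref{cor: correlation_zero_unif_hostkra_zero}, replacing $\mathds{1}_A$ by $\E(\mathds{1}_{A}\mid\cZ_{d})$. I would then argue directly on an inverse limit of nilsystems (\cref{thm: str_host_kra_factors}), where the correlation is continuous along $T$-orbits of the diagonal in the nilmanifold $X'^{d+1}$ and equals $\int g^{\otimes(d+1)}\,d\mu_{\Delta}>0$ at $t=e$.
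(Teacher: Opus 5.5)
There is a genuine gap, and it is exactly the step you flag as the main obstacle. Nothing in the proposal proves that the nilsequence component is positive at the base point, i.e.\ that $\phi(e)>0$. Equivalently, you would need $c(t)$ to stay bounded away from $0$ along a Nil$_{d}$ Bohr$_{0}$-set, off a set of zero density. This is the real content of the lemma, and neither of your routes delivers it.

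\emph{The product route.} Multiple recurrence for $A\times W$ in $X\times Z$ only yields $t$ with $m(W\cap t^{-1}W\cap\dots\cap t^{-d}W)>0$. That is a return of \emph{some} points of $W$, not of the particular point $b$ generating the nilsequence, and for $d\geq 2$ these differ. Take the skew product $(x,y)\mapsto(x+\alpha,y+x)$ with $b=(0,0)$. Returns of a small ball at times $n,2n$ only force $\|n\alpha\|$ and $\|n^{2}\alpha\|$ to be small. This is compatible with $\binom{n}{2}\alpha\approx 1/2$, in which case $n\notin N(b,U)$.

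\emph{The fallback.} Knowing $c(e)=\int g^{d+1}\,d\mu>0$ says nothing about $\phi(e)$, because the null component need not vanish at $e$. For $t\neq e$ the translates $(e,t,\dots,t^{d})\Delta$ of the diagonal spread over a larger subnilmanifold, so there is no continuity at $e$. Concretely, take the same skew product with $d=2$ and $g(x,y)=1+\cos 2\pi y$. Then $c_{g}(n)=1$ for every $n\neq 0$ but $c_{g}(0)=5/2$. So the nilsequence component is $\phi\equiv 1$, while the correlation at $e$ is $5/2$.

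The missing ingredient, which the paper uses, is the IP Szemer\'edi theorem of Furstenberg and Katznelson. It gives $\epsilon>0$ such that $\{t:\mu(A\cap tA\cap\dots\cap t^{d}A)>\epsilon\}$ is an $IP^{*}$-set. The argument then runs as follows.
\begin{itemize}
\item Write $\phi(t)=F(tb)$ with $F$ continuous on a system of order $d$. Suppose $F(b)\leq 0$, and take $V\ni b$ with $F<\epsilon/2$ on $V$.
\item Since $b$ is a distal point, $N(b,V)$ is also $IP^{*}$. Hence the intersection of the two sets is $IP^{*}$, hence syndetic.
\item A syndetic set meets the complement of the zero-density set $\{|\nu|\geq\epsilon/2\}$. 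Any $t$ there gives $c(t)=F(tb)+\nu(t)<\epsilon<c(t)$, a contradiction.
\end{itemize}
No lower bound depending only on $\mu(A)$ is needed: a single good $t$ outside the null set suffices.

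Once $\phi(e)>0$ is known, your Step 2 works, and your worry about $I\setminus N(b,U)$ is unfounded. Nil$_{d}$ Bohr$_{0}$-sets are by definition supersets of some $N(b,U)$, so it suffices that $N(b,U)\setminus I$ has zero density. This holds because that set is contained in $\{|\nu|>\eta\}$ whenever $\phi>\eta$ on $N(b,U)$. Then $M=N(b,U)\setminus I$ gives $I\Delta M=I\cup M\supseteq N(b,U)$.
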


\begin{proof}
    By \cref{thm: correlation_nilpotent_actions}, for every $t\in T$ we have\begin{align*}
        \mu(A\cap tA\cap  \dots\cap t^{d}A)= a(t)+b(t), 
    \end{align*}

    where $a$ is a $d$-step nilsequence and $b$ tends to zero in uniform density. By {\cite[Lemma 14]{Host_Maass_nil_ordre_deux:2007}} there is a system $(Z,T)$ of order $d$, $x\in Z$ and a continuous function $\phi\in C(Z)$ such that $a(t)=\phi(tx)$.

    Suppose that $\phi(x)\leq 0$. By {\cite[Theorem 10.1]{Furstenberg_Katznelson85}} there is $\epsilon>0$ such that\begin{align*}
        \{t\in T: \mu(A\cap tA\cap  \dots\cap t^{d}A)>\epsilon\}
    \end{align*}

    is an $IP^{*}$-set. Let $V$ be a neighborhood of $x$ such that $\phi(x')<\epsilon/2$ for any $x'\in V$. Since $N(x,V)$ is an $IP^{*}$-set, we have that \begin{align*}
        N(x,V) \cap \{t\in T: \mu(A\cap tA\cap  \dots\cap t^{d}A)>\epsilon\}
    \end{align*}

    is an $IP^{*}$-set, and in particular syndetic.
    
    Since $\{t\in T: |b(t)|>\varepsilon/2\}$ has zero Banach density, there exists $t\in T$ such that $|b(t)|<\epsilon/2$, $tx\in V$ and \begin{align*}
        \mu(A\cap tA\cap  \dots\cap t^{d}A)>\epsilon.
    \end{align*}

    Therefore,\begin{align*}
       \phi(tx)+b(t) = a(t)+b(t)<\epsilon< \mu(A\cap tA\cap  \dots\cap t^{d}A),
    \end{align*}

    which is a contradiction. Hence, we conclude that $\phi(x)>0$.

    Now, for each $\varepsilon>0$, the set $\{t\in T: \phi(tx)>\phi(x)-\varepsilon/2\}$ is a Nil$_{d}$ Bohr$_{0}$-set. Finally, since $\{t\in T: |b(t)|>\varepsilon/2\}$ has zero upper Banach density, the desired result follows.
\end{proof}

\begin{theorem}\label{thm: RPd_recurrence_sets}
    Let $(X,T)$ be a minimal system, with $T$ being a finitely generated abelian group, and let $d\geq 1$ be an integer. Consider the following statements:\begin{enumerate}[label=(\arabic*)]
        \item $(x,y)\in \NRP^{[d]}(X)$.

        \item $N(x,U)\in \sF_{\operatorname{Poi}_{d}}$ for each neighborhood $U$ of $y$.
        
        \item $N(x,U)\in \sF_{\operatorname{Bir}_{d}}$ for each neighborhood $U$ of $y$.

        \item $N(x,U)\in \sF_{d}^{*}$ for each neighborhood $U$ of $y$.
    \end{enumerate}

    Then $(1)\Rightarrow (2) \Rightarrow (3)$ and $(1)\Leftrightarrow (4)$.
\end{theorem}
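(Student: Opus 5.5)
The equivalence $(1)\Leftrightarrow(4)$ is exactly \cref{thm: characterization_NRP_Nil_bohr_sets}, since a finitely generated abelian group is finitely generated. So the work is in $(1)\Rightarrow(2)$ and $(2)\Rightarrow(3)$.

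For $(2)\Rightarrow(3)$ I would use the standard measure-to-topology passage. Let $(Z,T)$ be a minimal system and $V\subseteq Z$ a nonempty open set. Since $T$ is abelian, hence amenable, there is an invariant probability measure $\nu$ on $Z$, and by extremality we may take $\nu$ ergodic. Minimality gives $\nu(V)>0$: finitely many translates of $V$ cover $Z$. By (2) there is $t\in N(x,U)$ with $\nu(V\cap tV\cap\dots\cap t^{d}V)>0$. In particular this intersection is nonempty, so $N(x,U)$ is a set of $d$-topological recurrence.

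For $(1)\Rightarrow(2)$, take a measure preserving system $(Z,\mathcal Z,\nu,T)$ and $A$ with $\nu(A)>0$.
\begin{enumerate}[label=(\roman*)]
\item Reduce to the ergodic case. Use the ergodic decomposition $\nu=\int\nu_\omega$. Almost every component is ergodic, and $\nu_\omega(A)>0$ on a set of positive measure. Positivity of $\nu_\omega(A\cap tA\cap\dots\cap t^dA)$ for one such $\omega$ gives positivity for $\nu$ only after integrating, so I will fix a single $\omega$ with $\nu_\omega(A)>0$. It then suffices to find $t\in N(x,U)$ with $\nu_\omega(A\cap tA\cap\dots\cap t^{d}A)>0$, since this forces $\nu(\cdot)>0$ on a set of $\omega$ of positive measure.
\item Apply \cref{thm: almost_nil_bohr}. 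The return set $I$ then differs from a Nil$_d$ Bohr$_0$-set $S$ by a set $M$ of zero uniform density.
\item Use the strengthening recorded after \cref{thm: characterization_NRP_Nil_bohr_sets}. Since $(x,y)\in\NRP^{[d]}(X)$, the set $N(x,U)\cap S$ is syndetic, and in fact has positive upper Banach density.
\item Conclude: a syndetic set cannot be contained in $M$, so $N(x,U)\cap S\setminus M\neq\emptyset$. Any $t$ in it lies in $I\cap N(x,U)$.
\end{enumerate}

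The main obstacle is the bookkeeping in steps (ii)--(iv), namely the meaning of $I\Delta M$. The proof of \cref{thm: almost_nil_bohr} actually gives something cleaner: there are $\varepsilon>0$ and a Nil$_d$ Bohr$_0$-set $S=\{t:\phi(tx')>\phi(x')-\varepsilon/2\}$ such that $S\setminus\{t:|b(t)|>\varepsilon/2\}\subseteq I$. I would therefore use exactly the inclusion $S\setminus M\subseteq I$, with $M$ of zero upper Banach density. Syndetic sets in a finitely generated abelian group have positive upper Banach density, so $N(x,U)\cap S\setminus M\neq\emptyset$, which finishes the argument.
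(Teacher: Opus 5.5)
The parts of your proposal that follow the paper are sound: $(1)\Leftrightarrow(4)$, $(2)\Rightarrow(3)$, and the core of $(1)\Rightarrow(2)$. That core is the inclusion $S\setminus M\subseteq I$ extracted from the proof of \cref{thm: almost_nil_bohr}, syndeticity of $N(x,U)\cap S$ from the remark after \cref{thm: characterization_NRP_Nil_bohr_sets}, and zero Banach density of $M$. The gap is in step (i).

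Finding $t\in N(x,U)$ with $\nu_\omega(A\cap tA\cap\dots\cap t^{d}A)>0$ for one fixed ergodic component says nothing about
\[
\nu(A\cap tA\cap\dots\cap t^{d}A)=\int \nu_{\omega'}(A\cap tA\cap\dots\cap t^{d}A)\,dm(\omega').
\]
A single $\omega$ is in general an $m$-null set. Moreover, the good $t$ from steps (ii)--(iv) depends on $\omega$, through $\phi$, $x'$, $\varepsilon$, $b$, and hence $S$ and $M$. So your claim that this ``forces $\nu(\cdot)>0$ on a set of $\omega$ of positive measure'' is false as stated. Your own remark that positivity for one component helps ``only after integrating'' names the problem, but step (i) does not resolve it.

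The missing ingredient is a pigeonhole argument over the countable group $T$. Run steps (ii)--(iv) for every $\omega$ in $\Omega'=\{\omega:\nu_\omega\text{ ergodic},\ \nu_\omega(A)>0\}$, which has positive $m$-measure. This gives $t_\omega\in N(x,U)$ with $\nu_\omega(A\cap t_\omega A\cap\dots\cap t_\omega^{d}A)>0$. Then
\[
\Omega'\subseteq\bigcup_{t\in N(x,U)}\{\omega:\nu_\omega(A\cap tA\cap\dots\cap t^{d}A)>0\},
\]
a countable union of measurable sets, since $\omega\mapsto\nu_\omega(B)$ is measurable. Hence a single $t\in N(x,U)$ works for all $\omega$ in some set $\Omega''$ of positive measure. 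Integrating over $\Omega''$ gives $\nu(A\cap tA\cap\dots\cap t^{d}A)>0$. This is exactly how the paper concludes, and with this repair your argument is complete.
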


\begin{remark}
    We expect that these statements are actually equivalent. This is known to be true for $\Z$-actions (see \cite{Huang_Shao_Ye_nilbohr_automorphy:2016}). The proof in the $\Z$-action case relies on generalized polynomials together with quite involved computations. It would be interesting to extend their argument to actions of finitely generated abelian groups, and more generally to finitely generated nilpotent groups. Nevertheless, in this paper, we use \cref{thm: RPd_recurrence_sets} to study topological characteristic factors, and for that purpose, we only need the implications stated in \cref{thm: RPd_recurrence_sets}.
\end{remark}

\begin{proof}
    By \cref{prop: inverse_limit_metric}, we may assume that $(X,T)$ is a minimal metric system.

    First we prove $(1)\Rightarrow (2)$. Let $U$ be a neighborhood of $y$ and let $(Y,\cY,\mu,T)$ be a measure preserving system and $A\in \cY$ with $\mu(A)>0$. Let $\mu = \int_{\Omega}\mu_{\omega} \,dm(\omega)$ be an ergodic decomposition of $\mu$. Then there exists a set $\Omega'\subseteq \Omega$ with $m(\Omega')>0$ such that for every $\omega \in \Omega'$ we have $\mu_{\omega}(A)>0$. For each $\omega\in \Omega'$, define\begin{align*}
        F_{\omega} = \{t\in T: \mu_{\omega}(A\cap tA\cap \dots\cap t^{d}A)>0\}.
    \end{align*}

    By \cref{thm: almost_nil_bohr}, there exists a subset $M$ of $T$ with zero uniform density such that $F_{\omega}\Delta M$ is a Nil$_{d}$ Bohr$_{0}$-set. Consequently, $N(x,U)\cap (F_{\omega}\Delta M)$ is a syndetic set. Since M has zero uniform density, there is a $t_{\omega}\neq e$, where $e$ is the identity of $T$, with $t_{\omega} \in N(x,U)\cap F_{\omega}$. It follows that there exist a subset $\Omega'' \subseteq \Omega'$ with $m(\Omega'')>0$ and an element $t\in N(x,U)$ such that for every $\omega\in \Omega''$ one has $\mu_{\omega}(A\cap tA\cap \dots\cap t^{d}A)>0$. Therefore, $\mu(A\cap tA \cap \dots t^{d}A)>0$, and hence $N(x,U)\in \sF_{\operatorname{Poi}_{d}}$.

    Clearly $(2)\Rightarrow (3)$ and, by \cref{thm: characterization_NRP_Nil_bohr_sets}, we have $(4)\Leftrightarrow (1)$.
\end{proof}

\section{Topological characteristic factors}

\subsection{Basic definitions and properties}

In this subsection, we list the basic definitions and properties that will be used in this section.

\begin{definition}
    Let $X,Y$ be Hausdorff compact spaces  and let $\pi:X\to Y$ be a continuous surjective map. A subset $L$ of $X$ is called {\em $\pi$-saturated} if $L=\pi^{-1}(\pi(L))$.
\end{definition}

\begin{lemma}[{\cite[Lemma 3.5]{Glasner_Huang_Shao_Weiss_Ye_Topological_characteristic_factors:2020}}]\label{lemma: composition_saturated}
   Let $X,Y,Z$ be Hausdorff compact spaces. Let $\pi:X\to Y$, $\phi:X\to Z$ and $\psi: Z\to Y$ be continuous surjective maps such that $\pi=\psi\circ \phi$:
    
    \[\begin{tikzcd}
	X \\
	&& Z \\
	Y
	\arrow["\phi", from=1-1, to=2-3]
	\arrow["\pi"', from=1-1, to=3-1]
	\arrow["\psi", from=2-3, to=3-1]
\end{tikzcd}\]

\begin{enumerate}[label=(\arabic*)]
    \item If $A\subseteq X$ is $\pi$-saturated, then $A$ is $\phi$-saturated.
    \item If $A\subseteq X$ is $\phi$-saturated and $\phi(A)$ is $\psi$-saturated, then $A$ is $\pi$-saturated.
 \end{enumerate}
\end{lemma}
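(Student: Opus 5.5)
Both parts reduce to elementary set identities, using only surjectivity and the relation $\pi=\psi\circ\phi$. Throughout I will use the equivalent formulation that $A$ is $\pi$-saturated if and only if $\pi^{-1}(\pi(A))\subseteq A$. The reverse inclusion $A\subseteq\pi^{-1}(\pi(A))$ always holds.

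For (1), assume $A$ is $\pi$-saturated and take $x\in\phi^{-1}(\phi(A))$. Then $\phi(x)=\phi(a)$ for some $a\in A$. Applying $\psi$ gives $\pi(x)=\psi(\phi(x))=\psi(\phi(a))=\pi(a)$, so $x\in\pi^{-1}(\pi(A))=A$. Hence $\phi^{-1}(\phi(A))\subseteq A$, and $A$ is $\phi$-saturated.

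For (2), assume $A=\phi^{-1}(\phi(A))$ and $\phi(A)=\psi^{-1}(\psi(\phi(A)))$. Note that $\psi(\phi(A))=\pi(A)$. Let $x\in\pi^{-1}(\pi(A))$. Then $\psi(\phi(x))=\pi(x)\in\pi(A)=\psi(\phi(A))$, so $\phi(x)\in\psi^{-1}(\psi(\phi(A)))=\phi(A)$. The $\phi$-saturation of $A$ then gives $x\in\phi^{-1}(\phi(A))=A$. Equivalently, one can write the chain
\[
\pi^{-1}(\pi(A))=\phi^{-1}\bigl(\psi^{-1}(\psi(\phi(A)))\bigr)=\phi^{-1}(\phi(A))=A.
\]

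There is no real obstacle here. The only point needing care is the identity $\pi^{-1}=\phi^{-1}\circ\psi^{-1}$ on subsets, which holds because $\pi=\psi\circ\phi$. Neither compactness nor the Hausdorff property is used.
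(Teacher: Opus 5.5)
Your proof is correct and complete: both parts are exactly the preimage computations you give, and in fact only $\pi=\psi\circ\phi$ is used, not even surjectivity. The paper does not prove this lemma itself but quotes it from Glasner--Huang--Shao--Weiss--Ye, and your direct set-theoretic verification is the standard argument.
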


Given a topological group $T$ and an integer $d\geq 1$, set\begin{align*}
    \sigma_{d} &= \{(t,t,\dots,t): t\in T\} \subseteq T^{d},\\
    \tau_{d} &=  \{(t,t^{2},\dots,t^{d}): t\in T\}\subseteq T^{d}.
\end{align*}

For a system $(X,T)$ and an integer $d\geq 1$. Let\begin{align*}
    N_{d}(X,T)=N_{d}(X)=\overline{\tau_{d} \Delta^{(d)}(X)},
\end{align*}

where $\Delta^{(d)}(A)=\{(x,\dots,x):x\in A\}\subseteq X^{d}$ for $A\subseteq X$. 

We write $x^{(d)} = (x,\dots,x)\in X^{d}$. If $(X,T)$ is transitive and $x\in X$ a transitive point, then $N_{d}(X)= \overline{\langle\sigma_{d},\tau_{d}\rangle x^{(d)}}$. The next theorem was proved for $\Z$-actions, but its proof is also valid for group actions.

\begin{theorem}[{\cite[Proposition 1.55]{Glasner_ergodic_theory_joinings:2003}}]
    Let $(X,T)$ be a minimal system and $d\geq 1$ be an integer. Then the system $(N_{d}(X),\langle\sigma_{d},\tau_{d}\rangle)$ is minimal and the $\tau_{d}$-minimal points are dense in $N_{d}(X)$.
\end{theorem}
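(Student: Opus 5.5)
The plan is to run Glasner's argument for $\Z$-actions with $T$ replaced by the group $\langle\sigma_{d},\tau_{d}\rangle\subseteq T^{d}$. Fix a transitive point $x\in X$; it exists and every point is transitive because $(X,T)$ is minimal. Then $N_{d}(X)=\overline{\langle\sigma_{d},\tau_{d}\rangle x^{(d)}}$.

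\textbf{Step 1: density of $\tau_{d}$-minimal points.} Take a minimal left ideal of $\beta T$ and any minimal idempotent $v$ of $\beta\tau_{d}$. Set $\bx=vx^{(d)}$. This point is $\tau_{d}$-minimal, and it lies in $\overline{\tau_{d}x^{(d)}}\subseteq N_{d}(X)$. Whether its first coordinate equals $x$ is not needed. Since $\sigma_{d}$ commutes with $\tau_{d}$ (for an abelian $T$, or in general for the diagonal action), every $\sigma_{d}$-translate $s\bx$ is also $\tau_{d}$-minimal. So the $\tau_{d}$-minimal points contain $\sigma_{d}\bx$. The key observation is that the projection of $\bx$ to the first coordinate is a point $x'\in X$ with $\bx=(x',\ast)$. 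Instead, I will use the standard trick: prove that $x^{(d)}$ itself lies in the closure of the $\tau_{d}$-minimal points. Then density follows, because $\langle\sigma_{d},\tau_{d}\rangle$ permutes $\tau_{d}$-minimal points when $T$ is abelian. To do this, run an induction on $d$ using the topological multiple recurrence theorem (Furstenberg–Weiss, valid for abelian $T$). It provides $t$ with $(x,tx,\dots,t^{d}x)$ arbitrarily close to $x^{(d)}$. Applying it inside the $\tau_{d}$-orbit closure of a $\tau_{d}$-minimal point yields minimal points of $\overline{\tau_{d}\bx}$ near the diagonal. Translating by $\sigma_{d}$ and using the minimality of $X$ then gives such points near $x^{(d)}$.

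\textbf{Step 2: minimality of $(N_{d}(X),\langle\sigma_{d},\tau_{d}\rangle)$.} Let $\bz\in N_{d}(X)$; we show that $x^{(d)}\in\overline{\langle\sigma_{d},\tau_{d}\rangle\bz}$. Using Step 1, first approximate $\bz$ by a $\tau_{d}$-minimal point $\by$. Its $\tau_{d}$-orbit closure is minimal, and it meets the set $\{(y,\ast)\}$ along an arbitrary diagonal direction. The main tool is again multiple recurrence applied to the minimal system $\overline{\tau_{d}\by}$: for any neighbourhood $U$ of $y_{1}$, there is $t$ with $t^{j}$-returns making all coordinates close simultaneously. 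Thus the orbit closure of $\by$ under $\langle\sigma_{d},\tau_{d}\rangle$ contains a diagonal point $w^{(d)}$. Minimality of $X$ under $\sigma_{d}$ then gives $x^{(d)}$. Because the limits involve both $\bz\approx\by$ and the group action, I will work with closed invariant subsets instead. Take a minimal $\langle\sigma_{d},\tau_{d}\rangle$-subset $K\subseteq\overline{\langle\sigma_{d},\tau_{d}\rangle\bz}$ and show that $K$ contains a diagonal point. Then $K\supseteq\overline{\sigma_{d}x^{(d)}}$, which gives $K=N_{d}(X)$.

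\textbf{Main obstacle.} The hard part is showing that every closed $\langle\sigma_{d},\tau_{d}\rangle$-invariant set contains a diagonal point. This requires a multiple recurrence statement for the system $(K,\tau_{d})$ in which the coordinates are moved by $t,t^{2},\dots,t^{d}$ simultaneously. I would first try Glasner's approach: an induction on $d$ via the map $(x_{1},\dots,x_{d})\mapsto(x_{2}x_{1}^{-1}\text{-type shifts})$, reducing to $N_{d-1}$ by applying $\sigma$-translations so that the first coordinate is fixed. Topological multiple recurrence for commuting homeomorphisms (the Furstenberg–Weiss theorem, which only uses commutativity) serves as the inductive engine.
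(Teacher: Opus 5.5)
Your proposal has a genuine gap: it never proves the one claim on which everything rests. The reduction itself is fine. For abelian $T$, $\langle\sigma_{d},\tau_{d}\rangle$ permutes $\tau_{d}$-minimal points; for nonabelian $T$, $\tau_{d}$ is not even a subgroup and does not commute with $\sigma_{d}$, contrary to your parenthetical. Together with $N_{d}(X)=\overline{\langle\sigma_{d},\tau_{d}\rangle x^{(d)}}$, this shows it suffices to prove that every $\langle\sigma_{d},\tau_{d}\rangle$-minimal set $K\subseteq N_{d}(X)$ meets $\Delta^{(d)}(X)$. But that is the whole content of the theorem, and you only label it the main obstacle and name an induction you would try.

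The mechanism you do invoke in Steps 1 and 2 cannot work. Multiple recurrence for $(\overline{\tau_{d}\by},\tau_{d})$ gives $\bz$ and $t$ with $\tau_{d}(t)^{j}\bz$ close to $\bz$. That is a return of the tuple to itself; it says nothing about $z_{1},\dots,z_{d}$ being close to one another. What you need is some $g\in\langle\sigma_{d},\tau_{d}\rangle$ with $g\by\in U^{d}$, which is exactly the assertion that the orbit closure meets $U^{d}$, not a recurrence statement. Getting there means undoing the spread of $\bz\approx(tx,t^{2}x,\dots,t^{d}x)$. Since $\bz$ is only a limit of such progressions with varying $t$, and the maps $\tau_{d}(t)^{-1}$ are not equicontinuous, nothing in your argument controls this. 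Also, Furstenberg--Weiss gives multiply recurrent points in every open set, not multiple recurrence at a prescribed $x$.

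The missing idea is algebraic. It is the arithmetic-progression analogue of the lemma at the start of Section 3 that $v^{[d]}_{*}$ is a minimal idempotent.

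1. Let $\Lambda=\langle\sigma_{d},\tau_{d}\rangle=\{(st,st^{2},\dots,st^{d}):s,t\in T\}$ and let $\overline{\Lambda}$ be its closure in $(\beta T)^{d}$. This compact right topological semigroup acts coordinatewise on $X^{d}$, and $\overline{\Lambda\by}=\overline{\Lambda}\by$.
2. For a minimal idempotent $u$ of $\beta T$, $u^{(d)}=(u,\dots,u)\in\overline{\sigma_{d}}\subseteq\overline{\Lambda}$.
3. Choose a minimal idempotent $r$ of $\overline{\Lambda}$ with $r=u^{(d)}r=ru^{(d)}$. Each coordinate $r_{i}$ is an idempotent of $\beta T$ with $r_{i}=ur_{i}=r_{i}u$, so $r_{i}=u$ because $u$ is minimal. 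Hence $u^{(d)}$ is itself a minimal idempotent of $\overline{\Lambda}$; this is the Bergelson--Furstenberg--Hindman--Katznelson argument.
4. Taking $u$ with $ux=x$ gives $u^{(d)}x^{(d)}=x^{(d)}$. So $x^{(d)}$ is $\Lambda$-minimal and $N_{d}(X)=\overline{\Lambda x^{(d)}}$ is minimal.

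Density then needs no recurrence. For any $\tau_{d}$-minimal $Y\subseteq N_{d}(X)$, the set $\overline{\sigma_{d}Y}$ is nonempty, closed, $\Lambda$-invariant, and the closure of a set of $\tau_{d}$-minimal points, so it equals $N_{d}(X)$. Your Step 1 is therefore a corollary of Step 2, not an ingredient for it.
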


Moreover, by a similar proof, one obtains the following result.

\begin{theorem}\label{thm: Nd_minimal}
    Let $(X,T)$ be a minimal system and $a_{1},a_{2},\dots,a_{d}$ be distinct numbers of $\Z$, where $d\geq 1$ is an integer. Let \begin{align*}
        \tau_{a_{1},\dots,a_{d}} &=  \{(t^{a_{1}},t^{a_2},\dots,t^{a_d}): t\in T\}\\
        N_{a_{1}, \dots ,a_{d}}(X) &= \overline{\tau_{a_{1},\dots,a_{d}} \Delta^{(d)}(X)}.
    \end{align*}

    Then the system $(N_{a_{1},\dots,a_{d}}(X),\langle \sigma_{d},\tau_{a_{1},\dots,a_{d}} \rangle )$ is minimal and the $\tau_{a_{1},\dots,a_{d}}$-minimal points are dense in $N_{a_{1},\dots,a_{d}}(X)$.
\end{theorem}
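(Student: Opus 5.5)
The plan is to adapt the argument of \cite[Proposition 1.55]{Glasner_ergodic_theory_joinings:2003}. Throughout, write $\tau=\tau_{a_{1},\dots,a_{d}}$, $N=N_{a_{1},\dots,a_{d}}(X)$ and $\Gamma=\langle\sigma_{d},\tau\rangle$. The only property of the exponents that is used is that they are pairwise distinct. This is what makes the maps $t\mapsto t^{a_{i}}$ suitable for a multiple recurrence statement.

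The proof is organized around one target claim: every nonempty $\Gamma$-invariant closed subset of $N$ meets $\Delta^{(d)}(X)$. Granting this claim, both assertions follow quickly.
\begin{enumerate}[label=(\roman*)]
\item Minimality. Since $X$ is minimal, $\overline{\sigma_{d}x^{(d)}}=\Delta^{(d)}(X)$ for every $x\in X$. Hence any $\Gamma$-orbit closure that meets the diagonal contains $\Delta^{(d)}(X)$. It therefore contains $\overline{\tau\Delta^{(d)}(X)}=N$.
\item Density of $\tau$-minimal points. It suffices to show that every point $y^{(d)}$ of the diagonal is in the closure of the $\tau$-minimal points, because $\tau$-translates of $\tau$-minimal points are again $\tau$-minimal and the $\tau$-orbit of $\Delta^{(d)}(X)$ is dense in $N$.
\end{enumerate}

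For (ii) I will use an idempotent in the $\tau$-action. Let $\beta T$ act on $X^{d}$ through $t\mapsto(t^{a_{1}},\dots,t^{a_{d}})$, and take a minimal idempotent $w$ for this action. Then $w y^{(d)}\in N$ is $\tau$-minimal, and $y^{(d)}$ is $\tau$-proximal to it. Showing that $w y^{(d)}$ can be chosen arbitrarily close to $y^{(d)}$ is a multiple recurrence statement. Concretely, for every nonempty open $U\subseteq X$ and every neighbourhood of a chosen minimal left ideal, I need some $t$ with $U\cap t^{-a_{1}}U\cap\dots\cap t^{-a_{d}}U\neq\emptyset$. This is the topological multiple recurrence theorem for the commuting maps $x\mapsto t^{a_{i}}x$. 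It is available for general groups via the IP/Hales--Jewett version of Furstenberg--Katznelson, which is already invoked in the proof of \cref{thm: almost_nil_bohr}. The Auslander--Ellis type argument then produces a $\tau$-minimal point in every neighbourhood of $y^{(d)}$ inside $N$.

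The main obstacle is the target claim itself. Take a $\Gamma$-minimal set $K\subseteq N$ and a point $z=\lim_{i}\tau_{t_{i}}y_{i}^{(d)}$ in $K$. The naive step is to apply $\tau_{t_{i}}^{-1}$ to return to the diagonal, but this fails because it is not uniform in $i$. I would instead combine (ii) with minimality of $K$: pick a $\tau$-minimal point $z'\in K$ and use $\sigma_{d}$ together with the multiple recurrence above to show that the $\tau$-orbit closure of $z'$ approaches the diagonal. Here the distinctness of the $a_{i}$ enters through the recurrence theorem. If $T$ is nonabelian, $\sigma_{d}$ and $\tau$ need not commute. In that case I would still work with the group they generate and check that Glasner's argument uses only that $\sigma_{d}$ normalizes the set $\tau\Delta^{(d)}(X)$ up to closure. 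This checking step is where I expect most of the care to be needed.
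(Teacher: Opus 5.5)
Your reduction of minimality to the target claim is fine. But that claim is the whole content of the minimality assertion, and the proposal does not prove it. (The paper gives no argument of its own here; it just defers to Glasner's Proposition 1.55.)

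\textbf{The target claim.} The sketch you give for it does not work as described. Topological multiple recurrence on $X$ only produces points $\tau_t x^{(d)}$ of $\tau\Delta^{(d)}(X)$ near the diagonal. It says nothing about a prescribed $\Gamma$-minimal set $K$, and the $\tau$-minimal points from (ii) need not lie in $K$. So the non-uniformity you correctly identify is not circumvented; nothing in the sketch connects recurrence on $X$ to $K$.

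\textbf{Step (ii).} It has the same defect. Recurrence gives, for each return time $t$, some point $x_t$ with $t^{a_i}x_t\in U$ for all $i$, but $x_t$ depends on $t$. A limit along a minimal idempotent $w$ therefore gives no control over $w y^{(d)}$ for a fixed $y$, so the ``Auslander--Ellis type argument'' is unjustified. Step (ii) is also unnecessary. For non-abelian $T$ the set $\tau_{a_1,\dots,a_d}$ is in general not a subgroup of $T^d$, so the statement only makes sense for abelian $T$, which is the case the paper uses. Then $\sigma_d$ commutes with $\tau$, so the set of $\tau$-minimal points of $N$ is nonempty and $\Gamma$-invariant. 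Its closure is therefore $N$ as soon as $N$ is $\Gamma$-minimal. (Also, the IP/Hales--Jewett theorems of Furstenberg--Katznelson concern commuting maps, not arbitrary groups.)

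\textbf{A way to close the gap.} The following argument uses no recurrence theorem, and does not even use that the $a_i$ are distinct.
\begin{enumerate}[label=(\arabic*)]
\item Let $S=(\beta T)^d$ with coordinatewise product. This is a compact right topological semigroup whose smallest ideal is $K(S)=K(\beta T)^d$.
\item Let $E\subseteq S$ be the closure of $\{(st^{a_1},\dots,st^{a_d}):s,t\in T\}$. Then $E$ is a closed subsemigroup, and taking $t=e$ shows it contains every $(p,\dots,p)$ with $p\in\beta T$.
\item Hence $E\cap K(S)\neq\emptyset$, and the standard fact for closed subsemigroups gives $K(E)=E\cap K(S)$.
\item For a minimal idempotent $u$ this yields $u^{(d)}\in K(E)$, so $L=Eu^{(d)}$ is a minimal left ideal of $E$.
\item Pick $x$ with $ux=x$. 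Since $r\mapsto r\boldsymbol{y}$ is continuous on $S$, we have $E\boldsymbol{y}=\overline{\Gamma\boldsymbol{y}}$ for every $\boldsymbol{y}\in X^d$.
\item Using that $T$ is abelian and $X$ is minimal, $N=\overline{\Gamma x^{(d)}}=Ex^{(d)}=Lx^{(d)}$.
\item For $\boldsymbol{y}=\ell x^{(d)}$ with $\ell\in L$, minimality of $L$ gives $\overline{\Gamma\boldsymbol{y}}=(E\ell)x^{(d)}=Lx^{(d)}=N$.
\end{enumerate}
So $(N,\Gamma)$ is minimal, and density of the $\tau$-minimal points follows from the commutation argument above.
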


\subsection{Topological characteristic factors along cubes}\label{subsec: charactersitic_cubic} In this subsection we show that the system of order $d-1$ is the topological characteristic factor along cubes of order $d$, modulo almost one-to-one factors, for any group action.

\begin{definition}
    Let $(X, T)$ be a metric system and let $\pi:(X, T)\to (Y, T)$ be a factor between metric systems. The system $(Y, T)$ is said to be a {\em topological cubic characteristic factor along cubes of order $d$} if there exists a dense $G_{\delta}$ set $X_{0}$ of $X$ such that for each $x\in X_{0}$ the set $\overline{\mathcal{F}^{[d]}_{*}(T)x^{[d]}_{*}}$ is $\pi^{[d]}_{*}$-saturated.
\end{definition}

The saturation property of dynamical cubes is closely related to the notion of a topological characteristic factor along cubes of order $d$. In particular, by \cref{thm: NRP_properties}, we immediately obtain the following result.

\begin{lemma}\label{prop: suf_nec_condition_cube_saturated}
    Let $\pi:(X,T)\to (Y,T)$ be a factor between minimal metric systems and let $d\geq 1$ be an integer. If $\bQ^{[d]}(X)$ is $\pi^{[d]}$-saturated, then $(Y,T)$ is a topological characteristic factor along cubes of order $d$.
\end{lemma}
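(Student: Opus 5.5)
The plan is to use part (5) of \cref{thm: cube_is_minimal}, which gives the dense $G_{\delta}$ set. Since $X$ is minimal and metric, that result yields a dense $G_{\delta}$ subset $X_{0}\subseteq X$ such that $Y_{x}^{[d]}=\bQ^{[d]}_{x}(X)$ for every $x\in X_{0}$. Projecting away the $\emptyset$ coordinate, this means that for $x\in X_{0}$,
\begin{align*}
    \overline{\cF^{[d]}_{*}(T)x^{[d]}_{*}} = \{\bx_{*}\in X^{[d]}_{*}: (x,\bx_{*})\in \bQ^{[d]}(X)\}.
\end{align*}
Here I use that $\cF^{[d]}$ fixes the $\emptyset$ coordinate, so the orbit closure of $x^{[d]}$ under $\cF^{[d]}$ is $\{x\}\times\overline{\cF^{[d]}_{*}x^{[d]}_{*}}$. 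I take this $X_{0}$ as the set required in the definition.

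Next I fix $x\in X_{0}$ and check that $A:=\overline{\cF^{[d]}_{*}x^{[d]}_{*}}$ is $\pi^{[d]}_{*}$-saturated. So let $\by_{*}\in X^{[d]}_{*}$ with $\pi^{[d]}_{*}(\by_{*})=\pi^{[d]}_{*}(\bx_{*})$ for some $\bx_{*}\in A$. Then $(x,\bx_{*})\in\bQ^{[d]}(X)$, and $(x,\by_{*})$ has the same image under $\pi^{[d]}$ as $(x,\bx_{*})$. The hypothesis that $\bQ^{[d]}(X)$ is $\pi^{[d]}$-saturated therefore gives $(x,\by_{*})\in\bQ^{[d]}(X)$. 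By the displayed identity, $\by_{*}\in A$. The reverse inclusion $A\subseteq(\pi^{[d]}_{*})^{-1}(\pi^{[d]}_{*}(A))$ is trivial, so $A$ is saturated and $(Y,T)$ is a topological characteristic factor along cubes of order $d$.

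The only step needing care is the identification in the first paragraph, i.e.\ that the fibre $\bQ^{[d]}_{x}(X)$ equals $\{x\}\times\overline{\cF^{[d]}_{*}x^{[d]}_{*}}$ for generic $x$. This is exactly what part (5) of \cref{thm: cube_is_minimal} supplies, together with the observation that $Y_{x}^{[d]}=\{x\}\times\overline{\cF^{[d]}_{*}x^{[d]}_{*}}$. Part (1) of \cref{thm: NRP_properties} is the analogous statement at the single-point level and may be cited as motivation.
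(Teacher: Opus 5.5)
Your proof is correct and is essentially the argument the paper has in mind. The paper states the lemma as immediate, and its only real input is the generic identity $\bQ^{[d]}_{x}(X)=\{x\}\times\overline{\cF^{[d]}_{*}x^{[d]}_{*}}$ from \cref{thm: cube_is_minimal}(5), which you combine with the saturation hypothesis exactly as needed. The paper's pointer to \cref{thm: NRP_properties} should be read this way: part (1) there only handles points whose last $2^{d}-1$ coordinates coincide, so on its own it would not give saturation of the full orbit closure.
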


Now, we prove the main result of this subsection.

\begin{proposition}\label{prop: cube_saturated}
    Let $\pi: (X, T)\to (Y, T)$ be a factor of minimal systems and $d\geq 2$ be an integer. If $\pi$ is open and $X_{d-1}$ is a factor of $Y$, where $X_{d-1}$ is the maximal factor of order $d-1$ of $X$, then $\bQ^{[d]}(X)$ is $\pi^{[d]}$-saturated.
\end{proposition}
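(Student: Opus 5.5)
We will show that whenever $\bx\in \bQ^{[d]}(X)$ and $\by\in X^{[d]}$ satisfy $\pi^{[d]}(\by)=\pi^{[d]}(\bx)$, then $\by\in \bQ^{[d]}(X)$. Since each point of $X^{[d]}$ is changed one coordinate at a time, it suffices to treat $\by$ differing from $\bx$ in a single coordinate $\epsilon_{0}$, with $\pi(y_{\epsilon_{0}})=\pi(x_{\epsilon_{0}})$. The cube $\bQ^{[d]}(X)$ is invariant under the symmetries of $\{0,1\}^{d}$ (Euclidean permutations and reflections of coordinates), so we may take $\epsilon_{0}=\emptyset$. The task is then to show: if $(x,\bx_{*})\in\bQ^{[d]}(X)$ and $\pi(x')=\pi(x)$, then $(x',\bx_{*})\in \bQ^{[d]}(X)$.

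\textbf{Step 1 (reduction to a dense set of base points).} By \cref{thm: cube_is_minimal}(4), $\cF^{[d]}$-minimal points are dense in $\bQ^{[d]}(X)$. Because $\pi$ is open, we can approximate: given $(x,\bx_{*})\in\bQ^{[d]}(X)$ and $x'$ in the same fiber, choose $(z_{i},\bz_{i*})\to(x,\bx_{*})$ with $\bz_{i}$ an $\cF^{[d]}$-minimal point of $\bQ^{[d]}$. Openness of $\pi$ then gives $z_{i}'\in\pi^{-1}(\pi(z_{i}))$ with $z_{i}'\to x'$. Since $\bQ^{[d]}(X)$ is closed, it is enough to prove the claim when $\bx$ is $\cF^{[d]}$-minimal. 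In that case, by \cref{thm: cube_is_minimal}(3), $\bx_{*}\in Y^{[d]}_{x}$, that is, $\bx_{*}\in\overline{\cF^{[d]}_{*}x^{[d]}_{*}}$.

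\textbf{Step 2 (using the order $d-1$ hypothesis).} Since $X_{d-1}$ is a factor of $Y$, we have $R_{\pi}\subseteq \mathcal{A}(\RP^{[d-1]}(X))=\NRP^{[d-1]}(X)$; here $\NRP^{[d-1]}$ is already a closed invariant equivalence relation by \cref{thm: NRP_properties}(2), and its quotient is $X_{d-1}$. Thus $(x,x')\in\NRP^{[d-1]}(X)$. By \cref{thm: NRP_properties}(1), this means $(x,x'^{[d]}_{*})\in\overline{\cF^{[d]}x^{[d]}}$, and by symmetry $(x',x^{[d]}_{*})\in \overline{\cF^{[d]}x'^{[d]}}\subseteq\bQ^{[d]}(X)$. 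So a cube exists with first coordinate $x'$ and every other coordinate equal to $x$. Applying nets $f_{i}\in\cF^{[d]}_{*}$ with $f_{i}x^{[d]}_{*}\to\bx_{*}$ to this cube, and passing to limits, we get $(x',\bx_{*})\in\bQ^{[d]}(X)$. This works because the face group $\cF^{[d]}$ fixes the $\emptyset$ coordinate, so $(e,f_{i})(x',x^{[d]}_{*})=(x',f_{i}x^{[d]}_{*})$, and $\bQ^{[d]}$ is closed and $\cF^{[d]}$-invariant.

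\textbf{Main obstacle.} I expect the delicate point to be Step 1. We must make sure the minimal-point approximation is compatible with moving the $\emptyset$-coordinate within its fiber, which is exactly where openness of $\pi$ is needed, and we must make sure $\bx_{*}$ lies in the $\cF^{[d]}_{*}$-orbit closure of $x^{[d]}_{*}$ rather than only in $\bQ_{x}^{[d]}$. If the uniqueness statement in \cref{thm: cube_is_minimal}(3) does not directly give this for minimal points, the fallback is to use the dense $G_{\delta}$ set from \cref{thm: cube_is_minimal}(5) in the metric case, where $Y_{x}^{[d]}=\bQ_{x}^{[d]}(X)$ for generic $x$, and to approximate through such $x$, again using openness of $\pi$.
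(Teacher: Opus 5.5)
Your proof is correct and uses the same ingredients as the paper: density of $\cF^{[d]}$-minimal points in $\bQ^{[d]}(X)$, the inclusion $R_\pi\subseteq\NRP^{[d-1]}(X)$ together with \cref{thm: NRP_properties}(1), and openness of $\pi$ for the approximation step. The same also holds for the observation that, for a minimal $\bx$, the orbit closure $\overline{\cF^{[d]}\bx}$ is a minimal subsystem of $\bQ^{[d]}_{x}(X)$ and hence equals $Y^{[d]}_{x}$, so the worry and fallback in your last paragraph are unnecessary. The only real difference is the order of the reductions: you first reduce to changing the $\emptyset$-coordinate of an arbitrary cube and only then approximate by minimal points. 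As a result, no $\cF^{[d]}$-minimality has to be tracked through the cube symmetries or through successive coordinate changes, a point the paper's ordering (minimal case first, then symmetry) leaves implicit.
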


\begin{proof}
    Let $\bx \in \bQ^{[d]}(X)$. Assume first that $\bx$ is a $\cF^{[d]}$-minimal point. In particular, we have $\bx \in \overline{\cF^{[d]}\bx_{\emptyset}^{[d]}}$ since $\overline{\cF^{[d]}\bx_{\emptyset}^{[d]}}$ is the unique $\cF^{[d]}$-minimal system in $\bQ^{[d]}[\bx_{\emptyset}]$, where $\bQ^{[d]}[\bx_{\emptyset}] = \{\by: \by_{\emptyset}=x_{\emptyset}\}$.
    
    By the invariance of $\bQ^{[d]}(X)$ under Euclidean permutations, to prove that $(\pi^{[d]})^{-1}(\pi^{[d]}(\bx)) \subseteq \bQ^{[d]}(X)$, it is enough to show that $(y,\bx_{*})\in \bQ^{[d]}(X)$ whenever $(\bx_{\emptyset},y)\in R_{\pi} \subseteq \NRP^{[d-1]}(X)$. By \cref{thm: cube_is_minimal}, we have $\overline{\cF^{[d]}_{*}y^{[d]}_{*}} = \overline{\cF^{[d]}_{*}\bx_{\emptyset *}^{[d]}}$. Therefore, \begin{align*}
        (y,\bx_{*}) \in \overline{\cF^{[d]}y^{[d]}} \subseteq \bQ^{[d]}(X).
    \end{align*}

    Thus, it follows that $(\pi^{[d]})^{-1}(\pi^{[d]}(\bx)) \subseteq \bQ^{[d]}(X)$.

    Now, we will prove the general case, where $\bx$ is not necessarily a $\cF^{[d]}$-minimal point. By \cref{thm: cube_is_minimal}, there exists a net $(\bx_{i})_{i\in I}$ in $\bQ^{[d]}(X)$ of $\cF^{[d]}$-minimal points such that $\bx_{i}\to \bx$. Since $\pi$ is an open map, it follows that\begin{align*}
        (\pi^{[d]})^{-1}(\pi^{[d]}(\bx_{i})) \to (\pi^{[d]})^{-1}(\pi^{[d]}(\bx)).
    \end{align*}

    Since $\bx_{i}$ is a $\cF^{[d]}$-minimal point, we have $(\pi^{[d]})^{-1}(\pi^{[d]}(\bx_{i})) \subseteq \bQ^{[d]}(X)$. Hence, we conclude that $(\pi^{[d]})^{-1}(\pi^{[d]}(\bx)) \subseteq \bQ^{[d]}(X)$.
\end{proof}

By \cref{thm: AG_diagram} and \cref{prop: cube_saturated}, we obtain the following theorem.

\begin{theorem}
        Let $(X, T)$ be a minimal metric system and $d\geq 2$ be an integer. Let $\pi:(X, T)\to (X_{d-1}, T)$ be the factor map to the maximal factor of order $d-1$. Then, there is a commutative diagram of factors of minimal systems

        \[\begin{tikzcd}
	X && {X'} \\
	\\
	{X_{d-1}} && {X_{d-1}'}
	\arrow["\pi"', from=1-1, to=3-1]
	\arrow["{\theta'}"', from=1-3, to=1-1]
	\arrow["{\pi'}", from=1-3, to=3-3]
	\arrow["\theta", from=3-3, to=3-1]
    \end{tikzcd}\]

    such that $\bQ^{[d]}(X') = ({\pi'}^{[d]})^{-1}(\bQ^{[d]}(X_{d-1}'))$, where $\theta$ and $\theta'$ are almost one-to-one extensions.
\end{theorem}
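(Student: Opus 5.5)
We apply \cref{thm: AG_diagram} to the factor $\pi:X\to X_{d-1}$. This yields minimal metric systems $X'$ and $X_{d-1}'$, almost one-to-one factors $\theta,\theta'$, and an open factor $\pi':X'\to X_{d-1}'$ that make the diagram commute. We then want to apply \cref{prop: cube_saturated} to $\pi'$. That requires knowing that the maximal factor of order $d-1$ of $X'$, call it $X'_{d-1}$, is a factor of $X_{d-1}'$; equivalently, $R_{\pi'}\subseteq \NRP^{[d-1]}(X')$.

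To prove that inclusion, we look at the composite $X'\to X\to X_{d-1}$. Since $\theta$ is almost one-to-one between minimal systems, it is proximal, and the same holds for $\theta'$. By \cref{thm: NRP_properties}(3), $(\theta')\times(\theta')$ maps $\NRP^{[d-1]}(X')$ onto $\NRP^{[d-1]}(X)$. The map $\pi\circ\theta'$ sends $X'$ onto a system of order $d-1$, so $X'_{d-1}$ factors onto $X_{d-1}$.

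The remaining point is that the induced map $X'_{d-1}\to X_{d-1}$ is an isomorphism. It is a factor whose domain has trivial $\NRP^{[d-1]}$, so it is distal; in particular $X'_{d-1}$ is a distal minimal system. On the other hand, $X'_{d-1}\to X_{d-1}$ is a factor of the proximal extension $\theta'$ composed with $\pi$. To make this precise, take $(a,b)\in X'\times X'$ with the same image in $X_{d-1}$. Then $(\theta'(a),\theta'(b))\in R_\pi=\NRP^{[d-1]}(X)$. By \cref{thm: NRP_properties}(3) there is $(a',b')\in\NRP^{[d-1]}(X')$ with $\theta'(a')=\theta'(a)$ and $\theta'(b')=\theta'(b)$. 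Consequently $(a,a')$ and $(b,b')$ are proximal, and $\P(X')\subseteq\NRP^{[d-1]}(X')$. Because $\NRP^{[d-1]}(X')$ is an equivalence relation, $(a,b)\in\NRP^{[d-1]}(X')$. This shows $R_{\pi\theta'}\subseteq \NRP^{[d-1]}(X')$, so $X'_{d-1}=X_{d-1}$ via $\pi\theta'$.

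Since $\pi\theta'=\theta\pi'$, the map $X'\to X'_{d-1}=X_{d-1}$ factors through $X_{d-1}'$. Hence $X'_{d-1}$ is a factor of $X_{d-1}'$, and \cref{prop: cube_saturated} gives that $\bQ^{[d]}(X')$ is $\pi'^{[d]}$-saturated.

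It then remains to identify $\pi'^{[d]}(\bQ^{[d]}(X'))$ with $\bQ^{[d]}(X_{d-1}')$. This is routine. The image of a closure of an orbit under a factor map between compact spaces is the closure of the image orbit, and every factor map sends $x^{[d]}$ to $\pi'(x)^{[d]}$ while intertwining the $\mathcal{HK}^{[d]}(T)$ actions. Therefore $\pi'^{[d]}(\bQ^{[d]}(X'))=\bQ^{[d]}(X'_{d-1})$. Combining this with saturation, $\bQ^{[d]}(X')=(\pi'^{[d]})^{-1}\bigl(\pi'^{[d]}(\bQ^{[d]}(X'))\bigr)=({\pi'}^{[d]})^{-1}(\bQ^{[d]}(X_{d-1}'))$.

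The main obstacle is the lifting step showing that $X'$ and $X$ share the maximal factor of order $d-1$. This uses \cref{thm: NRP_properties}(3) together with the fact that proximal pairs lie in $\NRP^{[d-1]}$; the rest is direct.
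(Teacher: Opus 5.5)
Your proposal is correct and follows the paper's route: apply the AG diagram to $\pi$, then apply \cref{prop: cube_saturated} to the open map $\pi'$. The paper leaves implicit the check that $R_{\pi'}\subseteq R_{\pi\theta'}\subseteq\NRP^{[d-1]}(X')$, and your lifting argument, using the proximality of $\theta'$ together with $\P(X')\subseteq\NRP^{[d-1]}(X')$, supplies it correctly. One small slip: in the image-of-cubes step you wrote $\bQ^{[d]}(X'_{d-1})$ where you mean $\bQ^{[d]}(X_{d-1}')$, an easy confusion given that you use $X'_{d-1}$ for the maximal factor of order $d-1$ of $X'$.
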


Thus, by \cref{lemma: composition_saturated}, we conclude the following theorem.

\begin{theorem}
    Let $(X, T)$ be a minimal metric system and $d\geq 2$ be an integer. Let $\pi:(X, T)\to (X_{d-1}, T)$ be the factor to the maximal factor of order $d-1$. Then, there is a commutative diagram of factors of minimal systems

    \[\begin{tikzcd}
	X && {X'} \\
	\\
	{X_{d-1}} && {X_{d-1}'}
	\arrow["\pi"', from=1-1, to=3-1]
	\arrow["{\theta'}"', from=1-3, to=1-1]
	\arrow["{\pi'}", from=1-3, to=3-3]
	\arrow["\theta", from=3-3, to=3-1]
    \end{tikzcd}\]

    such that $(X_{d-1}',T)$ is the topological characteristic factor along cubes of order $d$ of $(X',T)$, where $\theta,\theta'$ are almost one-to-one factors.
\end{theorem}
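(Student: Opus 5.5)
The plan is to combine the previous theorem with \cref{lemma: composition_saturated} and \cref{prop: suf_nec_condition_cube_saturated}. First I apply \cref{thm: AG_diagram} to $\pi:X\to X_{d-1}$. This gives minimal metric systems $X'$ and $X_{d-1}'$, almost one-to-one factors $\theta,\theta'$, and an open factor $\pi':X'\to X_{d-1}'$ making the diagram commute. Every factor of order $d-1$ of $X'$ factors through $\pi'$: the maximal factor of order $d-1$ of $X'$ is still $X_{d-1}$, composed with $\pi\theta'=\theta\pi'$. This holds because $\theta'$ is proximal and systems of order $d-1$ are distal, so proximal extensions cannot enlarge the maximal distal factor of order $d-1$. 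More precisely, $X'_{d-1}$-the maximal factor of order $d-1$ of $X'$-maps onto $X_{d-1}$ by a proximal map between distal minimal systems, hence it is an isomorphism. Therefore $(X')_{d-1}\cong X_{d-1}$ is a factor of $X_{d-1}'$, via $\theta$.

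Next I apply \cref{prop: cube_saturated} to the open factor $\pi':X'\to X_{d-1}'$, whose target has $(X')_{d-1}$ as a factor. This gives that $\bQ^{[d]}(X')$ is ${\pi'}^{[d]}$-saturated, i.e. $\bQ^{[d]}(X')=({\pi'}^{[d]})^{-1}(\bQ^{[d]}(X_{d-1}'))$, which is the content of the preceding theorem.

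Finally I pass from saturation of the full cube set to the defining property of a topological characteristic factor along cubes. By \cref{thm: cube_is_minimal}(5), there is a dense $G_\delta$ set $X_0'\subseteq X'$ with $\overline{\cF^{[d]}_*x^{[d]}_*}$ equal to the fiber $\bQ^{[d]}_x(X')$ for $x\in X_0'$. The fiber of a ${\pi'}^{[d]}$-saturated set over the first coordinate $x$ is ${\pi'}^{[d]}_*$-saturated. Concretely, if $\bx_*\in \bQ^{[d]}_x$ and $\pi'(\by_*)=\pi'(\bx_*)$, then $(x,\by_*)$ lies in the preimage of $\pi'^{[d]}(x,\bx_*)$, so $(x,\by_*)\in\bQ^{[d]}(X')$. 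Hence $\overline{\cF^{[d]}_*x^{[d]}_*}$ is ${\pi'}^{[d]}_*$-saturated for $x\in X_0'$, which is exactly the definition. This is essentially \cref{prop: suf_nec_condition_cube_saturated}; \cref{lemma: composition_saturated} is only needed if one wants to phrase saturation relative to the composite maps through $\theta$.

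The main obstacle is the first step: checking that the maximal factor of order $d-1$ of $X'$ really factors through $X_{d-1}'$. The hypothesis of \cref{prop: cube_saturated} concerns $X'$'s own maximal order-$(d-1)$ factor, not $X$'s. I expect to settle this via \cref{thm: NRP_properties}(3) together with proximality of $\theta'$ and $\P\subseteq\NRP^{[d-1]}$. Alternatively, one can use $R_{\pi'}\subseteq (\theta')^{-1}R_\pi$ and the lifting property $\theta'\times\theta'(\NRP^{[d-1]}(X'))=\NRP^{[d-1]}(X)$.
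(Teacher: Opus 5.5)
Your proposal is correct and follows the paper's route. You apply the AG diagram, then \cref{prop: cube_saturated} to the open map $\pi'$, and then pass from saturation of $\bQ^{[d]}(X')$ to the fiberwise definition via \cref{thm: cube_is_minimal}(5), which is exactly \cref{prop: suf_nec_condition_cube_saturated}. The paper leaves implicit the step $R_{\pi'}\subseteq\NRP^{[d-1]}(X')$, which is needed for \cref{prop: cube_saturated} to apply to $\pi'$; your second argument settles it correctly by lifting pairs of $R_\pi=\NRP^{[d-1]}(X)$ through $\theta'$ via \cref{thm: NRP_properties}(3), using that fibers of $\theta'$ consist of proximal pairs (hence lie in $\NRP^{[d-1]}(X')$), and using transitivity of $\NRP^{[d-1]}(X')$.
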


Moreover, by \cref{lemma: composition_saturated}, we conclude that the maximal factor of order $\infty$ is a topological characteristic factor along cubes of every order, modulo almost one-to-one factors.

\begin{theorem}\label{thm: cubic_characteristic_infty}
    Let $(X, T)$ be a minimal metric system and $d\geq 2$ be an integer. Let $\pi:(X, T)\to (X_{\infty}, T)$ be the factor to the maximal factor of order $\infty$. Then, there is a commutative diagram of factors of minimal systems

        \[\begin{tikzcd}
	X && {X'} \\
	\\
	{X_{\infty}} && {X_{\infty}'}
	\arrow["\pi"', from=1-1, to=3-1]
	\arrow["{\theta'}"', from=1-3, to=1-1]
	\arrow["{\pi'}", from=1-3, to=3-3]
	\arrow["\theta", from=3-3, to=3-1]
    \end{tikzcd}\]

    such that $(X_{\infty}',T)$ is the topological characteristic factor along cubes of order $d$ of $(X',T)$, where $\theta,\theta'$ are almost one-to-one factors.
\end{theorem}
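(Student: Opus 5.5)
We will follow the same scheme as for the order $d-1$ case: take the AG diagram for $\pi\colon X\to X_{\infty}$ (\cref{thm: AG_diagram}), with $\theta,\theta'$ almost one-to-one, $\pi'\colon X'\to X_{\infty}'$ open, and then apply \cref{prop: cube_saturated} to $\pi'$. The point to check is that the hypothesis of \cref{prop: cube_saturated} holds for $\pi'$, that is, that the maximal factor $X'_{d-1}$ of order $d-1$ of $X'$ is a factor of $X_{\infty}'$.

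For this, we first compare $X'$ with $X$. Since $\theta'$ is almost one-to-one between minimal systems, it is proximal. Hence $R_{\theta'}\subseteq \P(X')\subseteq \NRP^{[k]}(X')$ for every $k$. The maximal factor of order $\infty$ is given by the relation $\bigcap_k \NRP^{[k]}$, so $(X')_\infty$ is a factor of $X$ through $\theta'$. By \cref{thm: NRP_properties}(3), this factor coincides with $X_\infty$, the map being $\pi\circ\theta'$. Consequently $R_{\pi\theta'}=\bigcap_k \NRP^{[k]}(X')\subseteq \NRP^{[d-1]}(X')$.

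Next, by commutativity $\pi\circ\theta'=\theta\circ\pi'$, so $R_{\pi'}\subseteq R_{\pi\theta'}\subseteq \NRP^{[d-1]}(X')$. Therefore the quotient map $X'\to X'_{d-1}$ factors through $\pi'$, and $X'_{d-1}$ is a factor of $X'_\infty$. Only this inclusion $R_{\pi'}\subseteq\NRP^{[d-1]}(X')$ is actually used in the proof of \cref{prop: cube_saturated}, together with openness of $\pi'$. Applying \cref{prop: cube_saturated} then gives that $\bQ^{[d]}(X')$ is ${\pi'}^{[d]}$-saturated. By \cref{prop: suf_nec_condition_cube_saturated}, $(X_\infty',T)$ is a topological characteristic factor along cubes of order $d$ of $(X',T)$.

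The main obstacle is the identification of the maximal $\infty$-factor of $X'$ with that of $X$, that is, the step $R_{\pi\theta'}\subseteq \NRP^{[d-1]}(X')$. We will argue it via the lifting property \cref{thm: NRP_properties}(3): given $(x_1,x_2)\in R_{\pi\theta'}$, the pair $(\theta'(x_1),\theta'(x_2))$ lies in $\NRP^{[d-1]}(X)$. Lifting gives $(x_1',x_2')\in\NRP^{[d-1]}(X')$ with $\theta'(x_i')=\theta'(x_i)$. Each $(x_i,x_i')$ is then a proximal pair, hence lies in $\NRP^{[d-1]}(X')$. Since $\NRP^{[d-1]}(X')$ is an equivalence relation, transitivity gives $(x_1,x_2)\in \NRP^{[d-1]}(X')$. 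Alternatively, as the paper indicates, one can deduce the result from the order $d-1$ theorem together with \cref{lemma: composition_saturated}, using that $X_{d-1}$ is a factor of $X_\infty$.
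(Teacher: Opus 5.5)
Your proof is correct, but it follows a different route from the paper's. The paper gives no separate argument for this statement. It obtains it in one line from the order-$(d-1)$ theorem via \cref{lemma: composition_saturated}(1): saturation with respect to the coarser map onto (a modification of) $X_{d-1}$ is inherited by the finer map onto (a modification of) $X_\infty$. You mention this only as an alternative.

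Your main argument instead takes the AG diagram of $X\to X_\infty$ itself and applies \cref{prop: cube_saturated} directly to the open map $\pi'$. All the work then goes into the hypothesis $R_{\pi'}\subseteq\NRP^{[d-1]}(X')$. You verify it correctly from $\pi\theta'=\theta\pi'$, the lifting property \cref{thm: NRP_properties}(3), proximality of $\theta'$, and transitivity of $\NRP^{[d-1]}(X')$.

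Your route is self-contained and uses a single AG diagram. It also makes explicit a fact the paper uses tacitly already in the order-$(d-1)$ case: the almost one-to-one modification $\theta'$ does not change the order-$(d-1)$ structure of $X'$.

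The paper's route is shorter and explains conceptually why any factor between $X$ and $X_{d-1}$ works. Read literally, however, it needs the two AG diagrams (for $X\to X_{d-1}$ and for $X\to X_\infty$) to be reconciled. One way is to map the top space of the order-$(d-1)$ diagram into the product of its base with $X_\infty$, and check that the image is almost one-to-one over $X_\infty$. Your direct argument avoids this step.
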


\subsection{Topological characteristic factor along arithmetic progressions}\label{subsec: characteristic_arithmetic}

In this subsection we show that the system of order $d-1$ is the topological characteristic factor along arithmetic progressions of order $d$ (or the topological characteristic factor of order $d$) for finitely generated abelian actions. Throughout, we assume that in every system $(X,T)$, the acting group $T$ is a finitely generated abelian group.

\begin{definition}
    Let $(X, T)$ be a metric system and let $\pi:(X, T)\to (Y, T)$ be a factor. The system $(Y, T)$ is said to be a {\em topological characteristic factor of order $d$} if there exists a dense $G_{\delta}$ set $X_{0}$ of $X$ such that for each $x\in X_{0}$ the set $\overline{\tau_{d}x^{(d)}}$ is $\pi^{(d)}$-saturated.
\end{definition}

\begin{lemma}\label{lemma: RPd_finite_index}
    Let $(X,T)$ be a system and let $d\geq 1$ be an integer. If $H$ is a subgroup of finite index of $T$, then $\RP^{[d]}(X,T)=\RP^{[d]}(X,H)$.
\end{lemma}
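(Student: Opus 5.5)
The inclusion $\RP^{[d]}(X,T)\subseteq\RP^{[d]}(X,H)$ is the nontrivial one; the converse is immediate from the definition. Indeed, $\cF^{[d]}(H)\subseteq\cF^{[d]}(T)$, so any nets $f_i\in\cF^{[d]}(H)$, $x_i,y_i$ witnessing $(x,y)\in\RP^{[d]}(X,H)$ also witness membership in $\RP^{[d]}(X,T)$. For the hard direction I would work with the uniform (entourage) form of the definition: $(x,y)\in\RP^{[d]}(X,T)$ if and only if for every $\alpha\in\mathcal{U}_X$ there are $x',y'\in X$ and $f\in\cF^{[d]}(T)$ with $(x,x'),(y,y')\in\alpha$ and $(f_\epsilon x',f_\epsilon y')\in\alpha$ for all $\epsilon\neq\emptyset$.

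The key structural fact is a decomposition of $\cF^{[d]}(T)$. It is generated by the elements $t^{(\alpha_j)}$, where $\alpha_j=\{\epsilon:\epsilon_j=1\}$ are the upper facets. Since $T$ is abelian, every $f\in\cF^{[d]}(T)$ can be written as $f_\epsilon=\prod_{j\in\epsilon}t_j$ with $t_1,\dots,t_d\in T$. Choose a finite set $C=\{c_1,\dots,c_m\}$ of coset representatives of $H$ in $T$, and write $t_j=h_jc_{k_j}$ with $h_j\in H$. Then $f=g\cdot c$, where $g_\epsilon=\prod_{j\in\epsilon}h_j$ lies in $\cF^{[d]}(H)$ and $c_\epsilon=\prod_{j\in\epsilon}c_{k_j}$ ranges over a finite set $\mathcal C\subseteq T^{[d]}_*$. (If $H$ is not normal, one can pass to its normal core, which still has finite index, and enlarge $\mathcal C$ accordingly.)

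Given $\alpha$, I would use that the finitely many homeomorphisms $c_\epsilon^{-1}$, for $c\in\mathcal C$ and $\epsilon\neq\emptyset$, are uniformly continuous. This gives $\beta\subseteq\alpha$ such that $(a,b)\in\beta$ implies $(c_\epsilon^{-1}a,c_\epsilon^{-1}b)\in\alpha$ for all these $c,\epsilon$. Applying $T$-regional proximality with $\beta$ yields $x',y',f=gc$. Then $(g_\epsilon x',g_\epsilon y')=(c_\epsilon^{-1}f_\epsilon x',c_\epsilon^{-1}f_\epsilon y')\in\alpha$, so $g\in\cF^{[d]}(H)$ witnesses the $\alpha$-condition for $H$. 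Letting $\alpha$ shrink gives $(x,y)\in\RP^{[d]}(X,H)$.

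The main obstacle is the decomposition step, specifically writing $f$ as an $\cF^{[d]}(H)$ element times an element of a finite set. This requires care with commutativity, which we have because $T$ is abelian, and with the reordering of face coordinates, which is harmless since $f_\epsilon$ is a product over $j\in\epsilon$ in an abelian group. No minimality or metrizability is needed. The remaining step is to check that the entourage formulation coincides with the net definition given in the paper, which is standard.
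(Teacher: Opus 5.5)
Your proof is correct and is essentially the paper's argument: the paper likewise uses that $\cF^{[d]}_*(H)$ has finite index in $\cF^{[d]}_*(T)$, fixes coset representatives $f_1,\dots,f_n$, chooses an entourage on which all $f_i^{-1}$ are uniformly controlled, and removes the coset representative from a $T$-witness to obtain an $H$-witness. Your explicit decomposition $t_j=h_jc_{k_j}$ supplies the justification, which uses that $T$ is abelian, of the finite-index claim the paper states without proof; the normal-core remark is unnecessary, since $T$ is abelian throughout that section.
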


\begin{proof}
    Clearly $\RP^{[d]}(X,H)\subseteq \RP^{[d]}(X,T)$. Conversely, let $(x,y)\in \RP^{[d]}(X,T)$. Let $\alpha\in \mathcal{U}_{X}$ and let $\boldsymbol{\alpha}\in \mathcal{U}_{X_{*}^{[d]}}$.

    Note that $\mathcal{F}_{*}^{[d]}(H)$ is subgroup of finite index of $\mathcal{F}_{*}^{[d]}(T)$. Then there exist $f_{1},\dots,f_{n} \in \mathcal{F}_{*}^{[d]}(T)$ such that \begin{align*}
        \mathcal{F}_{*}^{[d]}(T) = \bigcup_{i=1}^{n}f_{i}\mathcal{F}_{*}^{[d]}(H).
    \end{align*}

    Consider $\boldsymbol{\beta}\in \mathcal{U}_{X^{[d]}_{*}}$ such that $(f_{i}^{-1}\boldsymbol{a},f_{i}^{-1}\boldsymbol{b})\in \boldsymbol{\alpha}$ whenever $(\boldsymbol{a},\boldsymbol{b})\in \boldsymbol{\beta}$, for every $i=1,\dots,n$. 
    
    Since $(x,y)\in \RP^{[d]}(X)$, there exist $x',y'\in X$ and $f\in \cF^{[d]}_{*}(T)$ such that $(x,x'),(y,y')\in \alpha$ and $(f{x'}^{[d]}_{*},f{y'}^{[d]}_{*})\in \boldsymbol{\beta}$. Note that $f=f_{i}f'$ for some $f'\in \mathcal{F}^{[d]}(H)$ and for some $i\in \{1,\dots,n\}$. Therefore, we have $(f'{x'}^{[d]}_{*},f'{y'}^{[d]}_{*})\in \boldsymbol{\alpha}$, and hence $(x,y)\in \RP^{[d]}(X,H)$.
\end{proof}

Now, we show that $N_{d}$ is saturated with respect to the quotient map to  $\RP^{[\infty]}$, modulo almost one-to-one factor. To prove this, we use the key idea of the proof of {\cite[Lemma 4.1]{Glasner_Huang_Shao_Weiss_Ye_Topological_characteristic_factors:2020}}.

\begin{proposition}\label{prop: Nd_saturated_infty_nil}
    Let $\pi: (X, T)\to (Y, T)$ be a factor of minimal systems and $d\geq 2$ be an integer. If $\pi$ is open and $X_{\infty}$ is a factor of $Y$, where $X_{\infty}$ is the maximal factor of order $\infty$ of $X$, then $N_{d}(X)$ is $\pi^{(d)}$-saturated.
\end{proposition}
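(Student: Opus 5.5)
We follow the pattern of \cref{prop: cube_saturated}. First we reduce to $\tau_d$-minimal points, then we pass to all of $N_d(X)$ by openness of $\pi$. By \cref{thm: Nd_minimal}, $\tau_d$-minimal points are dense in $N_d(X)$. Since $\pi$ is open, $\pi^{(d)}$ is open, so the map $\bx\mapsto(\pi^{(d)})^{-1}(\pi^{(d)}(\bx))$ is continuous for the Hausdorff topology. It therefore suffices to show that $(\pi^{(d)})^{-1}(\pi^{(d)}(\bx))\subseteq N_d(X)$ for every $\tau_d$-minimal point $\bx=(x_1,\dots,x_d)\in N_d(X)$; closedness of $N_d(X)$ then handles limits exactly as before. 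Replacing one coordinate at a time, and noting that the hypotheses are preserved at each step, it is enough to show the following: if $\bx$ is a $\tau_d$-minimal point of $N_d(X)$ and $(x_j,y)\in R_\pi$, then the point $\bx'$ obtained by replacing $x_j$ by $y$ lies in $N_d(X)$. Here $R_\pi\subseteq \RP^{[\infty]}(X)=\bigcap_k\RP^{[k]}(X)$, because $X_\infty$ is a factor of $Y$.

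To prove this replacement statement we adapt the key idea of \cite[Lemma 4.1]{Glasner_Huang_Shao_Weiss_Ye_Topological_characteristic_factors:2020}, using the recurrence characterization of \cref{thm: RPd_recurrence_sets} in place of the saturation theorem. Fix neighbourhoods $U_i\ni x_i$ for $i\ne j$ and $V\ni y$. We want $t\in T$ and $z\in X$ with $t^iz\in U_i$ for $i\neq j$ and $t^jz\in V$. By $\tau_d$-minimality of $\bx$, the return-time set $S=\{t: (t x_1,\dots,t^dx_d)\in \prod U_i'\}$ is syndetic, where the $U_i'$ are smaller neighbourhoods. Because $(x_j,y)\in\RP^{[k]}$ for a large $k$ (we take $k$ of the order of $d$ times the relevant nilpotency), \cref{thm: RPd_recurrence_sets} (1)$\Rightarrow$(3) gives that $N(x_j,V)$ is a set of $k$-topological recurrence. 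It also meets every Nil$_k$ Bohr$_0$ set syndetically, by the remark after \cref{thm: characterization_NRP_Nil_bohr_sets}.

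The plan is to apply this to the minimal system $(N_d(X),\langle\sigma_d,\tau_d\rangle)$, or to an auxiliary minimal system built from $\overline{\tau_d\bx}$ and the orbit of $x_j$. This should produce $s\in T$ with $sx_j\in V$ together with a correction $t$ in the $\tau_d$-return set of $\bx$ such that $t^i s$ acts compatibly on the other coordinates. The intended mechanism is to choose $s$ inside a set of the form $\{s: s^{(d)}\bx \text{ close to } \bx\}$ intersected with $N(x_j,V)$. The first set, restricted to coordinates $i\ne j$, should contain a Nil Bohr$_0$-type set, or at least be a set with which $N(x_j,V)$ must intersect because of the $\RP^{[\infty]}$ property. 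Finite-index subgroups are needed when passing from $t$ to powers $t^i$, and \cref{lemma: RPd_finite_index} lets us pass to such subgroups without changing $\RP^{[k]}$. \cref{thm: Nd_minimal} for general exponents $a_1,\dots,a_d$ supplies minimality of the intermediate systems in which we place the point.

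The main obstacle is this combinatorial step: showing that the set of times moving only the $j$-th coordinate near $y$, while keeping the others near $x_i$, is nonempty. This requires that the set of return times of $\bx$ restricted to the other coordinates be "large enough" (a Nil$_k$ Bohr$_0$ set up to a thin set, or a set of $k$-recurrence dual) to intersect $N(x_j,V)$. I would first try to handle it by induction on $d$. Fixing the coordinates other than $j$ reduces to a configuration $N_{a_1,\dots,a_{d-1}}(X)$ of length $d-1$, which is saturated by induction. The remaining freedom in the $j$-th coordinate is then controlled through \cref{thm: characterization_NRP_Nil_bohr_sets}, since $(x_j,y)\in\NRP^{[k]}$ for all $k$.
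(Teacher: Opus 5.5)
\textbf{There is a genuine gap.} You have the right ingredients (\cref{lemma: RPd_finite_index}, general exponents in \cref{thm: Nd_minimal}, and (1)$\Rightarrow$(3) of \cref{thm: RPd_recurrence_sets}). But you leave open exactly the step the proof consists of: finding a single time that sends the $j$-th coordinate into $V$ while the other coordinates return near the $x_i$. The mechanisms you sketch do not close it.

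Intersecting $N(x_j,V)$ with $\{s: sx_i\in U_i \text{ for } i\neq j\}$ uses only the diagonal action $\sigma_d$, and this cannot work. If $X$ is distal, every point of $X^d$ is $\tau_d$-minimal, so $\bx=x^{(d)}$ is admissible. Then $sx\in U_i$ and $sx\in V$ are incompatible once $U_i\cap V=\emptyset$, yet the proposition asserts $(y,x,\dots,x)\in N_d(X)$.

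Moreover, that return-time set belongs to arbitrary points of $X$, not to a nilsystem. It is not a Nil$_k$ Bohr$_0$ set, so \cref{thm: characterization_NRP_Nil_bohr_sets} says nothing about it. A set of topological recurrence is only guaranteed to meet sets of the form $\{t: W\cap tW\cap\dots\cap t^kW\neq\emptyset\}$ in a minimal system, not arbitrary syndetic sets. Your induction on $d$ likewise gives no way to move one coordinate alone.

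A smaller issue: you iterate the one-coordinate replacement at $\tau_d$-minimal points, but the modified point is no longer minimal. Prove the one-coordinate statement for all of $N_d(X)$ first (minimal points plus openness of $\pi$), and only then iterate over $j$.

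The missing idea is a subgroup that freezes the $j$-th coordinate:
\begin{itemize}
\item Let $S_j=\{(t^{j-1},\dots,t,e,t^{-1},\dots,t^{-(d-j)}):t\in T\}$. Since $\langle\sigma_d,S_j\rangle=\langle\sigma_d,\tau_d\rangle$, \cref{thm: Nd_minimal} (exponents $a_i=j-i$) makes $S_j$-minimal points, not $\tau_d$-minimal ones, dense in $N_d(X)$.
\item For such $\bx$, $Z=\overline{S_j\bx}$ is $S_j$-minimal, and every point of $Z$ has $j$-th coordinate $x_j$.
\item From $(x_j,y)\in\RP^{[d']}(X,T^{j(d-1)!})$ (\cref{lemma: RPd_finite_index}) and (1)$\Rightarrow$(3) of \cref{thm: RPd_recurrence_sets}, one gets a single $t$ with $t^{j(d-1)!}x_j\in V$ and $U\cap\boldsymbol{s}_jU\cap\dots\cap\boldsymbol{s}_j^{d'}U\neq\emptyset$ in $(Z,S_j)$. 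Here $U=Z\cap(U_1\times\dots\times U_d)$ and $\boldsymbol{s}_j$ is the element of $S_j$ given by $t$.
\item With $m=(d-1)!$, each $\ell_i=im/(j-i)$ for $i\neq j$ is an integer with $|\ell_i|\le d!$. So for $d'$ large (the paper takes $d'=2d!(d-1)!$) there is $\bz\in Z$ with $\boldsymbol{s}_j^{\ell_i}\bz\in U$ for all $i\neq j$.
\item Then $(t^mz_1,\dots,t^{dm}z_d)\in N_d(X)$ has $i$-th coordinate $t^{(j-i)\ell_i}z_i\in U_i$ and $j$-th coordinate $t^{jm}x_j\in V$, which is the required approximation.
\end{itemize}
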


\begin{proof}
    Let $1\leq j \leq d$ and define\begin{align*}
        S_{j} = \{(t^{j-1},t^{j-2},\dots,t,e,t^{-1},\dots,t^{-(d-j)}):t\in T\},
    \end{align*}

    where $e$ is the identity of $T$. By \cref{thm: Nd_minimal}, the system $(N_{d}(X),\langle \sigma_{d},S_{j}\rangle)$ is minimal, and the set of $S_{j}$-minimal points of $N_{d}(X)$ is dense in $N_{d}(X)$.

    Let $\bx = (x_{1},x_{2},\dots,x_{d})\in N_{d}(X)$ be an $S_{j}$-minimal point and let $y\in \pi^{-1}(\pi(x_{j}))$. Set $Z= \overline{S_{j}\bx}$ and let \begin{align*}
        U = Z \cap U_{1}\times U_{2}\times \dots\times U_{d},
    \end{align*}

    where $U_{1}\times U_{2}\times \dots \times U_{d}$ is a neighborhood of $\bx$.

    Since $X_{\infty}$ is a factor of $Y$, it follows that $(x_{j},y)\in \RP^{[d']}(X)$ with $d'=2d!(d-1)!$. By \cref{lemma: RPd_finite_index}, we have that $(x_{j},y)\in \RP^{[d']}(X,T^{j(d-1)!})$. Note that \cref{thm: RPd_recurrence_sets} holds for systems consisting of finitely many minimal subsystems. In particular, since $T^{j(d-1)!}$ is a subgroup of finite index, \cref{thm: RPd_recurrence_sets} holds for $(X,T^{j(d-1)!})$. Therefore, $N_{T^{j(d-1)!}}(x_{j},V)\in \sF_{\operatorname{Bir}_{d'}}$ for each neighborhood $V$ of $y$. Since $(Z,S_{j})$ is a minimal system, there exists $t\in T$ such that\begin{align*}
        t^{j(d-1)!}x_{j}\in V,\\
        U \cap \boldsymbol{s}_{j}U\cap \boldsymbol{s}_{j}^{2}U\cap \dots\cap \boldsymbol{s}_{j}^{d'}U\neq \emptyset,
    \end{align*}

    where $\boldsymbol{s}_{j} = (t^{j-1},t^{j-2},\dots,t,e,t^{-1},\dots,t^{-(d-j)})$. 
    
    Since $d'=2d!(d-1)!$, there exists $\bz \in Z$ such that\begin{align*}
        \boldsymbol{s}\bz \in U_{1}\times U_{2}\times \dots\times U_{j-1}\times V\times U_{j+1} \dots U_{d},
    \end{align*}

    where $\boldsymbol{s} = (t^{(d-1)!},t^{2(d-1)!},\dots, t^{j(d-1)!},\dots,t^{d(d-1)!})$. 
    
    Thus, there exist a net $(\bz_{i})_{i\in I}$ in $Z$ and a net $(t_{i})_{i\in I}$ in $T$ such that \begin{align*}
        \boldsymbol{s}_{i}\bz_{i} \to (x_{1},x_{2},\dots,x_{j-1},y,x_{j+1},\dots,x_{d}),
    \end{align*}

    where $\boldsymbol{s}_{i} = (t_{i}^{(d-1)!},t_{i}^{2(d-1)!},\dots, t_{i}^{j(d-1)!},\dots,t_{i}^{d(d-1)!})$. 
    
    Since $Z\subseteq N_{d}(X)$, we obtain \begin{align*}
        (x_{1},x_{2},\dots,x_{j-1},y,x_{j+1},\dots,x_{d}) \in N_{d}(X).
    \end{align*}

    Now, we will prove the general case, where $\bx$ is not necessarily a $S_{j}$-minimal point. Let $\bx = (x_{1},\dots,x_{d})\in N_{d}(X)$ and let $y\in \pi^{-1}(\pi(x_{j}))$. By \cref{thm: Nd_minimal}, there exists a net $(\bx_{i})_{i\in I}$ in $N_{d}(X)$ of $S_{j}$-minimal points $\bx_{i}=(\bx_{i,1},\dots,\bx_{i,d})\to \bx$. By the definition of $S_{j}$, we have $\bx_{i,j}=x_{j}$. Since $\pi$ is an open map, it follows that\begin{align*}
        \pi^{-1}(\pi(\bx_{i,j}))\to \pi^{-1}(\pi(x_{j})).
    \end{align*}

    Therefore, there exists a net $(y_{i})_{i\in I}$ in $\pi^{-1}(\pi(\bx_{i,j}))$ with $y_{i}\to y$. Since $\bx_{i}$ is a $S_{j}$-minimal point and $\pi(\bx_{i,j})=\pi(y_{i})$, we have that \begin{align*}
        (\bx_{i,1},\bx_{i,2},\dots,\bx_{i,j-1},y_{i},\bx_{i,j+1},\dots,\bx_{i,d}) \in N_{d}(X).
    \end{align*}

    Since $N_{d}(X)$ is closed, we obtain \begin{align*}
        (x_{1},x_{2},\dots,x_{j-1},y,x_{j+1},\dots,x_{d}) \in N_{d}(X).
    \end{align*}

    Hence, we conclude that $(\pi^{(d)})^{-1}(\pi^{(d)}(\bx))\subseteq N_{d}(X)$.
\end{proof}

Now, to pass from the factor of order $\infty$ to the factor of order $d-1$, we follow the ideas developed in \cite{Ye_Yu_polynomial_saturation:2025}. Since the proof for the distal case is essentially the same as that of {\cite[Theorem 3.6]{Ye_Yu_polynomial_saturation:2025}}, we omit it.

\begin{proposition}\label{prop: Nd_saturated_d_nil}
    Let $(X,T)$ be a minimal distal metric system and $d\geq 2$ be an integer. Then $N_{d+1}(X)$ is $\pi^{(d+1)}_{d-1}$-saturated, where $\pi_{d-1}:X \to X_{d-1}$ is the factor map from $X$ to its maximal factor of order $d-1$.
\end{proposition}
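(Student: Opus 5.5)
We plan to follow the strategy of Ye and Yu, combining \cref{prop: Nd_saturated_infty_nil} with an induction along the tower of maximal factors $X_{\infty}\to\cdots\to X_{d}\to X_{d-1}$. For a distal minimal system all factor maps are open, so no AG-diagram modification is needed. Applying \cref{prop: Nd_saturated_infty_nil} to the factor $X\to X_{\infty}$ (with $d+1$ in place of $d$) shows that $N_{d+1}(X)$ is $\pi_{\infty}^{(d+1)}$-saturated. By \cref{lemma: composition_saturated}, the problem then reduces to the system $X_{\infty}$ itself: it is enough to prove that $N_{d+1}(X_{\infty})$ is saturated with respect to $X_{\infty}\to X_{d-1}$. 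Writing this map as an inverse limit of the maps $X_{k}\to X_{d-1}$, and using that saturation passes to inverse limits because saturated sets are closed, it suffices to show for each $k\geq d$ that $N_{d+1}(X_{k})$ is saturated over $X_{k}\to X_{k-1}$. Chaining these down to $X_{d-1}$ via \cref{lemma: composition_saturated} then finishes.

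For the key step we take $k\geq d$ and $Z=X_{k}$, a system of order $k$. By the theorem on group factors, $Z\to Z_{k-1}=Z/\RP^{[k-1]}$ is a group factor with a compact abelian group $K$; for $k\geq 2$ it is central in the nilpotent structure. So we must show that $N_{d+1}(Z)$ is invariant under $K_{j}$, the action of $K$ in a single coordinate $j$. The set $N_{d+1}(Z)$ is already invariant under the diagonal $\Delta^{(d+1)}(K)$, since the $K$-action commutes with $T$. The aim is therefore to show that the closed subgroup $L=\{\mathbf{k}\in K^{d+1}:N_{d+1}(Z)\mathbf{k}=N_{d+1}(Z)\}$ is all of $K^{d+1}$.

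We plan to prove this by a polynomial/vertical-character argument, as in Ye and Yu. Suppose a character $\chi$ of $K^{d+1}$ annihilates $L$, say $\chi(k_{0},\dots,k_{d})=\prod_{i}\chi_{i}(k_{i})$. Fibers over $Z_{k-1}$ are $K$-orbits. Moving along the progressions $(x,tx,\dots,t^{d}x)$ and taking fiberwise information, we obtain a "cocycle" for the $K$-coordinate that is a polynomial of degree $\leq k$ in $t$ (for $k$-step nilsystems the vertical part of $t^{i}x$ behaves like $\binom{i}{k}$ times a degree-$k$ term plus lower-order data). Because $k\geq d$, the vectors $(\binom{i}{k})_{i=0}^{d}$ together with the lower-order contributions span every direction modulo $K_{k-1}$. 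Concretely, the Vandermonde-type matrix $\big(\binom{i}{m}\big)_{0\leq i\leq d,\,0\leq m\leq k}$ has full rank $d+1$ when $k\geq d$. This should force every $\chi_{i}$ to be trivial, hence $L=K^{d+1}$. In this step we will use the restriction to distal systems, in particular that the Ellis group of $Z$ is a group, to pick minimal points freely, and \cref{thm: Nd_minimal} to handle translates $\tau_{a_{0},\dots,a_{d}}$.

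The main obstacle is making the polynomial computation rigorous without an explicit nilmanifold description, since \cref{thm: structure_thm_NRP} is needed to realize $Z$ as an inverse limit of nilsystems. Our plan is to invoke \cref{thm: structure_thm_NRP} (the group $T$ is finitely generated), reduce to a single $k$-step nilsystem $G/\Gamma$ with $K$ identified with the last nontrivial term $G_{k}/(G_{k}\cap\Gamma)$, and then carry out the Leibman-type computation of orbit closures of $(g^{a_{0}},\dots,g^{a_{d}})$ in $(G/\Gamma)^{d+1}$ for each generator $g$ separately. This is exactly Ye and Yu's argument, which transfers to finitely generated abelian $T$ coordinatewise.
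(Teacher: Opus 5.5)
Your reductions are sound and match the route the paper indicates (it omits the argument, referring to Ye--Yu). Distality makes $X\to X_\infty$ open, \cref{prop: Nd_saturated_infty_nil} gives $\pi_\infty^{(d+1)}$-saturation, and \cref{lemma: composition_saturated} reduces to $X_\infty$. From there you pass to the steps $X_k\to X_{k-1}$ with $k\ge d$, and via \cref{thm: structure_thm_NRP} to single $k$-step nilsystems. (For the inverse-limit step, the fact you actually use is $N_{d+1}(X_\infty)=\bigcap_k (p_k^{(d+1)})^{-1}(N_{d+1}(X_k))$. This holds by compactness, since $p_k^{(d+1)}(N_{d+1}(X_\infty))=N_{d+1}(X_k)$.)

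So everything rests on the nilsystem step, and there your plan fails for $T\cong\Z^r$ with $r\ge 2$. Orbit closures computed generator by generator need not carry any vertical freedom, so nothing transfers ``coordinatewise''. Take $T=\Z^2$ acting on $\mathbb{T}^3$ by $e_1(y_1,y_2,z)=(y_1+\alpha,y_2,z+y_2)$ and $e_2(y_1,y_2,z)=(y_1,y_2+\alpha,z+y_1)$, with $\alpha\notin\mathbb{Q}$. The maps commute, the action is a minimal $2$-step nilsystem, and its maximal factor of order $1$ is the projection to $(y_1,y_2)\in\mathbb{T}^2$. Along $e_1$-progressions the vertical coordinates are $z+(m+in)y_2$. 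Hence the closure of $\{(e_1^mx,e_1^{m+n}x,e_1^{m+2n}x)\}$ lies in $\{z_0-2z_1+z_2=0\}$, and the same holds for $e_2$. Neither generator gives invariance of $N_3$ under a vertical shift in a single coordinate. Saturation of $N_3(X)$ comes only from mixed elements $t=e_1^ae_2^b$, whose progressions have vertical second difference $2ab\alpha$. It also needs joint equidistribution of the quadratic polynomials in $(s,t)\in\Z^2\times\Z^2$ describing $(sx,stx,st^2x)$.

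The missing ingredient is a genuinely multiparameter orbit-closure theorem. For a minimal nilsystem $(G/\Gamma,T)$ with $G$ spanned by $G^0$ and $\phi(T)$, one needs that $N_{d+1}(G/\Gamma)$ is the orbit of $x^{(d+1)}$ under the Hall--Petresco group $\{(g(0),\dots,g(d))\}$. Here the $g$ range over polynomial sequences adapted to the lower central series of this $G$. This is Leibman's theorem for $\Z^r$, replacing the Ziegler/Bergelson--Host--Kra result used for $\Z$.

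Only after that does your rank observation apply, and its mechanism is not the one in your heuristic. For $k>d$ the vectors $(\binom{i}{k})_{i=0}^{d}$ are zero. What matters is that $G_k\subseteq G_m$, so $n\mapsto h^{\binom{n}{m}}$ is admissible for $h\in G_k$ and every $m\le d$. Since $(\binom{i}{m})_{0\le i,m\le d}$ is unitriangular, this puts $G_k^{\,d+1}$ inside the Hall--Petresco group. Your character paragraph, as it stands, does not prove $L=K^{d+1}$ on its own.
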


Finally we can show the general case using the following lemmas.

\begin{lemma}[{\cite[Proposition 3.1]{Veech_point-distal_flows:1970}}]\label{lemma: density_preimage}
    Let $\pi:(X,T)\to(Y,T)$ be a factor between metric minimal systems. If $X_{0}$ is a dense $G_{\delta}$ set of $X$, then there is a dense $G_{\delta}$ set $Y_{0}$ of $Y$ such that for each $y\in Y_{0}$, $X_{0}\cap \pi^{-1}(y)$ is a dense $G_{\delta}$ set of $\pi^{-1}(y)$.
\end{lemma}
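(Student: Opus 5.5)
The idea is to write $X_{0}$ as a countable intersection of open dense sets. Then I reduce fibrewise density to a countable family of open dense conditions on $Y$, using that factor maps between minimal systems are semi-open. Write $X_{0}=\bigcap_{n\ge1}U_{n}$ with each $U_{n}$ open and dense in $X$, and fix a countable base $\{V_{k}\}_{k\ge1}$ of $X$ (available since $X$ is metric).

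For every $y$, the set $X_{0}\cap\pi^{-1}(y)=\bigcap_{n}(U_{n}\cap\pi^{-1}(y))$ is automatically a $G_{\delta}$ subset of the fibre. Since $\pi^{-1}(y)$ is compact, hence Baire, it suffices to find a dense $G_{\delta}$ set $Y_{0}\subseteq Y$ such that for each $y\in Y_{0}$ and each $n$, the set $U_{n}\cap\pi^{-1}(y)$ is dense in $\pi^{-1}(y)$.

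\textbf{Step 1 (semi-openness).} I first check that for any nonempty open $W\subseteq X$, the set $\pi(W)$ has nonempty interior. Choose a nonempty open $W'$ with $\overline{W'}\subseteq W$. By minimality there are $t_{1},\dots,t_{m}\in T$ with $X=\bigcup_{i}t_{i}W'$, hence $Y=\bigcup_{i}t_{i}\pi(\overline{W'})$. This is a finite union of closed sets, so by the Baire category theorem some $t_{i}\pi(\overline{W'})$ has nonempty interior. Translating back by $t_{i}^{-1}$, which acts as a homeomorphism of $Y$, shows that $\pi(\overline{W'})\subseteq\pi(W)$ has nonempty interior.

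\textbf{Step 2 (the open dense sets).} For each $n,k$ define
\begin{align*}
E_{n,k}=\operatorname{int}\pi(V_{k}\cap U_{n})\ \cup\ \bigl(Y\setminus\pi(\overline{V_{k}})\bigr).
\end{align*}
This set is open, since $\pi(\overline{V_{k}})$ is compact.

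To see that $E_{n,k}$ is dense, let $W\subseteq Y$ be nonempty and open. If $W\not\subseteq\pi(\overline{V_{k}})$, then $W$ meets $E_{n,k}$ directly. Otherwise $\pi^{-1}(W)$ is an open set meeting $\overline{V_{k}}$, so it meets $V_{k}$. Then $\pi^{-1}(W)\cap V_{k}\cap U_{n}$ is nonempty and open, because $U_{n}$ is open and dense. By Step 1 its image has nonempty interior, and this interior lies in $W\cap\operatorname{int}\pi(V_{k}\cap U_{n})$.

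Set $Y_{0}=\bigcap_{n,k}E_{n,k}$, which is a dense $G_{\delta}$ set by the Baire category theorem.

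\textbf{Step 3 (verification).} Let $y\in Y_{0}$, and suppose $\pi^{-1}(y)\cap V_{k}\neq\emptyset$. Then $y\in\pi(\overline{V_{k}})$, so $y\notin Y\setminus\pi(\overline{V_{k}})$. Since $y\in E_{n,k}$, it follows that $y\in\pi(V_{k}\cap U_{n})$, so $\pi^{-1}(y)\cap V_{k}\cap U_{n}\neq\emptyset$. As $\{V_{k}\}$ is a base, this shows that each $U_{n}\cap\pi^{-1}(y)$ is relatively open and dense in $\pi^{-1}(y)$. By Baire's theorem in the compact fibre, $X_{0}\cap\pi^{-1}(y)$ is a dense $G_{\delta}$ subset of $\pi^{-1}(y)$.

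The main obstacle is Step 1. The lemma does not assume $\pi$ is open, so the density argument in Step 2 has to rely on the semi-openness supplied by minimality of $X$; once that is available, the rest is bookkeeping with the countable base.
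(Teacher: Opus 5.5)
Your proof is correct. Step~1 is the key point: since $\pi$ is not assumed open, you need that images of nonempty open sets have nonempty interior, and you get this from minimality via a finite cover by translates and Baire. The open dense sets $E_{n,k}$ indexed by a countable base of $X$ then yield the residual set $Y_0$.

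The paper gives no proof of this lemma and simply quotes it from Veech. Your argument supplies a complete, self-contained proof of the cited result. It uses the standard semi-openness (Kuratowski--Ulam type) approach, which is the usual way this statement is established.
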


\begin{lemma}\label{lemma: density_continuty_points}
    Let $\pi:(X,T)\to(Y,T)$ be a factor between metric minimal systems and let $\Phi: Y \to 2^{X}$ defined by $y\mapsto \pi^{-1}(y)$. Then the set of continuity points of $\Phi$ is a dense $G_{\delta}$ set of $Y$.
\end{lemma}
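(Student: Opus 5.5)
The plan is to show that $\Phi$ is upper semicontinuous into $2^{X}$ with the Hausdorff (Vietoris) topology. Then the classical result of Fort, in the form that a usc set-valued map from a Baire space into $2^{X}$ with $X$ compact metric has a residual set of continuity points, gives the conclusion.

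\textbf{Upper semicontinuity.} Let $y_{n}\to y$ in $Y$. Suppose $x_{n}\in\pi^{-1}(y_{n})$ and $x_{n}\to x$ along a subsequence. By continuity of $\pi$, $\pi(x)=\lim\pi(x_{n})=y$, so $x\in\pi^{-1}(y)$. Equivalently, the graph $R=\{(y,x):\pi(x)=y\}$ is closed in $Y\times X$. Since $X$ is compact, this means that for every open $W\supseteq\pi^{-1}(y)$ there is a neighborhood $O$ of $y$ with $\pi^{-1}(y')\subseteq W$ for all $y'\in O$. Indeed, $\pi(X\setminus W)$ is compact and misses $y$, so we may take $O=Y\setminus\pi(X\setminus W)$. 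This step is routine.

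\textbf{Residuality of continuity points.} I would give the direct argument rather than only cite it. Fix a countable base $\{W_{k}\}$ of $X$. Let $\mathcal{A}_{k}$ be the set of $y$ such that $\pi^{-1}(y)\cap \overline{W_{k}}\neq\emptyset$. By the closed-graph property and compactness, $\mathcal{A}_{k}=\pi(\overline{W_{k}})$ is closed, so $\partial \mathcal{A}_{k}=\mathcal{A}_{k}\setminus\operatorname{int}\mathcal{A}_{k}$ is closed and nowhere dense. The set $Y_{1}=Y\setminus\bigcup_{k}\partial\mathcal{A}_{k}$ is then a dense $G_{\delta}$ by Baire's theorem, since $Y$ is compact metric.

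\textbf{Lower semicontinuity on $Y_{1}$.} Let $y\in Y_{1}$, $x\in\pi^{-1}(y)$, and let $W$ be a neighborhood of $x$. Choose $k$ with $x\in W_{k}\subseteq\overline{W_{k}}\subseteq W$. Then $y\in\mathcal{A}_{k}$, and since $y\notin\partial\mathcal{A}_{k}$ we have $y\in\operatorname{int}\mathcal{A}_{k}$. So every $y'$ near $y$ has $\pi^{-1}(y')\cap W\neq\emptyset$, which is lower semicontinuity at $y$. Together with upper semicontinuity this gives continuity of $\Phi$ at every point of $Y_{1}$ in the Vietoris topology, which coincides with the Hausdorff metric topology on $2^{X}$.

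Finally, the set of continuity points of a map into a metric space is automatically a $G_{\delta}$, being $\bigcap_{m}\{y:\operatorname{osc}_{\Phi}(y)<1/m\}$ with each set open. It contains the dense set $Y_{1}$, hence is a dense $G_{\delta}$. The main subtlety is ensuring that the boundary sets are nowhere dense. This holds because each $\mathcal{A}_{k}$ is closed, which is exactly where compactness of $X$, via the closed-graph property, enters. Minimality is not needed.
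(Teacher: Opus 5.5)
Your argument is correct and complete. Upper semicontinuity follows from compactness of $X$ exactly as you describe, and the set $Y\setminus\bigcup_k \partial\,\pi(\overline{W_k})$ is a dense $G_\delta$ on which $\Phi$ is also lower semicontinuous, hence Vietoris-continuous; it is contained in the ($G_\delta$) set of continuity points, which is therefore a dense $G_\delta$. The paper gives no proof of this lemma at all --- it is used as the classical Fort-type theorem on upper semicontinuous compact-valued maps --- so your write-up simply supplies the standard proof of that fact (and, as you note, uses neither minimality nor the group action).
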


\begin{theorem}\label{thm: topological_dstep_char_factor}
    Let $(X, T)$ be a minimal metric system and $d\geq 2$ be an integer. Let $\pi:(X, T)\to (X_{d-1}, T)$ be the factor to the maximal factor of order $d-1$. Then, there is a commutative diagram of factors of minimal systems

    \[\begin{tikzcd}
	X && {X'} \\
	\\
	{X_{d-1}} && {X_{d-1}'}
	\arrow["\pi"', from=1-1, to=3-1]
	\arrow["{\theta'}"', from=1-3, to=1-1]
	\arrow["{\pi'}", from=1-3, to=3-3]
	\arrow["\theta", from=3-3, to=3-1]
    \end{tikzcd}\]

    such that $N_{d+1}(X')$ is ${\pi'}^{(d+1)}$-saturated, where $\theta,\theta'$ are almost one-to-one factors.
\end{theorem}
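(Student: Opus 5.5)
The plan is to reduce to the two saturation results already available: \cref{prop: Nd_saturated_infty_nil} for the factor to $X_\infty$ when the map is open, and \cref{prop: Nd_saturated_d_nil} in the distal setting. These will be glued with \cref{lemma: composition_saturated}.

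First, apply \cref{thm: AG_diagram} to $\pi:X\to X_{d-1}$. This gives $X'$, $X_{d-1}'$, almost one-to-one maps $\theta,\theta'$ and an open factor $\pi':X'\to X_{d-1}'$. Since $\theta$ is proximal and $X_{d-1}$ is distal, $X_{d-1}'$ is an almost one-to-one extension of a system of order $d-1$. Next, factor $\pi'$ through the maximal factor of order $\infty$ of $X'$. Write $\phi:X'\to X'_\infty$ and $\psi:X'_\infty\to (X')_{d-1}$. Because $\theta'$ is proximal, $X'_{d-1}=X_{d-1}$ as the maximal order-$(d-1)$ factor. So $\pi'$ need not factor through $\phi$ directly; only $\theta\circ\pi'$ factors through $X_{d-1}$.

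To fix this, I would instead apply \cref{thm: AG_diagram} to the composite $X\to X_\infty\to X_{d-1}$ in a joint way. Concretely, first produce the AG modification $X'\to X_\infty'$ of $X\to X_\infty$, with the lower map open. Then apply AG again to $X_\infty'\to X_{d-1}$, getting an open map $X_\infty''\to X_{d-1}'$, and pass to the minimal set $X''$ in $X'\times_{X_\infty'} X_\infty''$. The claim to verify is that $X''\to X_\infty''$ is still open, using openness of $X'\to X_\infty'$ and that $X_\infty''\to X_\infty'$ is almost one-to-one. With this diagram, $X''\to X_\infty''$ is open and $X''_\infty$ maps onto $X_\infty$, so \cref{prop: Nd_saturated_infty_nil} gives that $N_{d+1}(X'')$ is saturated for $\phi:X''\to X_\infty''$.

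For the second stage, $X_\infty''$ is an almost one-to-one extension of the distal system $X_\infty$. I would transfer \cref{prop: Nd_saturated_d_nil} from $X_\infty$ to $X_\infty''$ as follows. $N_{d+1}(X_\infty'')$ maps onto $N_{d+1}(X_\infty)$, and the latter is saturated over $X_{d-1}$. Using \cref{lemma: density_preimage} and \cref{lemma: density_continuty_points}, one obtains density of singleton fibers of $\theta$ and openness of $X_\infty''\to X_{d-1}'$. The minimal-point and closure argument as in \cref{prop: Nd_saturated_infty_nil} then lifts saturation: approximate any point by $\tau$-minimal points lying over singleton fibers, replace one coordinate within a fiber of $X_\infty''\to X_{d-1}'$, and use openness to pass to limits. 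This yields that $\phi(N_{d+1}(X''))=N_{d+1}(X_\infty'')$ is $\psi^{(d+1)}$-saturated. Finally, \cref{lemma: composition_saturated}(2) gives saturation of $N_{d+1}(X'')$ for $\pi'=\psi\circ\phi$.

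The diagram with $X''$, $X_{d-1}'$ satisfies the statement after composing almost one-to-one maps. I expect the main obstacle to be this transfer of the distal saturation from $X_\infty$ to its almost one-to-one modification $X_\infty''$. Controlling that the replaced coordinate stays in $N_{d+1}$ requires the generic-point and openness argument rather than a formal lemma.
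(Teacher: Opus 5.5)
Your argument is correct, but it is organized differently from the paper's. You build a tower of open maps $X''\to X''_\infty\to X'_{d-1}$, get full saturation at each stage, and glue the stages with \cref{lemma: composition_saturated}(2).

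The claim you flag holds. Since $X''_\infty\to X'_\infty$ is almost one-to-one and $X'\to X'_\infty$ is open, consider the points of the fibered product whose $X''_\infty$-coordinate is the unique preimage of its image in $X'_\infty$. These points are dense and lie in every minimal subset. Hence the fibered product is itself minimal, so it equals $X''$ and $X''\to X'$ is almost one-to-one. Its projection to $X''_\infty$ is open because it is the pullback of an open map.

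The transfer step is easier than you expect, and minimal points play no role because no recurrence is used. Let $\kappa\colon X''_\infty\to X_\infty$ be the composite almost one-to-one map, $\psi\colon X''_\infty\to X'_{d-1}$ the open map and $\theta\colon X'_{d-1}\to X_{d-1}$. Then $\theta\circ\psi=\pi_{d-1}\circ\kappa$, so $\kappa^{(d+1)}$ sends $(\psi^{-1}(\psi(z)))^{d+1}$ into the $\pi_{d-1}^{(d+1)}$-fiber of $\kappa(z)^{(d+1)}$, hence into $N_{d+1}(X_\infty)$ by \cref{prop: Nd_saturated_d_nil}. A tuple whose coordinates are all injectivity points of $\kappa$ and whose image lies in $N_{d+1}(X_\infty)$ must lie in $N_{d+1}(X''_\infty)$. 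By \cref{lemma: density_preimage}, such tuples are dense in $(\psi^{-1}(y))^{d+1}$ for residually many $y$. Openness of $\psi$ and $\tau_{d+1}$-invariance of $N_{d+1}(X''_\infty)$ then finish the argument, and \cref{lemma: density_continuty_points} is not needed.

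The paper never builds the second modification. Working on $X$ itself, it shows that for a residual set of $x$ the whole $\pi^{(d+1)}$-fiber of $x^{(d+1)}$ lies in $N_{d+1}(X)$. It does this first over $\pi_\infty$, using one AG diagram and \cref{prop: Nd_saturated_infty_nil} transferred back to $X$ through injectivity points. It then passes to $\pi$ by combining with \cref{prop: Nd_saturated_d_nil}, using continuity points of $y\mapsto\pi_\infty^{-1}(y)$ and \cref{lemma: density_preimage}. That generic statement is insensitive to almost one-to-one modifications, so a single AG diagram for $\pi$ and openness of $\pi'$ upgrade it to full saturation.

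The paper's route is shorter and yields \cref{lemma: dense_preimage_in_Nd}, which it reuses for \cref{prop: characterization_minimal_in_RPd}. Yours is more modular and gives a diagram in which $\pi'$ factors through an open map onto an almost one-to-one modification of $X_\infty$. The cost is the fibered-product verification and re-proving the distal saturation on $X''_\infty$.
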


\begin{proof}
    Let $\pi_{\infty}:X\to X_{\infty}$ and $\pi_{d-1}:X_{\infty}\to X_{d-1}$ be the natural factor maps. Note that $\pi = \pi_{d-1}\circ\pi_{\infty}$.
    
    We first prove that there exists a dense $G_{\delta}$ set $X_{0}$ of $X$ such that $(\pi^{(d+1)}_{\infty})^{-1}(\pi^{(d+1)}_{\infty}(\bt x^{(d+1)})) \subseteq N_{d+1}(X)$ for every $x\in X_{0}$ and every $\bt\in \tau_{d}$. By \cref{prop: Nd_saturated_infty_nil}, there is a commutative diagram of factors of minimal systems

    \[\begin{tikzcd}
	X && {X'} \\
	\\
	{X_{\infty}} && {X_{\infty}'}
	\arrow["\pi_{\infty}"', from=1-1, to=3-1]
	\arrow["{\theta'}"', from=1-3, to=1-1]
	\arrow["{\pi_{\infty}'}", from=1-3, to=3-3]
	\arrow["\theta", from=3-3, to=3-1]
    \end{tikzcd}\]

    such that $N_{d+1}(X')$ is ${\pi_{\infty}'}^{(d+1)}$-saturated, where $\theta,\theta'$ are almost one-to-one factors.

    Define dense $G_{\delta}$ sets $X_{1} = \{y\in X_{\infty}: |\theta^{-1}(y)|=1\}$ and $X_{2} = \{x\in X: |{\theta'}^{-1}(x)|=1\}$. Let $X_{0} = \pi_{\infty}^{-1}(X_{1}) \cap X_{2}$, which is a dense $G_{\delta}$ set of $X$. 
    
    Let $x\in X_{0}$, $\bt \in \tau_{d}$ and $\bx \in (\pi_{\infty}^{(d+1)})^{-1}(\pi_{\infty}^{(d+1)}(\bt x^{(d+1)}))$. Let $y\in X'$ be such that ${\theta'}^{-1}(x) = \{y\}$. Since $\pi_{\infty}(x)\in X_{1}$, we have $\theta^{-1}(\pi_{\infty}(x)) = \{\pi_{\infty}'(y)\}$. It follows from $\theta\pi_{\infty}' = \pi_{\infty}\theta'$ that there exists $\by \in ({\pi_{\infty}'}^{(d+1)})^{-1}({\pi_{\infty}'}^{(d+1)}(\bt y^{(d+1)}))$ such that ${\theta'}^{(d+1)}(\by)=\bx$. Since $N_{d+1}(X')$ is ${\pi_{\infty}'}^{(d+1)}$-saturated, we have $\by \in N_{d+1}(X')$, and consequently we obtain that $\bx \in N_{d+1}(X)$.

    Now, we show that there exists a dense $G_{\delta}$ set $\Omega$ of $X$ such that $(\pi^{(d+1)})^{-1}(\pi^{(d+1)}(x^{(d+1)})) \subseteq N_{d+1}(X)$ for each $x\in \Omega$.
    
    By \cref{lemma: density_continuty_points}, there is a dense $G_{\delta}$ set $\Omega_{1}$ of $X_{\infty}$ consisting of continuity points of the map $y\mapsto \pi^{-1}_{\infty}(y)$. Recall that $X_{0}$ is a dense $G_{\delta}$ set of $X$ such that $(\pi^{(d+1)}_{\infty})^{-1}(\pi^{(d+1)}_{\infty}(\bt x^{(d+1)})) \subseteq N_{d+1}(X)$ for every $x\in X_{0}$ and every $\bt\in \tau_{d}$. Since $X_{0} \cap \pi_{\infty}^{-1}(\Omega_{1})$ is a dense $G_{\delta}$ set of $X$, by \cref{lemma: density_preimage}, there exists a dense $G_{\delta}$ set $Y_{0}$ of $X_{d-1}$ such that for each $y\in Y_{0}$, $X_{0} \cap \pi_{\infty}^{-1}(\Omega_{1})\cap \pi^{-1}(y)$ is a dense $G_{\delta}$ set of $\pi^{-1}(y)$. Define $\Omega = X_{0} \cap \pi_{\infty}^{-1}(\Omega_{1}) \cap \pi^{-1}(Y_{0})$.

    Let $x\in \Omega$. Since $x\in \pi^{-1}(Y_{0})$, it is enough to prove that $\bx \in N_{d+1}(X)$ for each $\bx \in (X_{0} \cap \pi_{\infty}^{-1}(\Omega_{1}))^{(d+1)} \cap (\pi^{(d+1)})^{-1}(\pi^{(d+1)}(x^{(d+1)}))$. Let $\bx \in (X_{0} \cap \pi_{\infty}^{-1}(\Omega_{1}))^{(d+1)} \cap (\pi^{(d+1)})^{-1}(\pi^{(d+1)}(x^{(d+1)}))$. By \cref{prop: Nd_saturated_d_nil}, it follows that\begin{align*}
        \bx \in (\pi_{\infty}^{(d+1)})^{-1}((\pi_{d-1}^{(d+1)})^{-1}(\pi_{d-1}^{(d+1)}(\pi_{\infty}^{(d+1)}(x^{(d+1)})))) \subseteq (\pi_{\infty}^{(d+1)})^{-1}(N_{d+1}(X_{\infty})).
    \end{align*}

    By density, there exist a sequence $(\bt_{i})_{i\in \N}$ in $\tau_{d}$ and a sequence $(x_{i})_{i\in\N}$ in $X_{0}$ such that \begin{align*}
        \pi_{\infty}^{(d+1)}(\bt_{i}x_{i}^{(d+1)})\to \pi_{\infty}^{(d+1)}(\bx).
    \end{align*}

    Since $\bx \in \pi_{\infty}^{(d+1)}(\Omega_{1}^{(d+1)})$, we get \begin{align*}
        (\pi_{\infty}^{(d+1)})^{-1}(\pi_{\infty}^{(d+1)}(\bt_{i}x_{i}^{(d+1)}))\to (\pi_{\infty}^{(d+1)})^{-1}(\pi_{\infty}^{(d+1)}(\bx)).
    \end{align*}

    Since $(x_{i})_{i\in \N} \subseteq X_{0}$, for each $i\in \N$, it follows that \begin{align*}
        (\pi_{\infty}^{(d+1)})^{-1}(\pi_{\infty}^{(d+1)}(\bt_{i}x_{i}^{(d+1)})) \subseteq N_{d+1}(X).
    \end{align*}

    Hence, $\bx \in N_{d+1}(X)$.

    Finally, by \cref{thm: AG_diagram}, there is a commutative diagram of factors of minimal systems

    \[\begin{tikzcd}
	X && {X'} \\
	\\
	{X_{d-1}} && {X_{d-1}'}
	\arrow["\pi"', from=1-1, to=3-1]
	\arrow["{\theta'}"', from=1-3, to=1-1]
	\arrow["{\pi'}", from=1-3, to=3-3]
	\arrow["\theta", from=3-3, to=3-1]
    \end{tikzcd}\]

    where $\theta,\theta'$ are almost one-to-one factors. Since $\pi'$ is open, to show that $N_{d+1}(X')$ is ${\pi'}^{(d+1)}$-saturated, it is enough to prove that $({\pi'}^{(d+1)})^{-1}({\pi'}^{(d+1)} (x^{(d+1)}))) \subseteq N_{d+1}(X')$ for each $x\in X'$.

    Since $\theta$ is an almost one-to-one factor and by the result proved above, there exists a dense $G_{\delta}$ set $\Omega'$ of $X'$ such that $|\theta^{-1}(\theta(\pi'(x)))| = 1$ and \begin{align*}
        ({\pi'}^{(d+1)})^{-1} ((\theta^{(d+1)})^{-1} ( \theta^{(d+1)}( ({\pi'}^{(d+1)} (x^{(d+1)}))) ))\subseteq N_{d+1}(X')
    \end{align*} 

    for each $x\in \Omega'$. 
    
    Since $|\theta^{-1}(\theta(\pi'(x)))| = 1$, we have $({\pi'}^{(d+1)})^{-1}({\pi'}^{(d+1)} (x^{(d+1)}))) \subseteq N_{d+1}(X')$ for each $x\in \Omega'$. Now, let $x\in X'$ and consider $(x_{i})_{i\in \N} \subseteq \Omega'$ such that $x_{i}\to x$. Since $\pi'$ is open, it follows that \begin{align*}
        ({\pi'}^{(d+1)})^{-1}({\pi'}^{(d+1)} (x_{i}^{(d+1}))) \to ({\pi'}^{(d+1)})^{-1}({\pi'}^{(d+1)} (x^{(d+1)}))).
    \end{align*}

    Since $({\pi'}^{(d+1)})^{-1}({\pi'}^{(d+1)} (x_{i}^{(d+1}))) \subseteq N_{d+1}(X')$, we conclude that $({\pi'}^{(d+1)})^{-1}({\pi'}^{(d+1)} (x^{(d+1)})))\subseteq N_{d+1}(X')$.
 \end{proof}

 The following was implicitly proved in \cref{thm: topological_dstep_char_factor}, but we state it here for convenience for the subsequent subsection.

\begin{lemma}\label{lemma: dense_preimage_in_Nd_infty}
     Let $(X, T)$ be a minimal metric system. Let $\pi:(X, T)\to (X_{\infty}, T)$ be the factor to the maximal factor of order $\infty$. Then there exists a dense $G_{\delta}$ set $X_{0}$ of $X$ such that, for every integer $d\geq 1$ and every $x\in X_{0}$,  $(\pi^{(d)})^{-1}(\pi^{(d)}(x^{(d)})) \subseteq N_{d}(X)$.
 \end{lemma}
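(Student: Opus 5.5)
The plan is to extract the first half of the proof of \cref{thm: topological_dstep_char_factor}, where order $d-1$ is replaced by order $\infty$. The key point is to build one dense $G_{\delta}$ set that works for every $d$ simultaneously. Since there are only countably many $d$, it is enough to produce, for each fixed $d\geq 2$, a dense $G_{\delta}$ set $X_{0}^{(d)}$ with the required property, and then set $X_{0}=\bigcap_{d} X_{0}^{(d)}$. This is again a dense $G_{\delta}$ set by the Baire category theorem, since $X$ is compact metric. The case $d=1$ is trivial because $N_{1}(X)=X$.

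Fix $d\geq 2$. First apply \cref{thm: AG_diagram} to $\pi_{\infty}=\pi:X\to X_{\infty}$. This gives a diagram $X\xleftarrow{\theta'}X'\xrightarrow{\pi'}X'_{\infty}\xrightarrow{\theta}X_{\infty}$ with $\pi'$ open and $\theta,\theta'$ almost one-to-one. Next check that $X_{\infty}$ is a factor of $X'_{\infty}$ (via $\theta$) and also the maximal order-$\infty$ factor of $X'$. The latter holds because an almost one-to-one extension is proximal, so $X'$ and $X$ share the same maximal distal, hence order-$\infty$, factor. \cref{prop: Nd_saturated_infty_nil} then applies to $\pi'$ and gives that $N_{d}(X')$ is ${\pi'}^{(d)}$-saturated.

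Now define $X_{1}=\{y\in X_{\infty}:|\theta^{-1}(y)|=1\}$ and $X_{2}=\{x\in X:|{\theta'}^{-1}(x)|=1\}$, and set $X_{0}^{(d)}=\pi^{-1}(X_{1})\cap X_{2}$. The set $\pi^{-1}(X_{1})$ is a dense $G_{\delta}$: it is $G_{\delta}$ by continuity, and it is dense because $\pi$ is a factor map between minimal systems, so it is semi-open and preimages of dense sets are dense. Hence $X_{0}^{(d)}$ is a dense $G_{\delta}$.

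Take $x\in X_{0}^{(d)}$ and $\bx\in(\pi^{(d)})^{-1}(\pi^{(d)}(x^{(d)}))$, and let ${\theta'}^{-1}(x)=\{y\}$. Write $\bx=(x_{1},\dots,x_{d})$ and choose $y_{i}\in X'$ with $\theta'(y_{i})=x_{i}$. Then $\theta(\pi'(y_{i}))=\pi(x_{i})=\pi(x)$, and since $\theta^{-1}(\pi(x))=\{\pi'(y)\}$ we get $\pi'(y_{i})=\pi'(y)$. Therefore $(y_{1},\dots,y_{d})\in({\pi'}^{(d)})^{-1}({\pi'}^{(d)}(y^{(d)}))\subseteq N_{d}(X')$ by saturation. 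Applying ${\theta'}^{(d)}$, which maps $N_{d}(X')$ into $N_{d}(X)$, gives $\bx\in N_{d}(X)$.

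The main point needing care is the identification that $X_{\infty}$ is still dominated by $X'_{\infty}$ and is the order-$\infty$ factor of $X'$, so that \cref{prop: Nd_saturated_infty_nil} genuinely applies. The rest is the Baire intersection over $d$.
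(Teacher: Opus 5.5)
Your proof is correct and is essentially the paper's argument: the first half of the proof of \cref{thm: topological_dstep_char_factor} (AG diagram for $\pi_\infty$, saturation of $N_d(X')$ from \cref{prop: Nd_saturated_infty_nil}, and the singleton-fiber sets $X_1,X_2$), and your checks of that proposition's hypotheses and of the density of $\pi^{-1}(X_1)$ fill in steps the paper leaves implicit. The Baire intersection over $d$ is harmless but unnecessary, because the AG diagram, and hence $X_0^{(d)}=\pi^{-1}(X_1)\cap X_2$, does not depend on $d$ once \cref{prop: Nd_saturated_infty_nil} gives saturation of $N_d(X')$ for every $d\geq 2$.
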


Moreover, using the same proof as in \cref{thm: topological_dstep_char_factor}, we can deduce the following lemma.

 \begin{lemma}\label{lemma: dense_preimage_in_Nd}
     Let $(X, T)$ be a minimal metric system and $d\geq 2$ be an integer. Let $\pi:(X, T)\to (X_{d-1}, T)$ be the factor to the maximal factor of order $d-1$. Then there exists a dense $G_{\delta}$ set $X_{0}$ of $X$ such that $(\pi^{(d+1)})^{-1}(\pi^{(d+1)}(x^{(d+1)})) \subseteq N_{d+1}(X)$ for every $x\in X_{0}$.
 \end{lemma}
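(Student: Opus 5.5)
The plan is to copy the second half of the proof of \cref{thm: topological_dstep_char_factor}. The only change is the index of the saturation statement: we use the version with $d+1$ points and the map $\pi_{d-1}$, and we avoid the AG-diagram step entirely. Write $\pi=\pi_{d-1}\circ\pi_\infty$, where $\pi_\infty:X\to X_\infty$ and $\pi_{d-1}:X_\infty\to X_{d-1}$.

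\emph{Step 1.} Obtain a dense $G_\delta$ set $X_0\subseteq X$ such that
\begin{align*}
(\pi_\infty^{(d+1)})^{-1}(\pi_\infty^{(d+1)}(\bt x^{(d+1)}))\subseteq N_{d+1}(X)
\end{align*}
for every $x\in X_0$ and every $\bt\in\tau_{d+1}$. Apply \cref{prop: Nd_saturated_infty_nil} to the open map $\pi_\infty'$ in the AG diagram (\cref{thm: AG_diagram}) of $\pi_\infty$. Then pull back along the almost one-to-one maps $\theta,\theta'$, taking $X_0=\pi_\infty^{-1}(X_1)\cap X_2$ with $X_1,X_2$ the singleton-fiber sets, exactly as in the first paragraph of that proof. $\tau_{d+1}$-translates cause no trouble, since $N_{d+1}$ is $\tau_{d+1}$-invariant and $\bt$ commutes with $\pi_\infty^{(d+1)}$.

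\emph{Step 2.} By \cref{lemma: density_continuty_points}, take the dense $G_\delta$ set $\Omega_1\subseteq X_\infty$ of continuity points of $y\mapsto\pi_\infty^{-1}(y)$. By \cref{lemma: density_preimage}, applied to $\pi$ and the dense $G_\delta$ set $X_0\cap\pi_\infty^{-1}(\Omega_1)$, get a dense $G_\delta$ set $Y_0\subseteq X_{d-1}$, and put $\Omega=X_0\cap\pi_\infty^{-1}(\Omega_1)\cap\pi^{-1}(Y_0)$. This is dense $G_\delta$ because $\pi$ is a factor map between minimal metric systems, so preimages of dense $G_\delta$ sets stay dense $G_\delta$.

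\emph{Step 3.} Fix $x\in\Omega$. Since the fibre $\pi^{-1}(\pi(x))$ meets $X_0\cap\pi_\infty^{-1}(\Omega_1)$ in a dense set and $N_{d+1}(X)$ is closed, it suffices to treat $\bx\in(\pi^{(d+1)})^{-1}(\pi^{(d+1)}(x^{(d+1)}))$ with all coordinates in $X_0\cap\pi_\infty^{-1}(\Omega_1)$.

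\emph{Step 4.} By \cref{prop: Nd_saturated_d_nil}, applied to the distal system $X_\infty$ (which is metric as a factor of $X$), $\pi_\infty^{(d+1)}(\bx)\in N_{d+1}(X_\infty)$. Since $\pi_\infty(X_0)$ is dense in $X_\infty$, choose $\bt_i\in\tau_{d+1}$ and $x_i\in X_0$ with
\begin{align*}
\pi_\infty^{(d+1)}(\bt_ix_i^{(d+1)})\to\pi_\infty^{(d+1)}(\bx).
\end{align*}
Continuity of the fibre map at each coordinate of $\pi_\infty^{(d+1)}(\bx)\in\Omega_1^{(d+1)}$ gives Hausdorff convergence of the fibres $(\pi_\infty^{(d+1)})^{-1}(\cdot)$. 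Each such fibre lies in $N_{d+1}(X)$ by Step 1, so $\bx\in N_{d+1}(X)$ by closedness.

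The main obstacle is the last step. It needs the density of $\tau_{d+1}\Delta^{(d+1)}(\pi_\infty(X_0))$ in $N_{d+1}(X_\infty)$, which follows from the definition of $N_{d+1}$ together with density of $\pi_\infty(X_0)$. It also needs Hausdorff convergence of product fibres from coordinatewise continuity points, which holds because the map $A_1\times\dots\times A_{d+1}\mapsto$ product is continuous in the Vietoris topology.
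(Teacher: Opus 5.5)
Your proposal is correct and follows essentially the same route as the paper, which proves this lemma by rerunning the first two parts of the proof of \cref{thm: topological_dstep_char_factor}. Those parts are the AG-diagram lift giving saturation over $X_\infty$ at points of $X_0$, followed by the continuity-point and fibre-density argument combined with \cref{prop: Nd_saturated_d_nil} for $X_\infty$; the final AG-diagram step is not needed. Your $\bt\in\tau_{d+1}$ in Step 1 is the right index (the paper's text writes $\tau_d$ there), and the facts you spell out (density of $\tau_{d+1}\Delta^{(d+1)}(\pi_\infty(X_0))$ in $N_{d+1}(X_\infty)$, and Hausdorff continuity of product fibres at points of $\Omega_1^{d+1}$) are exactly what the paper uses implicitly.
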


By the following lemma, which was proved for $\mathbb{Z}$-actions and whose proof also works for general group actions, we deduce that the factor of order $d-1$ is the topological characteristic factor of order $d$.

 \begin{lemma}[{\cite[Theorem 4.5]{Wu_Xu_Ye_Structure_saturated:2023}}]
    Let $\pi:(X,T)\to (Y,T)$ be a factor between minimal metric systems and let $d\geq 1$ be an integer. If $N_{d+1}(X)$ is $\pi^{(d+1)}$-saturated, then $Y$ is a $d$-step topological characteristic factor of $X$.
\end{lemma}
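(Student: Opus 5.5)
The plan is to use the definition directly: find a dense $G_\delta$ set $X_0$ of $X$ such that, for each $x\in X_0$, the orbit closure $\overline{\tau_d x^{(d)}}$ is $\pi^{(d)}$-saturated. The first step is a reduction from $N_{d+1}(X)$ to the $d$-tuples $N_d$-type orbit closures.

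For $x\in X$ set
\begin{align*}
W_x=\overline{\tau_{d+1}x^{(d+1)}}\subseteq N_{d+1}(X).
\end{align*}
Points of $W_x$ have first coordinate $x$, since the first coordinate of $(t,t^2,\dots,t^{d+1})x^{(d+1)}$ is $tx$; I will instead use the version with exponents $0,1,\dots,d$. Concretely, $(x,tx,\dots,t^dx)$ is obtained from $(t^0,t^1,\dots,t^d)x^{(d+1)}$, and $N_{d+1}(X)$ is defined via $\tau_{d+1}$ with exponents $1,\dots,d+1$. Applying $\sigma_{d+1}$ with $t^{-1}$ converts one description to the other, so $N_{d+1}(X)=\overline{\{(z,tz,\dots,t^dz)\}}$. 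Then $W'_x=\{\by\in X^d: (x,\by)\in N_{d+1}(X)\}$ contains $\overline{\tau_d x^{(d)}}$. The standard fact I will aim to use, as in the $\Z$ case, is that for $x$ in a dense $G_\delta$ set one has equality $W'_x=\overline{\tau_dx^{(d)}}$. This is the analogue of \cref{thm: cube_is_minimal}(5), and is proved by semicontinuity of $x\mapsto W'_x$ together with \cref{lemma: density_continuty_points}-type arguments: the map $x\mapsto \overline{\tau_dx^{(d)}}$ is lower semicontinuous, so it has a residual set of continuity points, and at those points minimality of $(N_{d+1}(X),\langle\sigma_{d+1},\tau_{d+1}\rangle)$ from \cref{thm: Nd_minimal} forces equality.

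The second step is saturation. Take $x$ in the intersection of that residual set with the set $X_1=\{x: \pi^{-1}(\pi(x))=\{x\}\}$. This set is dense $G_\delta$ once $\pi$ has an injectivity point; otherwise I intersect with the preimage of a residual set of $Y$. Given $\by\in\overline{\tau_dx^{(d)}}$ and $\by'$ with $\pi^{(d)}(\by')=\pi^{(d)}(\by)$, the hypothesis gives that $(x,\by')$ lies in $N_{d+1}(X)$, since it has the same $\pi^{(d+1)}$-image as $(x,\by)\in N_{d+1}(X)$. Hence $\by'\in W'_x=\overline{\tau_dx^{(d)}}$, which is the required saturation.

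The main obstacle is the genericity claim $W'_x=\overline{\tau_dx^{(d)}}$ on a residual set. I would prove it by showing two things. First, $\tau_d$-orbit closures of $(x,\dots,x)$ inside the fiber of $N_{d+1}(X)$ over $x$ form the unique minimal set for the fiber action of $\{(e,t,\dots,t^d)\}$ at generic $x$. Second, the fiber itself equals that minimal set at continuity points of the upper semicontinuous map $x\mapsto W'_x$. The argument would mimic \cite[Theorem 4.10]{Glasner_Gutman_Ye_higher_regionallyproximal_general_groups:2018}: approximate fiber points by $\tau_d$-images of diagonal points near $x$, and use continuity to transfer this to $x$ itself.
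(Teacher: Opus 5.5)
Your overall architecture is the right one; the paper does not reprove this lemma and simply imports it from Wu--Xu--Ye. You show that for generic $x$ the fibre $W'_x=\{\by\in X^d:(x,\by)\in N_{d+1}(X)\}$ equals $\overline{\tau_dx^{(d)}}$, and then transfer saturation with the first coordinate frozen at $x$. Your transfer step is correct.

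The gap is in how you plan to prove the genericity step, which you rightly single out as the crux.

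\begin{itemize}
\item Part (a) of your last paragraph needs $\overline{\tau_dx^{(d)}}$ to be minimal for the fibre action, i.e.\ that $x^{(d)}$ is a $\tau_d$-minimal point. Nothing you cite gives this. \cref{thm: Nd_minimal} only yields density of minimal points of $\{(e,t,\dots,t^d)\}$ in $N_{d+1}(X)$. There is also no analogue of the cube fact that $x^{[d]}_*$ is always $\cF^{[d]}_*$-minimal, which rests on $u^{[d]}_*$ being a minimal idempotent of $\beta\cF^{[d]}_*$.
\item Part (b) uses continuity points of the \emph{upper} semicontinuous map $x\mapsto W'_x$. Continuity there only adds $W'_x\subseteq\liminf_{z\to x}W'_z$, which says nothing about $W'_x\subseteq\overline{\tau_dx^{(d)}}$.
\item In your first paragraph, equality at continuity points is likewise not forced by minimality of $(N_{d+1}(X),\langle\sigma_{d+1},\tau_{d+1}\rangle)$.
\end{itemize}

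The repair uses exactly the map you mentioned first, and needs no minimality at all. Let $X_0$ be the set of continuity points of the lower semicontinuous map $\Psi(x)=\overline{\tau_dx^{(d)}}$ from $X$ to $2^{X^d}$. Since $X$ is compact metric, Fort's theorem makes $X_0$ a dense $G_\delta$ set.

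Fix $x\in X_0$ and $\by\in W'_x$. The identity $N_{d+1}(X)=\overline{\{(z,tz,\dots,t^dz):z\in X,\,t\in T\}}$ gives $z_n\to x$ and $t_n\in T$ with $(t_nz_n,\dots,t_n^dz_n)\to\by$. These points lie in $\Psi(z_n)$, and $\Psi(z_n)\to\Psi(x)$ in the Hausdorff metric, so $\by\in\Psi(x)$. Hence $W'_x=\overline{\tau_dx^{(d)}}$ on $X_0$. Your transfer step then shows that $\overline{\tau_dx^{(d)}}$ is $\pi^{(d)}$-saturated for every $x\in X_0$.

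Finally, drop the intersection with $X_1=\{x:\pi^{-1}(\pi(x))=\{x\}\}$ and the vague fallback. The transfer never uses injectivity of $\pi$ at $x$, because the first coordinate is $x$ on both sides. Moreover, $X_1$ is empty unless $\pi$ is almost one-to-one.
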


\begin{theorem}
    Let $(X, T)$ be a minimal metric system and $d\geq 2$ be an integer. Let $\pi:(X, T)\to (X_{d-1}, T)$ be the factor to the maximal factor of order $d-1$. Then, there is a commutative diagram of factors of minimal systems

    \[\begin{tikzcd}
	X && {X'} \\
	\\
	{X_{d-1}} && {X_{d-1}'}
	\arrow["\pi"', from=1-1, to=3-1]
	\arrow["{\theta'}"', from=1-3, to=1-1]
	\arrow["{\pi'}", from=1-3, to=3-3]
	\arrow["\theta", from=3-3, to=3-1]
    \end{tikzcd}\]

    such that $(X_{d-1}',T)$ is the topological characteristic factor of order $d$ of $(X',T)$, where $\theta,\theta'$ are almost one-to-one factors.
\end{theorem}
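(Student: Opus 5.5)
This is an assembly of results already proved in this subsection. I would take the diagram supplied by \cref{thm: topological_dstep_char_factor} and then apply the lemma of Wu, Xu and Ye quoted just above.

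\emph{Step 1.} Apply \cref{thm: topological_dstep_char_factor} to $\pi:X\to X_{d-1}$. This yields the AG-diagram from \cref{thm: AG_diagram}: minimal metric systems $X'$ and $X_{d-1}'$, almost one-to-one factors $\theta,\theta'$, and an open factor $\pi':X'\to X_{d-1}'$. The diagram commutes, and $N_{d+1}(X')$ is ${\pi'}^{(d+1)}$-saturated.

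\emph{Step 2.} Apply the lemma of Wu, Xu and Ye (\cite[Theorem 4.5]{Wu_Xu_Ye_Structure_saturated:2023}) to the factor $\pi':(X',T)\to(X_{d-1}',T)$ between minimal metric systems. Its hypothesis is exactly the saturation of $N_{d+1}(X')$ from Step 1. The conclusion is that $X_{d-1}'$ is a $d$-step topological characteristic factor of $X'$. Concretely, there is a dense $G_\delta$ set $X'_0\subseteq X'$ such that $\overline{\tau_d x^{(d)}}$ is ${\pi'}^{(d)}$-saturated for every $x\in X'_0$, which is the definition of a topological characteristic factor of order $d$.

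The indexing needs care: saturation of $N_{d+1}$ (with $\tau_{d+1}$ acting on $d+1$ coordinates) is what gives characteristic factor of order $d$ (with $\tau_d$ acting on $d$ coordinates). The paper's conventions state the lemma with exactly this offset, so the two statements match.

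\emph{Main obstacle.} The only substantive point is that the Wu–Xu–Ye lemma, proved for $\mathbb{Z}$, holds for finitely generated abelian $T$. The paper asserts this, and I would rely on that assertion. If I had to check it, the mechanism I expect is the following. Saturation of $N_{d+1}(X')$ for $\pi'$ passes to fibres over the first coordinate. A generic first coordinate $x$ has fibre in $N_{d+1}(X')$ equal to $\{x\}\times\overline{\tau_d x^{(d)}}$; one obtains such a generic $x$ via \cref{lemma: density_preimage} and \cref{lemma: density_continuty_points}, using that $\tau_{d+1}$-minimal points are dense (\cref{thm: Nd_minimal}). Saturation of the whole set then transfers to saturation of these fibres. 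None of this uses anything specific to $\mathbb{Z}$ beyond commutativity and the minimality result \cref{thm: Nd_minimal}.
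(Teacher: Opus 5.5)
Your proposal is correct and is exactly the paper's argument: take the diagram from \cref{thm: topological_dstep_char_factor}, in which $N_{d+1}(X')$ is ${\pi'}^{(d+1)}$-saturated, and apply the Wu--Xu--Ye lemma to $\pi'$, with the $d+1$ versus $d$ offset handled as you describe. Your sketch of why that lemma holds beyond $\mathbb{Z}$ is fine, though it uses more than is needed. It suffices to use lower semicontinuity of $x\mapsto\overline{\tau_d x^{(d)}}$, the fact that at its continuity points the fibre of $N_{d+1}$ over $x$ is $\{x\}\times\overline{\tau_d x^{(d)}}$, and the observation that this argument does not even require commutativity.
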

 \subsection{Applications}

 First, we note an application of \cref{thm: cubic_characteristic_infty}. In {\cite[Theorem 5.9]{Alvarez_Donoso_cube_struct_univ_nil_applications:2025}}, we proved that the maximal distal factor of a dynamical cube is the dynamical cube of the maximal distal factor. The main theorem used in that proof is {\cite[Proposition 5.16]{Alvarez_Donoso_cube_struct_univ_nil_applications:2025}}. By \cref{thm: cubic_characteristic_infty}, the same result holds for any group action. We omit the proof, since it is exactly the same.

 \begin{theorem}
     Let $(X,T)$ be a minimal system and let $d\geq 1$ be an integer. Then, $\bQ^{[d]}(X_{dis})$ is the maximal distal factor of $\bQ^{[d]}(X)$, where $X_{dis}$ is the maximal distal factor of $X$.
 \end{theorem}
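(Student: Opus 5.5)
The plan is to follow the route indicated in the text: first reduce to the metric case, then use \cref{thm: cubic_characteristic_infty} together with the distal-factor argument of \cite[Proposition 5.16]{Alvarez_Donoso_cube_struct_univ_nil_applications:2025}.

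\textbf{Metric reduction.} I first reduce to metric systems, writing $X$ as an inverse limit of metric minimal systems (\cref{prop: inverse_limit_metric} for countable $T$, or a standard separable-subalgebra argument in general). Both sides of the statement commute with inverse limits. The maximal distal factor of an inverse limit is the inverse limit of the maximal distal factors, and $\bQ^{[d]}$ commutes with inverse limits. So it suffices to treat a metric minimal $(X,T)$.

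\textbf{Setup and easy direction.} Let $\phi:X\to X_{dis}$ be the factor map. The induced map $\phi^{[d]}:\bQ^{[d]}(X)\to \bQ^{[d]}(X_{dis})$ is a factor of minimal $\mathcal{HK}^{[d]}(T)$-systems by \cref{thm: cube_is_minimal}. Since $X_{dis}$ is distal, $X_{dis}^{[d]}$ is distal under $\mathcal{HK}^{[d]}(T)$, and therefore so is its closed invariant subset $\bQ^{[d]}(X_{dis})$. Hence $\bQ^{[d]}(X_{dis})$ is a distal factor of $\bQ^{[d]}(X)$. It remains to show maximality: every distal factor $\psi:\bQ^{[d]}(X)\to W$ factors through $\phi^{[d]}$. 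Equivalently, the distal structure relation $S_{dis}(\bQ^{[d]}(X))$ contains $R_{\phi^{[d]}}\cap (\bQ^{[d]}(X))^{2}$.

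\textbf{Saturation step.} Since $X_{\infty}$ is distal, $X_{\infty}$ is a factor of $X_{dis}$. Apply \cref{thm: cubic_characteristic_infty}, or rather its underlying version \cref{prop: cube_saturated} with order $\infty$ via the AG-diagram. This gives almost one-to-one modifications $\theta':X'\to X$ and $\theta:X'_{dis}\to X_{dis}$ with $\pi':X'\to X'_{dis}$ open, such that $\bQ^{[d]}(X')$ is $\pi'^{[d]}$-saturated. Here $X'_{dis}$ is the AG-lift of $X_{dis}$, built the same way as for $X_\infty$, since \cref{prop: cube_saturated} only needs openness and that $X_{d-1}$ factors through the base. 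Saturation gives
\[
\bQ^{[d]}(X')=(\pi'^{[d]})^{-1}(\bQ^{[d]}(X'_{dis})).
\]
So $\bQ^{[d]}(X')$ is the full preimage of a distal system under the open map $\pi'^{[d]}$.

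\textbf{Key claim and transfer back to $X$.} The key claim is that points in a common fiber of $\pi'^{[d]}$ are $S_{dis}$-equivalent. Take $\bx,\by\in\bQ^{[d]}(X')$ with the same image. Each fibre of $\pi'^{[d]}$ is a product of fibres of $\pi'$, and any two points of a $\pi'$-fibre are related by $S_{dis}(X')$, because $X'/S_{dis}$ factors through $X'_{dis}$. I would change one coordinate at a time inside the fibre, staying in $\bQ^{[d]}(X')$ by saturation. At each step I would show that $(\bx,\bx')$, with $\bx,\bx'$ differing in one coordinate by an $S_{dis}(X')$-pair, lies in $S_{dis}(\bQ^{[d]}(X'))$. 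For this I would use \cref{thm: S_dis_capturing_smallest_diagonal} (the distal structure relation is the smallest closed capturing set containing $\Delta$), together with \cref{lemma: capturing_R_PR}, to approximate the one-coordinate changes by proximal moves, which lift into the cube because the fibre is entirely inside it. Finally, the almost one-to-one maps $\theta,\theta'$ are proximal, and proximal extensions do not change the distal structure. So $S_{dis}$ passes from $\bQ^{[d]}(X')$ down to $\bQ^{[d]}(X)$.

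\textbf{Main obstacle.} The hardest step is the coordinatewise argument: showing that a single-coordinate $S_{dis}$-change inside a saturated cube is an $S_{dis}$-pair of the cube system. I expect it to be handled by the capturing characterization applied to the closed invariant relation
\[
\{(\bx,\bx')\in \bQ^{[d]}(X')^{2} : (x_{\epsilon},x'_{\epsilon})\in S_{dis}(X') \text{ for one } \epsilon \text{ and } x_{\eta}=x'_{\eta} \text{ for } \eta\neq\epsilon\}.
\]
This is presumably the content of \cite[Proposition 5.16]{Alvarez_Donoso_cube_struct_univ_nil_applications:2025}, whose proof does not use the acting group.
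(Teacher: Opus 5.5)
Your outer framework matches the paper's. The paper's proof is that of \cite[Theorem 5.9]{Alvarez_Donoso_cube_struct_univ_nil_applications:2025}, with the saturation input \cite[Proposition 5.16]{Alvarez_Donoso_cube_struct_univ_nil_applications:2025} replaced by \cref{thm: cubic_characteristic_infty}. Applying \cref{prop: cube_saturated} to the AG-lift of $X\to X_{dis}$ is a legitimate variant, since $X'_{d-1}=X_{d-1}$.

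There are two small slips. First, the lifted base $X'_{dis}$ is only an almost one-to-one extension of $X_{dis}$, so $\bQ^{[d]}(X'_{dis})$ need not be distal. Second, you should justify that $\theta^{[d]}$ and $\theta'^{[d]}$ are proximal on cubes. This holds because, for proximal $\theta$, a minimal idempotent $u$ identifies any two tuples lying in a common fibre.

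The genuine gap is your ``key claim''. This is exactly the step that converts saturation into the distal-factor statement. It is not the content of \cite[Proposition 5.16]{Alvarez_Donoso_cube_struct_univ_nil_applications:2025}, which is the saturation result itself, so nothing you invoke covers it.

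The mechanism you sketch fails as stated, for two reasons.
\begin{itemize}
\item \emph{Intermediate cubes leave the cube set.} Generating $S_{dis}(X')$ from $\P(X')$ via $C(R)=\P(X')R$ (\cref{lemma: capturing_R_PR}) and closures produces intermediate points $z$ with $(x_\epsilon,z)\in\P(X')$. Such $z$ need not lie in the $\pi'$-fibre of $x_\epsilon$, so $\bx$ with $x_\epsilon$ replaced by $z$ need not lie in $\bQ^{[d]}(X')$. Saturation only guarantees cubes for $R_{\pi'}$-changes, and nothing makes $R_{\pi'}$ generated by $\P(X')\cap R_{\pi'}$; the latter is just the diagonal whenever $\pi'$ is distal.
\item \emph{No closedness.} A relation on $X'$ of the form ``for every cube with $x_\epsilon=a$ whose $\epsilon$-coordinate can be replaced by $b$, the two cubes are $S_{dis}$-related'' is invariant and capturing. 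It has no reason to be closed, because $a\mapsto\{\bx\in\bQ^{[d]}(X'):x_\epsilon=a\}$ is not lower semicontinuous. So \cref{thm: S_dis_capturing_smallest_diagonal} cannot be applied to it.
\end{itemize}

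The missing idea is to test only against the $\cF^{[d]}$-minimal part of the cube at $a$. Work at $\epsilon=\emptyset$: Euclidean symmetries preserve $\bQ^{[d]}$ and normalize $\mathcal{HK}^{[d]}(T)$, hence preserve $S_{dis}(\bQ^{[d]}(X))$. Set
\[
\mathcal{K}=\{(a,b)\in\NRP^{[d-1]}(X): ((a,\boldsymbol{p}_*),(b,\boldsymbol{p}_*))\in S_{dis}(\bQ^{[d]}(X)) \text{ for all } \boldsymbol{p}_*\in\overline{\cF^{[d]}_*a^{[d]}_*}\}.
\]
\begin{itemize}
\item Both cubes exist, since $\overline{\cF^{[d]}_*a^{[d]}_*}=\overline{\cF^{[d]}_*b^{[d]}_*}$ for $(a,b)\in\NRP^{[d-1]}(X)$.
\item $\mathcal{K}$ is closed, because orbit closures $a\mapsto\overline{\cF^{[d]}_*a^{[d]}_*}$ are lower semicontinuous.
\item $\mathcal{K}$ is $T$-invariant, since $t^{[d]}$ normalizes $\cF^{[d]}$.
\item Membership extends to all $\boldsymbol{q}_*$ with $(a,\boldsymbol{q}_*),(b,\boldsymbol{q}_*)\in\bQ^{[d]}(X)$. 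For a minimal idempotent $v\in\beta\cF^{[d]}$, these cubes are $\cF^{[d]}$-proximal to $(a,v\boldsymbol{q}_*)$ and $(b,v\boldsymbol{q}_*)$. Moreover $v\boldsymbol{q}_*\in\overline{\cF^{[d]}_*a^{[d]}_*}$ by \cref{thm: cube_is_minimal}(3).
\item $\mathcal{K}$ is capturing. $\NRP^{[d-1]}$ is capturing because its quotient is distal. Limits of $t_i^{[d]}((a,\boldsymbol{p}_*),(b,\boldsymbol{p}_*))$ lie in $S_{dis}(\bQ^{[d]}(X))$ by the extension above, and $S_{dis}(\bQ^{[d]}(X))$ is itself capturing.
\end{itemize}
Hence $S_{dis}(X)\subseteq\mathcal{K}$ by \cref{thm: S_dis_capturing_smallest_diagonal}.

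With this single-coordinate lemma applied to $X'$, the rest goes as you outline. Saturation keeps intermediate cubes in $\bQ^{[d]}(X')$ when you change one coordinate at a time inside a $\pi'^{[d]}$-fibre, using $R_{\pi'}\subseteq S_{dis}(X')$. The proximality of $\theta^{[d]}$ and $\theta'^{[d]}$ then finishes the argument.
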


We now give some applications of the topological characteristic factor along arithmetic progressions. Throughout, we assume that in every system $(X,T)$, the acting group $T$ is a finitely generated abelian group. First, we need the following definition, which was studied in \cite{Glasner_Huang_Shao_Ye_regionally_arithmetic_prog_nil:2020} for $\Z$-actions.

 \begin{definition}
     Let $(X,T)$ be a system and let $d\geq 1$ be an integer. We say that $(x,y)\in X\times X$ is a {\em regionally proximal pair of order $d$ along arithmetic progressions} if there are nets $(t_{i})_{i\in I} \subseteq T$, $((x_{i},y_{i}))_{i\in I}\subseteq X\times X$ and $a_{*}\in X^{d}$ such that\begin{align*}
         (x_{i},t_{i}x_{i},t_{i}^{2}x_{i},\dots,t_{i}^{d}x_{i},y_{i},t_{i}y_{i},t_{i}^{2}y_{i}\dots,t_{i}^{d}y_{i})\to (x,a_{*},y,a_{*}).
     \end{align*}

     The set of all such pairs is denoted by $\AP^{[d]}(X)$ and is called the {\em regionally proximal relation of order $d$ along arithmetic progressions}.
 \end{definition}

We next give a characterization of the minimal points of $\RP^{[d]}$.

 \begin{theorem}\label{prop: characterization_minimal_in_RPd}
     Let $(X,T)$ be a minimal system and let $d\geq 1$ be an integer. If $(x,y)\in X\times X$ is a minimal point, then the following statements are equivalent:\begin{enumerate}[label=(\arabic*)]
         \item $(x,y)\in \RP^{[d]}(X)$.
         \item $\{x,y\}^{d+2} \subseteq N_{d+2}(X)$.
         \item $(x,y)\in \AP^{[d]}(X)$.
     \end{enumerate}
 \end{theorem}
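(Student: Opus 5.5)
The plan is to prove the cycle $(1)\Rightarrow(2)\Rightarrow(3)\Rightarrow(1)$. Throughout, $T$ is finitely generated abelian, so $\RP^{[d]}=\NRP^{[d]}$ is a closed invariant equivalence relation and $X/\RP^{[d]}(X)=X_{d}$. By \cref{prop: inverse_limit_metric} we may pass to metric systems. This is routine, since all three conditions are preserved under inverse limits once they are checked on every metric factor.

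\emph{$(1)\Rightarrow(2)$.} We use \cref{lemma: dense_preimage_in_Nd} with $d+1$ in place of $d$. This gives a dense $G_\delta$ set $X_0\subseteq X$ such that $(\pi_d^{(d+2)})^{-1}(\pi_d^{(d+2)}(z^{(d+2)}))\subseteq N_{d+2}(X)$ for every $z\in X_0$, where $\pi_d:X\to X_d$.

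1. If $(x,y)\in\RP^{[d]}(X)$, then $\pi_d(x)=\pi_d(y)$. If in addition $x\in X_0$, then every word in $\{x,y\}^{d+2}$ lies in the $\pi_d^{(d+2)}$-fiber of $x^{(d+2)}$, and the conclusion follows directly.

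2. For general $x$, use minimality of $(x,y)$: the orbit closure $W=\overline{T(x,y)}$ is a minimal subset of $\RP^{[d]}(X)$, and its first projection onto $X$ is onto.

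3. \emph{Main obstacle.} We need a point $(x',y')\in W$ with $x'\in X_0$. The first projection $W\to X$ need not be open, so it is not immediate that it meets $X_0$. I would try \cref{lemma: density_preimage} applied to the factor $W\to X$, pulling back the residual set. Alternatively, I would run the proof of \cref{thm: topological_dstep_char_factor} directly inside the minimal system $W$, observing that $\{x',y'\}^{d+2}\subseteq N_{d+2}(X)$ is a closed condition on $(x',y')$.

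4. Once such $(x',y')$ is found, closedness and $T$-invariance carry the inclusion back to $(x,y)$, since $(x,y)$ lies in the orbit closure of $(x',y')$ by minimality of $W$.

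\emph{$(2)\Rightarrow(3)$.} The word $(x,x,\dots,x,y)$ lies in $N_{d+2}(X)$. So there are $t_i\in T$ and $z_i\in X$ with
\[
(z_i,t_iz_i,\dots,t_i^{d}z_i,t_i^{d+1}z_i)\to(x,\dots,x,y).
\]
This shows $y$ is approximated along arithmetic progressions, but it is not yet the two-row configuration that $\AP^{[d]}$ requires.

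To produce the configuration $(x,a_*,y,a_*)$, I would use two further words of $\{x,y\}^{d+2}$ together with a reparametrization by $t\mapsto t^{-1}$ and shifts. The aim is to obtain nets $x_i\to x$ and $y_i\to y$ whose progressions $t_i^kx_i$ and $t_i^ky_i$ ($1\le k\le d$) converge to a common tuple $a_*$, in the style of Glasner–Huang–Shao–Ye. The cleanest route I see is to use $(x,y,y,\dots,y)\in N_{d+2}$ and $(y,y,\dots,y)$, and to take the common limit over the $d$ middle coordinates. The membership of $(x,\dots,x,y)$ is used for the symmetric version.

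\emph{$(3)\Rightarrow(1)$.} I would show $\AP^{[d]}(X)\subseteq\NRP^{[d]}(X)$ by projecting to $X_d$, which is an inverse limit of $d$-step nilsystems by \cref{thm: structure_thm_NRP}.

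1. In a $d$-step nilsystem, the sequence $k\mapsto t^kz$ is polynomial of degree $\le d$ in $k$.

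2. Hence $t^{0}z$ is determined continuously by $t^1z,\dots,t^{d+1}z$, via a vanishing discrete $(d+1)$-th derivative; equivalently, $(x,a_*)$ lies in the cube-type set of $X_d$ of arithmetic-progression type.

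3. If $(x,a_*)$ and $(y,a_*)$ are both limits of such progressions, then $x$ and $y$ are both determined by the same data $a_*$, so $\pi_d(x)=\pi_d(y)$. Extending $d$ to $d+1$ terms may require using the $(d+2)$-term info; I would adjust the indices accordingly.

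4. Therefore $(x,y)\in\RP^{[d]}(X)$, which closes the cycle.
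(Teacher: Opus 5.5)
\paragraph{The gap is in $(3)\Rightarrow(1)$.} As designed, this step fails. You treat $(x,a_*)$ and $(y,a_*)$ as lying \emph{independently} in the closure of $(d+1)$-term progressions in $X_d$, and then claim $x$ is determined by $a_*$. That claim is false. Take $d=1$ and an irrational rotation of the circle: the closure of $\{(z,tz)\}$ is the whole torus, so $a_1$ carries no information about $x$. A vanishing $(d+1)$-th difference needs $d+2$ points, which is the index mismatch you noticed yourself. But hypothesis (3) only supplies $(d+1)$-term progressions, and borrowing ``$(d+2)$-term info'' from (2) is circular within the cycle $(1)\Rightarrow(2)\Rightarrow(3)\Rightarrow(1)$.

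What makes $\AP^{[d]}$ small is that both rows are driven by the \emph{same} net $t_i$, and your argument never uses this. Once you use it, no nilsystem theory is needed. For $t\in T$, let $f=\prod_{j=1}^{d}t^{(\alpha_j)}\in\cF^{[d]}$, where $\alpha_j=\{\epsilon:\epsilon_j=1\}$. Then $f_\emptyset=e$ and $f_\epsilon=t^{|\epsilon|}$. Build $f_i$ from $t_i$ in this way. The $\AP^{[d]}$ nets then give $(f_ix_i^{[d]},f_iy_i^{[d]})\to(x,\tilde a_*,y,\tilde a_*)$ with $\tilde a_\epsilon=a_{|\epsilon|}$. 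This is exactly the definition of $\RP^{[d]}$, which is why the paper calls $(3)\Rightarrow(1)$ clear.

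\paragraph{The other two implications.} For $(1)\Rightarrow(2)$ your argument is the paper's, but the ``main obstacle'' in your step 3 is not an obstacle. You already noted that the first projection $W\to X$ is onto. So for any $x'\in X_0$ there is $y'$ with $(x',y')\in W\subseteq\RP^{[d]}(X)$, and your steps 1 and 4 finish the proof. No openness, density-of-preimages lemma, or rerun of the saturation argument is needed. The paper writes this point as $(x',p^{-1}y)$ with $x=px'$.

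For $(2)\Rightarrow(3)$, one word suffices, and you should state it explicitly. Suppose $(w_i,t_iw_i,\dots,t_i^{d+1}w_i)\to(x,y,\dots,y)$. Take $x_i=w_i$ and $y_i=t_iw_i$. Both progressions then converge, with $a_*=(y,\dots,y)$. The reparametrization $t\mapsto t^{-1}$ and the extra words are unnecessary.
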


 \begin{proof}
    Clearly $(2)\Rightarrow (3) \Rightarrow (1)$. Thus, it suffices to show that $(1)\Rightarrow (2)$. By \cref{prop: inverse_limit_metric}, {\cite[Proposition A.2.3]{Huang_Shao_Ye_nilbohr_automorphy:2016}} and {\cite[Lemma 3.6]{Lian_Qiu_pronil_arith_progressions:2024}}, we may assume that $(X,T)$ is a metric system.
    
     By \cref{lemma: dense_preimage_in_Nd}, there exists a dense $G_{\delta}$ set $X_{0}$ of $X$ such that $(\pi^{(d+2)})^{-1}(\pi^{(d+2)}(z^{(d+2)})) \subseteq N_{d+2}(X)$ for every $z\in X_{0}$, where $\pi:X\to X_{d}$ is the factor map onto the maximal factor of order $d$. Let $x'\in X_{0}$ and let $p\in M$ be such that $x=px'$. Since $\RP^{[d]}(X)$ is invariant, we have $(x',p^{-1}y)\in \RP^{[d]}(X)$. Therefore, \begin{align*}
         \{x',p^{-1}y\}^{d+2} \subseteq (\pi^{(d+2)})^{-1}(\pi^{(d+2)}({x'}^{(d+2)})) \subseteq N_{d+2}(X).
     \end{align*}

     Since $N_{d+2}(X)$ is $T$-invariant, it follows that $\{x,y\}^{d+2} \subseteq N_{d+2}(X)$.
 \end{proof}

Note that using the same proof it is possible to obtain the following result.\begin{corollary}\label{cor: APd_RPd_open_map_nil}
    Let $(X,T)$ be a minimal system and let $d\geq 1$ be an integer. Suppose that the factor map from $X$ to its maximal factor of order $d$ is open, then the following are equivalent:\begin{enumerate}[label=(\roman*)]
         \item $(x,y)\in \RP^{[d]}(X)$.
         \item $\{x,y\}^{d+2} \subseteq N_{d+2}(X)$.
         \item $(x,y)\in \AP^{[d]}(X)$.
     \end{enumerate}
\end{corollary}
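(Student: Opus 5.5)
The plan is to reuse the cycle $(ii)\Rightarrow(iii)\Rightarrow(i)\Rightarrow(ii)$ from \cref{prop: characterization_minimal_in_RPd}. The only new ingredient is the openness of $\pi:X\to X_d$, which will let us drop the minimality assumption on $(x,y)$. The implications $(ii)\Rightarrow(iii)$ and $(iii)\Rightarrow(i)$ do not use minimality at all, so they carry over verbatim.

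\emph{From (ii) to (iii).} Suppose $\{x,y\}^{d+2}\subseteq N_{d+2}(X)$. Then the point $(x,y,y,\dots,y)$ lies in $N_{d+2}(X)$. Since $N_{d+2}(X)$ is the closure of the progressions $(z,tz,\dots,t^{d+1}z)$, the configurations $(x,y,\dots,y)$ and $(y,y,\dots,y)$ combine to give $(x,y)\in\AP^{[d]}(X)$, exactly as in the earlier proof.

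\emph{From (iii) to (i).} Arithmetic progressions $(z,tz,\dots,t^dz)$ are sub-configurations of cube configurations. Hence $\AP^{[d]}(X)\subseteq\RP^{[d]}(X)$, again as used before.

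\emph{From (i) to (ii).} This is the substantive step. First reduce to metric $X$ as in the earlier proof, and write $\pi:X\to X_d$ for the factor map. Because $T$ is abelian, $\RP^{[d]}=\NRP^{[d]}=R_\pi$, so (i) says exactly $\pi(x)=\pi(y)$. Next, take the dense $G_\delta$ set $X_0$ from \cref{lemma: dense_preimage_in_Nd}, with $d+1$ in place of $d$: for every $z\in X_0$ we have $(\pi^{(d+2)})^{-1}(\pi^{(d+2)}(z^{(d+2)}))\subseteq N_{d+2}(X)$. Now choose a sequence $z_i\in X_0$ with $z_i\to x$. Since $\pi$ is open, the fibre map $w\mapsto\pi^{-1}(w)$ is continuous in the Hausdorff topology, so $\pi^{-1}(\pi(z_i))\to\pi^{-1}(\pi(x))$. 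Because $y\in\pi^{-1}(\pi(x))$, we can pick $y_i\in\pi^{-1}(\pi(z_i))$ with $y_i\to y$. Then every point of $\{z_i,y_i\}^{d+2}$ lies in the fibre of $z_i^{(d+2)}$ under $\pi^{(d+2)}$, hence in $N_{d+2}(X)$. Since $N_{d+2}(X)$ is closed, passing to the limit gives $\{x,y\}^{d+2}\subseteq N_{d+2}(X)$.

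The main obstacle is the reduction to metric systems. The openness hypothesis must survive writing $X$ as an inverse limit of metric systems, since the approximating factor maps to their order-$d$ factors need not be open. If that fails, I would instead run the density argument directly with nets: Hausdorff convergence of fibres under an open map, and \cref{lemma: density_continuty_points}, make sense for compact Hausdorff spaces. This would require the $G_\delta$ set of \cref{lemma: dense_preimage_in_Nd} to be replaced by a merely dense set of good points. Alternatively, one could work with metric factors through which $\pi$ factors openly.
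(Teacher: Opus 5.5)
Your argument is essentially the paper's. The ``same proof'' keeps $(ii)\Rightarrow(iii)\Rightarrow(i)$, and in $(i)\Rightarrow(ii)$ it replaces the minimality-based transport of $(x,y)$ to a point of the dense $G_\delta$ set of \cref{lemma: dense_preimage_in_Nd} by the continuity of $w\mapsto\pi^{-1}(w)$ coming from openness, exactly as in the last step of the proof of \cref{thm: topological_dstep_char_factor}.

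The obstacle you flag is real, and the paper passes over it: unlike minimality of $(x,y)$ in \cref{prop: characterization_minimal_in_RPd}, openness of $X\to X_d$ is not obviously inherited by an arbitrary metric presentation. Your last suggestion closes it. Use the metric factors $X_A$ given by separable, unital, $T$-invariant closed subalgebras $A\subseteq C(X)$ that are closed under $f\mapsto\bigl(z\mapsto\sup f(\pi^{-1}(\pi(z)))\bigr)$. For such $A$, $(X_A)_d$ is the factor determined by $A\cap\{g\circ\pi: g\in C(X_d)\}$, and $X_A\to (X_A)_d$ is open. These $A$ are cofinal, and $\{x,y\}^{d+2}\subseteq N_{d+2}(X)$ then follows from the metric case, since a point of $X^{d+2}$ lies in $N_{d+2}(X)$ as soon as all its projections lie in the sets $N_{d+2}(X_A)$.
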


In particular \cref{cor: APd_RPd_open_map_nil} proves that $\AP^{[d]}(X)=\RP^{[d]}(X)$ for distal systems.

One consequence of \cref{prop: characterization_minimal_in_RPd} is that $\RP^{[d]}$ is the smallest closed invariant equivalence relation containing $\AP^{[d]}$. To prove this, we need the following lemma, which was proved for $\Z$-actions, but the same proof works for finitely generated abelian actions.

\begin{lemma}[{\cite[Theorem 3.8]{Glasner_Huang_Shao_Ye_regionally_arithmetic_prog_nil:2020}}]\label{lemma: Rpi_APd_proximal_factor}
    Let $\pi:(X,T)\to (Y,T)$ be a proximal factor between minimal systems and let $d\geq 1$ be an integer. Then $R_{\pi} \subseteq \AP^{[d]}(X)$.
\end{lemma}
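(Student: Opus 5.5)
The strategy is to use proximality to find one idempotent that collapses $x$ and $y$ together, and then to find a single $t\in T$ such that $t,t^{2},\dots,t^{d}$ all send both $x$ and $y$ close to a common point. With such a $t$ we can take the constant nets $x_i=x$ and $y_i=y$, and only the element $t_i$ has to vary.

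\textbf{Step 1 (a common idempotent).} Let $(x,y)\in R_{\pi}$. Since $\pi$ is proximal, the pair $(x,y)$ is proximal. By the standard Ellis-semigroup characterization of proximality, there is a minimal left ideal $I$ of $\beta T$ such that $px=py$ for every $p\in I$. Fix an idempotent $v\in J(I)$ and set $z:=vx=vy$.

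\textbf{Step 2 (a set in the ultrafilter).} Fix a neighborhood $W$ of $z$. Since $v$ is the limit of $t$ along the ultrafilter $v$, the set
\begin{align*}
A_{W}=\{t\in T: tx\in W,\ ty\in W\}
\end{align*}
belongs to $v$. Because $v$ is a minimal idempotent, $A_{W}$ is a central set in $T$.

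\textbf{Step 3 (the key combinatorial input).} I would show that every central set $A$ in a finitely generated abelian group $T$ contains a configuration $\{t,t^{2},\dots,t^{d}\}$ for some $t\neq e$. This is the step I expect to be the main obstacle. My first attempt would pass through dynamics, using that $A$ is the return set of a minimal point to an open set. Concretely, one can find a minimal system $(Z,T)$, a point $z_0\in Z$ and an open set $O\ni z_0$ with $z_0=vz_0$ and $N(z_0,O)\subseteq A$. One takes $Z=\overline{Tv}\subseteq\beta T$ with $O=\overline{A}$, which is clopen; here $v$ is a minimal point of $Z$.

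The goal is then a $t$ with $t^{j}z_0\in O$ for $j=1,\dots,d$. This is a pointwise multiple recurrence statement for the commuting maps $t\mapsto t^{j}$. Uniform (syndetic) topological recurrence alone gives only some point $z'$ near $z_0$, not $z_0$ itself. To get the point $z_0$ itself, I would instead invoke the IP polynomial Szemerédi theorem of Bergelson–Leibman (equivalently, the Furstenberg–Katznelson IP theorem) for finitely generated abelian groups. That theorem gives an IP set of such $t$ inside every set belonging to a minimal idempotent: every member of a minimal idempotent ultrafilter contains the images of the polynomial configuration $t\mapsto(t,t^{2},\dots,t^{d})$, which vanishes at $e$. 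If quoting this is undesirable, the alternative is a direct ultrafilter argument. One applies Furstenberg–Weiss multiple recurrence on the minimal system $(N_{d}(M),\langle\sigma_d,\tau_d\rangle)$ of \cref{thm: Nd_minimal} to show that $v^{(d)}$ is $\tau_d$-recurrent, which is exactly the required statement.

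\textbf{Step 4 (assembling the nets).} Direct the neighborhoods $W$ of $z$ by reverse inclusion, and for each $W$ choose $t_W$ with $t_W^{j}\in A_W$ for $1\le j\le d$. Taking $x_W=x$ and $y_W=y$, we get
\begin{align*}
(x,t_Wx,\dots,t_W^{d}x,\ y,t_Wy,\dots,t_W^{d}y)\to (x,z,\dots,z,\ y,z,\dots,z).
\end{align*}
This is precisely the defining condition for $(x,y)\in\AP^{[d]}(X)$ with $a_*=(z,\dots,z)$, which concludes the argument.
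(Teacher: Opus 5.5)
Your Steps 1, 2 and 4 are fine. Step 3, however, is false for $d\geq 2$, and this is a genuine gap. Membership in a minimal idempotent $v$ does not force a set to contain $\{t,t^{2},\dots,t^{d}\}$ with $t\neq e$. Already for $T=\Z$ and $d=2$, color each $n\neq 0$ by the parity of the exponent of $2$ in $n$. Then $n$ and $2n$ always get different colors. Since $v$ is a non-principal ultrafilter, it contains one of the two color classes, and that class is a central set containing no pair $\{n,2n\}$ with $n\neq 0$. Put differently, if every member of $v$ contained such a pattern, the homogeneous pattern $\{t,t^{2}\}$ would be partition regular, and it is not.

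The IP Szemer\'edi theorems (Furstenberg--Katznelson and their polynomial extensions) do not say what you quote. They say that for a nonempty open set $U$ of a minimal system, the set $\{t: U\cap t^{-1}U\cap\dots\cap t^{-d}U\neq\emptyset\}$ is IP$^*$. That is, they produce patterns $\{a,at,\dots,at^{d}\}$ with a \emph{free base point} $a$. Such a return set therefore \emph{meets} every central set, but it gives no pattern with base point $e$ inside a central set.

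Your alternative fails for related reasons. Furstenberg--Weiss gives a residual set of multiply recurrent points, not recurrence of the prescribed point $v^{(d)}$. Even granting it, $t^{j}v\in\overline{A_W}$ means $t^{-j}A_W\in v$, i.e.\ $t^{j}z\in\overline{W}$; this is information about $z$, not about $x$ and $y$. The point whose return set to $\overline{A_W}$ is $A_W$ is $e$, which is not minimal; likewise $N(v,\overline{A})\subseteq A$ is false, since $e\in N(v,\overline A)$ always. By insisting on constant nets $x_i=x$, $y_i=y$, you are trying to prove that $(x,y)$ is proximal along $\{t,\dots,t^{d}\}$. Centrality of $A_W$ cannot deliver that pointwise statement.

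The missing idea is to use the regional freedom in $\AP^{[d]}$ and combine set recurrence with the return times of $x$. This is the mechanism of the proof of \cref{prop: Nd_saturated_infty_nil}; the paper itself only cites Glasner--Huang--Shao--Ye for this lemma. One route that works is as follows.

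\textbf{Setup.} Since $y$ is minimal, choose $w\in J(I)$ with $wy=y$, so $y=wx$. Put $m=d!$. As $T^{m}$ has finite index, $T^{m}\in w$, so $\{t: t^{m}x\in V\}$ contains an IP set for every neighborhood $V$ of $y$. Let $\rho(t)=(e,t,\dots,t^{d})$ act on $X^{d+1}$. Let $\bx=(x,a_1,\dots,a_d)$ be a $\rho$-minimal point of $\overline{\rho(T)x^{(d+1)}}$; its first coordinate is $x$. Let $U=(U_0\times\dots\times U_d)\cap\overline{\rho(T)\bx}$ be a neighborhood of $\bx$.

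\textbf{Recurrence step.} By Furstenberg--Katznelson, the set of $t$ with $\bigcap_{k=0}^{m}\rho(t)^{-k}U\neq\emptyset$ is IP$^*$. Hence it contains some $t$ with $t^{m}x\in V$; take $\bz$ in that intersection. Consider the point $(\bz,t^{m}\bz)$, with $t^{m}$ acting diagonally on the second block. It lies in the closure $\Lambda$ of $\{(x',sx',\dots,s^{d}x',y',sy',\dots,s^{d}y')\}$, because $\bz$ is a limit of tuples $(x'_i,s_ix'_i,\dots,s_i^{d}x'_i)$ and $T$ is abelian.

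\textbf{Conclusion.} The coordinates of $(\bz,t^{m}\bz)$ are $z_0=x$, $z_j\in U_j$, $t^{m}x\in V$, and $t^{m}z_j=(\rho(t)^{m/j}\bz)_j\in U_j$. Shrinking $U$ and $V$ gives $(x,a_*,y,a_*)\in\Lambda$, i.e.\ $(x,y)\in\AP^{[d]}(X)$. Note that the approximating base points genuinely vary.
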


\begin{theorem}
    Let $(X,T)$ be a minimal system and let $d\geq 1$ be an integer. Then $\RP^{[d]}(X)$ is the smallest closed invariant equivalence relation containing $\AP^{[d]}$.
\end{theorem}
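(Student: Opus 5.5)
We show two inclusions: $\mathcal{A}(\AP^{[d]}(X))\subseteq \RP^{[d]}(X)$ and $\RP^{[d]}(X)\subseteq \mathcal{A}(\AP^{[d]}(X))$, where $\mathcal{A}(\AP^{[d]}(X))$ is the smallest closed invariant equivalence relation containing $\AP^{[d]}(X)$.

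\emph{First inclusion.} Since $T$ is abelian, $\RP^{[d]}(X)=\NRP^{[d]}(X)$ by \cite[Theorem 3.2]{Shao_Ye_regionally_prox_orderd:2012}. By \cref{thm: NRP_properties}, $\NRP^{[d]}(X)$ is a closed invariant equivalence relation. Next we check $\AP^{[d]}(X)\subseteq \RP^{[d]}(X)$ directly, as the implication $(3)\Rightarrow(1)$ in \cref{prop: characterization_minimal_in_RPd}. In the definition of $\AP^{[d]}$, take the face transformation whose $\epsilon$-coordinate is $t_i^{|\epsilon|}$. This is an element of $\cF^{[d]}$, since it is the product of $t_i$ over the upper facets $\{\epsilon:\epsilon_k=1\}$. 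The convergence of the arithmetic progression tuples then yields the required convergence of the cubes, after a projection or passage to a subnet. Together these give $\mathcal{A}(\AP^{[d]}(X))\subseteq \RP^{[d]}(X)$.

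\emph{Second inclusion.} Let $E=\mathcal{A}(\AP^{[d]}(X))$ and $(x,y)\in\RP^{[d]}(X)$. Choose $u'\in J(M)$ with $u'(x,y)$ a minimal point of $X\times X$, taking a minimal idempotent of a minimal left ideal acting on the pair. Then $x':=u'x$ is proximal to $x$, and $y':=u'y$ is proximal to $y$. Invariance and closedness of $\RP^{[d]}$ give $(x',y')=u'(x,y)\in\RP^{[d]}(X)$. Because $(x',y')$ is a minimal point, \cref{prop: characterization_minimal_in_RPd} gives $(x',y')\in\AP^{[d]}(X)\subseteq E$.

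It remains to show $\P(X)\subseteq E$, since then $(x,x'),(y',y)\in E$ and transitivity yields $(x,y)\in E$. For this, use \cref{lemma: Rpi_APd_proximal_factor}, which gives $R_\pi\subseteq\AP^{[d]}(X)$ for any proximal factor $\pi$. Let $\pi\colon X\to X/E$ be the quotient, with $E$ closed invariant so that $X/E$ is a minimal system. We want $\P(X)\subseteq R_\pi$. Take any $(x,z)\in\P(X)$ and apply the same trick inside $\AP^{[d]}$ directly: proximal pairs lie in $\AP^{[d]}(X)$. To see this, pick $t_i$ with $t_i(x,z)\to(w,w)$. The $d$-dimensional cube reasoning used for $\P\subseteq\RP^{[d]}$ (\cite[Lemma A.5]{Glasner_Gutman_Ye_higher_regionallyproximal_general_groups:2018}) adapts to progressions. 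Alternatively, apply \cref{lemma: Rpi_APd_proximal_factor} to the maximal proximal-to-distal reduction: take $\pi\colon X\to Y$ where $Y$ is a factor on whose fibers proximality is captured.

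\emph{Main obstacle.} The step I expect to be the main obstacle is showing $\P(X)\subseteq E$ cleanly. I would first try the following. $\P(X)\subseteq \RP^{[d]}(X)$, and every pair in $\P(X)$ has the form $(x,vx)$ with $v$ an idempotent. Then $(x,vx)$ is sent by $v$ to the diagonal, and the nets $t_i\to v$ give $(x,t_ix,\dots,t_i^dx,vx,t_i vx,\dots)$. Here $t_i^k x$ and $t_i^k vx$ converge to the same point because $vt_i^k$ acts the same on both in the limit along $v$ (as $v v=v$). This gives $\P(X)\subseteq\AP^{[d]}(X)$ directly.
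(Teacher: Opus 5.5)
Your first inclusion is correct, and the face transformation with $\epsilon$-coordinate $t_i^{|\epsilon|}$ makes the paper's ``clearly'' explicit. Your reduction in the second inclusion is also the paper's: push $(x,y)$ by a minimal idempotent to a minimal point of $\RP^{[d]}(X)$, apply \cref{prop: characterization_minimal_in_RPd}, and reconnect through proximal pairs. The gap is the step you flag yourself: $\P(X)\subseteq\mathcal{A}(\AP^{[d]}(X))$ is never proved, and none of your attempts works.

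Your final ``direct'' argument is wrong. From $t_i\to v$ you do get $t_ix\to vx$ and $t_ivx\to vvx=vx$, so the $k=1$ terms agree. For $k\geq 2$, however, the net $t_i^k$ converges in $\beta T$ to the image of $v$ under the continuous extension of $t\mapsto t^k$, not to the semigroup power $v^k=v$. Nothing guarantees that this image lies in $\beta T v$, so there is no reason for $\lim t_i^kx=\lim t_i^kvx$.

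The identity $vv=v$ only controls iterated limits. That is why the cube argument for $\P(X)\subseteq\RP^{[d]}(X)$ does not ``adapt''. There the $\epsilon$-coordinate $t_1^{\epsilon_1}\cdots t_d^{\epsilon_d}$ involves $d$ independent parameters, and $\lim_{t_1\to v}\cdots\lim_{t_d\to v}$ yields $v^{|\epsilon|}=v$. In a progression, all coordinates are powers of a single parameter. Idempotence alone cannot even place $t$ and $t^2$ in a common set belonging to $v$: for $T=\Z$, colour $n\neq 0$ by the parity of its $2$-adic valuation. One colour class belongs to $v$, and it contains no pair $n,2n$.

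Your alternative via \cref{lemma: Rpi_APd_proximal_factor} also fails as stated. A proximal factor $\pi$ always satisfies $R_\pi\subseteq\P(X)$, so demanding $\P(X)\subseteq R_\pi$ amounts to $\P(X)=R_\pi$ being a closed equivalence relation, which is false in general. Rewriting the goal as $\P(X)\subseteq R_\pi$ for $\pi\colon X\to X/E$ is only a restatement. Nor can \cref{prop: characterization_minimal_in_RPd} be applied directly, since an off-diagonal proximal pair is never a minimal point of $X\times X$.

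In the paper, $\P(X)\subseteq\mathcal{A}(\AP^{[d]}(X))$ is deduced by combining \cref{prop: characterization_minimal_in_RPd} with \cref{lemma: Rpi_APd_proximal_factor}. That is, proximal pairs are reached through fibres of proximal factor maps, not through a net computation with fixed base points. Some argument of that kind is the missing ingredient, and without it your second inclusion is unproved. Note also that you assert the stronger inclusion $\P(X)\subseteq\AP^{[d]}(X)$, which the paper does not claim.
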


\begin{proof}
    Let $\mathcal{A}(\AP^{[d]}(X))$ denote the smallest closed invariant equivalence relation containing $\AP^{[d]}$. Clearly $\mathcal{A}(\AP^{[d]}(X)) \subseteq \RP^{[d]}(X)$.

    Let $(x,y)\in \RP^{[d]}(X)$. By \cref{prop: characterization_minimal_in_RPd}, it follows that $(y,vx)\in \AP^{[d]}(X)$, where $v$ is a minimal idempotent in $M$ such that $vy=y$. Furthermore, by \cref{prop: characterization_minimal_in_RPd} and \cref{lemma: Rpi_APd_proximal_factor}, we deduce that $\P(X)\subseteq \mathcal{A}(\AP^{[d]}(X))$. In particular, $(x,vx)\in \mathcal{A}(\AP^{[d]}(X))$. Since $\mathcal{A}(\AP^{[d]}(X))$ is an equivalence relation, we conclude that $(x,y)\in \mathcal{A}(\AP^{[d]}(X))$.
\end{proof}

Now, we  use the characterization of the minimal points of $\RP^{[d]}$ to study independence. 

\begin{definition}
    Let $(X,T)$ be a system. Given a tuple $\mathscr{A} = (U_{0},U_{1},\dots,U_{k})$ of subsets of $X$, we say that $F\subseteq T$ is an independence set for $\mathscr{A}$ if for any non-empty finite set $J\subseteq T$ and any $s = (s(j):j\in J) \in \{0,1,\dots,k\}^{J}$ we have\begin{align*}
        \bigcap_{j\in J} j U_{s(j)} \neq \emptyset.
    \end{align*}

    We shall denote the collection of all independence sets for $\mathscr{A}$ by $\operatorname{Ind}(U_{0},U_{1},\dots,U_{k})$.
\end{definition}

\begin{definition}
    Let $(X,T)$ be a system and let $d\geq 1$ be an integer. A pair $(x_{1},x_{2})\in X\times X$ is called an $\operatorname{Ind}_{ap}$-pair of order $d$ if for every pair of neighborhoods $U_{1},U_{2}$ of $x_{1}$ and $x_{2}$ respectively, there exists some $t\in T\setminus\{e\}$, where $e$ is the identity of $T$, such that $\{t,t^{2},\dots,t^{d}\} \in \operatorname{Ind}(U_{1},U_{2})$.

    A pair $(x_{1},x_{2})\in X\times X$ is called an $\operatorname{Ind}_{ap}$-pair if for every integer $d\geq 1$, $(x_{1},x_{2})$ is an $\operatorname{Ind}_{ap}$-pair of order $d$.
\end{definition}

Denote by $\operatorname{Ind}_{ap}(X)$ the set of all $\operatorname{Ind}_{ap}$-pairs of order $d$. Note that $\operatorname{Ind}_{ap}(X)\subseteq \AP^{[d]}(X)$, for every $d\geq 1$ being an integer.

The following characterization of $\operatorname{Ind}_{ap}(X)$ was proved for $\Z$-actions, but its proof also works for abelian group actions.

\begin{lemma}[{\cite[Lemma 7.6]{Cai_Shao_top_charact_independence:2022}}]\label{lemma: characterization_Ind_ap}
    Let $(X,T)$ be a system and $(x_{1},x_{2})\in X\times X\setminus \Delta(X)$. Then $(x_{1},x_{2})\in \operatorname{Ind}_{ap}(X)$ if and only if for every integer $d\geq 1$, $\{x_{1},x_{2}\}^{d} \subseteq N_{d}(X)$.
\end{lemma}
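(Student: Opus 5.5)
The plan is to unwind the definition of $\operatorname{Ind}(U_{1},U_{2})$ for the set $\{t,t^{2},\dots,t^{d}\}$ into a statement about the orbit of a diagonal point under $\tau_{d}$, and then compare it with the definition $N_{d}(X)=\overline{\tau_{d}\Delta^{(d)}(X)}$. First reduce to the full index set: if $\bigcap_{j\in J} jU_{s(j)}\neq\emptyset$ holds for $J=\{t,\dots,t^{d}\}$ and every colouring $s$, then it holds for every nonempty $J'\subseteq J$, since we may extend $s$ arbitrarily to $J$. If some powers $t^{i}=t^{j}$ coincide, a colouring of the \emph{set} $J$ simply induces a colouring of the indices $1,\dots,d$ that is constant on coinciding powers, so nothing changes. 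Next observe that $z\in\bigcap_{j=1}^{d}t^{j}U_{s(j)}$ means $(t^{-1}z,\dots,t^{-d}z)\in U_{s(1)}\times\dots\times U_{s(d)}$. Writing $r=t^{-1}$, this is exactly a point $(rz,r^{2}z,\dots,r^{d}z)$ of $\tau_{d}\Delta^{(d)}(X)$ lying in the given product neighbourhood.

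For the forward direction, assume $(x_{1},x_{2})\in\operatorname{Ind}_{ap}(X)$, fix $d$ and a point $(x_{s(1)},\dots,x_{s(d)})\in\{x_{1},x_{2}\}^{d}$. For any neighbourhoods $U_{1},U_{2}$ of $x_{1},x_{2}$, the $t$ provided by the definition, together with the colouring $s$, gives a point of $\tau_{d}\Delta^{(d)}(X)$ in $U_{s(1)}\times\dots\times U_{s(d)}$. Letting the neighbourhoods shrink shows $(x_{s(1)},\dots,x_{s(d)})\in N_{d}(X)$.

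For the converse, the difficulty is that the hypothesis $\{x_{1},x_{2}\}^{d}\subseteq N_{d}(X)$ yields, for each colouring $s\in\{1,2\}^{d}$, some element $r_{s}$ depending on $s$, whereas the definition requires one $t$ working for all $2^{d}$ colourings simultaneously, with $t\neq e$. This uniformity is the main obstacle. I would overcome it by passing to a larger dimension. Choose a word $w\in\{1,2\}^{D}$, for instance the concatenation of all $2^{d}$ words of length $d$, so that $D=d2^{d}$, in which every $s\in\{1,2\}^{d}$ occurs as a block of consecutive letters $w_{k+1}\cdots w_{k+d}$. Since $(x_{w_{1}},\dots,x_{w_{D}})\in N_{D}(X)$ by hypothesis, there are $r\in T$ and $z\in X$ with $r^{i}z\in U_{w_{i}}$ for all $1\le i\le D$. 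For a block starting after position $k$, the point $z'=r^{k}z$ satisfies $r^{j}z'\in U_{s(j)}$ for $j=1,\dots,d$. Hence the single element $t=r^{-1}$ makes $\{t,\dots,t^{d}\}$ an independence set for $(U_{1},U_{2})$.

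Finally, to guarantee $t\neq e$ we use $x_{1}\neq x_{2}$: shrink $U_{1},U_{2}$ to be disjoint, and note that $w$ contains two consecutive letters $1,2$. Then $r^{i}z\in U_{1}$ and $r^{i+1}z\in U_{2}$ force $r\neq e$. Since $d$ was arbitrary, $(x_{1},x_{2})$ is an $\operatorname{Ind}_{ap}$-pair. Abelianness of $T$ is not essential here; only the cyclic subgroup generated by $r$ is used, as in the $\Z$ case of \cite[Lemma 7.6]{Cai_Shao_top_charact_independence:2022}.
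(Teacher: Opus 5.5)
Your converse direction is correct and uses nothing about $T$. The word $w$ containing every pattern of length $d$ as a block yields a single $r$ serving all $2^{d}$ colourings, and disjoint $U_{1},U_{2}$ together with two adjacent distinct letters of $w$ force $r\neq e$. The gap is in the forward direction. There you must produce, for an \emph{arbitrary} $s\in\{1,2\}^{d}$, a point of $\tau_{d}\Delta^{(d)}(X)$ in $U_{s(1)}\times\dots\times U_{s(d)}$ from a witness $t\neq e$ with $\{t,\dots,t^{d}\}\in\operatorname{Ind}(U_{1},U_{2})$. This needs $s$ to be a colouring of the \emph{set} $\{t,\dots,t^{d}\}$, i.e.\ $t,\dots,t^{d}$ pairwise distinct. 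Your remark that ``nothing changes'' when powers coincide only covers the opposite translation (set-colourings to index-colourings), which is what the converse uses. If $t$ has order $m<d$, no pattern with $s(i)\neq s(i+m)$ can come from $t$.

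Consequently your closing claim that no property of $T$ is used is false, and the conclusion genuinely fails without control of torsion. Let $T=\bigoplus_{n\geq 1}\Z/2$ act on $\{0,1\}^{\{0,1,2,\dots\}}$ by $a_{n}(x)_{0}=x_{0}+x_{n}$, with the other coordinates fixed. Take $x_{1}=0^{\infty}$, $x_{2}=10^{\infty}$, and let $U_{1},U_{2}$ be the cylinders fixing coordinates $0,\dots,k$ around them. The element $t=a_{k+1}$ gives $\{t,\dots,t^{d}\}\subseteq\{t,e\}$ and $tU_{i}\cap U_{j}\neq\emptyset$ for all $i,j$, so $(x_{1},x_{2})\in\operatorname{Ind}_{ap}(X)$. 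But $t^{3}=t$ for every $t$, so $N_{3}(X)\subseteq\{(a,b,c):a=c\}$ and $(x_{1},x_{1},x_{2})\notin N_{3}(X)$.

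In the paper's setting the gap can be closed. There $T$ is finitely generated abelian; the paper only cites the $\Z$-case, where $t\neq e$ automatically makes the powers distinct. Let $F_{d}$ be the set of $t\neq e$ with $t^{k}=e$ for some $1\le k\le d-1$; it lies in the finite torsion subgroup. For $t\in F_{d}$ one has $\{t,e\}\subseteq\{t,\dots,t^{d}\}$, so $t$ is a witness for $(U_{1},U_{2})$ only if $tU_{i}\cap U_{j}\neq\emptyset$ for all $i,j$. If this held for all neighbourhoods, continuity of $t^{-1}$ would give $t^{-1}x_{j}=x_{i}$ for all $i,j$, whence $x_{1}=x_{2}$. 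Since $F_{d}$ is finite, you can therefore shrink $U_{1},U_{2}$ so that no element of $F_{d}$ is a witness. A witness for the smaller sets is still one for the original sets, and it has $t,\dots,t^{d}$ pairwise distinct, after which your forward argument is valid.
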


By \cref{prop: characterization_minimal_in_RPd} and \cref{lemma: characterization_Ind_ap}, we obtain the following result.

\begin{proposition}\label{prop: minimal_points_RPinfty_iff_Indap}
    Let $(X,T)$ be a minimal system. If $(x_{1},x_{2})\in X\times X$ is a minimal point, then $(x_{1},x_{2}) \in \RP^{[\infty]}(X)$ if and only if $(x_{1},x_{2})\in \operatorname{Ind}_{ap}(X)$.
\end{proposition}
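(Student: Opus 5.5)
The plan is to derive the statement directly from \cref{prop: characterization_minimal_in_RPd} and \cref{lemma: characterization_Ind_ap}, applied one order $d$ at a time. Recall that $\RP^{[\infty]}(X)=\bigcap_{d\geq 1}\RP^{[d]}(X)$. Since $(x_{1},x_{2})$ is a minimal point, \cref{prop: characterization_minimal_in_RPd} applies for every $d\geq 1$. It gives $(x_{1},x_{2})\in \RP^{[d]}(X)$ if and only if $\{x_{1},x_{2}\}^{d+2}\subseteq N_{d+2}(X)$. Hence $(x_{1},x_{2})\in\RP^{[\infty]}(X)$ if and only if $\{x_{1},x_{2}\}^{m}\subseteq N_{m}(X)$ for every $m\geq 3$.

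Next I remove the restriction $m\geq 3$. The sets $N_{m}(X)$ are monotone under projection to the first $m-1$ coordinates: the image of $\overline{\tau_{m}\Delta^{(m)}(X)}$ under this projection is $\overline{\tau_{m-1}\Delta^{(m-1)}(X)}$, using compactness. Therefore $\{x_{1},x_{2}\}^{m}\subseteq N_{m}(X)$ implies $\{x_{1},x_{2}\}^{m-1}\subseteq N_{m-1}(X)$, since every word of length $m-1$ extends to a word of length $m$. So the condition for all $m\geq 3$ is equivalent to the condition for all $m\geq 1$.

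Now split into cases. If $x_{1}\neq x_{2}$, \cref{lemma: characterization_Ind_ap} identifies the condition for all $m\geq 1$ with $(x_{1},x_{2})\in\operatorname{Ind}_{ap}(X)$, which finishes this case. If $x_{1}=x_{2}$, the pair lies in $\RP^{[\infty]}(X)$ trivially. It also lies in $\operatorname{Ind}_{ap}(X)$: for a neighborhood $U$ of $x_{1}$ and $t\neq e$ we need $\bigcap_{j} t^{j}U\neq\emptyset$. Minimality of $(X,T)$ gives this via multiple recurrence; if $T$ is finite, $X$ is a single finite orbit and one chooses $t$ suitably. In any case this point is the convention implicit in \cref{lemma: characterization_Ind_ap}, which excludes the diagonal, so I would treat the diagonal separately or note it is harmless.

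The main obstacle is only bookkeeping. One needs the monotonicity of $N_{m}$ under coordinate projection, together with the fact that the minimality hypothesis is exactly what \cref{prop: characterization_minimal_in_RPd} requires for each $d$ simultaneously. Both points are straightforward, so the proof reduces to combining the two cited results.
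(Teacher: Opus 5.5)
Your argument is the paper's: its proof consists solely of combining \cref{prop: characterization_minimal_in_RPd} with \cref{lemma: characterization_Ind_ap}. Your observation that $N_{m}(X)$ projects into $N_{m-1}(X)$ is correct, and it supplies the passage from ``all $m\geq 3$'' to ``all $m\geq 1$'' that the paper leaves implicit.

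The one thing to fix is the diagonal when $T$ is finite, where you cannot always ``choose $t$ suitably''. Let $T=\Z/2\Z$ act on $X=\{a,b\}$ by swapping the two points, and take $U_{1}=U_{2}=\{a\}$ and $d\geq 2$. The only $t\neq e$ gives $\{t,t^{2}\}=\{t,e\}$, and $tU_{1}\cap U_{2}=\emptyset$. Hence the minimal point $(a,a)$ lies in $\RP^{[\infty]}(X)$ but not in $\operatorname{Ind}_{ap}(X)$. So the statement must be read either for off-diagonal pairs, as in the cited lemma and implicitly in the paper, or for infinite $T$.

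For infinite $T$ your multiple-recurrence remark works with one adjustment. Apply Furstenberg--Weiss to the powers $t_{0},t_{0}^{2},\dots$ of an element $t_{0}$ of infinite order. Since $T$ commutes with $t_{0}$, minimality lets you move a multiply recurrent point into $U=U_{1}\cap U_{2}$. Then take $t=t_{0}^{n}$ with $n\geq 1$, which guarantees $t\neq e$.
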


By \cref{prop: minimal_points_RPinfty_iff_Indap}, one can prove that every minimal system without nontrivial $\operatorname{Ind}_{ap}$-pairs is a proximal extension of its maximal factor of order $\infty$. The following theorem gives a refinement of this result.

\begin{theorem}\label{thm: non_Indap}
    Let $(X,T)$ be a minimal metric system without nontrivial $\operatorname{Ind}_{ap}$-pairs. Then $(X,T)$ is an almost one-to-one extension of its maximal factor of order $\infty$.
\end{theorem}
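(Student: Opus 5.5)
Let $\pi:X\to X_\infty$ be the factor map onto the maximal factor of order $\infty$, so that $R_\pi=\RP^{[\infty]}(X)=\bigcap_d \RP^{[d]}(X)$. The plan is to find a single point $x\in X$ with $\pi^{-1}(\pi(x))=\{x\}$. For minimal metric systems, one singleton fiber already forces the set of singleton fibers to be a dense $G_\delta$: by \cref{lemma: density_continuty_points}, continuity points $y$ of $y\mapsto\pi^{-1}(y)$ form a dense $G_\delta$, and upper semicontinuity together with minimality (translating a singleton fiber near $y$) makes the fiber over each continuity point a singleton. So the whole problem reduces to producing one singleton fiber.

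To produce it, I will use \cref{lemma: dense_preimage_in_Nd_infty}. It gives a dense $G_\delta$ set $X_0\subseteq X$ such that for every $x\in X_0$, every $d\geq1$, and every $x'\in\pi^{-1}(\pi(x))$, we have $(\pi^{(d)})^{-1}(\pi^{(d)}(x^{(d)}))\subseteq N_d(X)$. Any point of $\{x,x'\}^d$ lies in this preimage, so $\{x,x'\}^d\subseteq N_d(X)$ for all $d$. If $x'\neq x$, \cref{lemma: characterization_Ind_ap} then says $(x,x')\in\operatorname{Ind}_{ap}(X)$, which is a nontrivial $\operatorname{Ind}_{ap}$-pair and contradicts the hypothesis. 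Hence $\pi^{-1}(\pi(x))=\{x\}$ for every $x\in X_0$, and $X_0$ is already a dense $G_\delta$ set of points with singleton fibers. That is exactly the statement that $\pi$ is almost one-to-one, so the reduction of the first paragraph is not even needed.

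The main point to check is that \cref{lemma: dense_preimage_in_Nd_infty} really applies to the full fiber $\pi^{-1}(\pi(x))$ with no openness assumption on $\pi$. Its proof in \cref{thm: topological_dstep_char_factor} goes through the AG diagram and uses that $x$ lies in $X_2$, where $\theta'$ is injective, and $\pi(x)$ lies in $X_1$, where $\theta$ is injective. At such points, the saturation of $N_d(X')$ under the open map $\pi'_\infty$, given by \cref{prop: Nd_saturated_infty_nil}, pushes down through $\theta'$ to the whole fiber of $\pi$ over $\pi(x)$. The lemma also needs a single dense $G_\delta$ that works for all $d$ simultaneously. This holds because the AG diagram for $\pi$ does not depend on $d$, and \cref{prop: Nd_saturated_infty_nil} holds for every $d$ at once. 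Taking $t=e$ in the proof, which only needs $\bt x^{(d)}$ with $\bt$ trivial, covers exactly the case used here.
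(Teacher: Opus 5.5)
Your proposal is correct and is essentially the paper's proof. You take the dense $G_\delta$ set $X_0$ from \cref{lemma: dense_preimage_in_Nd_infty}, note that $\{x,x'\}^d\subseteq N_d(X)$ for every $d$ whenever $x\in X_0$ and $x'\in\pi^{-1}(\pi(x))$, and then use \cref{lemma: characterization_Ind_ap} to force $x'=x$, exactly as the paper does. As you note, the reduction to a single singleton fiber in your first paragraph is unnecessary, since every point of $X_0$ already has a singleton fiber.
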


\begin{proof}
    Let $\pi:X\to X_{\infty}$ be the factor map onto its maximal factor of order $\infty$. By \cref{lemma: dense_preimage_in_Nd_infty}, there exists a dense $G_{\delta}$ set $X_{0}$ of $X$ such that, for every integer $d\geq 1$ and every $x\in X_{0}$,  $(\pi^{(d)})^{-1}(\pi^{(d)}(x^{(d)})) \subseteq N_{d}(X)$. 
    
    Let $x\in X_{0}$ and $y\in \pi^{-1}(x)$. Then \begin{align*}
        \{x,y\}^{d} \subseteq (\pi^{(d)})^{-1}(\pi^{(d)}(x^{(d)})) \subseteq N_{d}(X)
    \end{align*}

    for every integer $d\geq 1$. 
    
    It follows from \cref{lemma: characterization_Ind_ap} that $x=y$, and hence $\pi$ is almost one-to-one.
\end{proof}

\end{document}